\title{Non-degenerate potentials on the quiver $X_7$}
\author{Sefi Ladkani}
\address{Department of Mathematics, University of Haifa, Mount Carmel,
Haifa 3498838, Israel}
\email{ladkani.math@gmail.com}
\newcommand{\cA}{\mathcal{A}}
\newcommand{\cC}{\mathcal{C}}
\newcommand{\eps}{\varepsilon}
\newcommand{\bF}{\mathbb{F}}
\newcommand{\cJ}{\mathcal{J}}
\newcommand{\gL}{\Lambda}
\newcommand{\fm}{\mathfrak{m}}
\newcommand{\cP}{\mathcal{P}}
\newcommand{\bQ}{\mathbb{Q}}
\newcommand{\bZ}{\mathbb{Z}}
\newcommand{\vphi}{\varphi}
\newcommand{\la}{\leftarrow}
\newcommand{\wh}{\widehat}
\newcommand{\wt}{\widetilde}
\newcommand{\llangle}{\langle \! \langle}
\newcommand{\rrangle}{\rangle \! \rangle}
\DeclareMathOperator{\ch}{char}
\DeclareMathOperator{\HH}{HH}
\DeclareMathOperator{\Hom}{Hom}
\DeclareMathOperator{\id}{id}
\DeclareMathOperator{\rank}{rank}
\DeclareMathOperator{\rot}{rot}
\DeclareMathOperator{\sub}{sub}
\newtheorem{theorem}{Theorem}[section]
\newtheorem{thm}{Theorem}
\newtheorem{prop}[theorem]{Proposition}
\newtheorem{lemma}[theorem]{Lemma}
\newtheorem{cor}[theorem]{Corollary}
\newtheorem*{cor*}{Corollary}
\theoremstyle{definition}
\newtheorem{defn}[theorem]{Definition}
\newtheorem{remark}[theorem]{Remark}
\newtheorem{notat}[theorem]{Notation}
\newtheorem{alg}[theorem]{Algorithm}
\newtheorem{example}[theorem]{Example}
\numberwithin{equation}{section}
\begin{document}

\begin{abstract}
We develop a method to compute certain mutations of quivers with
potentials and use this to construct an explicit family of non-degenerate
potentials on the exceptional quiver $X_7$.

We confirm a conjecture of Geiss-Labardini-Schr\"{o}er
by presenting a computer-assisted proof that over a ground field
of characteristic $2$, the Jacobian algebra of one member $W_0$
of this family is infinite-dimensional, whereas that of another member
$W_1$ is finite-dimensional, implying that these potentials are not
right equivalent.

As a consequence, we draw some conclusions on the associated cluster
categories, and in particular obtain a representation-theoretic proof
that there are no reddening mutation sequences for the quiver $X_7$.

We also show that when the characteristic of the ground field differs 
from $2$, the Jacobian algebras of $W_0$ and $W_1$ are both finite-dimensional.
Thus $W_0$ seems to be the first known non-degenerate potential
with the property that the finite-dimensionality of its
Jacobian algebra depends upon the ground field.
\end{abstract}

\maketitle

\section{Introduction}

Quivers without loops and $2$-cycles (cycles of length $2$) form an
important ingredient in the theory of cluster algebras by Fomin and
Zelevinsky~\cite{FZ02}. The notion of 
quiver mutation, which is the process of creating a new such quiver
from a given one at a prescribed vertex, plays a central role in the theory.

A quiver can be enriched by an algebraic datum in the form of a
potential. Roughly speaking, a potential is a (possibly infinite)
linear combination of
cycles in the (complete) path algebra of the quiver. A systematic study of
quivers with potentials and their mutations has been initiated in the
work of Derksen, Weyman and Zelevinsky~\cite{DWZ08}. 
Whereas in quiver mutation the resulting quiver has no 2-cycles by
construction, this may not be the case for mutation of quivers with potentials.
We say that a potential is \emph{non-degenerate} if for any sequence of
mutations, the resulting quiver does not have any 2-cycles.

Given a quiver with potential, one can construct several algebraic objects
(such as the Ginzburg dg-algebra~\cite{Ginzburg06}, the Jacobian algebra,
a cluster category~\cite{Amiot09,Plamondon11}).
Only when the potential is non-degenerate
can these constructions be used to adequately model the corresponding cluster
algebra. This lies at the foundation of the additive categorification of
(skew-symmetric) cluster algebras, see the surveys~\cite{Keller10, Keller12}.
In addition, one can draw significant conclusions on the cluster algebra of a quiver
by investigating the Jacobian algebra of a non-degenerate potential
on it, as done in~\cite{DWZ10}.
For such applications to cluster algebras, it is important to know that
there exists a non-degenerate potential on every quiver
(without loops and $2$-cycles).

The existence of a non-degenerate potential over an uncountable
ground field was established in~\cite{DWZ08}. However, the proof is not
constructive and except for a few classes of quivers,
it is not known in general how to write a non-degenerate potential on a given
quiver.
Among the quivers for which an explicit form of a non-degenerate potential
is known we could mention those belonging to the mutation classes of Dynkin
quivers~\cite{BMR06,DWZ08};
those arising from triangulations of oriented
surfaces~\cite{FST08,Labardini09,Labardini16};
those arising from reduced expressions in Coxeter
groups~\cite{BIRS09,BIRS11,GLS11};
and certain McKay quivers~\cite{dTdVVdB13}.

Moreover, it is not known in general 
whether a non-degenerate potential on a given quiver is unique.
Here, uniqueness is considered up to right equivalence, which is
an automorphism of the complete path algebra fixing the vertices.
We have shown~\cite{Ladkani13} the uniqueness of a non-degenerate
potential for all the quivers belonging to the class $\cP$ of
Kontsevich and Soibelman~\cite{KS08}.
The question of uniqueness has also been settled for many finite mutation
classes of quivers~\cite{GLS16,GLM20}.

The remaining finite mutation class for which the answers to the above
questions were not known was that of the exceptional quiver $X_7$,
discovered by Derksen and Owen in~\cite{DerksenOwen08}
and shown in Figure~\ref{fig:X7intro}.
This mutation class consists of two quivers and it does not belong to the
class $\cP$, so no explicit description of a non-degenerate potential
was known.
A potential on $X_7$ has been suggested in the physics
literature~\cite{ACCERV13}, but without any rigorous analysis.
It has been conjectured by Geiss, Labardini-Fragoso and
Schr\"{o}er~\cite[\S9.5.1]{GLS16} that 
there are at least two non-degenerate potentials on $X_7$
(up to right equivalence).

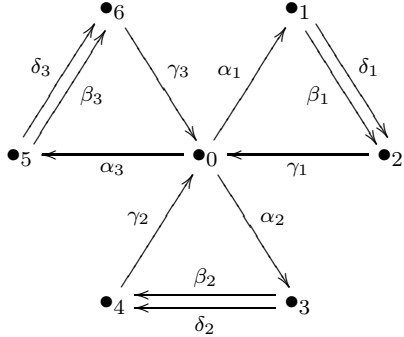
\begin{figure}
\[
\xymatrix@=1.5pc{
& {\bullet_6} \ar[ddr]^{\gamma_3} & &
{\bullet_1} \ar@<0.5ex>[ddr]^{\delta_1} \ar@<-0.5ex>[ddr]_{\beta_1} \\
\\
{\bullet_5} \ar@<0.5ex>[uur]^{\delta_3} \ar@<-0.5ex>[uur]_{\beta_3} & &
{\bullet_0} \ar[ll]^{\alpha_3} \ar[uur]^{\alpha_1} \ar[ddr]^{\alpha_2} & &
{\bullet_2} \ar[ll]^{\gamma_1} \\
\\
& {\bullet_4} \ar[uur]^{\gamma_2} & & {\bullet_3} \ar@<0.5ex>[ll]^{\delta_2}
\ar@<-0.5ex>[ll]_{\beta_2}
}
\]
\caption{The quiver $X_7$.}
\label{fig:X7intro}
\end{figure}

In this paper we construct a family of non-degenerate potentials on $X_7$
which are defined over any ground field $K$
and confirm the conjecture of~\cite{GLS16} when $K$ has characteristic $2$.
In order to describe our first result,
consider the following six $3$-cycles
$B_1, B_2, B_3, \Delta_1, \Delta_2, \Delta_3$ on $X_7$ given by
\begin{align*}
B_i = \alpha_i \beta_i \gamma_i &,&
\Delta_i = \alpha_i \delta_i \gamma_i &,&
(1 \leq i \leq 3).
\end{align*}

\begin{thm}
Let $K$ be a field and
let $P(x) \in K[[x]]$ be a power series without constant term.
Then the potential on the quiver $X_7$ given by
\begin{equation} \label{e:X7:W}
W = B_1 + B_2 + B_3
+ \Delta_1 \Delta_2 + \Delta_2 \Delta_3 + \Delta_3 \Delta_1
+ P(\Delta_1 \Delta_2 \Delta_3)
\end{equation}
is non-degenerate.
\end{thm}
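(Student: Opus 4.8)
The plan is to exploit that the quiver $X_7$ has a \emph{finite} mutation class: by Derksen and Owen~\cite{DerksenOwen08} it consists of only two quivers up to isomorphism, $X_7$ itself and a second quiver $X_7'$. By definition, non-degeneracy of $W$ means that along every sequence of mutations the reduced quiver never has a $2$-cycle. I would establish this by producing a finite, mutation-closed list of quivers with potential: a family $\mathcal{F}$ containing $(X_7, W)$ such that for every $(Q,V)\in\mathcal{F}$ and every vertex $k$ the reduced mutation $\mu_k(Q,V)$ has no $2$-cycle and is right-equivalent to a member of $\mathcal{F}$. Every iterated mutation of $(X_7,W)$ is then right-equivalent to some member of $\mathcal{F}$, and since no member has a $2$-cycle, $W$ is non-degenerate.

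What makes such a finite family plausible is that $\mathcal{F}$ need not record the series $P$ precisely: it suffices to work with finitely many \emph{shapes} of potential, each depending on an arbitrary series parameter. Concretely, I expect mutation to send a potential of the explicit form~\eqref{e:X7:W} on $X_7$ to a potential of one analogous explicit form on $X_7'$, and conversely, with the parameter series transported to some new, still arbitrary, series $\widetilde{P}$. Because only the shape of a potential is needed to detect whether its quiver has a $2$-cycle, it is enough to show that these two shapes close up under mutation; the value of the parameter may vary freely within each shape.

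To keep the number of computations small I would use the order-$3$ cyclic symmetry of $W$ that rotates the three arms $\{1,2\}$, $\{3,4\}$, $\{5,6\}$ of $X_7$; this is a genuine automorphism of $X_7$ fixing $W$, since the terms $\sum_i B_i$, $\Delta_1\Delta_2+\Delta_2\Delta_3+\Delta_3\Delta_1$ and $P(\Delta_1\Delta_2\Delta_3)$ are each invariant under the cyclic relabeling of arms. Modulo this symmetry there are only three inequivalent vertices --- the central vertex $0$ and the two types of arm vertex, represented by $1$ and $2$ --- so it is enough to compute $\mu_0$, $\mu_1$ and $\mu_2$ explicitly and to transport the remaining mutations by symmetry. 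I would begin with $\mu_0$: reversing the incoming arrows $\gamma_1,\gamma_2,\gamma_3$ and the outgoing arrows $\alpha_1,\alpha_2,\alpha_3$ produces nine composite arrows together with the dual arrows, and the terms $B_i$ and $\Delta_i\Delta_j$ of $W$ then dictate which $2$-cycles appear in the premutation and how the new triangle terms enter.

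The main obstacle is the reduction step that follows each premutation. One must split off the trivial part of the mutated potential and pass to its reduced quiver with potential, and this has to be carried out in the completed path algebra while keeping the infinite series $P(\Delta_1\Delta_2\Delta_3)$ under control. The delicate points are to exhibit explicitly the right-equivalence that cancels all $2$-cycles of the premutation, and to verify simultaneously that no $2$-cycle survives into the reduced quiver and that the reduced potential is again of one of the two prescribed shapes with a new series $\widetilde{P}$. Tracking how $P$ is substituted and transported through this reduction, and confirming that the two shapes genuinely close up rather than generating further shapes, is precisely where the method for computing mutations developed earlier in the paper is needed; once closure and the absence of $2$-cycles are checked for these finitely many shapes, non-degeneracy of $W$ follows formally.
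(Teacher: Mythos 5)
Your proposal follows essentially the same route as the paper: the paper exhibits a single potential $W'$ on $X'_7$ with $\mu_0(X_7,W)\simeq(X'_7,W')$, $\mu_k(X_7,W)\simeq(X_7,W)$ and $\mu_k(X'_7,W')\simeq(X'_7,W')$ for all side vertices $k$, which is exactly the finite mutation-closed family of shapes you describe, computed via the matching machinery you allude to. The one step you flag but do not resolve --- that the shapes ``genuinely close up'' --- is indeed the crux: mutating $W'$ at a side vertex of $X'_7$ returns $\mu_0(X_7,\wt{W})$ where $\wt{W}$ carries mixed terms $P(-B_i\Delta_j)$ rather than $P(\Delta_1\Delta_2\Delta_3)$, so it is \emph{not} of shape~\eqref{e:X7:W}, and a separate nontrivial right-equivalence (the paper's Lemma~\ref{l:P:BD}) is needed to identify it with the original shape and close the family.
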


The theorem appears as~Theorem~\ref{t:X7:potential} and its proof
is based on the material in Sections~\ref{sec:QP:mut}
and~\ref{sec:X7:mut}.

Our next result concerns the potentials $W_0$ and $W_1$
corresponding to the choices of power series
$P(x)=0$ and $P(x)=x$ in the theorem.
In explicit terms,
\begin{align*}
W_0 &= B_1 + B_2 + B_3 
+ \Delta_1 \Delta_2 + \Delta_2 \Delta_3 + \Delta_3 \Delta_1, \\
W_1 &= W_0 + \Delta_1 \Delta_2 \Delta_3 .
\end{align*}

Let $\gL_0 = \cP(X_7, W_0)$ and $\gL_1 = \cP(X_7, W_1)$ be
the corresponding Jacobian algebras and
denote by $\ch K$ the characteristic of the ground field $K$.

\begin{thm}
Let $K$ be a field.
\begin{enumerate}
\renewcommand{\theenumi}{\alph{enumi}}
\item
If $\ch K \ne 2$, then the algebras $\gL_0$ and $\gL_1$ are finite-dimensional
over $K$.

\item
If $\ch K = 2$, then $\gL_0$ is infinite-dimensional while $\gL_1$ is
finite-dimensional.
\end{enumerate}
\end{thm}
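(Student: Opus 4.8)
My plan is to reduce everything to the algebra at the central vertex $\bullet_0$. Writing $\gL$ for either $\gL_0$ or $\gL_1$, I first observe that $\bullet_0$ is a ``cut vertex'': every arrow out of $\{\bullet_1,\dots,\bullet_6\}$ eventually returns to $\bullet_0$ (from $a_i$ one must proceed $a_i\to b_i\to 0$), so any sufficiently long path factors through $\bullet_0$. Hence $e_0\gL e_0$ is spanned by products of the primitive cycles $B_i,\Delta_i$ at $\bullet_0$, and $\dim_K\gL<\infty$ if and only if $\dim_K e_0\gL e_0<\infty$. I would then compute the twelve cyclic derivatives $\partial_a W$. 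The relation $\partial_{\beta_i}W=\gamma_i\alpha_i$ gives $\gamma_i\alpha_i=0$, whence $\Delta_i^2=\alpha_i\delta_i(\gamma_i\alpha_i)\delta_i\gamma_i=0$ and likewise $B_i\Delta_i=\Delta_iB_i=B_i^2=0$. Multiplying $\partial_{\alpha_i}W_0$ on the left by $\alpha_i$ and $\partial_{\gamma_i}W_0$ on the right by $\gamma_i$ yields, for $W_0$,
\[
\gamma_i\alpha_i=0,\qquad B_i=-\Delta_i(\Delta_{i+1}+\Delta_{i-1})=-(\Delta_{i+1}+\Delta_{i-1})\Delta_i ,
\]
which eliminates the $B_i$ and forces $\Delta_i(\Delta_{i+1}+\Delta_{i-1})=(\Delta_{i+1}+\Delta_{i-1})\Delta_i$. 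Thus $e_0\gL_0 e_0$ is generated by $\Delta_1,\Delta_2,\Delta_3$ subject to $\Delta_i^2=0$ and
\[
[\Delta_1,\Delta_2]=[\Delta_2,\Delta_3]=[\Delta_3,\Delta_1]=:c .
\]
For $W_1$ the extra cyclic term $\Delta_1\Delta_2\Delta_3$ adds a cubic correction to each of these relations (e.g.\ $B_i=-\Delta_i(\Delta_{i+1}+\Delta_{i-1})-\Delta_i\Delta_{i+1}\Delta_{i-1}$), which is the source of the difference between the two potentials.

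The heart of the matter is the behaviour of $c$, and here the characteristic enters through an elementary but decisive mechanism: for square-zero operators the commutator $XY-YX$ can equal a nonzero scalar only when $\ch K=2$, since then it coincides with the anticommutator $XY+YX$. Concretely, using $\Delta_i^2=0$ one checks $\Delta_i c=-c\Delta_i$ when $\ch K\ne2$ but $\Delta_i c=c\Delta_i$ when $\ch K=2$; so for $W_0$ the element $c$ is central precisely in characteristic $2$. The plan is then to show, by reducing words of length $\le 4$ modulo the relations, that $c^2=0$ when $\ch K\ne2$ (the factor of $2$ appearing through a Jacobi-type identity forcing $(\Delta_1+\Delta_2+\Delta_3)c=0$ and ultimately $c^2=0$), so that $e_0\gL_0 e_0$ is spanned by bounded-length words in $\Delta_1,\Delta_2,\Delta_3,c$ and is finite-dimensional; whereas when $\ch K=2$ the central $c$ is \emph{not} nilpotent. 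For $W_1$ the cubic corrections break the equality of the three commutators and deform $c$ into a nilpotent element in every characteristic, yielding finiteness throughout.

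To organise the computations I would set up a noncommutative Gr\"obner basis for the Jacobian ideal in the complete path algebra with respect to an admissible order; in the finite-dimensional cases ($W_0$ with $\ch K\ne2$, and $W_1$ in all characteristics) the reduction terminates with only finitely many normal monomials, which both proves finiteness and bounds $\dim_K\gL$ (the completion being harmless once the algebra is finite-dimensional nilpotent modulo its semisimple part). For the infinite-dimensional case I would prove non-nilpotency of $c$ by exhibiting an explicit module: at $\bullet_0$ one has, for each $a\in K$, the square-zero representation
\[
\Delta_1=\begin{pmatrix}0&1\\0&0\end{pmatrix},\quad
\Delta_2=\begin{pmatrix}0&0\\a^2&0\end{pmatrix},\quad
\Delta_3=\begin{pmatrix}a&1\\a^2&a\end{pmatrix},
\]
on which $c$ acts as $a^2\cdot\id$; extending this to an action of all twelve arrows of $X_7$ gives a genuine $\gL_0$-module with $c$ acting as the scalar $a^2$. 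After base-changing to $\overline K$ (which preserves $\dim_K$) one has such modules for infinitely many scalars, so no nonzero polynomial in $c$ annihilates all of them; hence $\{1,c,c^2,\dots\}$ are linearly independent in $e_0\gL_0 e_0$ and $\gL_0$ is infinite-dimensional. Alternatively, one certifies infiniteness purely from the Gr\"obner data by detecting a cycle in the Ufnarovski graph of leading monomials.

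The main obstacle is the rigorous certification of \emph{infinite}-dimensionality: a terminating reduction only ever proves finiteness, so the genuinely new input must be either the module above (whose extension from the $2$-dimensional vertex-$0$ data to a consistent representation of all twelve arrows satisfying the full Jacobian relations is a finite but delicate check) or the verified Ufnarovski-cycle certificate. Closely tied to this is the precise degree-$\ge4$ reduction that separates the characteristics, namely proving $c^2=0$ exactly when $2$ is invertible while $c^n\ne0$ for all $n$ when $\ch K=2$; the relations are inhomogeneous in path length, so these computations do not split by degree and are what make the argument computer-assisted.
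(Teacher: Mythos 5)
Your reduction to the central vertex and the presentation of $e_0\gL e_0$ by the relations $\Delta_i^2=0$ together with $[\Delta_1,\Delta_2]=[\Delta_2,\Delta_3]=[\Delta_3,\Delta_1]$ (deformed by cubic terms for $W_1$) coincides with the paper's Lemmas~\ref{l:KQn0fd} and~\ref{l:e0Le0}, and your $2\times 2$ matrices do satisfy those relations in characteristic $2$ with $c$ acting as $a^2$. The infinite-dimensionality certificate is, however, where you have a genuine gap: $e_0\gL_0 e_0$ is a quotient of the \emph{complete} algebra $K\llangle x_1,x_2,x_3\rrangle$ by the \emph{closure} $\bar I$ of the ideal of relations, and your representations are not nilpotent (the central element $c$ acts invertibly), so they are modules over $K\langle x_1,x_2,x_3\rangle/I$ but do not a priori factor through $K\llangle x_1,x_2,x_3\rrangle/\bar I$; linear independence of $1,c,c^2,\dots$ in the uncompleted quotient does not by itself give it in the completed one, since $\bar I\cap K\langle x_1,x_2,x_3\rangle$ may be strictly larger than $I$. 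The gap is fillable -- for $W_0$ the generators $x_i^2$ and $[x_i,x_j]-[x_k,x_i]$ are homogeneous, so $I$ is already closed (this is exactly the content of the paper's Lemma~\ref{l:KQI:graded}) -- but you never address it, and it is precisely the point that forces the paper to work with positive gradings and homogeneous commutativity relations (Propositions~\ref{p:KQI:hcomm} and~\ref{p:QP:cycles}, applied after mutating to $X'_7$ with the grading $|d_i|=1$, $|a_i|=|b_{ij}|=|c_i|=2$). Once patched, your module argument is a legitimately different and rather more explicit route than the paper's combinatorial one, and it yields the extra information that $c$ is central and non-nilpotent; note also that your worry about extending the action to all twelve arrows of $X_7$ is unnecessary, since the reduction to $e_0\gL_0 e_0$ that you already established makes modules over the idempotent subalgebra sufficient.

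For the finite-dimensional cases there is a second unaddressed point: a terminating Gr\"obner (or degree-by-degree) computation performed over $\bQ$ or over $\bF_2$ does not by itself settle \emph{all} characteristics $\ne 2$, because ranks of the relevant integral matrices can drop modulo a prime. The paper needs a genuine extra step here (Lemmas~\ref{l:dr:p0} and~\ref{l:prank}, and the verification in Proposition~\ref{p:W0W1:Kne2} that a suitable integral lattice has index $256=2^8$, so no odd prime interferes). Your proposal asserts termination ``for $\ch K\ne 2$'' without explaining how one computation certifies this uniformly; as written this is a gap, not merely an omission of routine detail. Relatedly, the claim $c^2=0$ when $2$ is invertible is announced as a plan rather than verified, and your argument does not actually depend on it -- what you need is only that the reduction terminates, which is the paper's criterion $d_r(I)=0$ for some $r$ (Proposition~\ref{p:AIfd}).
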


This theorem is a combination of the statements of
Proposition~\ref{p:W0W1dim}, Proposition~\ref{p:W0W1:Kne2}
concerning the finite-dimensionality of the algebras in question
and~Proposition~\ref{p:W0:Keq2} dealing with the algebra $\gL_0$
over a field of characteristic $2$.

We deduce the following corollary, confirming the conjecture of~\cite{GLS16}
when the ground field has characteristic $2$.

\begin{cor*}
If $\ch K = 2$, then $W_0$ and $W_1$ are two non-degenerate potentials on
$X_7$ that are not right equivalent.
\end{cor*}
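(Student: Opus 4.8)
The plan is to derive the corollary directly from the two theorems already stated, using the fundamental fact from the theory of quivers with potentials that right equivalence preserves the isomorphism type of the Jacobian algebra. First I would dispose of the non-degeneracy claim: the potentials $W_0$ and $W_1$ are exactly the specializations of the potential $W$ in~\eqref{e:X7:W} obtained by choosing $P(x)=0$ and $P(x)=x$ respectively. Since both of these are power series without constant term, Theorem~\ref{t:X7:potential} applies in each case and guarantees that $W_0$ and $W_1$ are non-degenerate. No further work is needed for this half of the statement.

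The substance of the corollary is the assertion that the two potentials are \emph{not} right equivalent. Here I would invoke the standard result of Derksen--Weyman--Zelevinsky: if two potentials $W$ and $W'$ on the same quiver $Q$ are right equivalent, i.e.\ related by an automorphism of the complete path algebra fixing the vertices, then this automorphism carries the Jacobian ideal of $W$ onto that of $W'$ and hence descends to an isomorphism of $K$-algebras $\cP(Q,W) \cong \cP(Q,W')$. In particular, right equivalent potentials have Jacobian algebras of the same dimension over $K$.

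I would then specialize to the case $\ch K = 2$ and apply part~(b) of the second theorem, which states that $\gL_0 = \cP(X_7,W_0)$ is infinite-dimensional whereas $\gL_1 = \cP(X_7,W_1)$ is finite-dimensional. Thus $\gL_0$ and $\gL_1$ cannot be isomorphic as $K$-algebras, and by the preceding paragraph $W_0$ and $W_1$ cannot be right equivalent. Together with their non-degeneracy, established above, this is precisely the content of the corollary.

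The genuine difficulty of the result lies entirely in the inputs we are permitted to assume rather than in the corollary itself: in establishing the non-degeneracy via the mutation computations behind Theorem~\ref{t:X7:potential}, and above all in proving the infinite-dimensionality of $\gL_0$ in characteristic $2$ (the computer-assisted part of the second theorem). Once the dimension dichotomy between $\gL_0$ and $\gL_1$ is in hand, the passage to non-equivalence is a formal consequence of the invariance of the Jacobian algebra under right equivalence, so I expect no serious obstacle in the corollary's proof proper.
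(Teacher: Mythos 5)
Your proposal is correct and is essentially the paper's own deduction: non-degeneracy of $W_0$ and $W_1$ comes from Theorem~\ref{t:X7:potential} with $P(x)=0$ and $P(x)=x$, and non-right-equivalence follows from the invariance of the Jacobian algebra under right equivalence (\cite[Proposition~3.7]{DWZ08}) combined with the infinite/finite dimension dichotomy in characteristic $2$. The only cosmetic difference is that the paper's Proposition~\ref{p:W0W1dim}(b) certifies non-isomorphism by comparing the computed dimensions $d_r$ of the idempotent subalgebras $e_0\gL_\eps e_0$ over $\bF_2$, which does not even require the full infinite-dimensionality of $\gL_0$; your route via Proposition~\ref{p:W0:Keq2} is equally valid.
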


Let us describe some cluster-theoretic properties of the quiver $X_7$
implied by these results.
Firstly,
the existence of a non-degenerate potential with infinite-dimensional
Jacobian algebra allows to invoke the results of~\cite{BDP14} and
deduce that there are no reddening mutation sequences for the quiver
$X_7$, see Proposition~\ref{p:X7:reddening}.
Previously, a combinatorial proof that there are no
maximal green sequences for $X_7$ was given by Seven~\cite{Seven14}.

Another consequence is that
for any non-degenerate potential $W$ on $X_7$ whose Jacobian algebra
is finite-dimensional, the exchange graph of the cluster-tilting objects
in the cluster category of $(X_7,W)$ is not connected,
see Proposition~\ref{p:X7:exchange}.

We note that in her Ph.D. thesis~\cite{Shkerat22},
Abeer Shkerat considered the potential $W_0$ and has shown using other
methods that when $\ch K \ne 2$,
the algebra $\gL_0$ is finite-dimensional and symmetric.

The above results indicate that the quiver $X_7$ shares many
properties with the quivers arising from triangulations,
in the sense of~\cite{FST08},
of closed oriented surfaces of positive genus with one puncture.
Among these properties are
the existence of at least two non-degenerate potentials whose
Jacobian algebras are pairwise not isomorphic,
the existence of a non-degenerate potential whose Jacobian
algebra is symmetric~\cite{Ladkani12} as well as another 
one whose Jacobian algebra is infinite-dimensional,
the non-existence of reddening sequences and the non-connectivity
of the exchange graph, see~\cite{Ladkani13} for details.

However,
the quiver $X_7$ reveals also a new phenomenon that is not present in the
other quivers of finite mutation type, even not those arising from
triangulations of closed surfaces with one puncture.
To the best of our knowledge, the quiver with potential $(X_7,W_0)$
seems to be the first instance of a quiver with a
non-degenerate potential whose Jacobian algebra
behaves differently
according to the characteristic of the ground field.
Previously, there were known
examples of quivers possessing several non-degenerate potentials, some of
them with finite-dimensional Jacobian algebras and some with
infinite-dimensional ones~\cite{GLS16,Ladkani13},
but this property did not depend on the ground field.

In the remaining part of the introduction, let us outline the
structure of the paper.
Due to the inherent symmetries of the quiver $X_7$, when considering
the non-degeneracy of a potential on it, we need to consider three kinds
of mutations; one at a side vertex $k \neq 0$; another at the central
vertex $0$ leading to the other quiver $X'_7$ in the mutation class
(shown in Figure~\ref{fig:X7});
and finally at a side vertex of $X'_7$.
Instead of giving ad-hoc calculations, we prefer to take on a more
systematic approach, which we develop in Section~\ref{sec:QP:mut}.

Observe that when forming the mutation at a vertex $k$ of a quiver $Q$
without loops and $2$-cycles, we first replace any path $\alpha \beta$
(where $\alpha$ is an arrow ending at $k$ and $\beta$ is an arrow starting
at $k$)
by an arrow $[\alpha \beta]$ with the same endpoints, then reverse the
arrows $\alpha$, $\beta$ to form new arrows $\alpha^*$, $\beta^*$
and finally remove a maximal set of $2$-cycles which may be created
during this process. This last step corresponds to a choice of
a matching with maximal possible cardinality between 
arrows $\gamma$ of $Q$ and corresponding
pairs of arrows $(\alpha_\gamma,\beta_\gamma)$ (ending and starting at $k$,
respectively)
such that $\alpha_\gamma \beta_\gamma \gamma$ are $3$-cycles in $Q$
(so that the $2$-cycles we remove while performing the mutation
are exactly $[\alpha_\gamma \beta_\gamma] \gamma$).
This leads us to introduce and study the notion of~\emph{matching}
(see Definitions~\ref{def:matching} and~\ref{def:matching:max}).

Let $\Gamma'$ be the subset of arrows in a maximal matching and $C'$ be
the subset of the corresponding pairs.
If $S$ a potential on $Q$ such that no term starts at $k$ and none of the
paths $\alpha \beta$ for $(\alpha, \beta) \in C'$ appears in $S$
(we call such potentials \emph{admissible},
cf.\ Definition~\ref{def:pot:admiss}), we can
form a ``dual'' potential $S^*$ on the mutation $\mu_k(Q)$ using a very
simple recipe; replace
each path $\alpha \beta$ (necessarily not in $C'$) occurring in $S$ by
the arrow $[\alpha \beta]$ and
each occurrence of an arrow $\gamma \in \Gamma'$ by the signed path
$- \beta^*_\gamma \alpha^*_\gamma$.
Our main computational result is expressed in the next theorem.
For a more elaborate statement, see Theorem~\ref{t:QPmut}.
\begin{thm}
In the situation described in the preceding paragraph,
\[
\mu_k(Q, \sum_{\gamma \in \Gamma'} \alpha_\gamma \beta_\gamma \gamma
+ S)
=
\bigl(\mu_k(Q), \sum_{(\alpha,\beta) \not \in C'}
[\alpha \beta] \beta^* \alpha^* + S^* \bigr).
\]
\end{thm}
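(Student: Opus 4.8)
The plan is to follow the DWZ recipe for mutation of quivers with potentials step by step, tracking exactly what happens to each cyclic term, and to show that the admissibility hypothesis guarantees that the reduction step (removing trivial paths / cancelling a maximal set of $2$-cycles) collapses cleanly onto the claimed ``dual'' formula. Write $W = \sum_{\gamma \in \Gamma'} \alpha_\gamma \beta_\gamma \gamma + S$. Recall that $\mu_k$ proceeds in two stages: first the \emph{premutation} $\wt{\mu}_k$, which produces the intermediate quiver $\wt{Q}$ (with all composable paths through $k$ replaced by arrows $[\alpha\beta]$, and the arrows at $k$ reversed to $\alpha^*,\beta^*$) together with the potential $[W] + \Delta_k$, where $[W]$ rewrites each term of $W$ passing through $k$ using the new arrows $[\alpha\beta]$, and $\Delta_k = \sum [\alpha\beta]\,\beta^*\alpha^*$ is the canonical correction term summed over \emph{all} composable pairs $(\alpha,\beta)$ at $k$; and second, the \emph{reduction}, which splits off the $2$-acyclic part by applying a right equivalence killing a maximal set of $2$-cycles.

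First I would compute $[W]$. Here the admissibility hypothesis does the essential bookkeeping: since no term of $S$ starts at $k$, the only terms of $W$ that are altered under $[\,\cdot\,]$ are the cycles $\alpha_\gamma\beta_\gamma\gamma$, each of which becomes $[\alpha_\gamma\beta_\gamma]\,\gamma$. Thus $[W] = \sum_{\gamma \in \Gamma'} [\alpha_\gamma\beta_\gamma]\,\gamma + S$, where in $S$ the (necessarily non-composable-through-$k$ or otherwise unaffected) terms are carried over verbatim. Adding the correction term gives the premutation potential
\[
[W] + \Delta_k
= \sum_{\gamma \in \Gamma'} [\alpha_\gamma\beta_\gamma]\,\gamma
+ \sum_{(\alpha,\beta)} [\alpha\beta]\,\beta^*\alpha^* + S .
\]

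Next I would separate the correction sum over \emph{all} pairs $(\alpha,\beta)$ into the part indexed by $C'$ and its complement. For a pair $(\alpha_\gamma,\beta_\gamma) \in C'$, the two terms $[\alpha_\gamma\beta_\gamma]\,\gamma$ and $[\alpha_\gamma\beta_\gamma]\,\beta^*_\gamma\alpha^*_\gamma$ combine, and the first exhibits a $2$-cycle $[\alpha_\gamma\beta_\gamma]\,\gamma$ whose coefficient is a unit; this is exactly the data of a maximal matching encoded by $\Gamma'$ and $C'$. The heart of the argument is to apply the reduction theorem of DWZ to eliminate precisely these $2$-cycles. The mechanics of reduction dictate a right equivalence of the complete path algebra that substitutes for each matched arrow $\gamma$ (with $\gamma \in \Gamma'$) the signed path $-\beta^*_\gamma\alpha^*_\gamma$ coming from its partner, which is the origin of the sign in the recipe for $S^*$; this substitution is applied inside the remaining potential, producing $S^*$ from $S$ by replacing each $\alpha\beta$ (with $(\alpha,\beta)\notin C'$) by $[\alpha\beta]$ and each occurrence of $\gamma\in\Gamma'$ by $-\beta^*_\gamma\alpha^*_\gamma$. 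The unmatched pairs survive untouched and contribute $\sum_{(\alpha,\beta)\notin C'} [\alpha\beta]\,\beta^*\alpha^*$, yielding the asserted $2$-acyclic reduced potential.

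The main obstacle I anticipate is verifying that the reduction genuinely terminates at the claimed potential rather than merely at something right-equivalent to it in a non-explicit way. Concretely, one must check that after substituting $\gamma \mapsto -\beta^*_\gamma\alpha^*_\gamma$ no \emph{new} $2$-cycles are created among the surviving arrows, so that the matching $\Gamma'$ is not just maximal but yields a genuinely $2$-acyclic output in one pass. This is where the maximality of the matching (Definition~\ref{def:matching:max}) must be invoked, together with the admissibility hypothesis ensuring that the paths $\alpha_\gamma\beta_\gamma$ for $(\alpha_\gamma,\beta_\gamma)\in C'$ do not appear in $S$; otherwise the substitution could reintroduce terms of the form $[\alpha_\gamma\beta_\gamma]\,\gamma'$ and spoil the clean description. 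I would therefore devote the bulk of the rigorous argument to showing that admissibility plus maximality force the substitution to act only on the ``new'' arrows and to leave the $[\alpha\beta]$-part in reduced form, completing the identification with the right-hand side.
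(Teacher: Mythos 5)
Your overall route coincides with the paper's: compute the premutation, split the correction term $\sum_{(\alpha,\beta)\in C}[\alpha\beta]\beta^*\alpha^*$ into the parts indexed by $C'$ and by $C\setminus C'$, remove the $2$-cycles $[\alpha_\gamma\beta_\gamma]\gamma$ by a right equivalence that substitutes $\gamma \la -\beta^*_\gamma\alpha^*_\gamma$, and conclude via the uniqueness of the reduced part. You also correctly identify the target potential and the origin of the sign.

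The genuine gap is at the step you dispose of with ``the mechanics of reduction dictate a right equivalence \dots that substitutes $\gamma \mapsto -\beta^*_\gamma\alpha^*_\gamma$.'' The DWZ reduction theorem only guarantees the \emph{existence} of a reduced part up to right equivalence; in general its proof proceeds by an iterative limiting process and yields no closed-form substitution. That in the present restricted situation the reduction is realized by exactly this one substitution, producing exactly the claimed potential, is the actual content that must be proved; in the paper this is Lemma~\ref{l:2cyc:1} and Proposition~\ref{p:2cyc}. Moreover, the substitution is not a single pass: applying $\sub_{\gamma \la \gamma - \beta^*_\gamma\alpha^*_\gamma}$ kills the term $\beta^*_\gamma\alpha^*_\gamma[\alpha_\gamma\beta_\gamma]$ but, because $\gamma$ may appear in $S$ several times per term, it leaves residual summands cyclically equivalent to $[\alpha_\gamma\beta_\gamma]\,T'$ for some $T'$; these must be absorbed by a \emph{second} unitriangular substitution $[\alpha_\gamma\beta_\gamma] \la [\alpha_\gamma\beta_\gamma] - T'$ (this is where the hypothesis that the arrows $[\alpha\beta]$ for $(\alpha,\beta)\in C'$ do not occur elsewhere is used). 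Your proposal omits this correction entirely, so as written the right equivalence you describe does not land on the stated potential. A secondary imprecision: $[W]$ is not $\sum_\gamma [\alpha_\gamma\beta_\gamma]\gamma + S$ but $\sum_\gamma[\alpha_\gamma\beta_\gamma]\gamma + [S]$, since terms of $S$ may pass through $k$ via pairs in $C\setminus C'$ and these paths are also replaced by arrows at the premutation stage. Finally, the issue you flag as the main obstacle --- that no new $2$-cycles are created --- is the easy part: maximality of the matching is equivalent to $2$-acyclicity of $Q^*$ (Lemma~\ref{l:Q*mut}), and the substitution takes values in $\wh{KQ^*}$, so nothing new can appear; the real work lies in the explicit two-stage reduction above.
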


In Section~\ref{sec:X7:mut} we deal with particular mutations.
We introduce the family of quivers $Q_n$, which are obtained by
gluing $n$ copies of the non-acyclic quiver in the mutation class
of the extended Dynkin quiver $\wt{A}_2$ at a common central
vertex, and their mutations $Q'_n$ at the central vertex.
The quivers $X_7$ and $X'_7$ are members of these families
(for the value $n=3$).
In Section~\ref{ssec:mut:side} we consider mutations at side vertices of
such gluing and apply this to mutations of potentials 
at side vertices on $Q_n$
in Section~\ref{ssec:Qn:mut:side},
while in Section~\ref{ssec:Qn:mut:center} we consider mutation at the
central vertex of $Q_n$.
Mutations at side vertices of the quiver $X'_7$ are dealt with in
Section~\ref{ssec:X7:side}, where we show that
if $W$ is a potential on $X_7$ of the form
in~\eqref{e:X7:W} and $k$ is a side vertex,
then there exists a potential $\wt{W}$ on $X_7$,
not necessarily of the same form, such that $\mu_k \mu_0(X_7, W)$
is right equivalent to $\mu_0(X_7, \wt{W})$
(Proposition~\ref{p:X7mut2:P} and Remark~\ref{rem:X7mutk:P}).
The crucial observation that $W$ and $\wt{W}$ are right equivalent
is proved in Section~\ref{ssec:X7:righteq}, see Lemma~\ref{l:P:BD}.
All these ingredients are combined together
in Section~\ref{ssec:X7:potential} to give a proof of
Theorem~\ref{t:X7:potential} concerning non-degenerate potentials
on $X_7$.

Section~\ref{sec:Jac:fdim} concerns the question of finite-dimensionality
of Jacobian algebras $\gL$ of potentials of the form
$
B_1 + B_2 + \dots + B_n + f(\Delta_1, \Delta_2, \dots, \Delta_n)
$
on the quivers $Q_n$, where $f \in K \llangle x_1, \dots, x_n \rrangle$
is a power series in $n$ non-commuting variables.
The question is reduced to that of the 
idempotent subalgebra $e_0 \gL e_0$ (Lemma~\ref{l:KQn0fd}),
which can be described as a quotient of $K \llangle x_1, \dots, x_n \rrangle$
by an explicit ideal $I_f$ depending on $f$ (Lemma~\ref{l:e0Le0}).
We present in detail an algorithm (Algorithm~\ref{alg:dr})
to compute the dimensions of the quotients $(I+\fm^r)/(I+\fm^{r+1})$
where $\fm$ is the maximal ideal of $K \llangle x_1, \dots, x_n \rrangle$
and $I$ is an ideal generated by finitely many elements.
We have implemented this algorithm using the \textsc{Magma} computer
algebra system~\cite{MAGMA}. By working also with lattices over
$\bZ$, we have been able to give a computer-assisted proof for the
finite-dimensionality of the algebras $\gL_0$ and $\gL_1$
over any field $K$ with $\ch K \ne 2$ and for $\gL_1$ also when
$\ch K = 2$, see~Section~\ref{ssec:computer}.

In Section~\ref{sec:Jac:infdim} we consider quotients of complete
path algebras of positively graded quivers by ideals generated by
homogeneous commutativity-relations (Proposition~\ref{p:KQI:hcomm})
and deduce the infinite-dimensionality of Jacobian algebras for
certain potentials
which are signed sums of homogeneous cycles of positive degree
(Proposition~\ref{p:QP:cycles}). We use these results to prove that the
Jacobian algebra of the potential $W_0$ is infinite-dimensional
over any ground field of characteristic $2$ (Proposition~\ref{p:W0:Keq2}).
The cluster-theoretic implications of this fact are outlined
in Sections~\ref{ssec:X7:reddening} and~\ref{ssec:X7:exchange}.

\section{A method to compute some QP mutations}
\label{sec:QP:mut}

\subsection{Preliminaries}
We recall the basic notions of the theory of quivers with potentials,
following~\cite{DWZ08}.

A \emph{quiver} is a finite directed graph.
More precisely, it is a quadruple $(Q_0, Q_1, s, t)$, where $Q_0$ and $Q_1$
are finite sets (of \emph{vertices} and \emph{arrows}, respectively)
and $s,t \colon Q_1 \to Q_0$ are functions specifying, for each arrow
$\alpha \in Q_1$,
its starting vertex $s(\alpha)$ and terminating vertex $t(\alpha)$.

Let $Q$ be a quiver.
A \emph{path} $p$ is a sequence of arrows $\alpha_1 \alpha_2 \dots \alpha_n$
such that $s(\alpha_{i+1}) = t(\alpha_i)$ for all $1 \leq i < n$.
For a path $p$, denote by $s(p)$ its starting vertex $s(\alpha_1)$
and by $t(p)$ its terminating vertex $t(\alpha_n)$.
A path $p$ is a \emph{cycle} if $s(p)=t(p)$.
Any vertex $i \in Q_0$ gives rise to the \emph{lazy path} $e_i$,
which is a cycle of length zero with $s(e_i)=t(e_i)=i$.
An \emph{$m$-cycle} is a cycle of length $m$. 
A \emph{loop} is a $1$-cycle.
Two cycles $c, c'$ are \emph{rotationally equivalent} if one can
be obtained from the other by rotating the arrows, i.e.\ there 
exist paths $p,q$ such that $c=pq$ and $c'=qp$.

Let $K$ be a field. The \emph{path algebra} $KQ$ is the $K$-algebra
whose underlying vector space has as a basis the set of paths in $Q$
and the product of two paths $p, q$ 
is their concatenation $pq$ if $t(p)=s(q)$ and zero otherwise.
The \emph{complete path algebra} $\wh{KQ}$ is the completion of $KQ$
at the ideal generated by the arrows.
Thus, as a vector space, it is a direct product of copies of $K$, one
for each path in $Q$, and the multiplication is induced from
that on $KQ$. Denote by $\fm$ the two-sided ideal of $\wh{KQ}$
generated by the arrows of $Q$. The family $\{\fm^n\}_{n \geq 1}$ forms
a basis of open neighbourhoods of $0$.

Let $\wh{KQ}_{cyc} = \bigoplus_{i \in Q_0} e_i \wh{KQ} e_i$ be the closed
subspace of $\wh{KQ}$ consisting of the (possibly infinite) linear
combinations of cycles in $Q$. If $W = \sum_{c} \lambda_c c$ is
such a combination, we say that a cycle $c$ \emph{appears} in $W$ if
$\lambda_c \neq 0$.
A \emph{potential} is an element $W$ in $\wh{KQ}_{cyc}$ such that
all the cycles appearing in $W$ have length at least $2$. A potential $W$
is \emph{reduced} if all the cycles appearing in $W$ have length at least
$3$.

It is sometimes convenient to consider the image of a potential in
the \emph{zeroth continuous Hochschild homology} $\HH_0(\wh{KQ})$.
Recall that this is
the quotient of $\wh{KQ}$ by the closure of the subspace spanned by
all the commutators of pairs of elements in $\wh{KQ}$.
The map from $\wh{KQ}$ restricts to a surjective map
$\wh{KQ}_{cyc} \twoheadrightarrow \HH_0(\wh{KQ})$.
Two elements in $\wh{KQ}_{cyc}$ are \emph{cyclically equivalent}
if they have the same image in $\HH_0(\wh{KQ})$
(\cite[Definition~3.2]{DWZ08}).
Since $\HH_0(\wh{KQ})$ has a topological basis given by the equivalence
classes of cycles of $Q$ modulo rotation,
we may assume that the cycles appearing in a potential are
pairwise rotationally inequivalent.

Two potentials $W$ and $W'$ on $Q$ are \emph{right equivalent}
if there exists an automorphism $\vphi \colon \wh{KQ} \to \wh{KQ}$ with
$\vphi(e_i)=e_i$ for all $i \in Q_0$ such that $\vphi(W)$ and $W'$ are
cyclically equivalent. The map $\vphi$ is called
a \emph{right equivalence}, see~\cite[Definition~4.2]{DWZ08}.

Consider the subspace $KQ_1 = \bigoplus_{\alpha \in Q_1} K\alpha$ spanned
by the arrows. There are natural inclusion $\iota \colon KQ_1 \to \wh{KQ}$
and projection $\pi \colon \wh{KQ} \to KQ_1$.
Let $\vphi \colon \wh{KQ} \to \wh{KQ}$ be an endomorphism of $\wh{KQ}$
with $\vphi(e_i)=e_i$ for all $i \in Q_0$ and form the
linear map $\vphi^{(1)} = \pi \vphi \iota \colon KQ_1 \to KQ_1$.
Then $\vphi$ is a right equivalence if and only if $\vphi^{(1)}$ is an
isomorphism~\cite[Proposition~2.4]{DWZ08}.
If $\vphi^{(1)}$ is the identity map, $\vphi$ is called 
\emph{unitriangular}.
If $\vphi(\alpha) = \vphi^{(1)}(\alpha)$ for all $\alpha \in Q_1$,
$\vphi$ is called \emph{change of arrows},
see~\cite[Definition~2.5]{DWZ08}.
 
Let $(Q',W')$ and $(Q'',W'')$ be quivers with potentials having the same
set of vertices $Q'_0 = Q''_0$. The \emph{direct sum}
$(Q',W') \oplus (Q'',W'')$ is defined as
$(Q' \oplus Q'', W'+W'')$, where
$Q' \oplus Q''$ is the quiver whose set of vertices is $Q'_0$
and its set of arrows is the disjoint union of the sets $Q'_1$ and $Q''_1$.

For the next two paragraphs, assume that the quivers have no loops.
A quiver with potential $(Q,W)$ is \emph{trivial}
if the set of arrows of $Q$ consists of $2n$ distinct arrows
$\alpha_1, \dots, \alpha_n, \beta_1, \dots, \beta_n$ such that
each path $\alpha_k \beta_k$ is a $2$-cycle
and $W$ is cyclically equivalent to
$\vphi(\alpha_1 \beta_1 + \dots + \alpha_n \beta_n)$
for some right equivalence $\vphi$ which is a change of arrows, see~\cite[Proposition~4.4]{DWZ08}.

By the \emph{Splitting Theorem}~\cite[Theorem~4.6]{DWZ08},
any quiver with potential $(Q,W)$ is right equivalent to a direct sum
$(Q_{red}, W_{red}) \oplus (Q_{triv}, W_{triv})$,
where $(Q_{red}, W_{red})$ is a reduced quiver with potential
and $(Q_{triv}, W_{triv})$ is a trivial one, both unique
up to right equivalence.
The quiver with potential $(Q_{red}, W_{red})$ is called the 
\emph{reduced part} of $(Q,W)$.

\subsection{Substitutions}

Let $Q$ be a quiver and $K$ be a field.
Since any endomorphism of $\wh{KQ}$ fixing the lazy paths $e_i$
is determined by its values on the arrows of $Q$, we may view
it as a substitution operation, replacing each occurrence of an arrow by
the corresponding value. In this section we take on this
point of view and list some basic properties of substitutions.
Recall that $\fm$ is the ideal of $\wh{KQ}$ generated by the arrows.

\begin{defn}
Let $\gamma$ be an arrow of $Q$ and $T \in \fm$ an element.
\begin{enumerate}
\renewcommand{\theenumi}{\alph{enumi}}
\item
We say that $T$ is \emph{parallel} to $\gamma$ if
$T \in e_{s(\gamma)} \fm e_{t(\gamma)}$.

\item
We say that $T$ is \emph{anti-parallel} to $\gamma$ if
$T \in e_{t(\gamma)} \fm e_{s(\gamma)}$.
\end{enumerate}
\end{defn}

\begin{remark}
If $T$ is anti-parallel to $\gamma$ then $\gamma T$ is a potential on $Q$.
If $\delta$ is an arrow anti-parallel to $\gamma$, then $\gamma \delta$
is a $2$-cycle in $Q$.
\end{remark}

\begin{defn}
Let $\gamma_1, \gamma_2, \dots, \gamma_n$ be distinct arrows of $Q$
and let $T_1, T_2, \dots, T_n \in \fm$ be elements such that
$T_i$ is parallel to $\gamma_i$ for each $1 \leq i \leq n$.

We denote by
$\sub_{\gamma_1 \la T_1, \gamma_2 \la T_2, \dots, \gamma_n \la T_n}$
the continuous algebra endomorphism of $\wh{KQ}$ whose value on
the generators is given by
\begin{align*}
&\sub_{\gamma_1 \la T_1, \dots, \gamma_n \la T_n}(e_v) = e_v ,\\
&\sub_{\gamma_1 \la T_1, \dots, \gamma_n \la T_n}(\alpha) =
\begin{cases}
T_i & \text{if $\alpha=\gamma_i$ for some $1 \leq i \leq n$,} \\
\alpha & \text{otherwise.}
\end{cases}
\end{align*}
for $v \in Q_0$ and $\alpha \in Q_1$.
\end{defn}

In the next lemmas we record some basic properties of the endomorphisms
$\sub_{\gamma \la T}$.

\begin{defn}
Let $T \in \fm$. We say that an arrow $\gamma$ \emph{appears in $T$}
if it appears in at least one of its terms, i.e.\
if $T = \sum_{p} \lambda_p p$ then
$\lambda_p \neq 0$ for some path $p$ containing $\gamma$.
\end{defn}

The following lemma is straightforward from the definitions.
\begin{lemma}
Let $\gamma$ be an arrow of $Q$ and let $T \in \fm$ be an element
parallel to $\gamma$.
Let $S \in \wh{KQ}$ and consider $S' = \sub_{\gamma \la T}(S)$.
\begin{enumerate}
\renewcommand{\theenumi}{\alph{enumi}}
\item
If $\gamma$ does not appear in $S$, then $S'=S$.

\item
If $\gamma$ does not appear in $T$, then it does not appear in $S'$.

\item
If an arrow of $Q$ does not appear in $S$ and in $T$,
then it does not appear in $S'$.
\end{enumerate}
\end{lemma}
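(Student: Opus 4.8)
The plan is to unwind the definitions and argue each of the three assertions separately, since they are essentially bookkeeping claims about which arrows survive a substitution. Recall that $\sub_{\gamma \la T}$ is the \emph{continuous} algebra endomorphism fixing the lazy paths and sending $\gamma$ to $T$ while fixing all other arrows. By continuity it suffices to understand its action on each path $p$ occurring in $S$, since $S$ is a (possibly infinite) convergent sum $\sum_p \lambda_p p$ and $\sub_{\gamma \la T}$ commutes with the limit. For a single path $p = \alpha_1 \alpha_2 \cdots \alpha_m$, the endomorphism property gives $\sub_{\gamma \la T}(p) = \prod_{j} \sub_{\gamma \la T}(\alpha_j)$, where each factor is either $T$ (if $\alpha_j = \gamma$) or $\alpha_j$ itself (otherwise). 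This explicit formula is the engine behind all three parts.

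First I would prove part (a). If $\gamma$ does not appear in $S$, then in the expansion $S = \sum_p \lambda_p p$ every path $p$ with $\lambda_p \neq 0$ avoids $\gamma$, so none of its arrows equals $\gamma$ and the factorization above yields $\sub_{\gamma \la T}(p) = p$. Summing and invoking continuity gives $S' = S$. For part (b), I would track the arrows appearing in the image. Consider a path $p$ occurring in $S$: applying the factorization, each factor $\sub_{\gamma \la T}(\alpha_j)$ is either a term built entirely from the arrows appearing in $T$ (when $\alpha_j = \gamma$) or the single arrow $\alpha_j \neq \gamma$. Since $T$ is parallel to $\gamma$, the substitution is well-defined and the resulting paths concatenate correctly. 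If $\gamma$ appears in neither $S$ nor $T$, then no path in the expansion of $\sub_{\gamma \la T}(p)$ can contain $\gamma$: the factors coming from $\gamma \neq \alpha_j$ are the original arrows (none equal to $\gamma$ by hypothesis on $S$), and the factors $T$ contribute only arrows appearing in $T$ (none equal to $\gamma$ by hypothesis on $T$). Hence $\gamma$ does not appear in $S'$.

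Part (c) is the common generalization and I would phrase its proof uniformly. Fix an arrow $\eta$ of $Q$ that appears in neither $S$ nor $T$. Take any path $p$ occurring in $S$; by hypothesis $\eta \neq \alpha_j$ for every factor $\alpha_j$ of $p$. In the product expansion of $\sub_{\gamma \la T}(p)$, each factor is either an arrow $\alpha_j \neq \eta$ or (when $\alpha_j = \gamma$) the element $T$, whose expansion involves only arrows appearing in $T$ and therefore excludes $\eta$. Consequently every path in the expansion of $\sub_{\gamma \la T}(p)$ avoids $\eta$, and passing to the convergent sum $S' = \sum_p \lambda_p \sub_{\gamma \la T}(p)$ shows that $\eta$ does not appear in $S'$. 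Note that part (b) is the special case $\eta = \gamma$ combined with the observation that $\gamma$ automatically does not appear in $S$ under the stated hypothesis—or, more directly, part (b) follows from (c) once one remarks that the hypothesis ``$\gamma$ does not appear in $T$'' together with the general behavior of the substitution forces $\gamma$ out of the image regardless of whether it appeared in $S$.

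None of the steps presents a genuine obstacle; the lemma is labeled straightforward precisely because the endomorphism property plus continuity reduces everything to the single-path factorization. The only point requiring mild care is the appeal to continuity, ensuring that the statements about finitely-supported path expansions pass to the infinite sums defining elements of $\wh{KQ}$; this is immediate since $\sub_{\gamma \la T}$ is defined to be continuous and the relevant topology is the $\fm$-adic one, under which the partial sums converge and the image arrows of any fixed length are determined by finitely many substitutions.
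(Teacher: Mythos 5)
Your proof is correct and is exactly the intended argument; the paper gives no proof of this lemma at all, stating only that it is straightforward from the definitions. The one wobble is in part (b), where you momentarily invoke a hypothesis that $\gamma$ does not appear in $S$ (which is neither assumed nor needed --- when $\alpha_j \neq \gamma$ the factor is $\alpha_j$, which is not $\gamma$ by definition, and when $\alpha_j = \gamma$ the factor is $T$, which avoids $\gamma$ by hypothesis); your own closing remark that the conclusion holds ``regardless of whether it appeared in $S$'' is the correct resolution.
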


\begin{lemma} \label{l:sub:prod}
Let $\gamma, \gamma'$ be distinct arrows of $Q$
and let $T, T' \in \fm$. Assume that:
\begin{enumerate}
\renewcommand{\theenumi}{\roman{enumi}}
\item
$T$ is parallel to $\gamma$ and $T'$ parallel to $\gamma'$,

\item
$\gamma$ does not appear in $T'$ and $\gamma'$ does not appear in $T$.
\end{enumerate}

Then the endomorphisms $\sub_{\gamma \la T}$ and $\sub_{\gamma' \la T'}$
commute with each other, and their composition equals $\sub_{\gamma \la T, \gamma' \la T'}$.
\end{lemma}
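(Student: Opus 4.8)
The plan is to exploit the fact, recalled at the start of this subsection, that a continuous algebra endomorphism of $\wh{KQ}$ fixing the lazy paths is completely determined by its values on the arrows. Writing $\sigma = \sub_{\gamma \la T}$, $\sigma' = \sub_{\gamma' \la T'}$ and $\tau = \sub_{\gamma \la T, \gamma' \la T'}$, all three are such endomorphisms; the map $\tau$ is well-defined precisely because $\gamma \neq \gamma'$ and, by hypothesis~(i), $T$ and $T'$ are parallel to $\gamma$ and $\gamma'$ respectively. Hence it suffices to prove that $\sigma \sigma'$ and $\sigma' \sigma$ both agree with $\tau$ on every arrow $\alpha \in Q_1$; equality on arrows then propagates to all of $\wh{KQ}$ by the algebra and continuity properties.

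First I would compute $(\sigma \sigma')(\alpha) = \sigma(\sigma'(\alpha))$ by distinguishing three cases according to whether $\alpha$ equals $\gamma'$, equals $\gamma$, or is neither. When $\alpha \notin \{\gamma, \gamma'\}$ both maps fix $\alpha$, so the composite returns $\alpha = \tau(\alpha)$. When $\alpha = \gamma$ we have $\sigma'(\gamma) = \gamma$ (as $\gamma \neq \gamma'$) and then $\sigma(\gamma) = T = \tau(\gamma)$. The only case requiring an actual argument is $\alpha = \gamma'$: here $\sigma'(\gamma') = T'$, and I must show $\sigma(T') = T'$. This is where hypothesis~(ii) enters --- since $\gamma$ does not appear in $T'$, part~(a) of the preceding lemma gives $\sub_{\gamma \la T}(T') = T'$, so the composite yields $T' = \tau(\gamma')$. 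Thus $\sigma \sigma' = \tau$.

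The roles of $(\gamma, T)$ and $(\gamma', T')$ are interchangeable: hypotheses~(i) and~(ii) are symmetric in the two pairs, so the identical computation with the pairs swapped shows $\sigma' \sigma = \sub_{\gamma' \la T', \gamma \la T}$, which equals $\tau$ since the defining recipe does not depend on the order in which the substituted arrows are listed. Therefore $\sigma \sigma' = \sigma' \sigma = \tau$, establishing both commutativity and the claimed formula for the composition.

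The single subtle point, and the only place the hypotheses do real work, is the computation $\sigma(T') = T'$ in the case $\alpha = \gamma'$: condition~(ii) is exactly what guarantees that feeding $T'$ back through $\sigma$ does not trigger any further substitution of $\gamma$ by $T$ inside $T'$. Without it the nested substitution would in general differ from the simultaneous one. I expect no difficulty beyond keeping the case analysis and the composition order straight.
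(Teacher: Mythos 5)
Your proof is correct and follows essentially the same route as the paper: both reduce to checking the compositions on arrows and use part~(a) of the preceding lemma (that $\sub_{\gamma \la T}$ fixes any element in which $\gamma$ does not appear) at exactly the point where hypothesis~(ii) is needed. The only cosmetic difference is that you compute one composition and invoke symmetry, while the paper writes out both compositions explicitly.
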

\begin{proof}
It suffices to consider the action of the compositions on the arrows of $Q$.
It is clear that any arrow $\alpha \neq \gamma, \gamma'$ is mapped to itself.
Now, 
\begin{align*}
\sub_{\gamma \la T} (\sub_{\gamma' \la T'}(\gamma))
= \sub_{\gamma \la T}(\gamma) = T
&,&
\sub_{\gamma' \la T'} (\sub_{\gamma \la T}(\gamma))
= \sub_{\gamma' \la T'}(T) = T, 
\end{align*}
where the right equality holds since $\gamma'$ does not appear in $T$.
Similarly,
\begin{align*}
\sub_{\gamma' \la T'} (\sub_{\gamma \la T}(\gamma'))
= \sub_{\gamma' \la T'}(\gamma') = T'
&,&
\sub_{\gamma \la T} (\sub_{\gamma' \la T'}(\gamma'))
= \sub_{\gamma \la T}(T') = T' 
\end{align*}
since $\gamma$ does not appear in $T'$, and the claim follows.
\end{proof}

We deduce by induction the following statement.
\begin{cor} \label{c:sub:prod}
Let $\gamma_1, \dots,\gamma_n$ be distinct arrows of $Q$ and let
$T_1, \dots, T_n \in \fm$. Assume that for each $1 \leq i \leq $n,
\begin{enumerate}
\renewcommand{\theenumi}{\roman{enumi}}
\item
$T_i$ is parallel to $\gamma_i$,

\item
$\gamma_i$ does not appear in any of the elements $T_j$ for $j \neq i$.
\end{enumerate}
Then the endomorphisms $\sub_{\gamma_i \la T_i}$ 
commute with each other, and their composition
in any order equals $\sub_{\gamma_1 \la T_1, \dots, \gamma_n \la T_n}$.
\end{cor}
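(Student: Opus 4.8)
The plan is to argue by induction on $n$, using Lemma~\ref{l:sub:prod} for pairs and the elementary fact (recorded in the preceding lemma, part~(a)) that a substitution $\sub_{\gamma \la T}$ fixes any element in which $\gamma$ does not appear. Throughout I will use that a continuous endomorphism of $\wh{KQ}$ fixing the lazy paths is determined by its values on arrows, so all the identities reduce to comparing actions on a single arrow.

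First I would dispose of pairwise commutativity, which is essentially free. For any $i \neq j$ the pair $\gamma_i, \gamma_j$ together with $T_i, T_j$ satisfies the hypotheses of Lemma~\ref{l:sub:prod}: condition~(i) there is exactly hypothesis~(i) here, and condition~(ii) there -- that $\gamma_i$ does not appear in $T_j$ and $\gamma_j$ does not appear in $T_i$ -- is the special case of hypothesis~(ii) here for the two indices $i, j$. Hence $\sub_{\gamma_i \la T_i}$ and $\sub_{\gamma_j \la T_j}$ commute. Since a finite collection of pairwise commuting endomorphisms has a well-defined composite independent of the order, it then suffices to identify one particular ordering with the simultaneous substitution $\Phi := \sub_{\gamma_1 \la T_1, \dots, \gamma_n \la T_n}$.

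For that identification I would induct on $n$, with $n = 1$ trivial and $n = 2$ being Lemma~\ref{l:sub:prod}. Put $\Psi = \sub_{\gamma_2 \la T_2, \dots, \gamma_n \la T_n}$. The subfamily indexed by $\{2, \dots, n\}$ inherits hypotheses~(i) and~(ii), so the inductive hypothesis gives $\Psi = \sub_{\gamma_2 \la T_2} \circ \cdots \circ \sub_{\gamma_n \la T_n}$; consequently the full composite equals $\sub_{\gamma_1 \la T_1} \circ \Psi$, and it remains only to check $\sub_{\gamma_1 \la T_1} \circ \Psi = \Phi$ on an arbitrary arrow $\alpha$. There are three cases, using the values of $\Psi$ read off directly from its definition as a simultaneous substitution. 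If $\alpha = \gamma_1$, then $\Psi(\gamma_1) = \gamma_1$ (as $\gamma_1 \notin \{\gamma_2, \dots, \gamma_n\}$) and applying $\sub_{\gamma_1 \la T_1}$ yields $T_1 = \Phi(\gamma_1)$. If $\alpha = \gamma_i$ with $i \geq 2$, then $\Psi(\gamma_i) = T_i$, and since $\gamma_1$ does not appear in $T_i$ by hypothesis~(ii), the substitution $\sub_{\gamma_1 \la T_1}$ fixes $T_i$, giving $T_i = \Phi(\gamma_i)$. If $\alpha$ is none of the $\gamma_i$, all maps fix $\alpha$.

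The computation is routine; the only point carrying genuine content is the case $\alpha = \gamma_i$ with $i \geq 2$, where everything hinges on hypothesis~(ii) ensuring that $\gamma_1$ is absent from $T_i$, so that the outer substitution leaves $\Psi(\gamma_i) = T_i$ untouched rather than recursively substituting into it. I expect the remainder to be bookkeeping: inheriting the hypotheses for the subfamily in the inductive step, and invoking the already-established pairwise commutativity to pass from the single chosen ordering to an arbitrary one.
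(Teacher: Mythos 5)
Your proof is correct and follows the route the paper intends: the paper simply states that the corollary is "deduced by induction" from Lemma~\ref{l:sub:prod}, and your argument — pairwise commutativity from that lemma plus an induction identifying one ordering of the composite with the simultaneous substitution by checking on arrows — is exactly the expected fleshing-out of that deduction.
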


We now consider two situations where the substitution endomorphism is
actually an automorphism (and hence a right equivalence).
The first situation is dealt with in the next lemma.

\begin{lemma} \label{l:sub:gTauto}
Let $\gamma$ be an arrow of $Q$ and let $T, T' \in \fm$ be elements
parallel to $\gamma$. Then:
\begin{enumerate}
\renewcommand{\theenumi}{\alph{enumi}}
\item \label{it:gTT'}
If $\gamma$ does not appear in $T'$, then
\[
\sub_{\gamma \la \gamma + T} \circ \sub_{\gamma \la \gamma + T'}
= \sub_{\gamma \la \gamma + T + T'} .
\]

\item \label{it:gTauto}
If $\gamma$ does not appear in $T$, then
$\sub_{\gamma \la \gamma + T}$ is an automorphism of $\wh{KQ}$.
\end{enumerate}
\end{lemma}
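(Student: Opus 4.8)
The plan is to prove part~(\ref{it:gTT'}) by a direct computation on the generators of $\wh{KQ}$ and then deduce part~(\ref{it:gTauto}) as a formal consequence by specializing to $T' = -T$.

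For part~(\ref{it:gTT'}), both sides are continuous algebra endomorphisms fixing every $e_v$, so it suffices to compare their values on the arrows of $Q$. Any arrow $\alpha \neq \gamma$ is fixed by each of the three substitutions, hence maps to $\alpha$ on both sides. For $\gamma$ itself, the right-hand side yields $\gamma + T + T'$, while the left-hand side yields $\sub_{\gamma \la \gamma + T}(\gamma + T') = (\gamma + T) + \sub_{\gamma \la \gamma + T}(T')$. Here the hypothesis that $\gamma$ does not appear in $T'$ is exactly what is needed: by part~(a) of the first (straightforward) lemma on substitutions, an endomorphism of the form $\sub_{\gamma \la \,\cdot\,}$ fixes any element in which $\gamma$ does not appear, so $\sub_{\gamma \la \gamma + T}(T') = T'$ and the two expressions agree.

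For part~(\ref{it:gTauto}), I would first note that $-T \in \fm$ is again parallel to $\gamma$, and $\gamma$ does not appear in $-T$ precisely because it does not appear in $T$; hence $\sub_{\gamma \la \gamma - T}$ is a well-defined substitution endomorphism. Applying part~(\ref{it:gTT'}) with $T' = -T$ gives $\sub_{\gamma \la \gamma + T} \circ \sub_{\gamma \la \gamma - T} = \sub_{\gamma \la \gamma} = \id$, since the condition ``$\gamma$ does not appear in $-T$'' is satisfied. Swapping the roles of the two substitutions (so that now the inner one carries $+T$, in which $\gamma$ again does not appear) gives $\sub_{\gamma \la \gamma - T} \circ \sub_{\gamma \la \gamma + T} = \id$. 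Thus $\sub_{\gamma \la \gamma - T}$ is a two-sided inverse of $\sub_{\gamma \la \gamma + T}$, which is therefore an automorphism of $\wh{KQ}$.

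I expect no serious obstacle here: the entire content is the bookkeeping of which element each substitution is permitted to fix. The single point that genuinely uses the stated hypotheses is the identity $\sub_{\gamma \la \gamma + T}(T') = T'$ in part~(\ref{it:gTT'}), together with its mirror image when verifying both compositions in part~(\ref{it:gTauto}); one must confirm that the ``$\gamma$ does not appear'' condition lands on the correct argument in each of the two orderings, which it does because passing from $T$ to $-T$ leaves unchanged the set of arrows that appear. An alternative route for part~(\ref{it:gTauto}) would be to compute the induced linear map $\vphi^{(1)}$ on $KQ_1$, observe that it is the identity plus a square-zero nilpotent (the column of $\gamma$ picks up only arrows different from $\gamma$), and invoke~\cite[Proposition~2.4]{DWZ08}; but the inverse-by-part~(\ref{it:gTT'}) argument is shorter and entirely self-contained.
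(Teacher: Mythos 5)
Your proof is correct and follows essentially the same route as the paper: part~(a) is verified on the arrows, using that $\sub_{\gamma \la \gamma+T}$ fixes $T'$ because $\gamma$ does not appear in $T'$, and part~(b) follows by taking $T'=-T$ in both orders to exhibit $\sub_{\gamma \la \gamma - T}$ as a two-sided inverse. The bookkeeping of which composition order requires which hypothesis is handled correctly.
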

\begin{proof}
For part~\eqref{it:gTT'},
it is enough to consider the action of the composition on the arrows of $Q$.
Any arrow $\alpha \neq \gamma$ is mapped to itself, whereas
\[
\sub_{\gamma \la \gamma + T} (\sub_{\gamma \la \gamma + T'}(\gamma)) = 
\sub_{\gamma \la \gamma + T}(\gamma + T') = (\gamma + T) + T'
\]
since $\gamma$ does not appear in $T'$.

For part~\eqref{it:gTauto}, observe that by~\eqref{it:gTT'} we have
\[
\sub_{\gamma \la \gamma - T} \circ \sub_{\gamma \la \gamma + T} =
\sub_{\gamma \la \gamma + T} \circ \sub_{\gamma \la \gamma - T} =
\id_{\wh{KQ}} .
\]
\end{proof}

\begin{remark}
Observe that the proof of Lemma~\ref{l:sub:gTauto} does not rely on
\cite[Proposition~2.4]{DWZ08}, so if we assume in addition that $T \in KQ$,
then the same argument shows that $\sub_{\gamma \la \gamma+T}$, now
viewed as an endomorphism of the path algebra $KQ$, is an automorphism of $KQ$.
\end{remark}

The second situation is dealt with in the next lemma,
where we do need to work within the complete setting.
\begin{lemma} \label{l:sub:gTauto:m2}
Let $T \in \fm$ such that
$T - \sum_{\alpha \in Q_1} \lambda_\alpha \alpha \in \fm^2$
for some scalars $\lambda_\alpha \in K$.
Then $\sub_{\gamma \la T}$ is an automorphism of $\wh{KQ}$
if and only if $\lambda_\gamma \neq 0$.

In particular, $\sub_{\gamma \la \gamma+T}$ is a
(unitriangular) automorphism for any $T \in \fm^2$.
\end{lemma}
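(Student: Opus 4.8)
The plan is to reduce the statement to a finite-dimensional linear-algebra computation via the criterion \cite[Proposition~2.4]{DWZ08} recalled in the preliminaries: an endomorphism $\vphi$ of $\wh{KQ}$ fixing the lazy paths is an automorphism (equivalently, a right equivalence) if and only if the induced linear map $\vphi^{(1)} = \pi \vphi \iota \colon KQ_1 \to KQ_1$ on the finite-dimensional arrow space is an isomorphism. So first I would set $\vphi = \sub_{\gamma \la T}$, which fixes every $e_i$, and simply compute $\vphi^{(1)}$ on the basis of arrows.

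For an arrow $\alpha \neq \gamma$ we have $\vphi(\alpha) = \alpha$, hence $\vphi^{(1)}(\alpha) = \pi(\alpha) = \alpha$. For $\gamma$ itself, $\vphi(\gamma) = T$, and since $\pi$ annihilates the lazy paths together with $\fm^2$ (all paths of length $\geq 2$), the hypothesis $T - \sum_{\alpha} \lambda_\alpha \alpha \in \fm^2$ gives $\vphi^{(1)}(\gamma) = \pi(T) = \sum_{\alpha \in Q_1} \lambda_\alpha \alpha$. Thus $\vphi^{(1)}$ fixes the subspace $U = \bigoplus_{\alpha \neq \gamma} K\alpha$ pointwise and sends $\gamma$ to $\lambda_\gamma \gamma + u$ with $u = \sum_{\alpha \neq \gamma} \lambda_\alpha \alpha \in U$.

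The next step is to decide when this map is invertible. In the basis of arrows its matrix is the identity with the $\gamma$-column replaced by $(\lambda_\alpha)_{\alpha}$, so $\det \vphi^{(1)} = \lambda_\gamma$; equivalently, a direct check of injectivity suffices, since if $\lambda_\gamma = 0$ then $\vphi^{(1)}(\gamma - u) = 0$, while if $\lambda_\gamma \neq 0$ the equation $\vphi^{(1)}(c\gamma + w) = 0$ with $w \in U$ forces $c\lambda_\gamma = 0$, hence $c = 0$ and then $w = 0$. Invoking \cite[Proposition~2.4]{DWZ08} then yields that $\sub_{\gamma \la T}$ is an automorphism if and only if $\lambda_\gamma \neq 0$.

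Finally, the ``in particular'' clause is the special case in which the substituted element is $\gamma + T$ with $T \in \fm^2$: its degree-one part is $\gamma$ itself, so $\lambda_\gamma = 1$ and $\lambda_\alpha = 0$ for $\alpha \neq \gamma$, giving $\vphi^{(1)} = \id_{KQ_1}$; hence $\sub_{\gamma \la \gamma + T}$ is not merely an automorphism but unitriangular in the sense recalled above. I expect no genuine obstacle. The only points requiring care are the identification $\pi(T) = \sum_\alpha \lambda_\alpha \alpha$, which is exactly where the hypothesis $T - \sum_\alpha \lambda_\alpha \alpha \in \fm^2$ enters, and the fact that, in contrast with Lemma~\ref{l:sub:gTauto}, no explicit inverse is exhibited here: the automorphism property is obtained abstractly from \cite[Proposition~2.4]{DWZ08}, whose validity rests on the completeness of $\wh{KQ}$. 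This is precisely why one must work in the complete setting.
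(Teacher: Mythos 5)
Your proof is correct and follows the same route as the paper: both reduce to the observation that $\vphi^{(1)}$ sends the arrow basis to $(Q_1 \setminus \{\gamma\}) \cup \{\sum_\alpha \lambda_\alpha \alpha\}$, which is a basis precisely when $\lambda_\gamma \neq 0$, and then invoke \cite[Proposition~2.4]{DWZ08}. Your version merely writes out the linear-algebra verification that the paper leaves implicit.
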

\begin{proof}
The set
$(Q_1 \setminus \{\gamma\}) \cup \left\{ \sum \lambda_\alpha \alpha \right\}$
is a basis of the subspace spanned by the arrows if and only if
$\lambda_\gamma \ne 0$. Now use~\cite[Proposition~2.4]{DWZ08}.
\end{proof}

The next lemma relates the value of the endomorphism
$\sub_{\gamma \la \gamma + T}$ applied on a potential to that of
$\sub_{\gamma \la T}$.

\begin{lemma} \label{l:sub:gTcyc}
Let $\gamma$ be an arrow of $Q$, let $T \in \fm$ be an element
parallel to $\gamma$ and let $W$ be a potential on $Q$. Then
there exists $T' \in \fm$ anti-parallel to $\gamma$ such that
the potentials $\sub_{\gamma \la \gamma+T}(W)$ and
$\gamma T' + \sub_{\gamma \la T}(W)$ are cyclically equivalent.
Moreover,
\begin{enumerate}
\renewcommand{\theenumi}{\alph{enumi}}
\item
If $T$ and $W$ belong to $KQ$, then so does $T'$.

\item
If $\gamma$ does not appear in $T$, then it does not appear in
$\sub_{\gamma \la T}(W)$.

\item
If $\delta$ is an arrow of $Q$
that does not appear in $T$ and in $W$, then 
it does not appear in $T'$ and in $\sub_{\gamma \la T}(W)$.
\end{enumerate}
\end{lemma}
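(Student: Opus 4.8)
The plan is to work one cycle at a time and then assemble the contributions, exploiting the continuity of the two substitution endomorphisms. Since $W$ is a (possibly infinite) combination $\sum_c \lambda_c c$ of cycles and both $\sub_{\gamma \la \gamma+T}$ and $\sub_{\gamma \la T}$ are continuous, it suffices to analyze $\sub_{\gamma \la \gamma+T}(c)$ for a single cycle $c$ and then control convergence at the end. First I would write $c = q_0 \gamma q_1 \gamma \cdots \gamma q_k$, isolating the $k$ occurrences of $\gamma$ so that no $q_i$ contains $\gamma$; since the $q_i$ are then fixed by the substitution, we get $\sub_{\gamma \la \gamma+T}(c) = q_0 (\gamma+T) q_1 \cdots (\gamma+T) q_k$.

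Expanding this product over the $2^k$ choices of $\gamma$ or $T$ in each factor, the single term that selects $T$ in every factor is exactly $\sub_{\gamma \la T}(c)$, while every other (``mixed'') monomial retains at least one $\gamma$. Because $T$ is parallel to $\gamma$, each such monomial is again a cycle based at $s(c)$. Using cyclic equivalence I would rotate each mixed monomial so that its first occurrence of $\gamma$ is moved to the front, writing it as $\gamma r$ with $r$ a path from $t(\gamma)$ to $s(\gamma)$, i.e.\ anti-parallel to $\gamma$. Collecting these shows $\sub_{\gamma \la \gamma+T}(c)$ is cyclically equivalent to $\gamma T'_c + \sub_{\gamma \la T}(c)$, where $T'_c$ is the finite sum of the corresponding paths $r$, and setting $T' = \sum_c \lambda_c T'_c$ yields the asserted form.

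The convergence bookkeeping is the point requiring care, and I expect it to be the main (if modest) obstacle. Since $T \in \fm$, replacing some $\gamma$'s by $T$ does not decrease length, so every mixed monomial arising from $c$ has length at least $\ell(c) \geq 2$; after stripping one leading $\gamma$, the corresponding $r$ has length at least $\ell(c)-1 \geq 1$, so each $T'_c \in \fm$ is anti-parallel to $\gamma$. Moreover a path of length $< N$ in $T'$ can only come from a cycle $c$ with $\ell(c) \leq N$, of which there are finitely many in the finite quiver $Q$, each contributing finitely many monomials; hence every path occurs in $T' = \sum_c \lambda_c T'_c$ with a well-defined finite coefficient, so $T' \in \fm$ is a genuine element of $\wh{KQ}$. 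One small convention to fix is which occurrence of $\gamma$ is rotated to the front when several are present; any fixed choice works, since only cyclic equivalence is claimed.

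Finally, the three supplementary assertions are immediate from this construction. Part~(a) holds because when $T, W \in KQ$ all the sums above are finite, so $T' \in KQ$. Part~(b) is the already-recorded fact that $\sub_{\gamma \la T}$ introduces no $\gamma$ when $\gamma$ does not appear in $T$ (the earlier straightforward lemma). For part~(c), an arrow $\delta$ absent from both $T$ and $W$ is absent from every $q_i$ and from $T$, hence from all the mixed monomials, so it appears neither in $T'$ nor, by the same straightforward lemma, in $\sub_{\gamma \la T}(W)$.
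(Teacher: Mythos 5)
Your proof is correct and follows essentially the same route as the paper: isolate the occurrences of $\gamma$ in each cycle, expand the product $(\gamma+T)$-factor by factor, identify the all-$T$ term with $\sub_{\gamma \la T}(c)$, and rotate each mixed term to begin with $\gamma$. The only (cosmetic) differences are that the paper first rotates the cycle into the form $\gamma p_1 \gamma p_2 \cdots \gamma p_n$ before expanding, and that it compresses your convergence bookkeeping into the phrase ``by linearity and continuity,'' which your length estimate $\ell(r) \geq \ell(c)-1$ justifies.
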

\begin{proof}
By linearity and continuity, it is enough to consider the case where
$W$ is a cycle of length at least $2$.
If $\gamma$ does not appear in $W$, then
\[
\sub_{\gamma \la \gamma + T}(W) = W = \sub_{\gamma \la T}(W)
\]
and the claim follows with $T'=0$.
Otherwise, any cycle containing $\gamma$ is cyclically equivalent
to a cycle of the form
$\gamma p_1 \gamma p_2 \dots \gamma p_n$ for some
$n \geq 1$, where each of the paths $p_i$ is anti-parallel to $\gamma$
and does not contain $\gamma$.
Evaluating the substitution, we get
\[
\sub_{\gamma \la \gamma + T}(\gamma p_1 \gamma p_2 \dots \gamma p_n) =
(\gamma + T) p_1 (\gamma + T) p_2 \dots (\gamma + T) p_n .
\]

By expanding the product, the right hand side can be written as a
sum of $2^n$ terms parameterized by $0 \leq e < 2^n$ in such a way
that the term corresponding to $0$ is
\[
T p_1 T p_2 \dots T p_n =
\sub_{\gamma \la T}(\gamma p_1 \gamma p_2 \dots \gamma p_n)
\]
and all the other terms contain at least one occurrence of $\gamma$.
It follows that for any $e \neq 0$, there exists an element $T'_e$
anti-parallel to $\gamma$ such that the corresponding term is cyclically
equivalent to $\gamma T'_e$. Thus
$\sub_{\gamma \la \gamma + T}(\gamma p_1 \gamma p_2 \dots \gamma p_n)$
is cyclically equivalent to
$\gamma T' + \sub_{\gamma \la T}(\gamma p_1 \gamma p_2 \dots \gamma p_n)$
for $T' = \sum_{e \neq 0} T'_e$.

The other claims are clear.
\end{proof}

\subsection{Some right equivalences involving $2$-cycles}

Right equivalences of potentials involving $2$-cycles
play an important role in the theory of quivers with potentials.
The proof of the Splitting
Theorem~\cite[Theorem~4.6]{DWZ08} relies on
Lemma~4.7 of~\cite{DWZ08}, which states that
if $\gamma_i \delta_i$ are disjoint $2$-cycles,
$T_i \in \fm^2$ is parallel to $\gamma_i$ and
$U_i \in \fm^2$ is parallel to $\delta_i$, then for any
potential $S \in \fm^3$ in which none of the arrows $\gamma_i$, $\delta_i$
appear, the potential
\begin{equation} \label{e:2cyc:potU}
\gamma_1 \delta_1 + \dots + \gamma_n \delta_n +
\gamma_1 U_1 + \dots + \gamma_n U_n +
T_1 \delta_1 + \dots + T_n \delta_n +
S
\end{equation}
is right equivalent to
\begin{equation} \label{e:2cyc:S}
\gamma_1 \delta_1 + \dots + \gamma_n \delta_n + S'
\end{equation}
for some $S' \in \fm^3$. The proof of this result is based on a limiting
process, as described in~\cite[Lemma~4.8]{DWZ08}, see also the detailed
discussion in~\cite[p.\ 807]{Labardini09}.

In this section we present, in a somewhat restricted setup,
an analogous result whose proof does not
require a limiting process, giving a new perspective to
Remark~5 of~\cite{Labardini09}.
In our version, the terms $U_i$ are required to vanish and none of the arrows
$\gamma_i, \delta_i$ is allowed to appear in any of the terms $T_j$,
however the potential $S$ may contain the arrows $\gamma_i$
(but not $\delta_i$).
An advantage of our approach is that it yields an explicit form for the
potential $S'$ appearing in~\eqref{e:2cyc:S} in terms of an appropriate
substitution, see Proposition~\ref{p:2cyc} below.

We start by considering a situation involving only one $2$-cycle.
\begin{lemma} \label{l:2cyc:1}
Let $\gamma$ and $\delta$ be anti-parallel arrows of $Q$,
let $T \in \fm$ and let $S$ be a potential on $Q$.
Assume that:
\begin{enumerate}
\renewcommand{\theenumi}{\roman{enumi}}
\item
$T$ is parallel to $\gamma$,

\item
$\gamma$, $\delta$ do not appear in $T$,

\item
$\delta$ does not appear in $S$.
\end{enumerate}

Then:
\begin{enumerate}
\renewcommand{\theenumi}{\alph{enumi}}
\item
The potentials $\gamma \delta + T \delta + S$ and 
$\gamma \delta + \sub_{\gamma \la -T}(S)$ are right equivalent.

\item
$\gamma$ and $\delta$ do not appear in $\sub_{\gamma \la -T}(S)$.
\end{enumerate}
\end{lemma}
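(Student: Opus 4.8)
The plan is to produce the right equivalence as a composition of two substitution automorphisms: one acting on $\gamma$ to kill the term $T\delta$, and a corrective one acting on $\delta$ to remove the cyclic discrepancy that this introduces. First I would set $\vphi = \sub_{\gamma \la \gamma - T}$, which is an automorphism of $\wh{KQ}$ by Lemma~\ref{l:sub:gTauto}\eqref{it:gTauto}, since $\gamma$ does not appear in $T$ by hypothesis~(ii). Applying $\vphi$ to $\gamma\delta + T\delta + S$ and using that $\vphi$ fixes $\delta$ and fixes $T$ (again because $\gamma$ does not appear in $T$), the summand $\gamma\delta$ contributes $\gamma\delta - T\delta$, the summand $T\delta$ is unchanged, and the two occurrences of $T\delta$ cancel, so that $\vphi(\gamma\delta + T\delta + S) = \gamma\delta + \sub_{\gamma \la \gamma - T}(S)$.

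The remaining task is to replace $\sub_{\gamma \la \gamma - T}(S)$ by $\sub_{\gamma \la -T}(S)$ up to cyclic equivalence. Here Lemma~\ref{l:sub:gTcyc}, applied with $T$ replaced by $-T$ and $W$ replaced by $S$, supplies an element $T'$ anti-parallel to $\gamma$ such that $\sub_{\gamma \la \gamma - T}(S)$ is cyclically equivalent to $\gamma T' + \sub_{\gamma \la -T}(S)$; moreover part~(c) of that lemma guarantees, from hypotheses~(ii) and~(iii), that $\delta$ appears neither in $T'$ nor in $\sub_{\gamma \la -T}(S)$. Thus $\vphi(\gamma\delta + T\delta + S)$ is cyclically equivalent to $\gamma\delta + \gamma T' + \sub_{\gamma \la -T}(S)$, and it remains to absorb the spurious cycle $\gamma T'$ into the $2$-cycle $\gamma\delta$. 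Since $T'$ is parallel to $\delta$ and $\delta$ does not appear in $T'$, the map $\psi = \sub_{\delta \la \delta - T'}$ is an automorphism by Lemma~\ref{l:sub:gTauto}\eqref{it:gTauto}. Because $\psi$ fixes $\gamma$, fixes $T'$ (as $\delta$ is absent from $T'$), and fixes $\sub_{\gamma \la -T}(S)$ (as $\delta$ is absent from it), applying $\psi$ sends $\gamma\delta$ to $\gamma\delta - \gamma T'$ while leaving the other two summands untouched, so $\gamma T'$ cancels. Hence $\psi \circ \vphi$ is the desired right equivalence and part~(a) follows, once one records that cyclic equivalence is preserved under the vertex-fixing automorphism $\psi$.

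For part~(b), the elementary substitution lemma stated at the start of the section applies directly: $\gamma$ does not appear in $\sub_{\gamma \la -T}(S)$ because $\gamma$ does not appear in $-T$ (hypothesis~(ii)), and $\delta$ does not appear because it appears neither in $S$ (hypothesis~(iii)) nor in $-T$ (hypothesis~(ii)); this is exactly the non-appearance already exploited above. The step I expect to require the most care is the bookkeeping of which arrows appear where: the argument hinges on feeding the non-appearance hypotheses correctly into Lemmas~\ref{l:sub:gTauto} and~\ref{l:sub:gTcyc}, so that both substitutions are genuine automorphisms and so that the corrective substitution on $\delta$ does not disturb the already-simplified $\gamma$-part. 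The conceptual point that replaces the limiting process of~\cite{DWZ08} is precisely that the discrepancy between $\sub_{\gamma \la \gamma - T}(S)$ and $\sub_{\gamma \la -T}(S)$ is a \emph{single} cycle $\gamma T'$, removable by one further substitution rather than an infinite iteration.
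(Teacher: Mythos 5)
Your proposal is correct and follows essentially the same route as the paper's proof: first apply $\sub_{\gamma\la\gamma-T}$ to cancel $T\delta$, then invoke Lemma~\ref{l:sub:gTcyc} (with $-T$ in place of $T$) to isolate the discrepancy as a single cycle $\gamma T'$, and finally remove it with $\sub_{\delta\la\delta-T'}$. The bookkeeping of non-appearance hypotheses matches the paper's argument exactly.
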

\begin{proof}
Write $W = \gamma \delta + T \delta + S$.
Since $\gamma$ does not appear in $T$, the endomorphism
$\sub_{\gamma \la \gamma - T}$ is an automorphism of $\wh{KQ}$
by Lemma~\ref{l:sub:gTauto}, hence
$W$ and $\sub_{\gamma \la \gamma -T}(W)$ are right equivalent.
Using again our assumption that $\gamma$ does not appear in $T$,
we get
\[
\sub_{\gamma \la \gamma -T}(W) = 
(\gamma - T) \delta + T \delta + \sub_{\gamma \la \gamma - T}(S)
= \gamma \delta + \sub_{\gamma \la \gamma - T}(S) .
\]

By Lemma~\ref{l:sub:gTcyc} applied for the potential $S$, we get
that $\sub_{\gamma \la \gamma - T}(S)$ is cyclically equivalent to
a potential of the form $\gamma T' + \sub_{\gamma \la -T}(S)$,
where $T'$ is an element anti-parallel to $\gamma$, hence
parallel to $\delta$.
It follows that $\sub_{\gamma \la \gamma -T}(W)$ is cyclically
equivalent to the potential
\[
\gamma \delta + \gamma T' + \sub_{\gamma \la -T}(S).
\]

Moreover, our assumptions on $\delta$ imply that it does not appear in $T'$
nor in $\sub_{\gamma \la -T}(S)$, so by applying the endomorphism
$\sub_{\delta \la \delta - T'}$, we get
\[
\begin{split}
\sub_{\delta \la \delta - T'}(\gamma \delta + \gamma T' + 
\sub_{\gamma \la -T}(S)) &=
\gamma (\delta - T') + \gamma T' +
\sub_{\delta \la \delta - T'}(\sub_{\gamma \la -T}(S)) \\
&= \gamma \delta + \sub_{\gamma \la -T}(S)
\end{split}
\]
By Lemma~\ref{l:sub:gTauto},
$\sub_{\delta \la \delta - T'}$ is an automorphism of $\wh{KQ}$,
so $\gamma \delta + \sub_{\gamma \la -T}(S)$ is right equivalent
to $\sub_{\gamma \la \gamma -T}(W)$ and hence to $W$.
\end{proof}

\begin{prop} \label{p:2cyc}
Let $\gamma_1, \dots, \gamma_n, \delta_1, \dots, \delta_n$
be distinct arrows of $Q$, let $T_1, \dots, T_n \in \fm$
and let $S$ be a potential on $Q$.
Assume that:
\begin{enumerate}
\renewcommand{\theenumi}{\roman{enumi}}
\item
For each $1 \leq i \leq n$, 
$T_i$ is parallel to $\gamma_i$ and
$\delta_i$ is anti-parallel to $\gamma_i$.

\item
The arrows $\gamma_1, \dots, \gamma_n, \delta_1, \dots, \delta_n$
do not appear in any of the elements $T_1, \dots, T_n$.

\item
None of the arrows $\delta_1, \dots, \delta_n$ appears in $S$.
\end{enumerate}
Then:
\begin{enumerate}
\renewcommand{\theenumi}{\alph{enumi}}
\item
The potentials
\begin{equation} \label{e:2cyc:potT}
\gamma_1 \delta_1 + \gamma_2 \delta_2 + \dots + \gamma_n \delta_n
+ T_1 \delta_1 + \dots + T_n \delta_n + S
\end{equation}
and
\[
\gamma_1 \delta_1 + \gamma_2 \delta_2 + \dots + \gamma_n \delta_n + 
\sub_{\gamma_1 \la -T_1, \dots, \gamma_n \la -T_n}(S)
\]
are right equivalent.

\item
None of the arrows $\gamma_1, \dots, \gamma_n, \delta_1, \dots, \delta_n$
appears in
$\sub_{\gamma_1 \la -T_1, \dots, \gamma_n \la -T_n}(S)$.
\end{enumerate}
\end{prop}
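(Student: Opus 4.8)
The plan is to argue by induction on $n$, peeling off one $2$-cycle at a time and reducing to the single-$2$-cycle case already settled in Lemma~\ref{l:2cyc:1}. The base case $n=1$ is precisely that lemma, so I would assume $n \geq 2$ and that the statement holds for $n-1$.

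First I would isolate the last $2$-cycle by writing the potential in~\eqref{e:2cyc:potT} as $\gamma_n \delta_n + T_n \delta_n + S_0$, where $S_0 = \sum_{i<n} \gamma_i \delta_i + \sum_{i<n} T_i \delta_i + S$ absorbs the remaining $2$-cycles. The hypotheses of Lemma~\ref{l:2cyc:1} are readily checked for the triple $(\gamma_n, \delta_n, T_n)$ and the potential $S_0$: by (i) the element $T_n$ is parallel to $\gamma_n$ and $\delta_n$ is anti-parallel to $\gamma_n$; by (ii) neither $\gamma_n$ nor $\delta_n$ appears in $T_n$; and since the arrows are distinct while $\delta_n$ occurs in none of $T_1, \dots, T_n$ (by (ii)) nor in $S$ (by (iii)), the arrow $\delta_n$ does not appear in $S_0$. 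Lemma~\ref{l:2cyc:1} then yields that the potential is right equivalent to $\gamma_n \delta_n + \sub_{\gamma_n \la -T_n}(S_0)$, with $\gamma_n, \delta_n$ absent from the substituted part.

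Next I would compute $\sub_{\gamma_n \la -T_n}(S_0)$. Because $\gamma_n$ is distinct from every $\gamma_i, \delta_i$ with $i < n$ and, by (ii), does not appear in any $T_i$, the substitution fixes each term $\gamma_i \delta_i$ and $T_i \delta_i$ with $i<n$, so that $\sub_{\gamma_n \la -T_n}(S_0) = \sum_{i<n} \gamma_i \delta_i + \sum_{i<n} T_i \delta_i + \sub_{\gamma_n \la -T_n}(S)$. Collecting terms, the potential becomes right equivalent to $\sum_{i<n} \gamma_i \delta_i + \sum_{i<n} T_i \delta_i + S'$, where $S' = \gamma_n \delta_n + \sub_{\gamma_n \la -T_n}(S)$. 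This is again of the form~\eqref{e:2cyc:potT}, now for the $n-1$ arrows $\gamma_1, \dots, \gamma_{n-1}, \delta_1, \dots, \delta_{n-1}$ and the potential $S'$. The point requiring care here is verifying hypothesis (iii) for $S'$: using the basic property of substitutions that an arrow absent from both $S$ and $-T_n$ stays absent from $\sub_{\gamma_n \la -T_n}(S)$, together with (ii) and (iii), each $\delta_i$ with $i<n$ is absent from $S'$; hypotheses (i) and (ii) for the reduced data are inherited directly.

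Applying the induction hypothesis and combining the substitutions is the final step. Induction gives right equivalence to $\sum_{i<n} \gamma_i \delta_i + \sub_{\gamma_1 \la -T_1, \dots, \gamma_{n-1} \la -T_{n-1}}(S')$. Since the first $n-1$ substitutions fix $\gamma_n \delta_n$, and since Corollary~\ref{c:sub:prod}---whose hypotheses hold because, by (ii), each $\gamma_i$ appears in none of the $T_j$---identifies the composite $\sub_{\gamma_1 \la -T_1, \dots, \gamma_{n-1} \la -T_{n-1}} \circ \sub_{\gamma_n \la -T_n}$ with $\sub_{\gamma_1 \la -T_1, \dots, \gamma_n \la -T_n}$, I obtain right equivalence to $\sum_{i=1}^n \gamma_i \delta_i + \sub_{\gamma_1 \la -T_1, \dots, \gamma_n \la -T_n}(S)$, proving (a). For (b) I would argue directly on the joint substitution: each $\gamma_j$ is eliminated by its own substitution and, by (ii), is never reintroduced through any $-T_k$, while each $\delta_j$ is absent from $S$ (by (iii)) and from every $T_k$ (by (ii)), so neither can survive in the image. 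The only genuine obstacle throughout is the bookkeeping of which arrows appear where; the algebraic content reduces entirely to Lemma~\ref{l:2cyc:1} and the commutation of substitutions furnished by Corollary~\ref{c:sub:prod}.
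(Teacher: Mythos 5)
Your proposal is correct and follows essentially the same route as the paper's own proof: induction on $n$ with Lemma~\ref{l:2cyc:1} as the base case, the same decomposition isolating the $n$-th $2$-cycle, the same verification that $\delta_n$ is absent from the absorbed remainder and that the $\delta_i$ ($i<n$) are absent from the new potential $\gamma_n\delta_n + \sub_{\gamma_n \la -T_n}(S)$, and the same appeal to Corollary~\ref{c:sub:prod} to merge the substitutions. No gaps.
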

\begin{proof}
We prove the claim by induction on $n$, the case $n=1$ being
Lemma~\ref{l:2cyc:1}. We assume the claim for $n-1$ and prove it
for $n$. Write
\[
W = \gamma_1 \delta_1 + \gamma_2 \delta_2 + \dots + \gamma_n \delta_n
+ T_1 \delta_1 + \dots + T_n \delta_n + S
= \gamma_n \delta_n + T_n \delta_n + S'
\]
where
$S' = \sum_{i=1}^{n-1} \gamma_i \delta_i + \sum_{i=1}^{n-1} T_i \delta_i + S$.

By assumption, the arrows $\gamma_n$, $\delta_n$ do not appear in $T_n$ and
the arrow
$\delta_n$ does not appear in any of the terms of $T_1, \dots T_{n-1}, S$,
hence it does not appear in $S'$. Applying Lemma~\ref{l:2cyc:1}
for $\gamma_n$, $\delta_n$, $T_n$ and $S'$, we get that $W$ is right
equivalent to the potential
\[
\gamma_n \delta_n + \sub_{\gamma_n \la -T_n}(S').
\]

Since $\gamma_n$ does not appear in any of the terms $T_1, \dots, T_{n-1}$
occurring in $S'$, we get
\begin{align}
\nonumber
\gamma_n \delta_n + \sub_{\gamma_n \la -T_n}(S') &=
\gamma_n \delta_n + \sum_{i=1}^{n-1} \gamma_i \delta_i
+ \sum_{i=1}^{n-1} T_i \delta_i + \sub_{\gamma_n \la -T_n}(S) \\
\label{e:cdS''}
&= \sum_{i=1}^{n-1} \gamma_i \delta_i
+ \sum_{i=1}^{n-1} T_i \delta_i + S''
\end{align}
where $S'' = \gamma_n \delta_n + \sub_{\gamma_n \la -T_n}(S)$.

By assumption, none of the arrows $\delta_1, \dots, \delta_{n-1}$ appear
in $T_n$ or in $S$, hence they do not appear in $S''$, and by the induction
hypothesis the potential in~\eqref{e:cdS''} is right equivalent to
\[
\begin{split}
&\gamma_1 \delta_1 + \dots + \gamma_{n-1} \delta_{n-1} +
\sub_{\gamma_1 \la -T_1, \dots, \gamma_{n-1} \la -T_{n-1}}(S'')
= \\
&\gamma_1 \delta_1 + \dots + \gamma_{n-1} \delta_{n-1} + \gamma_n \delta_n + 
\sub_{\gamma_1 \la -T_1, \dots, \gamma_{n-1} \la -T_{n-1}}
(\sub_{\gamma_n \la -T_n}(S)) = \\
&\gamma_1 \delta_1 + \dots + \gamma_{n-1} \delta_{n-1} + \gamma_n \delta_n + 
\sub_{\gamma_1 \la -T_1, \dots, \gamma_{n-1} \la -T_{n-1},
\gamma_n \la -T_n}(S)
\end{split}
\]
where the last equality is a consequence of Corollary~\ref{c:sub:prod}.
\end{proof}

\subsection{Pre-mutation}

In this section we recall the notions of pre-mutation of quivers and
quivers with potentials and establish terminology and notations that
will be used in the sequel.

Let $Q$ be a quiver and fix a vertex $k$ of $Q$.
Assume that
$Q$ has \emph{no loops at $k$} and \emph{no $2$-cycles passing through $k$}.
For a pair of vertices $i, j$, denote by
$\Gamma_{ij}$ the set of arrows from $j$ to $i$ and let
\[
C_{ij} = \left\{ (\alpha, \beta) \,:\, i \xrightarrow{\alpha} k,
k \xrightarrow{\beta} j \right\}.
\]
One could think of $C_{ij}$ as the set of paths of length $2$ from $i$
to $j$ passing through $k$. These sets obviously depend on the vertex
$k$, but since it is fixed throughout, we omit it in order to simplify
the notation. Write $C = \bigcup_{i,j \ne k} C_{ij}$.

\begin{defn} \label{def:premut}
The \emph{pre-mutation} $\wt{\mu}_k(Q)$ of the quiver $Q$ at the vertex $k$
is obtained from $Q$ by performing the following steps:
\begin{enumerate}
\renewcommand{\labelenumi}{\theenumi.}
\item
For any $i,j \neq k$ and each pair $(\alpha,\beta) \in C_{ij}$,
add an arrow $i \xrightarrow{[\alpha \beta]} j$;

\item
Replace each arrow $i \xrightarrow{\alpha} k$ by an arrow
$k \xrightarrow{\alpha^*} i$ in the opposite direction;
similarly, replace each arrow $k \xrightarrow{\beta} j$ by an arrow
$j \xrightarrow{\beta^*} k$.
\end{enumerate}

When $Q$ has no loops and $2$-cycles,
the \emph{mutation} $\mu_k(Q)$ is obtained from $\wt{\mu}_k(Q)$
by removing a maximal set of disjoint $2$-cycles.
\end{defn}

\begin{notat}
Let $W$ be a potential on $Q$. By rotating its terms we may and will assume
that none of its terms starts at $k$.
We denote by $[W]$ the potential on $\wt{\mu}_k(Q)$ obtained from $W$ by
replacing each occurrence of a path $\alpha \beta$ with
$(\alpha,\beta) \in C$ by the corresponding arrow $[\alpha \beta]$.
\end{notat}

\begin{defn}[\protect{\cite[Definition~5.5]{DWZ08}},
\protect{\cite[Definition~11]{Labardini09}}]
The \emph{pre-mutation} $\wt{\mu}_k(Q,W)$ of the quiver with potential
$(Q,W)$ at the vertex $k$ is defined as
$(\wt{\mu}_k(Q), \wt{W})$, where
\[
\wt{W} = \sum_{(\alpha,\beta) \in C} [\alpha \beta] \beta^* \alpha^* + [W] .
\]
The \emph{mutation} $\mu_k(Q,W)$ is the reduced part of $\wt{\mu}_k(Q,W)$.
\end{defn}

\subsection{Matchings and their duals}
\label{ssec:match}

The process of removal of $2$-cycles when forming the mutation
involves some choice. Let us better understand this choice;
observe that any pair
$(\alpha,\beta) \in C_{ij}$ together with an arrow $\gamma \in \Gamma_{ij}$
give rise to a $2$-cycle $[\alpha \beta] \gamma$ in the pre-mutation
$\wt{\mu}_k(Q)$, and any $2$-cycle arises in this way. Thus, a
choice of a maximal set of $2$-cycles corresponds to a choice of
bijections $C'_{ij} \xrightarrow{\sim} \Gamma'_{ij}$ between
subsets $C'_{ij} \subseteq C_{ij}$ and $\Gamma'_{ij} \subseteq \Gamma_{ij}$
having the maximal possible cardinality
$|C'_{ij}|=|\Gamma'_{ij}| = \min\{|C_{ij}|, |\Gamma_{ij}|\}$.

This motivates us to introduce the notion of \emph{matching}, to be discussed in
this section.
We keep the assumptions on the quiver $Q$ and the vertex $k$;
no loops at $k$ and no $2$-cycles passing through $k$.

\begin{defn} \label{def:matching}
A \emph{matching} $\rho$ consists of the following data,
for each pair of vertices $i,j \neq k$:
\begin{itemize}
\item
a subset $C'_{ij}$ of $C_{ij}$ and
a subset $\Gamma'_{ij}$ of $\Gamma_{ij}$;

\item
a bijection $C'_{ij} \xrightarrow{\sim} \Gamma'_{ij}$.
\end{itemize}

These bijections can be merged to a bijection $C' \xrightarrow{\sim} \Gamma'$
between the subset $C' = \bigcup_{i,j \ne k} C'_{ij}$ of the disjoint union
$C = \bigcup_{i,j \ne k} C_{ij}$ and the subset
$\Gamma' = \bigcup_{i,j \ne k} \Gamma'_{ij}$ of the disjoint union
$\Gamma = \bigcup_{i,j \ne k} \Gamma_{ij}$, and we denote
$\rho = (C', \Gamma', C' \xrightarrow{\sim} \Gamma')$.
\end{defn}

Given a matching, we denote,
for a pair $(\alpha, \beta) \in C'$, the corresponding arrow
in $\Gamma'$ by $\gamma_{\alpha,\beta}$. Similarly, for an arrow
$\gamma \in \Gamma'$, denote the corresponding pair in $C'$ by
$(\alpha_\gamma, \beta_\gamma)$. Thus any pair $(\alpha,\beta) \in C'$
gives rise to the $3$-cycle $\alpha \beta \gamma_{\alpha,\beta}$
in $Q$, any arrow $\gamma \in \Gamma'$ belongs to the $3$-cycle
$\alpha_\gamma \beta_\gamma \gamma$ and these two sets of $3$-cycles
coincide.

\begin{defn}
The potential \emph{encoding} a matching $\rho$ is the following sum of
$3$-cycles
\[
W_\rho = \sum_{(\alpha,\beta) \in C'} \alpha \beta \gamma_{\alpha,\beta}
= \sum_{\gamma \in \Gamma'} \alpha_\gamma \beta_\gamma \gamma .
\]
\end{defn}

\begin{remark}
The matching $\rho$ can be recovered from the potential $W_\rho$ encoding
it in the following way;
if $\sum_{r=1}^n \alpha_r \beta_r \gamma_r$
is a sum of $3$-cycles in $Q$ passing through the vertex $k$ with 
$(\alpha_r, \beta_r) \in C$ and $\gamma_r \in \Gamma$
such that the arrows $\gamma_1, \dots, \gamma_n$ are pairwise distinct
and the elements $(\alpha_1,\beta_1), \dots, (\alpha_n, \beta_n)$ are
pairwise distinct,
then 
$C' = \left\{ (\alpha_1,\beta_1), \dots, (\alpha_n, \beta_n) \right\}$
and $\Gamma' = \{ \gamma_1, \dots, \gamma_n\}$, with the bijection
sending each $(\alpha_r,\beta_r)$ to the corresponding $\gamma_r$
for $1 \leq r \leq n$.
\end{remark}

\begin{example} \label{ex:matching}
As a running example to illustrate the material of this section,
let $Q$ be any quiver containing the leftmost quiver
in Figure~\ref{fig:Qmatch}
as full subquiver, so that in particular
no arrows other than $\alpha_1, \alpha_2, \beta_1, \beta_2$
start or end at the vertex~$k$.
Mutations of potentials on such quivers arising from triangulations of
surfaces were considered in~\cite{GLM20,Labardini09}
and~\cite[\S9.4]{Ladkani17}.

The sets $C$ and $\Gamma$ are given by
$C = \{(\alpha_1, \beta_1), (\alpha_1, \beta_2), (\alpha_2, \beta_1),
(\alpha_2, \beta_2)\}$, 
$\Gamma = \{ \gamma_1, \gamma_2 \}$,
so each arrow $\gamma_t$ can be matched to the pair $(\alpha_t, \beta_t)$.
Hence there are four possible matchings $\rho_{\varnothing}, \rho_1,
\rho_2, \rho_{12}$ which are encoded by the potentials 
\begin{align*}
W_{\rho_{\varnothing}} = 0 &,&
W_{\rho_1} = \alpha_1 \beta_1 \gamma_1 &,&
W_{\rho_2} =  \alpha_2 \beta_2 \gamma_2 &,&
W_{\rho_{12}} = \alpha_1 \beta_1 \gamma_1 + \alpha_2 \beta_2 \gamma_2.
\end{align*}
\end{example}

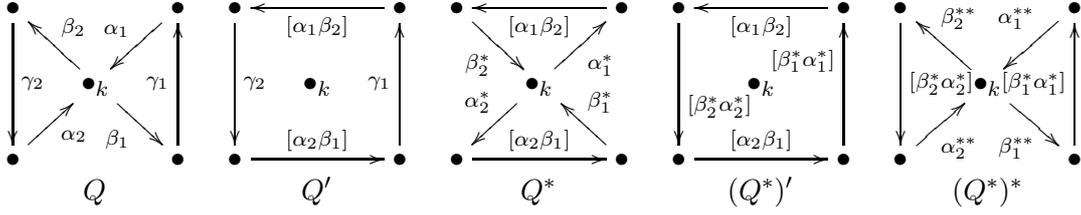
\begin{figure}
\[
\begin{array}{ccccc}
\xymatrix@=1.36pc{
{\bullet} \ar[dd]^{\gamma_2} & & {\bullet} \ar[dl]_{\alpha_1} \\
& {\bullet_k} \ar[dr]_{\beta_1} \ar[ul]_{\beta_2} \\
{\bullet} \ar[ur]_{\alpha_2} & & {\bullet} \ar[uu]^{\gamma_1}
}
\hspace{-0.06em} &
\xymatrix@=1.36pc{
{\bullet} \ar[dd]^{\gamma_2} && {\bullet} \ar[ll]^{[\alpha_1 \beta_2]} \\
& {\bullet_k} \\
{\bullet} \ar[rr]^{[\alpha_2 \beta_1]} && {\bullet} \ar[uu]^{\gamma_1}
}
\hspace{-0.06em} &
\xymatrix@=1.36pc{
{\bullet} \ar[dr]_{\beta^*_2} && {\bullet} \ar[ll]^{[\alpha_1 \beta_2]} \\
& {\bullet_k} \ar[ur]_{\alpha^*_1} \ar[dl]_{\alpha^*_2} \\
{\bullet} \ar[rr]^{[\alpha_2 \beta_1]} && {\bullet} \ar[ul]_{\beta^*_1}
}
\hspace{-0.06em} &
\xymatrix@=1.36pc{
{\bullet} \ar[dd]^(0.65){[\beta^*_2 \alpha^*_2]}
&& {\bullet} \ar[ll]^{[\alpha_1 \beta_2]} \\
& {\bullet_k} \\
{\bullet} \ar[rr]^{[\alpha_2 \beta_1]}
&& {\bullet} \ar[uu]^(0.65){[\beta^*_1 \alpha^*_1]}
}
\hspace{-0.06em} &
\xymatrix@R=1.36pc@C=1.5pc{
{\bullet} \ar[dd]^{[\beta^*_2 \alpha^*_2]}
&& {\bullet} \ar[dl]_(0.4){\alpha_1^{**}} \\
& {\bullet_k} \ar[dr]_(0.6){\beta_1^{**}} \ar[ul]_(0.6){\beta_2^{**}} \\
{\bullet} \ar[ur]_(0.4){\alpha_2^{**}}
&& {\bullet} \ar[uu]^{[\beta^*_1 \alpha^*_1]}
}
\\
Q & Q' & Q^* & (Q^*)' & (Q^*)^*
\end{array}
\]
\caption{Example of quivers built from a matching;
parts of the original quiver $Q$ and the quivers $Q', Q^*, (Q^*)', (Q^*)^*$
around the vertex $k$, computed with respect to the matching $\rho_{12}$ of
Example~\ref{ex:matching}.}
\label{fig:Qmatch}
\end{figure}

Given a matching $\rho$ on $Q$, we introduce two quivers built
from the data of $\rho$; the quiver $Q_\rho$ of the matching and
the dual quiver $Q^*$.

\begin{defn} \label{def:Qrho}
The \emph{quiver of the matching $\rho$}, denoted $Q_\rho$,
is the quiver whose set of vertices is that of $Q$ and its arrows
are of two kinds:
\begin{enumerate}
\renewcommand{\labelenumi}{\theenumi.}
\item
The arrows in $\Gamma'$;

\item
An arrow $i \xrightarrow{[\alpha \beta]} j$ for any $i,j \neq k$ and
$(\alpha,\beta) \in C'_{ij}$.
\end{enumerate}
\end{defn}

\begin{defn} \label{def:Qdual}
Let $\rho = (C', \Gamma', C' \xrightarrow{\sim} \Gamma')$ be a matching.
The \emph{dual quiver} $Q^*$ of $Q$ with respect to $\rho$ is the
quiver obtained from $Q$ by performing the following three steps:
\begin{enumerate}
\renewcommand{\labelenumi}{\theenumi.}
\item
For any $i,j \neq k$ and each pair $(\alpha,\beta) \in C_{ij} \setminus C'_{ij}$,
add an arrow $i \xrightarrow{[\alpha \beta]} j$;

\item
Replace each arrow $i \xrightarrow{\alpha} k$ by an arrow
$k \xrightarrow{\alpha^*} i$ in the opposite direction;
similarly, replace each arrow $k \xrightarrow{\beta} j$ by an arrow
$j \xrightarrow{\beta^*} k$;

\item
Remove all the arrows in $\Gamma'$.
\end{enumerate}
\end{defn}

\begin{remark}
The quiver of the matching and the dual quiver depend only on the subsets
$C'$ and $\Gamma'$ and not on the particular bijection
$C' \xrightarrow{\sim} \Gamma'$.
\end{remark}

\begin{lemma} \label{l:Qrho}
We have:
\begin{enumerate}
\renewcommand{\theenumi}{\alph{enumi}}
\item \label{it:Q*Qrho}
$\wt{\mu}_k(Q) = Q^* \oplus Q_\rho$

\item
$(Q_\rho, [W_\rho])$ is a trivial quiver with potential.
\end{enumerate}
\end{lemma}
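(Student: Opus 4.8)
The plan is to prove both parts by direct arrow-bookkeeping, since the two quivers on the right-hand side of part~(a) are built by selectively distributing the arrows produced by the pre-mutation.

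For part~(a) I would first note that all three quivers $\wt{\mu}_k(Q)$, $Q^*$ and $Q_\rho$ share the vertex set $Q_0$, so it only remains to match up arrows. I would enumerate the arrows of $\wt{\mu}_k(Q)$ into three groups: the arrows of $Q$ not incident to $k$ (which are exactly the elements of $\Gamma = \bigcup_{i,j\neq k}\Gamma_{ij}$), the reversed arrows $\alpha^*$ and $\beta^*$ for arrows $\alpha$ ending and $\beta$ starting at $k$, and the arrows $[\alpha\beta]$ for $(\alpha,\beta)\in C$. The key observation is that the defining steps of $Q^*$ and of $Q_\rho$ split these groups consistently with the decompositions $\Gamma = \Gamma' \sqcup (\Gamma\setminus\Gamma')$ and $C = C' \sqcup (C\setminus C')$: the arrows $\Gamma\setminus\Gamma'$, all reversed arrows $\alpha^*,\beta^*$, and the arrows $[\alpha\beta]$ with $(\alpha,\beta)\in C\setminus C'$ belong to $Q^*$, while the arrows $\Gamma'$ and the arrows $[\alpha\beta]$ with $(\alpha,\beta)\in C'$ belong to $Q_\rho$. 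Taking the disjoint union reconstitutes precisely the arrow list of $\wt{\mu}_k(Q)$, which is the assertion. The one point requiring care is that $\Gamma$ exhausts all arrows of $Q$ between vertices different from $k$, so that deleting $\Gamma'$ when forming $Q^*$ and re-introducing it in $Q_\rho$ exactly recovers them.

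For part~(b) I would compute $[W_\rho]$ explicitly. Each $3$-cycle $\alpha\beta\gamma_{\alpha,\beta}$ appearing in $W_\rho$ starts at the vertex $s(\alpha)\neq k$, so no rotation is needed and the operation $[\,\cdot\,]$ merely replaces the length-two subpath $\alpha\beta$, where $(\alpha,\beta)\in C'\subseteq C$, by the arrow $[\alpha\beta]$; hence
\[
[W_\rho] = \sum_{(\alpha,\beta)\in C'} [\alpha\beta]\,\gamma_{\alpha,\beta}.
\]
Writing $\alpha\colon i\to k$, $\beta\colon k\to j$ and $\gamma_{\alpha,\beta}\in\Gamma'_{ij}$ (so that $\gamma_{\alpha,\beta}\colon j\to i$), one checks that $[\alpha\beta]\colon i\to j$, and therefore each $[\alpha\beta]\,\gamma_{\alpha,\beta}$ is a $2$-cycle at $i$. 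Since the bijection $C'\xrightarrow{\sim}\Gamma'$ guarantees that the $|C'|$ arrows $[\alpha\beta]$ and the $|\Gamma'|$ arrows $\gamma_{\alpha,\beta}$ are pairwise distinct and together constitute all $2|C'|$ arrows of $Q_\rho$, the potential $[W_\rho]$ is literally a sum of disjoint $2$-cycles. Taking the change of arrows $\varphi=\id$ then exhibits $(Q_\rho,[W_\rho])$ in the form required by the definition of a trivial quiver with potential.

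There is no serious obstacle here: the whole argument is careful arrow-accounting together with the identification of $[W_\rho]$ as a sum of disjoint $2$-cycles. The most error-prone steps are confirming that the reversal and deletion operations defining $Q^*$ and $Q_\rho$ partition the arrows of $\wt{\mu}_k(Q)$ with neither overlap nor omission, and verifying the endpoint computation showing that each $[\alpha\beta]\,\gamma_{\alpha,\beta}$ closes up into a $2$-cycle.
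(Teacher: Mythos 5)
Your proof is correct and follows essentially the same route as the paper's: part (a) is the observation that the defining steps of $Q^*$ and $Q_\rho$ partition the arrows of $\wt{\mu}_k(Q)$ according to $\Gamma=\Gamma'\sqcup(\Gamma\setminus\Gamma')$ and $C=C'\sqcup(C\setminus C')$, and part (b) identifies $[W_\rho]=\sum_{(\alpha,\beta)\in C'}[\alpha\beta]\gamma_{\alpha,\beta}$ as a sum of pairwise disjoint $2$-cycles exhausting the arrows of $Q_\rho$. The paper states this more tersely, but the content is identical; your extra bookkeeping (endpoint checks, distinctness via the bijection $C'\xrightarrow{\sim}\Gamma'$) just makes explicit what the paper leaves to the reader.
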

\begin{proof}
Comparing Definitions~\ref{def:premut}, \ref{def:Qrho} and \ref{def:Qdual},
we see that $Q^*$ and $Q_\rho$ are subquivers of $\wt{\mu}_k(Q)$ with
disjoint sets of arrows whose union equals the set of arrows of
$\wt{\mu}_k(Q)$. This proves the first claim.
Now $[W_\rho] = \sum_{(\alpha,\beta) \in C'}
[\alpha \beta] \gamma_{\alpha,\beta}$ is a sum of pairwise disjoint
$2$-cycles in $Q_\rho$, proving the second claim.
\end{proof}

\begin{example}
There is always the \emph{empty matching} $\rho_\varnothing$ whose
subsets $C'$ and $\Gamma'$ are the empty sets.
For this matching, $Q_{\rho_\varnothing}$ is a quiver without arrows
and $Q^*=\wt{\mu}_k(Q)$.
\end{example}

Let $i,j \neq k$. Denote by $C^*_{ji}$ the set of all paths $j \to k \to i$
in $Q^*$ and by $\Gamma^*_{ji}$ the set of the arrows from $i$ to $j$
in $Q^*$. Then
$C^*_{ji} = \left\{ (\beta^*, \alpha^*) \,:\, (\alpha, \beta) \in C_{ij}
\right\}$
and $\Gamma^*_{ji}$ contains the set
$\bigl\{ [\alpha \beta] \,:\, (\alpha, \beta) \in C_{ij} \setminus C'_{ij}
\bigr\}$.
Denote by $C^*$ and $\Gamma^*$ the disjoint unions
$\bigcup_{i,j \neq k} C^*_{ji}$ and $\bigcup_{i,j \neq k} \Gamma^*_{ji}$,
respectively.

\begin{defn}
Let $\rho = (C', \Gamma', C' \xrightarrow{\sim} \Gamma')$ be a matching.
The \emph{dual matching} 
$\rho^* = ((C^*)', (\Gamma^*)', (C^*)' \xrightarrow{\sim} (\Gamma^*)')$
on $Q^*$ is defined by
\begin{align*}
(C^*)' &= \left\{ (\beta^*, \alpha^*) \,:\,
(\alpha, \beta) \in C \setminus C' \right\}\\
(\Gamma^*)' &= \left\{ [\alpha \beta] \,:\,
(\alpha, \beta) \in C \setminus C' \right\}
\end{align*}
with the bijection
sending each pair $(\beta^*, \alpha^*)$ to the arrow $[\alpha \beta]$.
\end{defn}

\begin{example}
For the matching $\rho_{12}$ of Example~\ref{ex:matching},
the parts around the vertex $k$ of the dual quiver $Q^*$ and its dual
$(Q^*)^*$ with respect to $\rho^*_{12}$ are shown in Figure~\ref{fig:Qmatch}.
\end{example}

\begin{lemma} \label{l:rho**}
The quivers $Q$ and $(Q^*)^*$ are isomorphic and under this isomorphism
the matching $\rho$ coincides with $(\rho^*)^*$.
\end{lemma}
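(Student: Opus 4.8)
The plan is to unwind Definition~\ref{def:Qdual} twice and read off an explicit bijection between the arrows of $(Q^*)^*$ and those of $Q$. First I would list the arrows of $Q^*$ by type: the arrows $\beta^*\colon j \to k$ and $\alpha^*\colon k \to i$ incident to $k$ (one for each original $\beta\colon k\to j$ and $\alpha\colon i\to k$), the added arrows $[\alpha\beta]$ for $(\alpha,\beta)\in C\setminus C'$, and the arrows of $\Gamma\setminus\Gamma'$ not incident to $k$ (those of $\Gamma'$ having been deleted in step~3). Applying Definition~\ref{def:Qdual} a second time with respect to the dual matching $\rho^*$, I would track each type. The decisive observation is the double complement: since $(\Gamma^*)'=\{[\alpha\beta]:(\alpha,\beta)\in C\setminus C'\}$, step~3 of the second dualization deletes precisely the arrows $[\alpha\beta]$ produced by the first one, restoring $\Gamma\setminus\Gamma'$; and step~1 adds an arrow $[\beta^*\alpha^*]$ for each pair of $C^*\setminus(C^*)'$, where $C^*\setminus(C^*)'$ corresponds exactly to $C'$ again.

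Next I would define the isomorphism $(Q^*)^*\xrightarrow{\sim} Q$ to be the identity on vertices and, on arrows, to send $\alpha^{**}\mapsto\alpha$, $\beta^{**}\mapsto\beta$, $[\beta^*\alpha^*]\mapsto\gamma_{\alpha,\beta}$ for $(\alpha,\beta)\in C'$, and to fix each arrow of $\Gamma\setminus\Gamma'$. The point to check is that this assignment preserves sources and targets, which reduces to three direction verifications: $\alpha^{**}$ runs $i\to k$ just as $\alpha$ does, $\beta^{**}$ runs $k\to j$ just as $\beta$ does, and the arrow $[\beta^*\alpha^*]$ replacing the path $j\xrightarrow{\beta^*}k\xrightarrow{\alpha^*}i$ runs $j\to i$, which is exactly the direction of $\gamma_{\alpha,\beta}\in\Gamma_{ij}$. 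Since the four families just listed exhaust and partition the arrow sets of both quivers, the map is a bijection, hence a quiver isomorphism.

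Finally I would transport the matching. Unwinding the definition of the dual matching applied to $\rho^*$, the selected pairs of $(\rho^*)^*$ are $(\alpha^{**},\beta^{**})$ for $(\alpha,\beta)\in C'$, matched to the arrows $[\beta^*\alpha^*]$; under the isomorphism above these become the pairs $(\alpha,\beta)\in C'$ matched to $\gamma_{\alpha,\beta}$, which is precisely the bijection $C'\xrightarrow{\sim}\Gamma'$ defining $\rho$. Hence $((C^*)^*)'$, $((\Gamma^*)^*)'$ and the matching bijection are all carried to their counterparts in $\rho$.

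I expect the only real difficulty to be bookkeeping rather than mathematics: one must keep the opposing source/target conventions straight (recall that $C_{ij}$ records paths $i\to k\to j$ while $\Gamma_{ij}$ records arrows $j\to i$) and propagate the complementation $C\mapsto C\setminus C'$ through two layers so that $C\setminus(C\setminus C')=C'$ returns the originally selected set. No analytic or limiting input is involved; the single genuinely substantive check is that the new arrow $[\beta^*\alpha^*]$ and the original arrow $\gamma_{\alpha,\beta}$ have the same endpoints.
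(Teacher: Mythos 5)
Your proposal is correct and follows essentially the same route as the paper: unwind Definition~\ref{def:Qdual} twice, observe that the added arrows $[\alpha\beta]$ for $(\alpha,\beta)\in C\setminus C'$ are exactly $(\Gamma^*)'$ and hence get removed again, and identify $[\beta^*\alpha^*]$ with $\gamma_{\alpha,\beta}$ (and $\alpha^{**},\beta^{**}$ with $\alpha,\beta$), checking endpoints; the paper states the isomorphism in the direction $Q\to(Q^*)^*$ but this is only a cosmetic difference. The transport of the matching via $((\Gamma^*)^*)'=\{[\beta^*\alpha^*]:(\alpha,\beta)\in C'\}$ also matches the paper's final observation.
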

\begin{proof}
Going over Definition~\ref{def:Qdual} twice, the quiver $(Q^*)^*$ is
obtained from $Q$ as follows:
\begin{itemize}
\item
Replace any arrow $\alpha$ ending at $k$ by $\alpha^*$ and then by
$(\alpha^*)^*$. \\
Do the same for any arrow $\beta$ starting at $k$.

\item
Add the arrows $[\alpha \beta]$ for $(\alpha, \beta) \in C \setminus C'$.

\item
Remove the arrows of $\Gamma'$.

\item
Add the arrows $[\beta^* \alpha^*]$ for
$(\alpha, \beta) \in C'$.

\item
Remove the arrows $[\alpha \beta]$ for
$(\alpha, \beta) \in C \setminus C'$.
\end{itemize}

We see that the processes of adding arrows and then removing arrows are
dual to each other and $Q \simeq (Q^*)^*$ by the isomorphism $\vphi$ sending
\begin{align*}
\alpha \mapsto (\alpha^*)^* &,&
\beta \mapsto (\beta^*)^* &,&
\gamma \mapsto [\beta^*_\gamma \alpha^*_\gamma]
\quad (\gamma \in \Gamma'),
\end{align*}
fixing all the vertices and the other arrows.

Finally, observe that $((\Gamma^*)^*)'$ is the set of arrows
$[\beta^* \alpha^*]$ with $(\alpha, \beta) \in C'$, which gets
identified with $\Gamma'$ by $\vphi$.
\end{proof}

\begin{defn}
Let $\rho$ be a matching.
An element $S$ in $\wh{KQ}$ is \emph{$\rho$-admissible} if:
\begin{enumerate}
\renewcommand{\theenumi}{\roman{enumi}}
\item
None of the terms of $S$ starts or ends at $k$,

\item
For any pair $(\alpha, \beta) \in C'$, the path $\alpha \beta$ does
not appear in any term of $S$.
\end{enumerate}
\end{defn}

\begin{remark}
The property of being $\rho$-admissible depends only on the subset $C'$
and not on $\Gamma'$ or the particular bijection.
\end{remark}

\begin{lemma} \label{l:adm:Q'}
The $\rho$-admissible elements form  a subalgebra of $(1-e_k)\wh{KQ}(1-e_k)$
which is isomorphic to the complete path algebra of the quiver $Q'$
obtained from $Q$ by:
\begin{enumerate}
\renewcommand{\labelenumi}{\theenumi.}
\item
Removing all the arrows starting or ending at $k$; and

\item
Adding an arrow $i \xrightarrow{[\alpha \beta]} j$ 
for any pair $(\alpha, \beta) \in C_{ij} \setminus C'_{ij}$.
\end{enumerate}
\end{lemma}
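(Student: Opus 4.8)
The plan is to produce an explicit continuous algebra homomorphism $\Phi\colon\wh{KQ'}\to\wh{KQ}$ and identify its image with the set of $\rho$-admissible elements, so that both assertions of the lemma drop out simultaneously. First I would reformulate the two admissibility conditions on the level of paths. Writing $1-e_k=\sum_{v\ne k}e_v$, condition~(i) says precisely that $S\in(1-e_k)\wh{KQ}(1-e_k)$, i.e.\ every term of $S$ is a path neither starting nor ending at $k$. Call such a path \emph{admissible} if, in addition, condition~(ii) holds for it, that is, whenever it passes through $k$ it does so along a segment $\alpha\beta$ with $(\alpha,\beta)\in C\setminus C'$. The $\rho$-admissible elements are then exactly the (possibly infinite) $K$-linear combinations of admissible paths, and these span a closed subspace of $\wh{KQ}$.

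Next I would define $\Phi$ via the universal property of the complete path algebra: set $\Phi(e_v)=e_v$ for every vertex $v$, send each arrow of $Q$ not incident to $k$ (these are exactly the arrows of $Q'$ other than the $[\alpha\beta]$) to itself, and send each new arrow $[\alpha\beta]$ with $(\alpha,\beta)\in C\setminus C'$ to the length-two path $\alpha\beta\in\wh{KQ}$. The endpoints match, and every arrow of $Q'$ is sent into $\fm$ (indeed $\alpha\beta\in\fm^2$), so the assignment extends uniquely to a continuous $K$-algebra homomorphism carrying the arrow ideal of $\wh{KQ'}$ into $\fm$.

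The heart of the argument is to show that $\Phi$ restricts to an isomorphism of the corner $(1-e_k)\wh{KQ'}(1-e_k)$ onto the admissible elements. On the path basis, $\Phi$ sends a path $p'$ of $Q'$ (which avoids the now-isolated vertex $k$, except for the lazy path $e_k$) to the $Q$-path obtained by expanding every $[\alpha\beta]$ into $\alpha\beta$, and by construction this is an admissible path. Conversely, every admissible path arises from a \emph{unique} such $p'$: reading it from left to right, each visit to $k$ is forced to consist of one arrow $\alpha$ ending at $k$ immediately followed by one arrow $\beta$ starting at $k$ (there are no loops at $k$), with $(\alpha,\beta)\in C\setminus C'$ by admissibility, so the grouping of these segments into the arrows $[\alpha\beta]$ is unambiguous and inverts $\Phi$. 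Thus $\Phi$ gives a bijection between the paths of $Q'$ other than $e_k$ and the admissible paths; since a $Q'$-path of length $\ell$ has $Q$-length between $\ell$ and $2\ell$, both $\Phi$ and its inverse are continuous, so $\Phi$ restricts to a topological isomorphism of $(1-e_k)\wh{KQ'}(1-e_k)$ onto the closed span of the admissible paths. Being the image of a subalgebra under an algebra homomorphism, the latter is a subalgebra of $(1-e_k)\wh{KQ}(1-e_k)$ with identity $1-e_k$, which proves both claims; as $k$ is isolated in $Q'$, its lazy path $e_k$ spans a trivial direct factor $Ke_k$ of $\wh{KQ'}$, and discarding it identifies $\wh{KQ'}$ with this corner.

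I expect the combinatorial unique-factorization step to be the main obstacle: everything hinges on the fact that an admissible path decomposes uniquely into non-$k$ arrows and $C\setminus C'$ detours through $k$. This uses essentially that $Q$ has no loops at $k$, so each excursion into $k$ enters and leaves by exactly one arrow, together with condition~(ii), which excludes the $C'$ detours and removes any ambiguity. The same observation yields the direct verification that admissibility is preserved under multiplication, should one prefer to argue the subalgebra claim without invoking $\Phi$: a product $pp'$ of admissible paths is glued at a vertex $\ne k$, so the junction produces neither a new endpoint at $k$ nor a new $k$-detour, hence no forbidden subpath $\alpha\beta$ with $(\alpha,\beta)\in C'$.
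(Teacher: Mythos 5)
Your proposal is correct and is essentially the paper's argument run in the opposite direction: the paper declares the map $S \mapsto [S]$ (collapsing each $\alpha\beta$ with $(\alpha,\beta)\in C\setminus C'$ to $[\alpha\beta]$) to be the isomorphism and calls the verification clear, while you construct its inverse $\Phi$ and supply the unique-factorization argument (no loops at $k$, so each visit to $k$ is a single $C\setminus C'$ detour) that justifies "clear". The only cosmetic point is that, since $k$ remains as an isolated vertex of $Q'$, the admissible elements are literally the corner $(1-e_k)\wh{KQ'}(1-e_k)$ rather than all of $\wh{KQ'}$ — an imprecision you flag and the paper shares.
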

\begin{proof}
This is clear; the isomorphism is given by the map $S \mapsto [S]$ replacing
any occurrence of a path $\alpha \beta$ of length $2$ such that
$(\alpha, \beta) \in C \setminus C'$ by the arrow $[\alpha \beta]$ in $Q'$.
\end{proof}

\begin{example}
For the matching $\rho_{12}$ of Example~\ref{ex:matching}, parts of the
quivers $Q'$ and $(Q^*)'$ around the vertex $k$ are also shown in
Figure~\ref{fig:Qmatch}.
\end{example}

\begin{lemma} \label{l:Q'isoQ*'}
The quivers $Q'$ and $(Q^*)'$ are isomorphic. 
\end{lemma}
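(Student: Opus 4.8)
The plan is to prove the isomorphism by directly comparing the arrow sets of $Q'$ and $(Q^*)'$, both of which carry the common vertex set $Q_0$ (with $k$ isolated in each). First I would enumerate the arrows of $Q'$ using Lemma~\ref{l:adm:Q'}: since forming $Q'$ only deletes arrows incident to $k$ and no arrow between vertices $\neq k$ is removed, the arrows of $Q'$ are the \emph{full} set $\Gamma = \bigcup_{i,j \neq k} \Gamma_{ij}$ (in particular all of $\Gamma'$ survives) together with one new arrow $[\alpha \beta] \colon i \to j$ for each $(\alpha, \beta) \in C_{ij} \setminus C'_{ij}$.

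Next I would unwind the two layers of construction on the dual side. By Definition~\ref{def:Qdual}, the arrows of $Q^*$ avoiding $k$ are precisely $\Gamma \setminus \Gamma'$ (the arrows of $\Gamma'$ being removed in step~3) together with the arrows $[\alpha\beta]$ for $(\alpha,\beta) \in C \setminus C'$, while the arrows touching $k$ are exactly the reversed arrows $\alpha^*, \beta^*$. Then, applying Lemma~\ref{l:adm:Q'} to the dual matching $\rho^*$ on $Q^*$, forming $(Q^*)'$ deletes the arrows touching $k$ (all the $\alpha^*,\beta^*$) and adjoins a new arrow for each pair in $C^* \setminus (C^*)'$. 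Here the crucial bookkeeping is that, by definition of $\rho^*$, one has $(C^*)' = \{(\beta^*,\alpha^*) : (\alpha,\beta) \in C \setminus C'\}$ whereas $C^* = \{(\beta^*,\alpha^*) : (\alpha,\beta) \in C\}$, so the complement $C^* \setminus (C^*)'$ is indexed by $C'$ and the new arrows are exactly $[\beta^* \alpha^*] \colon j \to i$ for $(\alpha, \beta) \in C'_{ij}$.

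Comparing the two lists, both quivers contain the arrows $\Gamma \setminus \Gamma'$ and the arrows $[\alpha\beta]$ for $(\alpha,\beta) \in C \setminus C'$. The only discrepancy is that $Q'$ additionally carries the arrows of $\Gamma'$, while $(Q^*)'$ carries instead the arrows $[\beta^*\alpha^*]$ for $(\alpha,\beta) \in C'$; both families are indexed by $C'$. I would therefore define $\psi \colon Q' \to (Q^*)'$ to fix every vertex and every common arrow and to send $\gamma_{\alpha,\beta} \mapsto [\beta^*\alpha^*]$ for each $(\alpha,\beta) \in C'$. It then remains only to check compatibility of endpoints on these last arrows: since $\gamma_{\alpha,\beta} \in \Gamma'_{ij}$ runs $j \to i$, and since $\beta^* \colon j \to k$ and $\alpha^* \colon k \to i$ give $[\beta^*\alpha^*] \colon j \to i$, the source and target agree, so $\psi$ is a source- and target-preserving bijection on arrows, hence an isomorphism of quivers.

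The argument is pure combinatorial bookkeeping, so I expect no genuine obstacle; the one point needing care is unwinding the dual matching correctly, so that $C^* \setminus (C^*)'$ is identified with $C'$ (and not with $C \setminus C'$) and so that the reversal $\alpha \mapsto \alpha^*$, $\beta \mapsto \beta^*$ transports the endpoints of $\gamma_{\alpha,\beta}$ to those of $[\beta^*\alpha^*]$.
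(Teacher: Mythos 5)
Your proposal is correct and follows essentially the same route as the paper: both unwind Definition~\ref{def:Qdual} and Lemma~\ref{l:adm:Q'} to see that $(Q^*)'$ is obtained from $Q'$ by trading the arrows of $\Gamma'$ for the arrows $[\beta^*_\gamma\alpha^*_\gamma]$ indexed by $C'$, and then define the isomorphism $\gamma \mapsto [\beta^*_\gamma\alpha^*_\gamma]$ fixing everything else. The endpoint check you flag is exactly the point the paper's $\vphi'$ relies on, so there is nothing to add.
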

\begin{proof}
Going over the definitions as in the proof of Lemma~\ref{l:rho**},
we see that the quiver $(Q^*)'$ is obtained from $Q'$ by removing all the
arrows in $\Gamma'$ and then adding the arrows $[\beta^* \alpha^*]$ for
$(\alpha, \beta) \in C'$.
Hence $Q' \simeq (Q^*)'$ by the isomorphism $\vphi'$
taking each arrow $\gamma \in \Gamma'$ in $Q'$ to the arrow
$[\beta^*_\gamma \alpha^*_\gamma]$ in $(Q^*)'$, fixing all the vertices
and the other arrows.
\end{proof}

\begin{prop} \label{p:rho:admissible}
The map
\begin{equation} \label{e:Sdual}
S \mapsto
S^* = \sub_{\{\gamma \la
             -\beta^{*}_\gamma \alpha^{*}_\gamma\}_{\gamma \in \Gamma'}}([S])
\end{equation}
is an isomorphism from the algebra
of $\rho$-admissible elements of $\wh{KQ}$
to the algebra of $\rho^*$-admissible elements of $\wh{KQ^*}$.
\end{prop}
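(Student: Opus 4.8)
The plan is to factor the map $S \mapsto S^*$ as a composition of the isomorphisms already established in this section, together with one harmless sign twist. By Lemma~\ref{l:adm:Q'} applied to $Q$, the bracket map $S \mapsto [S]$ is an algebra isomorphism from the $\rho$-admissible elements of $\wh{KQ}$ onto the complete path algebra $\wh{KQ'}$. Since $S^* = \sub_{\{\gamma \la -\beta^*_\gamma \alpha^*_\gamma\}_{\gamma \in \Gamma'}}([S])$ is obtained from $[S]$ by the indicated substitution, it suffices to show that this substitution defines an algebra isomorphism from $\wh{KQ'}$ onto the $\rho^*$-admissible elements of $\wh{KQ^*}$; the proposition then follows by composing.

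First I would make the substitution precise as a map between two different complete path algebras. The arrows of $Q'$ are the arrows in $\Gamma'$, the arrows $[\alpha\beta]$ for $(\alpha,\beta)\in C\setminus C'$, and the arrows of $Q$ not incident to $k$ that are not in $\Gamma'$; of these, only the arrows of $\Gamma'$ fail to be arrows of $Q^*$. Hence, by the direction conventions of Definition~\ref{def:Qdual} the path $\beta^*_\gamma\alpha^*_\gamma$ is parallel to $\gamma$, so sending $\gamma \mapsto -\beta^*_\gamma\alpha^*_\gamma$ for $\gamma\in\Gamma'$ and fixing every other arrow determines a unique continuous $K$-algebra homomorphism $\Phi \colon \wh{KQ'} \to \wh{KQ^*}$, and $S^* = \Phi([S])$ by definition.

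The key step is to identify $\Phi$ with a composite of known maps. Let $\vphi' \colon \wh{KQ'} \to \wh{K(Q^*)'}$ be the algebra isomorphism induced by the quiver isomorphism of Lemma~\ref{l:Q'isoQ*'}, which sends each $\gamma \in \Gamma'$ to $[\beta^*_\gamma\alpha^*_\gamma]$ and fixes the remaining arrows; let $\psi^* \colon \wh{K(Q^*)'} \to \wh{KQ^*}$ be the inverse of the bracket isomorphism of Lemma~\ref{l:adm:Q'} for the pair $(Q^*,\rho^*)$, so that $\psi^*$ carries each arrow $[\beta^*_\gamma\alpha^*_\gamma]$ back to the path $\beta^*_\gamma\alpha^*_\gamma$ and fixes the arrows $[\alpha\beta]$ (with $(\alpha,\beta)\in C\setminus C'$) and the external arrows; and let $\tau$ be the change-of-arrows automorphism of $\wh{KQ'}$ acting by $\gamma \mapsto -\gamma$ on $\Gamma'$ and fixing all other arrows, which is an involution and hence an automorphism. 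Comparing values on the three families of generators of $\wh{KQ'}$ shows $\Phi = \psi^* \circ \vphi' \circ \tau$, the only discrepancy between $\Phi$ and the sign-free composite $\psi^*\circ\vphi'$ being exactly the sign absorbed by $\tau$. As a composition of isomorphisms, $\Phi$ is therefore an isomorphism of $\wh{KQ'}$ onto the $\rho^*$-admissible elements, and the proposition follows.

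The remaining and most delicate point is the bookkeeping confirming that the image genuinely consists of $\rho^*$-admissible elements; this is built into the target of $\psi^*$ but I would also verify it directly. Since every arrow of $Q'$ starts and ends away from $k$ and the substitution is parallel, no term of $S^*$ starts or ends at $k$. For the second admissibility condition one observes that in $S^*$ the starred arrows occur only inside the substituted blocks $\beta^*_\gamma\alpha^*_\gamma$ with $\gamma\in\Gamma'$; thus a length-two subpath of the form $\beta^*\alpha^*$ must be one of these blocks, for which $(\alpha_\gamma,\beta_\gamma)\in C'$ and hence $(\beta^*_\gamma,\alpha^*_\gamma)\in C^*\setminus (C^*)'$, so no forbidden path indexed by $(C^*)'$ appears. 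I expect this admissibility check, together with keeping the sign conventions of Definition~\ref{def:Qdual} consistent across the two dualizations, to be the main obstacle; once it is settled, the factorization $\Phi = \psi^*\circ\vphi'\circ\tau$ yields the isomorphism with no further computation. A tempting alternative, deducing bijectivity from $(\rho^*)^*=\rho$ via Lemma~\ref{l:rho**}, is less convenient here, since iterating the construction introduces extra signs on both the $\gamma$ and the $[\alpha\beta]$ arrows that would then have to be reconciled.
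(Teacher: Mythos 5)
Your proposal is correct and follows essentially the same route as the paper's own proof: the paper likewise reduces via Lemma~\ref{l:adm:Q'} to showing that the signed substitution restricts to an isomorphism $\wh{KQ'}\to\wh{K(Q^*)'}$, obtained from the quiver isomorphism $\vphi'$ of Lemma~\ref{l:Q'isoQ*'} by sending each $\gamma\in\Gamma'$ to $-\vphi'(\gamma)$, and it performs the same direct check of $\rho^*$-admissibility of $S^*$. Your explicit factorization $\Phi=\psi^*\circ\vphi'\circ\tau$ is just a slightly more formal packaging of the same argument.
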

\begin{proof}
We start by verifying that $S^*$ is a well-defined $\rho^*$-admissible
element of $\wh{KQ^*}$. The map
$\psi = \sub_{\{\gamma \la
             -\beta^{*}_\gamma \alpha^{*}_\gamma\}_{\gamma \in \Gamma'}}$
is an endomorphism of the complete path algebra of $\wt{\mu}_k(Q)$.
Since $S$ is $\rho$-admissible, the expression $[S]$ is a well-defined
element of that algebra that does not contain any of the arrows
$[\alpha \beta]$ for $(\alpha, \beta) \in C'$. 
After applying the substitution,
$S^* = \psi(S)$ will not contain any of the arrows $\gamma \in \Gamma'$.
Hence $S^*$ can be seen as an element of $\wh{KQ^*}$ (embedded in the
natural way, as $Q^*$ is a subquiver of $\wt{\mu}_k(Q)$).
Moreover, since the only paths of the form $\beta^* \alpha^*$
which may appear in $S^*$ are $\beta^*_\gamma \alpha^*_\gamma$ for
$\gamma \in \Gamma'$, the element $S^*$ is $\rho^*$-admissible.

In view of Lemma~\ref{l:adm:Q'}, it suffices to show that
$\sub_{\{\gamma \la -[\beta^*_\gamma \alpha^*_\gamma]\}_{\gamma \in \Gamma'}}$
(formally speaking, an endomorphism of the complete path algebra of an
ambient quiver whose vertex set equals that of $Q'$ and $(Q^*)'$ and
whose arrow set contains the union of those of $Q'$ and $(Q^*)'$;
for instance, $\wt{\mu}_k \wt{\mu}_k(Q)$ is such quiver)
restricts to an isomorphism from the complete path algebra of $Q'$
to that of $(Q^*)'$.
This is clear from Lemma~\ref{l:Q'isoQ*'} and the isomorphism $\vphi'$
of the quivers $Q'$ and $(Q^*)'$ constructed in its proof;
by sending each $\gamma \in \Gamma'$ to $-\vphi'(\gamma)$ and fixing all the
vertices and the other arrows we get the required isomorphism of complete
path algebras.
\end{proof}

We note that the signs in~\eqref{e:Sdual}, which at first sight might
seem artificial, 
will be needed later when we consider mutations.

\begin{defn} \label{def:pot:admiss}
A potential $S$ on $Q$ is \emph{$\rho$-admissible} if it is 
$\rho$-admissible as an element of $\wh{KQ}$. In other words, 
\begin{enumerate}
\renewcommand{\theenumi}{\roman{enumi}}
\item \label{it:start:k}
None of the terms of $S$ starts at $k$,

\item
For any pair $(\alpha, \beta) \in C'$, the path $\alpha \beta$ does
not appear in $S$.
\end{enumerate}
\end{defn}

We note that by appropriately rotating its terms, any potential is
cyclically equivalent to a potential satisfying the
requirement~\eqref{it:start:k}.

As a consequence of Proposition~\ref{p:rho:admissible}, we obtain:
\begin{cor}
The map $S \mapsto S^*$ gives a linear bijection from
the space of $\rho$-admissible potentials on $Q$
to the space of $\rho^*$-admissible potentials on $Q^*$.
\end{cor}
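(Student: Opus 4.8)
The plan is to deduce the statement directly from Proposition~\ref{p:rho:admissible}. That proposition already produces a $K$-linear algebra isomorphism $S \mapsto S^*$ from the $\rho$-admissible elements of $\wh{KQ}$ onto the $\rho^*$-admissible elements of $\wh{KQ^*}$; in particular it is linear and bijective. Hence it suffices to show that this isomorphism restricts to a bijection between the subspace of $\rho$-admissible potentials and the subspace of $\rho^*$-admissible potentials, i.e.\ that $S$ is a potential (a cyclic element all of whose cycles have length at least $2$) if and only if $S^*$ is one. Cyclicity is immediate: the map is an algebra homomorphism fixing every $e_i$, so it preserves the Peirce decomposition and therefore carries $\wh{KQ}_{cyc} = \bigoplus_i e_i \wh{KQ} e_i$ into $\bigoplus_i e_i \wh{KQ^*} e_i = \wh{KQ^*}_{cyc}$; the same holds for the inverse map. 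Thus the only real point is the length condition.

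For the length condition I would use the factorization of $S \mapsto S^*$ through the quivers $Q'$ and $(Q^*)'$ supplied by Lemmas~\ref{l:adm:Q'} and~\ref{l:Q'isoQ*'}. First, the map $[\cdot]$ identifies $\rho$-admissible potentials on $Q$ with potentials on $Q'$: a cycle of length $\ell$ that meets $k$ exactly $m$ times is sent to a cycle of length $\ell - m$, since each passage $\alpha\beta$ through $k$ (a length-$2$ path) becomes a single arrow $[\alpha\beta]$, and one checks that $\ell - m \geq 2$ whenever $\ell \geq 2$; conversely $[\cdot]^{-1}$ only lengthens cycles, so it returns potentials on $Q'$ to $\rho$-admissible potentials. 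Second, by the proof of Proposition~\ref{p:rho:admissible}, the restriction of the substitution to $\wh{KQ'}$ is the sign-twisted algebra isomorphism $\wh{KQ'} \to \wh{K(Q^*)'}$ induced by the quiver isomorphism $\vphi'$ of Lemma~\ref{l:Q'isoQ*'}; since $\vphi'$ is an isomorphism of quivers it is length-preserving, so it carries potentials on $Q'$ bijectively onto potentials on $(Q^*)'$. Finally, applying Lemma~\ref{l:adm:Q'} to the pair $(Q^*, \rho^*)$ — legitimate because $Q^*$ also has no loops at $k$ and no $2$-cycles through $k$ — identifies potentials on $(Q^*)'$ with $\rho^*$-admissible potentials on $Q^*$. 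Composing these three bijections recovers $S \mapsto S^*$ and yields the claim.

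The main obstacle is the length bookkeeping in the first factor, namely ruling out that a cycle of length $\geq 2$ collapses to a loop or a lazy path under $[\cdot]$. This is exactly where the two standing hypotheses enter: since there are no loops at $k$, consecutive visits to $k$ cannot be adjacent, so the $m$ passages are disjoint and $\ell \geq 2m$, which already gives $\ell - m \geq 2$ for $m \geq 2$ and for $m = 0$; the single borderline case $\ell = 2$, $m = 1$ is precisely a $2$-cycle through $k$, which is excluded by assumption, leaving $\ell \geq 3$ and hence $\ell - m \geq 2$ when $m = 1$. Everything else is formal, and surjectivity onto the $\rho^*$-admissible potentials is built into the factorization (alternatively it follows by symmetry from Lemma~\ref{l:rho**}, since $(\rho^*)^* = \rho$).
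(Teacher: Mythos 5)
Your proof is correct and follows the same route as the paper, which states this corollary as an immediate consequence of Proposition~\ref{p:rho:admissible} without further argument. The only content you add is the (correct) verification that a cycle of length at least $2$ cannot collapse to a loop or a lazy path under $[\cdot]$, which uses exactly the standing hypotheses (no loops at $k$, no $2$-cycles through $k$) and is left implicit in the paper.
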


\subsection{Mutation}

We now have all the ingredients needed to express the (pre-)mutations
of potentials of the form $W_\rho + S$, where $\rho$ is a (maximal)
matching and $S$ is a $\rho$-admissible potential,
in terms of the dual matching.

\begin{defn} \label{def:matching:max}
We say that a matching $\rho = (C', \Gamma', C' \xrightarrow{\sim} \Gamma')$
is \emph{maximal} if it has the maximal possible cardinality.
That is,
$|C'_{ij}| = |\Gamma'_{ij}| = \min \{|C_{ij}|, |\Gamma_{ij}|\}$
for all $i,j \neq k$. 
\end{defn}

\begin{example}
In Example~\ref{ex:matching}, 
the only maximal matching is $\rho_{12}$.
\end{example}

\begin{lemma} \label{l:Q*mut}
Assume that $Q$ has no loops or $2$-cycles.
Then the following conditions are equivalent for a matching $\rho$:
\begin{enumerate}
\renewcommand{\theenumi}{\alph{enumi}}
\item \label{it:max}
The matching is maximal.

\item \label{it:Q*no2cyc}
There are no $2$-cycles in the dual quiver $Q^*$.

\item \label{it:Q*mut}
The dual quiver $Q^*$ equals the mutation $\mu_k(Q)$.
\end{enumerate}
\end{lemma}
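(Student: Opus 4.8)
The plan is to prove the equivalences by a cycle of implications, exploiting the decomposition $\wt{\mu}_k(Q) = Q^* \oplus Q_\rho$ from Lemma~\ref{l:Qrho}\eqref{it:Q*Qrho} together with a careful count of $2$-cycles in the pre-mutation. The key structural observation is that every $2$-cycle of $\wt{\mu}_k(Q)$ has the form $[\alpha\beta]\gamma$ with $(\alpha,\beta)\in C_{ij}$ and $\gamma\in\Gamma_{ij}$, as already noted in Section~\ref{ssec:match}. Since $Q$ has no $2$-cycles passing through $k$ (and the new arrows $\alpha^*,\beta^*$ incident to $k$ cannot form $2$-cycles among themselves, as $Q$ has none through $k$), all $2$-cycles of $\wt{\mu}_k(Q)$ live among the arrows away from $k$, namely between the $[\alpha\beta]$ and the $\gamma\in\Gamma_{ij}$. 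A $2$-cycle survives in $Q^*$ precisely when both its arrows survive, i.e.\ when $(\alpha,\beta)\in C_{ij}\setminus C'_{ij}$ (so $[\alpha\beta]$ is an arrow of $Q^*$) and $\gamma\notin\Gamma'_{ij}$ (so $\gamma$ is not removed in step~3 of Definition~\ref{def:Qdual}).

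I would first establish \textbf{(a)}$\Rightarrow$\textbf{(b)}. Fixing a pair $i,j\neq k$, I would show that $Q^*$ has no $2$-cycle between vertices $i$ and $j$ using a simple counting/pigeonhole argument. The arrows of $Q^*$ from $i$ to $j$ of the form $[\alpha\beta]$ correspond to $C_{ij}\setminus C'_{ij}$, and the arrows from $j$ to $i$ coming from $\Gamma_{ij}$ correspond to $\Gamma_{ij}\setminus\Gamma'_{ij}$; a $2$-cycle requires one of each. Maximality gives $|C'_{ij}|=|\Gamma'_{ij}|=\min\{|C_{ij}|,|\Gamma_{ij}|\}$, so at least one of $C_{ij}\setminus C'_{ij}$ or $\Gamma_{ij}\setminus\Gamma'_{ij}$ is empty, and hence no such $2$-cycle can exist. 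I would also remark that no $2$-cycle can pass through $k$ in $Q^*$, since the only arrows at $k$ are the $\alpha^*,\beta^*$ and a $2$-cycle through them would correspond to a $2$-cycle through $k$ in $Q$, which is excluded. The absence of loops in $Q^*$ follows from the absence of loops and $2$-cycles through $k$ in $Q$.

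For \textbf{(b)}$\Rightarrow$\textbf{(c)}, I would argue that when $Q^*$ has no $2$-cycles, the decomposition $\wt{\mu}_k(Q)=Q^*\oplus Q_\rho$ exhibits $Q^*$ as the result of deleting a set of disjoint $2$-cycles from $\wt{\mu}_k(Q)$: indeed, every arrow of $Q_\rho$ participates in exactly one $2$-cycle $[\alpha\beta]\gamma_{\alpha,\beta}$ with $(\alpha,\beta)\in C'$, and these $2$-cycles are pairwise disjoint (this is essentially the content of the triviality statement in Lemma~\ref{l:Qrho}). Since removing $Q_\rho$ leaves $Q^*$ which is $2$-cycle-free by hypothesis, the removed $2$-cycles form a \emph{maximal} disjoint collection, and therefore $Q^*=\mu_k(Q)$ by Definition~\ref{def:premut}.

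Finally, \textbf{(c)}$\Rightarrow$\textbf{(a)} is the converse direction, which I would prove by contraposition: if $\rho$ is not maximal, then for some $i,j\neq k$ we have $|C'_{ij}|<\min\{|C_{ij}|,|\Gamma_{ij}|\}$, so both $C_{ij}\setminus C'_{ij}$ and $\Gamma_{ij}\setminus\Gamma'_{ij}$ are nonempty, producing a surviving $2$-cycle in $Q^*$; then $Q^*$ cannot equal the $2$-cycle-free quiver $\mu_k(Q)$. The main obstacle, and the place I would be most careful, is the bookkeeping in \textbf{(b)}$\Rightarrow$\textbf{(c)}: I must verify that the $2$-cycles $[\alpha\beta]\gamma_{\alpha,\beta}$ removed in passing from $\wt{\mu}_k(Q)$ to $Q^*$ really are \emph{disjoint} (no arrow shared) and really do constitute a \emph{maximal} such set, since ``maximal set of disjoint $2$-cycles'' in Definition~\ref{def:premut} is what pins down $\mu_k(Q)$. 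Disjointness is immediate because the bijection $C'\xrightarrow{\sim}\Gamma'$ pairs each $[\alpha\beta]$ with a single distinct $\gamma_{\alpha,\beta}$, but maximality of the removed set is exactly equivalent to $2$-cycle-freeness of the remainder, which is where hypothesis~\textbf{(b)} is used.
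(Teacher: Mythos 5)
Your proposal is correct and takes essentially the same approach as the paper: both rest on the observation that the only possible $2$-cycles in $Q^*$ are $[\alpha\beta]\gamma$ with $(\alpha,\beta)\in C_{ij}\setminus C'_{ij}$ and $\gamma\in\Gamma_{ij}\setminus\Gamma'_{ij}$ (so their existence is equivalent to non-maximality), and both deduce the equivalence with $Q^*=\mu_k(Q)$ from the decomposition $\wt{\mu}_k(Q)=Q^*\oplus Q_\rho$ of Lemma~\ref{l:Qrho}. The paper merely organizes this as two biconditionals rather than a cycle of implications.
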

\begin{proof}
Since there are no $2$-cycles in $Q$, the only $2$-cycles in $Q^*$
are of the form
\[
\xymatrix{i \ar@<0.5ex>[r]^{[\alpha \beta]} & j \ar@<0.5ex>[l]^{\gamma}}
\]
for $(\alpha, \beta) \in C_{ij} \setminus C'_{ij}$ and
$\gamma \in \Gamma_{ij} \setminus \Gamma'_{ij}$.
Thus, if such a $2$-cycle exists, we can enlarge the matching by
adding $(\alpha, \beta)$ to $C'$ and $\gamma$ to $\Gamma'$, mapping
$(\alpha, \beta)$ to $\gamma$. This proves the equivalence of
\eqref{it:max} and~\eqref{it:Q*no2cyc}.

By Lemma~\ref{l:Qrho}(\ref{it:Q*Qrho}), $Q^*$ is obtained from
$\wt{\mu}_k(Q)$ by removing the $2$-cycles comprising the arrows
of the quiver $Q_\rho$. Thus, $Q^* = \mu_k(Q)$ if and only if
there are no $2$-cycles left in $Q^*$. This proves the equivalence
of \eqref{it:Q*no2cyc} and \eqref{it:Q*mut}.
\end{proof}

We are now ready to state and prove the main result of this section.

\begin{theorem} \label{t:QPmut}
Let $Q$ be a quiver without loops, let $k$ be a vertex of $Q$ such that
no $2$-cycle passes through $k$ and
let $\rho = (C', \Gamma', C' \xrightarrow{\sim} \Gamma')$ be a matching.

Let $S$ be a $\rho$-admissible potential on $Q$ and let
\[
S^* = \sub_{\{\gamma \la 
             -\beta^{*}_\gamma \alpha^{*}_\gamma\}_{\gamma \in \Gamma'}}([S])
\]
be the corresponding $\rho^*$-admissible potential on $Q^*$.

Consider the potential $W$ on $Q$ given by
\begin{align}
\nonumber
W = W_\rho + S &=
\sum_{(\alpha,\beta) \in C'} \alpha \beta \gamma_{\alpha,\beta} + S
\intertext{and the potential $W^*$ on $Q^*$ given by}
\label{e:W*}
W^* = W_{\rho^*} + S^* &=
\sum_{(\alpha, \beta) \in C \setminus C'} [\alpha \beta] \beta^* \alpha^* 
+ \sub_{\{\gamma \la
            -\beta^{*}_\gamma \alpha^{*}_\gamma\}_{\gamma \in \Gamma'}}([S]).
\end{align}

Then:
\begin{enumerate}
\renewcommand{\theenumi}{\alph{enumi}}
\item \label{it:QWpre}
The pre-mutation $\wt{\mu}_k(Q,W)$ is right equivalent to
$(Q^*, W^*) \oplus (Q_\rho, [W_\rho])$.

\item \label{it:QWmut}
If $Q$ does not have any $2$-cycles and
the matching $\rho$ is maximal, then
$Q^*=\mu_k(Q)$ and the mutation $\mu_k(Q,W)$ is right equivalent to
$(Q^*,W^*)$.
\end{enumerate}
\end{theorem}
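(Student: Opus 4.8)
The plan is to compute the pre-mutation potential $\wt{W}$ explicitly, to recognize inside it a family of pairwise disjoint $2$-cycles together with ``linear'' correction terms, and then to remove those corrections by a single application of Proposition~\ref{p:2cyc}. Recall that $\wt{\mu}_k(Q,W) = (\wt{\mu}_k(Q), \wt{W})$ with $\wt{W} = \sum_{(\alpha,\beta) \in C} [\alpha\beta]\beta^*\alpha^* + [W]$. Since $W = W_\rho + S$ and $[W_\rho] = \sum_{(\alpha,\beta)\in C'}[\alpha\beta]\gamma_{\alpha,\beta}$, splitting the first sum along the partition $C = C' \cup (C\setminus C')$ gives
\[
\wt{W} = \sum_{(\alpha,\beta)\in C'}[\alpha\beta]\gamma_{\alpha,\beta}
+ \sum_{(\alpha,\beta)\in C'}[\alpha\beta]\beta^*\alpha^*
+ \Bigl(\sum_{(\alpha,\beta)\in C\setminus C'}[\alpha\beta]\beta^*\alpha^* + [S]\Bigr).
\]
Here the first sum is $[W_\rho]$, a collection of disjoint $2$-cycles living in $Q_\rho$, and I will treat the parenthesized expression as a ``remainder'' potential supported on $Q^*$.

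To apply Proposition~\ref{p:2cyc} I set, for each $(\alpha,\beta)\in C'$ with corresponding arrow $\gamma = \gamma_{\alpha,\beta}\in\Gamma'$, the substituted arrow to be $\gamma$, the retained anti-parallel arrow to be $\delta = [\alpha\beta]$, and the coefficient to be $T = \beta^*_\gamma\alpha^*_\gamma$. Rotating $[\alpha\beta]\beta^*\alpha^*$ to the cyclically equivalent $\beta^*_\gamma\alpha^*_\gamma\,[\alpha\beta] = T\delta$ turns the middle sum into $\sum T\delta$, while the first sum is $\sum\gamma\delta$ up to rotation. The three hypotheses of the proposition must then be checked: for $(\alpha,\beta)\in C_{ij}$ the element $T = \beta^*_\gamma\alpha^*_\gamma$ runs $j\to i$ and so is parallel to $\gamma$, while $\delta = [\alpha\beta]$ runs $i\to j$ and is anti-parallel to $\gamma$; the arrows $\gamma\in\Gamma'$ and $[\alpha\beta]$ with $(\alpha,\beta)\in C'$ never occur in any $T$, as each $T$ is built solely from starred arrows; and none of the arrows $[\alpha\beta]$ with $(\alpha,\beta)\in C'$ appears in the remainder. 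This last verification is exactly where the $\rho$-admissibility of $S$ enters: it forces the path $\alpha\beta$ to be absent from $S$, hence $[\alpha\beta]$ to be absent from $[S]$, and the sum over $C\setminus C'$ manifestly avoids these arrows.

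Proposition~\ref{p:2cyc} then shows that $\wt{W}$ is right equivalent to $\sum_{(\alpha,\beta)\in C'}[\alpha\beta]\gamma_{\alpha,\beta} + \sub_{\{\gamma \la -\beta^*_\gamma\alpha^*_\gamma\}_{\gamma\in\Gamma'}}\bigl(\sum_{(\alpha,\beta)\in C\setminus C'}[\alpha\beta]\beta^*\alpha^* + [S]\bigr)$. Since no $\gamma\in\Gamma'$ occurs in the sum over $C\setminus C'$, the substitution fixes that sum, while on $[S]$ it produces precisely $S^*$ by definition. The substituted remainder is therefore $W_{\rho^*} + S^* = W^*$, and the untouched first sum is $[W_\rho]$; thus $\wt{W}$ is right equivalent to $W^* + [W_\rho]$, which is the potential of $(Q^*,W^*)\oplus(Q_\rho,[W_\rho])$ in view of the decomposition $\wt{\mu}_k(Q) = Q^*\oplus Q_\rho$ from Lemma~\ref{l:Qrho}. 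This settles part~(\ref{it:QWpre}).

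For part~(\ref{it:QWmut}), suppose $Q$ has no $2$-cycles and $\rho$ is maximal. Lemma~\ref{l:Q*mut} then gives $Q^* = \mu_k(Q)$ and, crucially, that $Q^*$ has no $2$-cycles at all; consequently $W^*$ contains no cycle of length $2$, so $(Q^*,W^*)$ is reduced. Since $(Q_\rho,[W_\rho])$ is trivial by Lemma~\ref{l:Qrho}, part~(\ref{it:QWpre}) realizes $\wt{\mu}_k(Q,W)$ as the direct sum of a reduced and a trivial quiver with potential, so the uniqueness in the Splitting Theorem~\cite[Theorem~4.6]{DWZ08} identifies its reduced part $\mu_k(Q,W)$ with $(Q^*,W^*)$. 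I expect the principal difficulty to be the bookkeeping in matching $\wt{W}$ to the normal form of Proposition~\ref{p:2cyc}: one must correctly assign, within each $2$-cycle, the substituted role to $\gamma\in\Gamma'$ and the retained role to $[\alpha\beta]$, carry out the rotation $[\alpha\beta]\beta^*\alpha^*\sim\beta^*_\gamma\alpha^*_\gamma[\alpha\beta]$, and confirm that $\rho$-admissibility supplies exactly hypothesis~(iii) of that proposition.
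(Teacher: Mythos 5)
Your proposal is correct and follows essentially the same route as the paper: the identical splitting of $\wt{W}$ along $C = C' \cup (C\setminus C')$, the same assignment of roles ($\gamma\in\Gamma'$ substituted, $[\alpha\beta]$ retained, $T=\beta^*_\gamma\alpha^*_\gamma$) in a single application of Proposition~\ref{p:2cyc}, and the same appeal to Lemma~\ref{l:Qrho}, Lemma~\ref{l:Q*mut} and the uniqueness in the Splitting Theorem for part~(b). Your explicit verification of the hypotheses of Proposition~\ref{p:2cyc}, including where $\rho$-admissibility is used, matches the paper's argument.
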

\begin{proof}
In order to prove part~\eqref{it:QWpre}, we start by writing the
pre-mutation of $(Q,W)$ as
\[
\begin{split}
\wt{\mu}_k(Q,W) &=
\sum_{(\alpha,\beta) \in C} [\alpha \beta] \beta^* \alpha^* + [W] =
\sum_{(\alpha,\beta) \in C} [\alpha \beta] \beta^* \alpha^* +
\sum_{(\alpha,\beta) \in C'} [\alpha \beta] \gamma_{\alpha,\beta} + [S]
\\
&=
\sum_{(\alpha,\beta) \in C'} [\alpha \beta] \gamma_{\alpha,\beta} +
\sum_{(\alpha,\beta) \in C'} [\alpha \beta] \beta^* \alpha^* +
\biggl(
\sum_{(\alpha,\beta) \in C \setminus C'} [\alpha \beta] \beta^* \alpha^* +
[S]
\biggr) .
\end{split}
\]

Now set $\delta_{\alpha,\beta} = [\alpha \beta]$ and
$T_{\alpha,\beta} = \beta^* \alpha^*$ for $(\alpha,\beta) \in C'$.
Observe that
$\gamma_{\alpha,\beta}$ does not appear in any of the elements
$T_{\alpha',\beta'}$ and the arrow $\delta_{\alpha,\beta}$ does not appear
in any of the elements $T_{\alpha',\beta'}$ or in $[S]$.
Thus we can apply Proposition~\ref{p:2cyc} and deduce that $\wt{\mu}_k(Q,W)$
is right equivalent to the potential
\[
\begin{split}
&\sum_{(\alpha,\beta) \in C'} [\alpha \beta] \gamma_{\alpha,\beta} +
\sub_{\{\gamma_{\alpha,\beta} \la -T_{\alpha,\beta}\}_{(\alpha,\beta) \in C'}}
\Bigl(
\sum_{(\alpha,\beta) \in C \setminus C'} [\alpha \beta] \beta^* \alpha^* +
[S]
\Bigr) = \\
&\sum_{(\alpha,\beta) \in C'} [\alpha \beta] \gamma_{\alpha,\beta} +
\sum_{(\alpha,\beta) \in C \setminus C'} [\alpha \beta] \beta^* \alpha^* +
\sub_{\{\gamma \la -\beta^*_\gamma \alpha^*_\gamma \}_{\gamma \in \Gamma'}}
([S])
\end{split}
\]
which is equal to $(Q^*,W^*) \oplus (Q_\rho,[W_\rho])$.

Note that $(Q_\rho, [W_\rho])$ is a trivial QP by Lemma~\ref{l:Qrho}.
If the assumptions
in part~\eqref{it:QWmut} hold, then $Q^*$ does not contain any $2$-cycles and
hence $(Q^*,W^*)$ is a reduced QP and so by part~\eqref{it:QWpre} equals the
reduced part of a QP which is right equivalent to the pre-mutation
$\wt{\mu}_k(Q,W)$. It follows that the mutation $\mu_k(Q,W)$, being the
reduced part of the pre-mutation $\wt{\mu}_k(Q,W)$, is right equivalent
to $(Q^*,W^*)$.
\end{proof}

\begin{remark}
If the potential $W$ is a signed sum of cycles (i.e.\ each cycle appears
with coefficient equal to $1$ or $-1$), then so is $W^*$.
This makes the mutation operation purely combinatorial so
in principle it can be implemented on a computer.
The main challenge in applying Theorem~\ref{t:QPmut} in practice is,
given a potential $W$ on a quiver $Q$ and a vertex $k$,
to find a decomposition $W = W_0 + S$ such that $W_0$ encodes a
maximal matching with respect to which the potential $S$ is admissible.
\end{remark}

\begin{remark}
A special case of Theorem~\ref{t:QPmut} is that if $Q$ has no loops or
$2$-cycles and the matching $\rho$ is maximal,
then $\mu_k(Q,W_\rho) = (Q^*, W_{\rho^*})$, a result which could
also be verified directly. The theorem tells us that we can deform $W_\rho$
by adding any $\rho$-admissible potential $S$ and then the
result of the mutation is correspondingly deformed to $W_{\rho^*} + S^*$.
\end{remark}

In Section~\ref{sec:X7:mut} we will apply Theorem~\ref{t:QPmut} in order to
compute three kinds of mutations, see Proposition~\ref{p:glue:mut1}, 
Proposition~\ref{p:Qn:mut0} and Proposition~\ref{p:X7mut2}.

\subsection{Signs}
The formula~\eqref{e:W*} for $W^*$ involves minus signs;
we have to replace each occurrence of an arrow $\gamma \in \Gamma'$ by
$-\beta^*_\gamma \alpha ^*_\gamma$. We would like to know when it is
possible to eliminate these signs. To this end, we consider 
the corresponding potential on $Q^*$ without any signs,
\[
W^*_+ =
\sum_{(\alpha, \beta) \in C \setminus C'} [\alpha \beta] \beta^* \alpha^* 
+ \sub_{\{\gamma \la
        \beta^{*}_\gamma \alpha^{*}_\gamma\}_{\gamma \in \Gamma'}}([S]),
\]
and find
sufficient conditions that $(Q^*,W^*)$ is right equivalent to $(Q^*,W^*_+)$.

Note that
$W^*_+ = W_{\rho^*} + (S_+)^*$ for the potential
$S_+ = \sub_{\{\gamma \la -\gamma\}_{\gamma \in \Gamma'}}(S)$ on $Q$, so
one could look for right equivalence of $W_\rho+S$ and $W_\rho+S_+$ on $Q$.
We will consider $Q^*$ instead, as the right equivalences we construct
are independent of $\Gamma'$.

Obviously, $W^*=W^*_+$ if the following condition $(\star)$ on $S$ holds:
\begin{itemize}
\item
the total number of occurrences of the arrows of $\Gamma'$ in any term of $S$
is even;

\item
in particular, if no arrow of $\Gamma'$ appears in $S$.
\end{itemize}

Denote by $A$ the set of arrows in $Q$ ending at $k$ and by $B$
the set of arrows in $Q$ starting at $k$, so that $C = A \times B$.
Let $A'$ be a subset of $A$ and $B'$ be a subset of $B$.
Denote by $A' \star B'$ the subset
$(A' \times (B \setminus B')) \cup ((A \setminus A') \times B')$ of $C$.

Let $\eps_{A',B'}$ be the right equivalence on the complete path algebra of
$\wt{\mu}_k(Q)$ defined by sending
\begin{align*}
\alpha^* \mapsto -\alpha^* &,&
\beta^* \mapsto -\beta^* &,&
[\alpha' \beta'] \mapsto -[\alpha' \beta']
\end{align*}
for $\alpha \in A'$, $\beta \in B'$ and 
$(\alpha',\beta') \in A' \star B'$, respectively,
keeping all the other arrows intact.
For any matching $\rho$, the restriction of $\eps_{A',B'}$ to $\wh{KQ^*}$
gives a right equivalence which will be denoted by the same notation.

\begin{lemma} \label{l:eAB:Wrho}
$\eps_{A',B'}(W_{\rho^*}) = W_{\rho^*}$ for any matching $\rho$.
\end{lemma}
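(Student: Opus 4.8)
The plan is to verify directly that the right equivalence $\eps_{A',B'}$ fixes the potential $W_{\rho^*}$, which by definition is the sum of $2$-cycles
\[
W_{\rho^*} = \sum_{(\alpha,\beta) \in C \setminus C'} [\alpha\beta]\,\beta^*\alpha^* .
\]
Since $\eps_{A',B'}$ is an algebra automorphism fixing the vertices, it suffices to track what it does to each individual term $[\alpha\beta]\,\beta^*\alpha^*$ and show each such term is sent to itself. First I would recall from the definition that $\eps_{A',B'}$ multiplies $\alpha^*$ by $-1$ exactly when $\alpha \in A'$, multiplies $\beta^*$ by $-1$ exactly when $\beta \in B'$, and multiplies $[\alpha\beta]$ by $-1$ exactly when $(\alpha,\beta) \in A' \star B' = (A' \times (B\setminus B')) \cup ((A\setminus A') \times B')$, keeping all other arrows intact.

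The core of the argument is a parity bookkeeping on the sign picked up by a single term. Applying $\eps_{A',B'}$ to $[\alpha\beta]\,\beta^*\alpha^*$ produces the original term multiplied by a sign $(-1)^{a+b+c}$, where $a$ records whether $\alpha \in A'$ (so that $\alpha^* \mapsto -\alpha^*$), $b$ records whether $\beta \in B'$, and $c$ records whether $(\alpha,\beta) \in A' \star B'$. The key observation is that $c = a + b \pmod 2$: indeed, unwinding the definition of $A' \star B'$, the pair $(\alpha,\beta)$ lies in $A' \star B'$ precisely when exactly one of $\alpha \in A'$, $\beta \in B'$ holds, which is exactly the condition $a + b \equiv 1 \pmod 2$. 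Hence the total sign is $(-1)^{a+b+c} = (-1)^{2(a+b)} = 1$, so each term is fixed. I would present this as a short case analysis (or the single congruence above) over the four possibilities for membership of $\alpha$ in $A'$ and $\beta$ in $B'$.

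By linearity of the automorphism, fixing every term implies $\eps_{A',B'}(W_{\rho^*}) = W_{\rho^*}$. Finally, since the statement asserts this on $\wh{KQ^*}$ and the arrows $[\alpha\beta]$, $\alpha^*$, $\beta^*$ appearing in $W_{\rho^*}$ all belong to $Q^*$ (recall $\rho^*$ is a matching on $Q^*$ and its encoding potential is the displayed sum of $2$-cycles), there is no ambiguity in restricting $\eps_{A',B'}$ to $\wh{KQ^*}$ as indicated in the paragraph preceding the lemma. I do not anticipate a genuine obstacle here; the only subtlety worth stating carefully is the parity identity $c \equiv a+b \pmod 2$, which is precisely the reason the definition of $A' \star B'$ was set up as the symmetric-difference-type expression $(A' \times (B\setminus B')) \cup ((A\setminus A') \times B')$. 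Making that design choice visible is the main point of the proof.
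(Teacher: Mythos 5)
Your proof is correct and is exactly the argument the paper intends: the paper's one-line proof (``our choice of signs ensures that $\eps_{A',B'}$ maps each term $[\alpha\beta]\beta^*\alpha^*$ to itself'') is precisely your parity computation $c \equiv a+b \pmod 2$ spelled out. No discrepancies.
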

\begin{proof}
Our choice of signs ensures that $\eps_{A',B'}$ maps each term
$[\alpha \beta] \beta^* \alpha^*$ to itself.
\end{proof}

Fix a matching $\rho = (C', \Gamma', C' \xrightarrow{\sim} \Gamma')$.
For a cycle $c$ of $\wt{\mu}_k(Q)$, let 
\begin{align*}
N_{c,1} &= \text{the total number of occurrences in $c$ of arrows
$[\alpha \beta]$ with $(\alpha, \beta) \in A' \star B'$}, \\
N_{c,2} &= \text{the total number of occurrences in $c$ of arrows
$\gamma \in \Gamma'$ with
$(\alpha_\gamma, \beta_\gamma) \not \in A' \star B'$}.
\end{align*}

\begin{lemma} \label{l:eAB:c}
We have
\[
\eps_{A',B'}
\bigl( \sub_{\{\gamma \la
        -\beta^{*}_\gamma \alpha^{*}_\gamma\}_{\gamma \in \Gamma'}}(c)
\bigr)
=
(-1)^{N_{c,1}+N_{c,2}}
\sub_{\{\gamma \la
        \beta^{*}_\gamma \alpha^{*}_\gamma\}_{\gamma \in \Gamma'}}(c).
\]
\end{lemma}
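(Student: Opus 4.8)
The plan is to reduce everything to a per-arrow sign count. Both $\eps_{A',B'}$ and the two substitutions $\sub_{\{\gamma \la -\beta^*_\gamma\alpha^*_\gamma\}_{\gamma\in\Gamma'}}$ and $\sub_{\{\gamma \la \beta^*_\gamma\alpha^*_\gamma\}_{\gamma\in\Gamma'}}$ (call them $\sub_-$ and $\sub_+$) are continuous $K$-algebra endomorphisms fixing the lazy paths, so by linearity and continuity it suffices to treat the case in which $c = a_1 a_2 \cdots a_m$ is a single cyclic word in the arrows of $\wt\mu_k(Q)$. Moreover $\eps_{A',B'}$ acts on each generator by a scalar $\pm 1$, so for every arrow $a$ there is a sign $\tau(a) \in \{+1,-1\}$ with $\eps_{A',B'}(\sub_-(a)) = \tau(a)\,\sub_+(a)$; multiplicativity then gives $\eps_{A',B'}(\sub_-(c)) = \bigl(\prod_{i=1}^m \tau(a_i)\bigr)\sub_+(c)$, and the lemma amounts to showing that the number of $i$ with $\tau(a_i) = -1$ has the same parity as $N_{c,1}+N_{c,2}$.

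I would then compute $\tau$ on each arrow type. Write $\sigma(x)$ for the sign by which $\eps_{A',B'}$ scales a generator $x$. For an arrow $[\alpha\beta]$ with $(\alpha,\beta) \in C$ the substitutions do nothing, so $\tau([\alpha\beta]) = \sigma([\alpha\beta])$, which is $-1$ exactly when $(\alpha,\beta) \in A'\star B'$; these are precisely the occurrences counted by $N_{c,1}$. The decisive case is $\gamma \in \Gamma'$: here $\sub_-(\gamma) = -\beta^*_\gamma\alpha^*_\gamma$ whereas $\sub_+(\gamma) = \beta^*_\gamma\alpha^*_\gamma$, and $\eps_{A',B'}$ scales $\beta^*_\gamma\alpha^*_\gamma$ by $\sigma(\alpha^*_\gamma)\sigma(\beta^*_\gamma)$, with $\sigma(\alpha^*_\gamma) = -1$ iff $\alpha_\gamma \in A'$ and $\sigma(\beta^*_\gamma) = -1$ iff $\beta_\gamma \in B'$. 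Hence $\tau(\gamma) = -\sigma(\alpha^*_\gamma)\sigma(\beta^*_\gamma)$, and a short parity check shows this equals $-1$ exactly when the conditions $\alpha_\gamma \in A'$ and $\beta_\gamma \in B'$ either both hold or both fail --- that is, exactly when $(\alpha_\gamma,\beta_\gamma) \notin A'\star B'$, which is precisely what $N_{c,2}$ counts.

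I expect this last sign computation to be the crux: it is here that the minus sign deliberately built into the substitution $\gamma \la -\beta^*_\gamma\alpha^*_\gamma$ pays off, flipping the relevant membership condition from ``$\in A'\star B'$'' (for the arrows $[\alpha\beta]$) to ``$\notin A'\star B'$'' (for the arrows $\gamma$), and keeping this complementation straight is the delicate point. The only other arrows of $\wt\mu_k(Q)$ are those incident to $k$, namely the $\alpha^*$ and $\beta^*$ that may appear directly in $c$, and the arrows of $Q$ not touching $k$; the substitutions fix all of these and $\eps_{A',B'}$ scales the latter by $+1$. For the cycles to which this lemma is applied, namely the terms of $[S]$ for a $\rho$-admissible potential $S$, none passes through $k$, so no $\alpha^*$ or $\beta^*$ occurs directly and these arrows contribute no sign. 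Collecting the contributions of the $[\alpha\beta]$-arrows and the $\gamma$-arrows then yields the factor $(-1)^{N_{c,1}+N_{c,2}}$, proving the identity.
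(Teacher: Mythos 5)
Your proof is correct and takes essentially the same route as the paper: evaluate both sides arrow by arrow using multiplicativity, check that an arrow $[\alpha\beta]$ contributes $-1$ exactly when $(\alpha,\beta)\in A'\star B'$, and that for $\gamma\in\Gamma'$ the built-in minus sign in $\gamma\la-\beta^*_\gamma\alpha^*_\gamma$ flips the condition to $(\alpha_\gamma,\beta_\gamma)\notin A'\star B'$, matching $N_{c,1}$ and $N_{c,2}$. Your extra remark that occurrences of $\alpha^*$, $\beta^*$ directly in $c$ would contribute unaccounted signs (harmless here since the lemma is only applied to terms of $[S]$, which avoid the vertex $k$) is a valid and slightly more careful observation than the paper's own two-line argument.
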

\begin{proof}
Evaluating $\eps_{A',B'}$ on
$\sub_{\{\gamma \la
        -\beta^{*}_\gamma \alpha^{*}_\gamma\}_{\gamma \in \Gamma'}}(c)$,
we see that each occurrence of an arrow $[\alpha \beta]$
contributes a factor of $-1$ if $(\alpha, \beta) \in A' \star B'$
and $1$ otherwise.
Similarly, if $\gamma \in \Gamma'$ then
$\eps_{A',B'}(-\beta^*_\gamma \alpha^*_\gamma) = 
\beta^*_\gamma \alpha^*_\gamma$
if $(\alpha_\gamma, \beta_\gamma) \in A' \star B'$
and 
$\eps_{A',B'}(-\beta^*_\gamma \alpha^*_\gamma) = 
-\beta^*_\gamma \alpha^*_\gamma$ otherwise. The claim now follows.
\end{proof}

We deduce the following sufficient condition on a potential $S$
for $\eps_{A',B'}(W^*) = W^*_+$ to hold, generalizing the
condition~$(\star)$.

\begin{lemma} \label{l:eAB:S}
Let $\rho$ be a matching and let $S$ be a potential on $Q$, with terms
rotated such that they do not start at the vertex $k$.
If $N_{c,1} + N_{c,2}$ is even for each term $c$ of $[S]$, then
\[
\eps_{A',B'}
\left( \sub_{\{\gamma \la
        -\beta^{*}_\gamma \alpha^{*}_\gamma\}_{\gamma \in \Gamma'}}([S])
\right)
=
\sub_{\{\gamma \la
        \beta^{*}_\gamma \alpha^{*}_\gamma\}_{\gamma \in \Gamma'}}([S])
\]
and hence
$\eps_{A',B'}(W^*) = W^*_+$
\end{lemma}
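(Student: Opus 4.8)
The plan is to establish the displayed substitution identity first and then observe that the claimed equality $\eps_{A',B'}(W^*) = W^*_+$ follows immediately by combining it with Lemma~\ref{l:eAB:Wrho}. For the substitution identity, I would argue by linearity and continuity, reducing to the case where $[S]$ consists of a single cycle $c$ of $\wt{\mu}_k(Q)$ (recall that since the terms of $S$ do not start at $k$, the expression $[S]$ is a well-defined element of the complete path algebra of the pre-mutation, and its terms are honest cycles). For such a single term the work is already done: Lemma~\ref{l:eAB:c} gives exactly
\[
\eps_{A',B'}
\bigl( \sub_{\{\gamma \la
        -\beta^{*}_\gamma \alpha^{*}_\gamma\}_{\gamma \in \Gamma'}}(c)
\bigr)
=
(-1)^{N_{c,1}+N_{c,2}}
\sub_{\{\gamma \la
        \beta^{*}_\gamma \alpha^{*}_\gamma\}_{\gamma \in \Gamma'}}(c),
\]
and the hypothesis that $N_{c,1}+N_{c,2}$ is even for every term $c$ of $[S]$ kills the sign, giving $(-1)^{N_{c,1}+N_{c,2}}=1$ on each term. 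Summing over the terms of $[S]$ then yields the first displayed equality of the lemma.

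Next I would deduce the consequence for $W^*$. By definition $W^* = W_{\rho^*} + S^*$ where, from~\eqref{e:Sdual}, $S^* = \sub_{\{\gamma \la -\beta^{*}_\gamma \alpha^{*}_\gamma\}_{\gamma \in \Gamma'}}([S])$, and $W^*_+ = W_{\rho^*} + \sub_{\{\gamma \la \beta^{*}_\gamma \alpha^{*}_\gamma\}_{\gamma \in \Gamma'}}([S])$. Applying $\eps_{A',B'}$, which is an algebra endomorphism (indeed a right equivalence), I split
\[
\eps_{A',B'}(W^*) = \eps_{A',B'}(W_{\rho^*}) + \eps_{A',B'}(S^*).
\]
The first summand is $W_{\rho^*}$ by Lemma~\ref{l:eAB:Wrho}, and the second is precisely $\sub_{\{\gamma \la \beta^{*}_\gamma \alpha^{*}_\gamma\}_{\gamma \in \Gamma'}}([S])$ by the substitution identity just established. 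Adding these gives $W^*_+$, as required.

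The only genuine subtlety, rather than an obstacle, is the reduction to a single cycle: one must check that $\eps_{A',B'}$ and the two substitution operators are continuous and $K$-linear so that the termwise identity propagates to the (possibly infinite) sum defining $[S]$. This is routine since all three maps are continuous algebra endomorphisms of the complete path algebra, and the parity hypothesis is imposed uniformly on every term $c$ of $[S]$, so no cancellation or convergence issue arises. I expect essentially all the weight of the argument to rest on Lemma~\ref{l:eAB:c}, which has already been proved, so the present lemma is really a bookkeeping assembly of the preceding two lemmas together with the additive splitting of $\eps_{A',B'}$ over $W^* = W_{\rho^*} + S^*$.
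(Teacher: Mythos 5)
Your proof is correct and follows exactly the paper's route: the paper's own proof is the one-line instruction ``Use Lemma~\ref{l:eAB:Wrho} and Lemma~\ref{l:eAB:c},'' and your write-up is simply a careful expansion of that, applying Lemma~\ref{l:eAB:c} termwise (with the parity hypothesis cancelling the sign), summing by linearity and continuity, and invoking Lemma~\ref{l:eAB:Wrho} for the $W_{\rho^*}$ summand. No issues.
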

\begin{proof}
Use Lemma~\ref{l:eAB:Wrho} and Lemma~\ref{l:eAB:c}.
\end{proof}

We now give a condition on the matching that implies,
for any admissible potential, that
$W^*$ is right equivalent to $W^*_+$.

\begin{prop} \label{p:CABsign}
Let $\rho = (C', \Gamma', C' \xrightarrow{\sim} \Gamma')$ be a matching.
If $C' = A' \star B'$ for some $A' \subseteq A$ and $B' \subseteq B$,
then for any $\rho$-admissible potential $S$ on $Q$
we have $\eps_{A',B'}(W^*)=W^*_+$,
hence $(Q^*,W^*)$ is right equivalent to $(Q^*,W^*_+)$.
\end{prop}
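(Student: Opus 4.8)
The plan is to exhibit $\eps_{A',B'}$ as the explicit right equivalence realizing $W^* \to W^*_+$, and to reduce the claim to the combinatorial condition of Lemma~\ref{l:eAB:S}. The key observation is that the hypothesis $C' = A' \star B'$ is precisely what forces the sign-count $N_{c,1} + N_{c,2}$ to be even on every relevant cycle, so the bulk of the work is arranging the bookkeeping to invoke the earlier lemma.

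First I would recall that $\eps_{A',B'}$ is a right equivalence on $\wh{KQ^*}$, so it suffices to show $\eps_{A',B'}(W^*) = W^*_+$; the right equivalence of $(Q^*,W^*)$ and $(Q^*,W^*_+)$ then follows immediately. By Lemma~\ref{l:eAB:S}, it is enough to verify that $N_{c,1} + N_{c,2}$ is even for each term $c$ of $[S]$. So the crux is the following counting claim: when $C' = A' \star B'$, the total contribution of the two sign-sources to any cycle coming from an admissible term is even.

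The heart of the argument is therefore purely combinatorial. I would analyze how a single term $c$ of $[S]$ traverses the vertex $k$. Since $S$ is $\rho$-admissible, no term of $S$ starts or ends at $k$, so as a cyclic word $c$ enters and leaves $k$ the same number of times, and each such passage is recorded either as an arrow $[\alpha\beta]$ (when the corresponding pair lies in $C \setminus C'$) or, after the substitution in $W^*$, as a factor $\beta^*_\gamma\alpha^*_\gamma$ coming from some $\gamma \in \Gamma'$ (when the pair lies in $C'$). The quantity $N_{c,1}$ counts passages of the first kind whose pair lies in $A' \star B'$, while $N_{c,2}$ counts passages of the second kind whose pair lies \emph{outside} $A' \star B'$. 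Using $C' = A' \star B'$, membership in $C'$ and membership in $A' \star B'$ coincide, so a passage contributes to $N_{c,1}$ exactly when its pair $(\alpha,\beta)$ lies in $(A' \star B') \setminus C' = \varnothing$, hence $N_{c,1} = 0$, and it contributes to $N_{c,2}$ exactly when its pair lies in $C' \setminus (A' \star B') = \varnothing$, hence $N_{c,2} = 0$. Thus $N_{c,1} + N_{c,2} = 0$ is trivially even, and Lemma~\ref{l:eAB:S} applies.

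The step I expect to be the main obstacle is pinning down the exact definitions of $N_{c,1}$ and $N_{c,2}$ relative to the set $A' \star B'$ versus $C'$, since the whole proof collapses to a tautology only once one checks that under $C' = A' \star B'$ these two sets coincide and the two counts individually vanish; getting the parenthetical conditions $(\alpha,\beta) \in A' \star B'$ and $(\alpha_\gamma,\beta_\gamma) \notin A' \star B'$ aligned correctly with the index sets $C \setminus C'$ and $\Gamma'$ is where a sign or an inclusion could slip. Once that alignment is confirmed, the conclusion $\eps_{A',B'}(W^*) = W^*_+$ is immediate from Lemma~\ref{l:eAB:S}, and the asserted right equivalence follows since $\eps_{A',B'}$ is an automorphism of $\wh{KQ^*}$ fixing all vertices.
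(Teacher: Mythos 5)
Your proof is correct and follows the same route as the paper: both arguments observe that admissibility forces every arrow $[\alpha\beta]$ occurring in $[S]$ to have $(\alpha,\beta)\in C\setminus C'$ so that $N_{c,1}=0$ when $C'=A'\star B'$, that $(\alpha_\gamma,\beta_\gamma)\in C'=A'\star B'$ forces $N_{c,2}=0$, and then invoke Lemma~\ref{l:eAB:S}. The identification of the contributing index sets as $(A'\star B')\setminus C'$ and $C'\setminus(A'\star B')$ is exactly the alignment the paper uses, so there is nothing to add.
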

\begin{proof}
If $C' = A' \star B'$ and $S$ is admissible with respect to $C'$, then
the number $N_{c,1}$ vanishes for any term $c$ of $[S]$.
Since $(\alpha_\gamma,\beta_\gamma) \in C'$ for any $\gamma \in \Gamma'$,
the numbers $N_{c,2}$ always vanish and the result is a consequence of
Lemma~\ref{l:eAB:S}.
\end{proof}

We list a few specific cases:

\begin{cor}
If $|C_{ij}| \leq |\Gamma_{ij}|$ for any $i,j \neq k$ then the conclusion
of Proposition~\ref{p:CABsign} holds for any maximal matching $\rho$.
\end{cor}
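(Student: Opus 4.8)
The plan is to verify that under the hypothesis $|C_{ij}| \leq |\Gamma_{ij}|$, every maximal matching automatically satisfies the decomposition condition $C' = A' \star B'$ required by Proposition~\ref{p:CABsign}, and then simply to invoke that proposition.

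First I would use maximality together with the inequality to pin down $C'$ exactly. By Definition~\ref{def:matching:max}, maximality gives $|C'_{ij}| = \min \{|C_{ij}|, |\Gamma_{ij}|\}$ for all $i,j \neq k$. The hypothesis $|C_{ij}| \leq |\Gamma_{ij}|$ forces this minimum to equal $|C_{ij}|$, so $|C'_{ij}| = |C_{ij}|$. Since $C'_{ij} \subseteq C_{ij}$ is a subset of equal (finite) cardinality, it must coincide with $C_{ij}$, and taking the union over all pairs $i,j$ yields $C' = C$.

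Next I would exhibit the required subsets $A' \subseteq A$ and $B' \subseteq B$. Recall that $C = A \times B$. Choosing $A' = A$ and $B' = \varnothing$, the definition of the $\star$-operation gives $A' \star B' = (A \times (B \setminus \varnothing)) \cup ((A \setminus A) \times \varnothing) = (A \times B) \cup \varnothing = C$. Hence $C' = C = A' \star B'$, so the hypothesis of Proposition~\ref{p:CABsign} is met and its conclusion follows at once: for any $\rho$-admissible potential $S$ we obtain $\eps_{A,\varnothing}(W^*) = W^*_+$, whence $(Q^*, W^*)$ is right equivalent to $(Q^*, W^*_+)$.

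There is no genuine obstacle here, as the statement is an immediate specialization of Proposition~\ref{p:CABsign}. The only points requiring a moment's care are the passage from the cardinality equality $|C'_{ij}| = |C_{ij}|$ to the set equality $C'_{ij} = C_{ij}$, which relies on the finiteness of these sets (guaranteed since $Q$ is a finite quiver), and the observation that the trivial choice $B' = \varnothing$ already realizes the full product $A \times B$ as $A' \star B'$. Symmetrically, the choice $A' = \varnothing$, $B' = B$ would serve equally well.
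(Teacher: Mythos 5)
Your proof is correct and is essentially the paper's own argument: maximality plus the hypothesis $|C_{ij}| \leq |\Gamma_{ij}|$ forces $C'=C$, and the choice $A'=A$, $B'=\varnothing$ (or its mirror) realizes $C$ as $A' \star B'$, so Proposition~\ref{p:CABsign} applies. You have merely spelled out the cardinality argument that the paper leaves implicit.
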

\begin{proof}
In this case, $\rho$ being maximal is equivalent to the condition $C'=C$,
so one can take $A'=A$ and $B'=\varnothing$ (or $A'=\varnothing$ and $B'=B$).
\end{proof}

\begin{cor} \label{c:onearrow}
If the vertex $k$ has only one incoming arrow or only one outgoing arrow,
then the conclusion of Proposition~\ref{p:CABsign} holds for any matching
$\rho$.
\end{cor}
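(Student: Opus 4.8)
The claim is the corollary: if the vertex $k$ has only one incoming arrow or only one outgoing arrow, then for any matching $\rho$ the conclusion of Proposition~\ref{p:CABsign} holds, i.e.\ $(Q^*,W^*)$ is right equivalent to $(Q^*,W^*_+)$. My plan is to deduce this directly from Proposition~\ref{p:CABsign} by exhibiting subsets $A' \subseteq A$ and $B' \subseteq B$ for which the set $C'$ of \emph{any} matching $\rho$ can be written in the form $A' \star B'$. Recall that $C = A \times B$ and that $A' \star B' = (A' \times (B \setminus B')) \cup ((A \setminus A') \times B')$, so once such a representation is found, the corollary is immediate.

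\emph{The key step} is the combinatorial observation about the structure of $C'$ when one of $A$ or $B$ is a singleton. Suppose first that $k$ has exactly one incoming arrow, so $A = \{\alpha\}$ is a singleton. Then $C = \{\alpha\} \times B$, and any subset $C' \subseteq C$ has the form $\{\alpha\} \times B''$ for a uniquely determined subset $B'' \subseteq B$. I would then take $A' = A = \{\alpha\}$ and $B' = B \setminus B''$. With these choices, $A \setminus A' = \varnothing$, so the second component $(A \setminus A') \times B'$ of $A' \star B'$ is empty, while the first component is $A' \times (B \setminus B') = \{\alpha\} \times B'' = C'$. Hence $C' = A' \star B'$, exactly as required by the hypothesis of Proposition~\ref{p:CABsign}. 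The dual case, where $k$ has exactly one outgoing arrow and $B = \{\beta\}$ is a singleton, is handled symmetrically: write $C' = A'' \times \{\beta\}$ for a unique $A'' \subseteq A$, and take $B' = B = \{\beta\}$ and $A' = A \setminus A''$, so that $B \setminus B' = \varnothing$ kills the first component of $A' \star B'$ and the second component becomes $(A \setminus A') \times \{\beta\} = A'' \times \{\beta\} = C'$.

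\emph{The main obstacle}, such as it is, lies in getting the bookkeeping of the $\star$-operation right: one must be careful that $A' \star B'$ is the symmetric-difference-like combination above and not a plain product, so the complementations in the choice of $A'$ or $B'$ are essential and must be matched to whichever of $A$, $B$ is the singleton. There is no analytic or algebraic difficulty beyond this; once the representation $C' = A' \star B'$ is established, I would simply invoke Proposition~\ref{p:CABsign} to conclude that $\eps_{A',B'}(W^*) = W^*_+$ and hence that $(Q^*,W^*)$ and $(Q^*,W^*_+)$ are right equivalent, for any $\rho$-admissible potential $S$. Since the argument made no use of maximality of $\rho$, the conclusion indeed holds for \emph{any} matching, which is the strengthening over the preceding corollary that this statement records.
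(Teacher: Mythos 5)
Your proposal is correct and takes essentially the same route as the paper: in the one-incoming-arrow case your choice $A'=A=\{\alpha\}$, $B'=B\setminus B''$ is exactly the paper's $B'=\{\beta\in B : (\alpha,\beta)\in C\setminus C'\}$, and the outgoing case is handled dually. Nothing is missing.
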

\begin{proof}
Assume that $k$ has only one incoming arrow $\alpha$, the other case being
dual.
In this case we can write any subset $C' \subseteq C$  as $C' = A' \star B'$
for $A' = A = \{\alpha\}$ and
$B' = \{ \beta \in B \,:\, (\alpha,\beta) \in C \setminus C' \}$.
\end{proof}

\begin{example}
Let $Q$ be a quiver as in Example~\ref{ex:matching} and assume that it has
no loops or $2$-cycles. For the maximal matching $\rho_{12}$, the subset
$C' = \{(\alpha_1, \beta_1), (\alpha_2, \beta_2)\}$ can be written as
$A' \star B'$ for $A' = \{\alpha_1\}$ and $B' = \{\beta_2\}$.
Hence if $S$ is any potential on~$Q$ that does not contain the paths
$\alpha_1 \beta_1$ and $\alpha_2 \beta_2$, then the mutation of
the potential $\alpha_1 \beta_1 \gamma_1 + \alpha_2 \beta_2 \gamma_2 + S$
at the vertex $k$ is right equivalent to the potential
\[
[\alpha_1 \beta_2] \beta^*_2 \alpha^*_1 + 
[\alpha_2 \beta_1] \beta^*_1 \alpha^*_2 +
\sub_{\gamma_1 \la \beta^*_1 \alpha^*_1, \,
\gamma_2 \la \beta^*_2 \alpha^*_2}([S])
\]
on the quiver $Q^*$, part of which is shown in Figure~\ref{fig:Qmatch}.
\end{example}

\section{Mutations of potentials on $X_7$ and related quivers}
\label{sec:X7:mut}

\subsection{Mutation at a side vertex}
\label{ssec:mut:side}

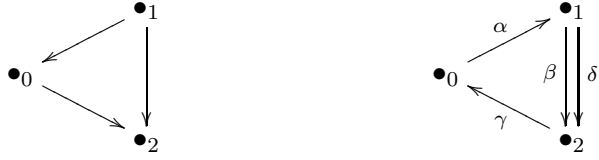
\begin{figure}
\begin{align*}
\xymatrix@=1pc{
&& {\bullet_1} \ar[dll] \ar[dd] \\
{\bullet_0} \ar[drr] \\
&& {\bullet_2}
}
&&
\xymatrix@=1pc{
&& {\bullet_1} \ar@<-0.2pc>[dd]_{\beta} \ar@<0.2pc>[dd]^{\delta} \\
{\bullet_0} \ar[urr]^{\alpha} \\
&& {\bullet_2} \ar[ull]^{\gamma}
}
\end{align*}
\caption{The quivers $\wt{A}_2$ (left) and $T_3$ (right).}
\label{fig:A2}
\end{figure}

The mutation class of the extended Dynkin quiver $\wt{A}_2$ consists
of the two quivers $\wt{A}_2$ and $T_3$ shown in Figure~\ref{fig:A2}.
In this section we consider quivers formed by gluing $T_3$ into
another quiver. This is done formally as follows;
a \emph{pointed quiver} is a pair $(Q,v)$ consisting of
a quiver $Q$ and a vertex $v$ of $Q$.
If $(Q',v')$ and $(Q'',v'')$ are pointed quivers, we can form their
\emph{gluing} by taking the disjoint union of the quivers $Q'$ and $Q''$
and then identifying the vertices $v'$ and $v''$.

Let $Q$ be a quiver without loops and $2$-cycles and let $v$ be a vertex
of $Q$. Consider the quiver $\wt{Q}$ which is the gluing of $(Q,v)$ and
$(T_3,0)$ identifying the vertices $v$ and $0$,
as in the following picture:
\[
\xymatrix@=1pc{
&&& {\bullet_1} \ar@<-0.2pc>[dd]_{\beta} \ar@<0.2pc>[dd]^{\delta} \\
Q {\xy \ellipse(14,7){} \endxy} &
{\bullet_0} \ar[urr]^{\alpha} \\
&&& {\bullet_2} \ar[ull]^{\gamma}
}
\]

Denote by $B = \alpha \beta \gamma$ and $\Delta = \alpha \delta \gamma$
the two $3$-cycles passing through the vertex $1$.

\begin{prop} \label{p:glue:mut1}
Let $S$ be a potential on $\wt{Q}$ and assume that $\beta$ does not appear
in $S$.
Then the mutation $\mu_1(\wt{Q},B+S)$ is isomorphic to $(\wt{Q},B+S)$.
\end{prop}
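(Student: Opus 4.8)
The plan is to apply Theorem~\ref{t:QPmut} with $k=1$ as the mutation vertex and to set up a suitable matching so that the mutation can be computed combinatorially. First I would identify the relevant combinatorial data around the vertex $1$. In $\wt{Q}$, the arrows ending at $1$ are $\delta, \beta \colon 1 \to 2$ — wait, I must be careful with directions: $\alpha \colon 0 \to 1$ is the single incoming arrow, while $\beta, \delta \colon 1 \to 2$ are the two outgoing arrows, and $\gamma \colon 2 \to 0$ closes both $3$-cycles. So $A = \{\alpha\}$, $B_{\mathrm{out}} = \{\beta, \delta\}$, and the set $C$ of length-two paths through $1$ is $\{(\alpha,\beta), (\alpha,\delta)\}$, while $\Gamma_{02}$ (arrows $2 \to 0$) contains $\gamma$. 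Since $B = \alpha\beta\gamma$ is the $3$-cycle to be consumed, the natural matching $\rho$ has $C' = \{(\alpha,\beta)\}$ matched to $\Gamma' = \{\gamma\}$, so that $W_\rho = \alpha\beta\gamma = B$ and $W = W_\rho + S$ with $S$ the given $\rho$-admissible potential (admissibility requires that no term of $S$ start at $1$, which holds after rotation, and that $\alpha\beta$ not appear in $S$, which is guaranteed since $\beta$ does not appear in $S$ at all).

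The key step is then to verify that this matching $\rho$ is maximal and that $Q^* \cong \wt{Q}$ as quivers. Maximality follows because $\Gamma_{02} = \{\gamma\}$ forces $|\Gamma'_{02}| \le 1$, and with $|C_{02}| = 2$ we get $\min\{|C_{02}|,|\Gamma_{02}|\} = 1 = |C'_{02}|$; one checks the remaining pairs of vertices contribute nothing. To identify $Q^*$ I would trace through Definition~\ref{def:Qdual}: replacing $\alpha$ by $\alpha^* \colon 1 \to 0$ and $\beta, \delta$ by $\beta^*, \delta^* \colon 2 \to 1$, adjoining the arrow $[\alpha\delta] \colon 0 \to 2$ coming from the unmatched pair $(\alpha,\delta) \in C \setminus C'$, and deleting $\gamma \in \Gamma'$. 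A direct inspection shows the resulting local configuration at vertex $1$ is again of type $T_3$ (with $[\alpha\delta]$ playing the role of the new incoming arrow and $\beta^*, \delta^*$ the outgoing ones, and $\alpha^*$ closing the cycles), so $Q^* \cong \wt{Q}$ via an explicit relabelling of arrows.

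With maximality established, Theorem~\ref{t:QPmut}\eqref{it:QWmut} gives $\mu_1(\wt{Q}, W) \cong (Q^*, W^*)$ where $W^* = W_{\rho^*} + S^*$, and I would then compute $W^*$ explicitly. The encoding $W_{\rho^*} = \sum_{(\alpha',\beta') \in C \setminus C'}[\alpha'\beta']\beta'^*\alpha'^* = [\alpha\delta]\,\delta^*\alpha^*$, and $S^* = \sub_{\gamma \la -\beta^*\alpha^*}([S])$. Because $\beta$ does not appear in $S$, no path $\alpha\beta$ or $\alpha\delta$ needs rewriting in $S$ in a way that interferes, and the only arrow of $\Gamma'$ is $\gamma$; moreover since $k=1$ has only the single incoming arrow $\alpha$, Corollary~\ref{c:onearrow} applies and lets me pass to the unsigned potential $W^*_+$ up to right equivalence, eliminating the minus sign in the substitution $\gamma \la \beta^*\alpha^*$. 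Under the isomorphism $Q^* \cong \wt{Q}$ identifying $[\alpha\delta]$ with the glued arrow $\alpha$, the pair $(\delta^*, \beta^*)$ with $(\beta,\delta)$ and $\gamma \leftrightarrow \gamma$, the term $[\alpha\delta]\delta^*\alpha^*$ becomes the new $B$-cycle and the substituted copy of $S$ becomes $S$ again, yielding an isomorphism $(Q^*, W^*_+) \cong (\wt{Q}, B+S)$.

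The main obstacle I anticipate is the bookkeeping of the identification $Q^* \cong \wt{Q}$ together with its effect on the transformed potential $S^*$: one must check that the relabelling of the $T_3$-subquiver is compatible with how $\sub_{\gamma \la \beta^*\alpha^*}$ rewrites every occurrence of $\gamma$ appearing in terms of $S$, and in particular that the paths of the form $\gamma(\cdots)$ in $[S]$ (those passing from the $T_3$-block back into $Q$ through $\gamma$) are correctly matched to the corresponding paths in $\wt{Q}$ under the isomorphism. This is a purely combinatorial check because, by the Remark following Theorem~\ref{t:QPmut}, the mutation of a signed sum of cycles is again a signed sum of cycles, but verifying that the resulting potential coincides \emph{on the nose} (not merely up to right equivalence) with $B+S$, so as to conclude an isomorphism rather than a weaker equivalence, is the delicate point.
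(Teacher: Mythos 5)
Your proposal is correct and follows essentially the same route as the paper's proof: the same maximal matching $C'=\{(\alpha,\beta)\}$, $\Gamma'=\{\gamma\}$ encoded by $W_\rho=B$, the same appeal to Theorem~\ref{t:QPmut} together with Corollary~\ref{c:onearrow} to remove the sign, and the same identification $Q^*\cong\wt{Q}$ exchanging the vertices $1$ and $2$. The only slip is the phrase ``$\gamma\leftrightarrow\gamma$'' in your final identification (the arrow $\gamma$ lies in $\Gamma'$ and is deleted from $Q^*$; the intended correspondence is $\alpha^*\leftrightarrow\gamma$), but your computation that $[\alpha\delta]\,\delta^*\alpha^*$ becomes the new $B$-cycle shows you have the correct dictionary in mind.
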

\begin{proof}
In the notations of Section~\ref{ssec:match},
the sets $C$ and $\Gamma$ are given by
$C = \{(\alpha,\beta), (\alpha,\delta) \}$ and $\Gamma = \{\gamma\}$.
Choose a maximal matching $\rho$ with
$C' = \{(\alpha,\beta)\}$ and $\Gamma'= \{\gamma\} = \Gamma$,
so that $W_\rho = B$.
Since there is only one incoming arrow to the vertex $1$,
a potential $S$ on $\wt{Q}$ is $\rho$-admissible if and only if
$\beta$ does not appear in $S$.

Computing the dual quiver $\wt{Q}^*$ with respect to $\rho$, we get
\[
\xymatrix@=1pc{
&&& {\bullet_1} \ar[dll]_{\alpha^*} \\
Q {\xy \ellipse(14,7){} \endxy} &
{\bullet_0} \ar[drr]_{[\alpha \delta]} \\
&&& {\bullet_2} \ar@<0.2pc>[uu]^{\beta^*} \ar@<-0.2pc>[uu]_{\delta^*}
}
\]
with the arrows of $Q$ unchanged, and the subsets defining the dual
matching $\rho^*$ are given by
$(C^*)' = \{(\delta^*, \alpha^*)\}$ and $(\Gamma^*)' = \{[\alpha \delta]\}$.

By Theorem~\ref{t:QPmut} and Corollary~\ref{c:onearrow},
we deduce that the mutation $\mu_1(\wt{Q}, W)$
of the potential $W = B + S$ on $\wt{Q}$
is given by $(\wt{Q}^*, W^*_+)$,
where $W^*_+ = [\alpha \delta] \delta^* \alpha^*
+ \sub_{\gamma \la \beta^* \alpha^*}([S])$.
Thus, in order to complete the proof, we have to
construct a continuous isomorphism of the complete path algebras
of $\wt{Q}$ and $\wt{Q^*}$ sending $W$ to $W^*_+$.

Indeed, the quivers $\wt{Q}$ and $\wt{Q}^*$ are isomorphic by sending
\begin{align*}
\alpha \mapsto [\alpha \delta] &,&
\beta \mapsto \delta^* &,& 
\gamma \mapsto \alpha^* &,&
\delta \mapsto \beta^* ,
\end{align*}
fixing all the other arrows, exchanging the vertices $1$ and $2$ and
fixing the other vertices. This isomorphism
induces an isomorphism between the respective complete path algebras,
which we will denote by $\psi$.

If $w$ is a cycle inside $Q$, then none of the arrows
$\alpha, \beta, \gamma, \delta$ appears in $w$ and we trivially have
$\psi(w) = w = \sub_{\gamma \la \beta^* \alpha^*}([w])$.
We also have
\[
\psi(\Delta) = [\alpha \delta] \beta^* \alpha^* = 
\sub_{\gamma \la \beta^* \alpha^*}([\Delta]),
\]
so if $\beta$ does not appear in $S$, then any term of $S$ is 
rotationally equivalent to a concatenation of cycles which are either
inside $Q$ or equal to $\Delta$, hence
$\psi(S) = \sub_{\gamma \la \beta^* \alpha^*}([S])$.

Finally, $\psi(B) = [\alpha \delta] \delta^* \alpha^*$,
so
\[
\psi(W) = \psi(B+S) = [\alpha \delta] \delta^* \alpha^* +
\sub_{\gamma \la \beta^* \alpha^*}([S]) 
= W^*_+
\]
as required.
\end{proof}

We record also the dual statement whose proof is similar and thus omitted.
\begin{prop} \label{p:glue:mut2}
Let $S$ be a potential on $\wt{Q}$ and assume that $\beta$ does not appear
in $S$. Then the mutation $\mu_2(\wt{Q},B+S)$ is isomorphic to $(\wt{Q},B+S)$.
\end{prop}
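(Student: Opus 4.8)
The statement is the mirror image of Proposition~\ref{p:glue:mut1}, obtained by interchanging the roles of the unique arrow $\alpha$ entering the glued triangle and the unique arrow $\gamma$ leaving it, so the plan is to run the very same argument with the vertex $1$ replaced by the vertex $2$. At the vertex $2$ the relevant sets of Section~\ref{ssec:match} are $C = \{(\beta,\gamma),(\delta,\gamma)\}$ and $\Gamma = \{\alpha\}$. Since $\gamma$ is the only arrow out of $2$, in any cyclic word an occurrence of $\beta$ is immediately followed by $\gamma$; hence $\beta\gamma$ appears in $S$ if and only if $\beta$ does, and (after rotating terms so that none starts at $2$) a potential $S$ is $\rho$-admissible precisely when $\beta$ does not appear in it, which is the hypothesis. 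I would take the maximal matching $\rho$ with $C' = \{(\beta,\gamma)\}$ and $\Gamma' = \{\alpha\}$, so that $W_\rho = \beta\gamma\alpha$ is cyclically equivalent to $B = \alpha\beta\gamma$, and invoke Corollary~\ref{c:onearrow} (the vertex $2$ has a single outgoing arrow) to pass to the sign-free dual potential $W^*_+$.

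Next I would record the dual quiver $\wt{Q}^*$ of Definition~\ref{def:Qdual}: besides the arrows of $Q$ it carries $\gamma^*\colon 0\to 2$, two arrows $\beta^*,\delta^*\colon 2\to 1$ and $[\delta\gamma]\colon 1\to 0$, while $\alpha$ is deleted. By Theorem~\ref{t:QPmut}(b) together with Corollary~\ref{c:onearrow}, the mutation $\mu_2(\wt{Q},B+S)$ equals $(\wt{Q}^*, W^*_+)$ with
\[
W^*_+ = [\delta\gamma]\,\gamma^*\delta^* + \sub_{\alpha \la \gamma^*\beta^*}([S]),
\]
where $[S]$ is formed as in Lemma~\ref{l:adm:Q'} by replacing each subpath $\delta\gamma$ occurring in $S$ by the arrow $[\delta\gamma]$.

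Finally I would exhibit an isomorphism of quivers with potential $(\wt{Q},B+S)\cong(\wt{Q}^*,W^*_+)$. The quiver isomorphism $\psi$ exchanges the vertices $1$ and $2$, fixes $Q$ and all its arrows, and sends
\[
\alpha\mapsto\gamma^*,\qquad \beta\mapsto\delta^*,\qquad \gamma\mapsto[\delta\gamma],\qquad \delta\mapsto\beta^*.
\]
A direct check gives $\psi(B) = \gamma^*\delta^*[\delta\gamma]$, which is cyclically equivalent to $[\delta\gamma]\gamma^*\delta^*$, and $\psi(\Delta) = \gamma^*\beta^*[\delta\gamma] = \sub_{\alpha\la\gamma^*\beta^*}([\Delta])$, while on any cycle lying inside $Q$ both $\psi$ and $\sub_{\alpha\la\gamma^*\beta^*}([\,\cdot\,])$ act as the identity. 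The main point of the argument, and the step I expect to carry the only real content, is the combinatorial observation dual to the one in Proposition~\ref{p:glue:mut1}: because $\gamma$ is the unique arrow leaving $2$ and $\beta$ does not appear in $S$, every term of $S$ is rotationally equivalent to a concatenation of cycles each of which either lies in $Q$ or equals $\Delta = \alpha\delta\gamma$. Since $\psi$ and the substitution agree on all of these building blocks, multiplicativity and continuity give $\psi(S) = \sub_{\alpha\la\gamma^*\beta^*}([S])$, whence $\psi(B+S)$ is cyclically equivalent to $W^*_+$. Combining this with the identification of the mutation above yields $\mu_2(\wt{Q},B+S)\cong(\wt{Q},B+S)$, as claimed.
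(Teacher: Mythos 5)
Your proof is correct and is exactly the dual argument the paper has in mind: the paper omits the proof of Proposition~\ref{p:glue:mut2}, stating only that it is "similar" to that of Proposition~\ref{p:glue:mut1}, and your write-up carries out that dualization faithfully (correct sets $C$, $\Gamma$, maximal matching, appeal to Theorem~\ref{t:QPmut} and Corollary~\ref{c:onearrow}, and the quiver isomorphism $\psi$ matching $W^*_+$). No gaps.
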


The next proposition shows that if $\beta$ does not appear in a potential
$S$, we can deform $B+S$ by adding arbitrary terms containing at
least two consecutive cycles inside $T_3$.

\begin{prop} \label{p:glue:deform}
Let $S$ be a potential on $\wt{Q}$ and assume that
$\beta$ does not appear in $S$.
Then for any scalars $\lambda, \lambda' \in K$ with $\lambda \neq 0$ and
any four potentials $S', S'', S''', S''''$ in $e_0 \wh{K\wt{Q}} e_0$,
the potentials $B + S$ and
$
\lambda B + \lambda' \Delta + S
+ B^2 S' + B \Delta S'' + \Delta B S''' + \Delta^2 S''''
$
are right equivalent.
\end{prop}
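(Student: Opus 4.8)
The plan is to realise the desired right equivalence as a composition of two substitution automorphisms of $\wh{K\wt{Q}}$ that act only on the arrow $\beta$. Two features of the setup make this possible: $\beta$ does not appear in $S$ (so substituting $\beta$ leaves $S$ untouched, by the first substitution lemma), and $\beta$ occurs exactly once, and linearly, in $B=\alpha\beta\gamma$. Write the target potential as $W' = \lambda B + \lambda'\Delta + S + R$ with $R = B^2 S' + B\Delta S'' + \Delta B S''' + \Delta^2 S''''$. The goal is to first normalise the quadratic part $\lambda B + \lambda'\Delta$ to $B$, and then to absorb the remaining higher-order terms, which all contain the subpath $\gamma\alpha$.

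First I would apply $\vphi_1 = \sub_{\beta \la \frac{1}{\lambda}\beta - \frac{\lambda'}{\lambda}\delta}$. Since $\delta$ is parallel to $\beta$ and the coefficient of $\beta$ equals $\frac{1}{\lambda}\neq 0$, this is an automorphism by Lemma~\ref{l:sub:gTauto:m2}. Because $\vphi_1$ alters only $\beta$ and fixes $\alpha,\gamma,\delta$, we get $\vphi_1(B)=\frac{1}{\lambda}B-\frac{\lambda'}{\lambda}\Delta$, while $\vphi_1(\Delta)=\Delta$ and $\vphi_1(S)=S$ (the latter since $\beta\notin S$). Hence $\vphi_1(\lambda B + \lambda'\Delta)=B$, and $\vphi_1(W')=B+S+R_1$ with $R_1:=\vphi_1(R)$. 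The bookkeeping point is that $\vphi_1$ is graded (it replaces $\beta$ by a degree-one element), so after expanding $\vphi_1(B)^{2}$, etc., the potential $R_1$ is once more a sum of terms of the form (two consecutive cycles from $\{B,\Delta\}$) followed by an element of $e_0\wh{K\wt{Q}}e_0$; in particular every term of $R_1$ still contains the subpath $\gamma\alpha$ at the junction of its two leading cycles, and all terms of $R_1$ have length at least $6$.

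The second step removes $R_1$, and this is where the structure pays off. Each term $t$ of $R_1$ factors as $t = a(\gamma\alpha)b$ with $a\colon 0\to 2$ and $b\colon 1\to 0$, and rotating gives $t$ cyclically equivalent to $\alpha(ba)\gamma$, where $u_t:=ba$ is a path parallel to $\beta$. Summing over the (convergent) family of terms, there is $U\in\fm^4$ parallel to $\beta$ with $R_1$ cyclically equivalent to $\alpha U\gamma$; morally this says $R_1$ lies in the ideal generated by $\gamma\alpha=\partial_\beta(B+S)$. Now set $\vphi'=\sub_{\beta \la \beta+U}$, a unitriangular automorphism by Lemma~\ref{l:sub:gTauto:m2} since $U\in\fm^2$. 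As $\beta$ occurs exactly once in $B$ and not at all in $S$, there are \emph{no} higher-order corrections: $\vphi'(B+S)=\alpha(\beta+U)\gamma+S = B+\alpha U\gamma+S$, which is cyclically equivalent to $B+S+R_1$. Thus $B+S$ is right equivalent to $B+S+R_1$, and composing with $\vphi_1^{-1}$ shows $B+S$ is right equivalent to $W'$.

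The main obstacle is located entirely in the first step: one must verify that normalising $\lambda B+\lambda'\Delta$ does not spoil the special shape of the higher terms, i.e.\ that $\vphi_1(R)$ is again a sum of terms carrying two consecutive $\{B,\Delta\}$-cycles (equivalently, a factor $\gamma\alpha$). Once this structural form is confirmed, the second step is clean precisely because $\beta$ is linear in $B$ and absent from $S$, so $\sub_{\beta \la \beta+U}$ produces the single term $\alpha U\gamma$ with no error. The only analytic point is the convergence of $U=\sum_t u_t$, which holds because $R_1$ has all terms of length at least $6$ and $\wh{K\wt{Q}}$ is complete.
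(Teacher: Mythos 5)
Your proof is correct. The paper's own argument uses the same basic mechanism---a substitution on the arrow $\beta$ justified by Lemma~\ref{l:sub:gTauto:m2}, exploiting that $\beta$ occurs exactly once and linearly in $B$ and not at all in $S$---but runs it in the opposite direction and in a single step: it applies $\sub_{\beta \la T}$ with
$T = \lambda \beta + \lambda' \delta + \beta \gamma S' \alpha \beta + \delta \gamma S'' \alpha \beta + \beta \gamma S''' \alpha \delta + \delta \gamma S'''' \alpha \delta$
directly to $B+S$, obtaining $\lambda B + \lambda' \Delta + B S' B + \Delta S'' B + B S''' \Delta + \Delta S'''' \Delta + S$, which is cyclically equivalent to the target. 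That one explicit choice of $T$ packages your two steps (normalizing the linear part, then absorbing the higher-order terms) into a single substitution and sidesteps the two points you rightly flag as needing care: checking that $\vphi_1$ preserves the ``two leading $\{B,\Delta\}$-cycles'' shape of $R$, and rotating $R_1$ term by term into the form $\alpha U \gamma$ (where one should also observe that distinct terms rotate to distinct paths, so the coefficient of each path in $U$ is a finite sum and $U$ is a well-defined element of $\wh{K\wt{Q}}$). Your factored, inverse-direction version costs a little more bookkeeping but is equally valid, and arguably makes it more transparent that the higher-order terms lie in the ideal generated by $\partial_\beta(B+S)=\gamma\alpha$.
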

\begin{proof}
The element
$T = \lambda \beta + \lambda' \delta + \beta \gamma S' \alpha \beta
+ \delta \gamma S'' \alpha \beta + \beta \gamma S''' \alpha \delta
+ \delta \gamma S'''' \alpha \delta$
is parallel to $\beta$.
The endomorphism
$\sub_{\beta \la T}$ is a right equivalence by Lemma~\ref{l:sub:gTauto:m2}.
Applying it on $B + S = \alpha \beta \gamma + S$, 
observing that $\beta$ appears only in $B$, we get
\[
\lambda B + \lambda' \Delta
+ B S' B + \Delta S'' B + B S''' \Delta + \Delta S'''' \Delta
+ S
\]
which is cyclically equivalent to the required potential.
\end{proof}

\subsection{The quivers $Q_n$ and mutations of potentials on them}
\label{ssec:Qn:mut:side}

In this section we introduce a sequence of quivers $Q_n$ and
investigate mutations of certain potentials on them.

\begin{notat}
Let $n \geq 1$. Denote by $Q_n$ the quiver on the $2n+1$ vertices
$0, 1, 2, \dots, 2n$ with $4n$ arrows
$\alpha_i, \beta_i, \gamma_i, \delta_i$ for $1 \leq i \leq n$ given by
\begin{align*}
0 \xrightarrow{\alpha_i} 2i-1 &,&
2i-1 \xrightarrow{\beta_i} 2i &,&
2i \xrightarrow{\gamma_i} 0 &,&
2i-1 \xrightarrow{\delta_i} 2i &&
(1 \leq i \leq n).
\end{align*}
\end{notat}

The quivers $Q_n$ for $n=1,2$ are shown in Figure~\ref{fig:Qn}.
Observe that the initial quiver $Q_1$ is the quiver $T_3$
of Figure~\ref{fig:A2}, and
$Q_{n+1}$ is
obtained from $Q_n$ by gluing a copy of $T_3$ at the
common vertex $0$. It follows that $Q_n$ can be viewed as iterated gluing
of $n$ copies of the quiver $T_3$ at the common vertex $0$
and that each quiver $Q_n$ is a full subquiver
of the next quiver $Q_{n+1}$ in the sequence.

The quiver $Q_3$ is precisely the quiver $X_7$ of
Derksen-Owen~\cite{DerksenOwen08} shown in Figure~\ref{fig:X7}.
It follows that the mutation class of $Q_n$ is finite if and only if
$n \leq 3$.
The quivers $Q_1$ and $Q_2$ arise from triangulations of
marked surfaces~\cite{FST08};
the surface corresponding to $Q_1$ is an annulus with
two marked points on one boundary component and one marked point
on the other boundary component;
the surface corresponding to $Q_2$ is
a punctured annulus with one marked point on each of the two boundary
components.

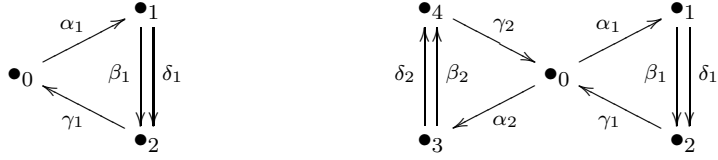
\begin{figure}
\begin{align*}
\xymatrix@=1pc{
&& {\bullet_1} \ar@<-0.2pc>[dd]_{\beta_1} \ar@<0.2pc>[dd]^{\delta_1} \\
{\bullet_0} \ar[urr]^{\alpha_1} \\
&& {\bullet_2} \ar[ull]^{\gamma_1}
}
&&
\xymatrix@=1pc{
{\bullet_4} \ar[drr]^{\gamma_2} &&
&& {\bullet_1} \ar@<-0.2pc>[dd]_{\beta_1} \ar@<0.2pc>[dd]^{\delta_1} \\
&& {\bullet_0} \ar[urr]^{\alpha_1} \ar[dll]^{\alpha_2} \\
{\bullet_3} \ar@<-0.2pc>[uu]_{\beta_2} \ar@<0.2pc>[uu]^{\delta_2} &&
&& {\bullet_2} \ar[ull]^{\gamma_1}
}
\end{align*}
\caption{The quivers $Q_n$ for $n=1,2$.}
\label{fig:Qn}
\end{figure}

For $1 \leq i \leq n$,
denote by $B_i = \alpha_i \beta_i \gamma_i$ and 
$\Delta_i = \alpha_i \delta_i \gamma_i$ the two $3$-cycles passing through
the vertices $2i-1$ and $2i$.

We start by recording the following observations:
\begin{lemma} \label{l:Qn:comb}
Let $S$ be a potential on $Q_n$ and let $1 \leq i \leq n$.
\renewcommand{\theenumi}{\alph{enumi}}
\begin{enumerate}
\item \label{it:acyclic}
The full subquiver of $Q_n$ on the set of vertices $1,2,\dots,2n$
is acyclic.

\item \label{it:words}
$S$ is cyclically equivalent to a (possibly infinite)
linear combination of words in the letters 
$B_1, \dots, B_n, \Delta_1, \dots, \Delta_n$.

\item
The arrow $\beta_i$ does not appear in $S$ if and only if
none the words in~\eqref{it:words} contains the letter $B_i$.

\item \label{it:admiss}
Rotate the terms of $S$ such that they do not start at $0$.
Then the path $\gamma_i \alpha_i$ does not appear in $S$
if and only if none of the words in~\eqref{it:words} is
$B_i$, $\Delta_i$, or up to rotation, contains as subword any of the words
$B_i B_i$, $B_i \Delta_i$, $\Delta_i B_i$ or $\Delta_i \Delta_i$.
\end{enumerate}
\end{lemma}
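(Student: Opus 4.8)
The plan is to prove the four parts in the order stated, using part (a) as the structural input for the rest and reading off (c) and (d) directly from the word representation furnished by (b). For part (a), I would observe that in the full subquiver on the vertices $1, \dots, 2n$ the only surviving arrows are the $\beta_i$ and $\delta_i$, since every other arrow ($\alpha_i$ or $\gamma_i$) is incident to the deleted vertex $0$. Each of $\beta_i, \delta_i$ runs from the odd vertex $2i-1$ to the even vertex $2i$, and no arrow of this subquiver starts at an even vertex; hence every path has length at most one and, in particular, cannot be a cycle.

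For part (b), the point is that by (a) every cycle of $Q_n$ must pass through the vertex $0$. After rotating a term so that it starts at $0$, I would read it off arrow by arrow: the only arrows leaving $0$ are the $\alpha_j$, the only arrows leaving $2j-1$ are $\beta_j$ and $\delta_j$, and the only arrow leaving $2j$ is $\gamma_j$. Thus the segment from $0$ to its first return to $0$ is forced to be either $B_j = \alpha_j \beta_j \gamma_j$ or $\Delta_j = \alpha_j \delta_j \gamma_j$, and iterating shows the entire cycle is a concatenation of such blocks, i.e.\ a word in the letters $B_1, \dots, B_n, \Delta_1, \dots, \Delta_n$. By linearity and continuity this extends to the (possibly infinite) potential $S$.

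For part (c), I would first fix the word representation of (b), combining rotationally equivalent terms so that the terms are pairwise rotationally inequivalent and no cancellation occurs; under this convention the set of arrows appearing in $S$ is read off unambiguously. The arrow $\beta_i$ is contained in the letter $B_i$ and in none of the other letters $B_j$ ($j \neq i$) or $\Delta_j$, so $\beta_i$ appears in $S$ precisely when the letter $B_i$ occurs in one of the words.

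Part (d) is the step I expect to demand the most care, because the length-two path $\gamma_i \alpha_i$ runs through the vertex $0$, and its visibility depends on how each cyclic term is cut into a linear path. The crucial observation is that cutting a cyclic word at a vertex other than $0$ — which is exactly what rotating so as not to start at $0$ achieves — leaves every junction at $0$ intact as a genuine length-two subpath of the linear representative. These junctions at $0$ are precisely the transitions $\gamma_{j_t} \alpha_{j_{t+1}}$ between consecutive letters, read cyclically and so including the wrap-around from the last letter to the first. Such a junction equals $\gamma_i \alpha_i$ exactly when the two adjacent letters both carry the index $i$; for a word of length at least two this is the condition that it contain, up to rotation, one of the subwords $B_i B_i$, $B_i \Delta_i$, $\Delta_i B_i$ or $\Delta_i \Delta_i$ (the phrase \emph{up to rotation} being what captures the wrap-around junction), while for a one-letter word the wrap-around junction of $B_i$ or $\Delta_i$ already produces $\gamma_i \alpha_i$, which accounts for the two separately listed single-letter possibilities. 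Assembling these equivalences yields the stated criterion.
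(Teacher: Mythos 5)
Your proof is correct and follows essentially the same route as the paper's: acyclicity of the side subquiver, decomposition of cycles through the vertex $0$ into the blocks $B_j$, $\Delta_j$, and reading off the occurrences of $\beta_i$ and of the junctions $\gamma_{i_s}\alpha_{i_{s+1}}$ (including the wrap-around, which accounts for the single-letter cases in part (d)). Your treatment is somewhat more detailed than the paper's, in particular in justifying why cutting a cycle away from $0$ keeps all junctions at $0$ visible, but the substance is identical.
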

\begin{proof}
{\ }
\renewcommand{\theenumi}{\alph{enumi}}
\begin{enumerate}
\item
Deleting the vertex $0$ and all the arrows incident to it, we are left
with $n$ disjoint copies of the Kronecker quiver.

\item
By the previous claim, any cycle of $Q_n$ is rotationally equivalent to
a cycle starting at the vertex $0$. Each such cycle is a concatenation of
cycles $B_j$ and $\Delta_j$ for $1 \leq j \leq n$.

\item
The only letter containing $\beta_i$ is $B_i$.

\item
If $(i_1, i_2, \dots, i_r)$ is the sequence of indices of the letters
comprising a word, then the paths of the form $\gamma_i \alpha_j$
appearing in any of its rotations are $\gamma_{i_r} \alpha_{i_1}$ and
$\gamma_{i_s} \alpha_{i_{s+1}}$ for $1 \leq s < r$.
Hence $\gamma_i \alpha_i$ does not appear if and only if
$(i_r,i_1) \neq (i,i)$ and $(i_s,i_{s+1}) \neq (i,i)$ for all $1 \leq s < r$.
If $r=1$, these conditions reduce to $i_1 \neq i$.
\end{enumerate}
\end{proof}

We now deal with mutations at the side vertices.
\begin{prop} \label{p:Qn:mutk}
Let $S$ be a potential on $Q_n$.
Assume that none of the arrows $\beta_1,\dots,\beta_n$ appears in $S$,
or equivalently, all the terms of $S$ are scalar multiples of words in
the letters $\Delta_i$.
Let $W = B_1 + \dots + B_n + S$.
Then $\mu_k(Q_n,W)$ is isomorphic to $(Q_n,W)$
for any vertex $k \neq 0$ of $Q_n$.
\end{prop}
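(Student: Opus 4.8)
The plan is to recognize $Q_n$ as the gluing of a smaller quiver with a single copy of $T_3$ at the central vertex $0$, and then to invoke the mutation results for such gluings established in Section~\ref{ssec:mut:side}, namely Proposition~\ref{p:glue:mut1} and Proposition~\ref{p:glue:mut2}. Since $k \neq 0$, there is a unique index $i$ with $1 \leq i \leq n$ such that $k \in \{2i-1, 2i\}$, and the mutation at $k$ will only interact with the $i$-th arm of $Q_n$, so the problem localizes to that arm.

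First I would set up the gluing decomposition. Let $Q'$ be the full subquiver of $Q_n$ obtained by deleting the two vertices $2i-1$, $2i$ together with the four arrows $\alpha_i, \beta_i, \gamma_i, \delta_i$; being a full subquiver of $Q_n$, it has no loops or $2$-cycles. Then $Q_n$ is precisely the gluing of $(Q', 0)$ and $(T_3, 0)$, where the copy of $T_3$ is the $i$-th arm, its vertices $1$ and $2$ playing the roles of $2i-1$ and $2i$, and its arrows $\alpha, \beta, \gamma, \delta$ playing the roles of $\alpha_i, \beta_i, \gamma_i, \delta_i$. Under this identification $B = B_i$ and $\Delta = \Delta_i$.

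Next I would rewrite the potential in the form required by the gluing propositions. Put $S' = \sum_{j \neq i} B_j + S$, so that $W = B_i + S'$. I claim that $\beta_i$ does not appear in $S'$: by hypothesis no arrow $\beta_j$ appears in $S$ at all (equivalently, by Lemma~\ref{l:Qn:comb}(c), every term of $S$ is a word in the letters $\Delta_j$), and among the cubic terms $B_1, \dots, B_n$ the only one in which $\beta_i$ occurs is $B_i$ itself, which has been separated off. Thus $S'$ is a potential on the gluing $Q_n$ in which the arrow $\beta$ of the glued copy of $T_3$ does not appear, which is exactly the hypothesis of Propositions~\ref{p:glue:mut1} and~\ref{p:glue:mut2}.

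Finally I would apply the appropriate gluing proposition: if $k = 2i-1$, corresponding to the vertex $1$ of $T_3$, Proposition~\ref{p:glue:mut1} gives that $\mu_k(Q_n, B_i + S')$ is isomorphic to $(Q_n, B_i + S')$; if $k = 2i$, corresponding to the vertex $2$, the same conclusion follows from Proposition~\ref{p:glue:mut2}. Since $B_i + S' = W$, this yields $\mu_k(Q_n, W) \cong (Q_n, W)$ in both cases. I do not expect any genuine obstacle: the whole content lies in matching the labels of the $i$-th arm of $Q_n$ with those of $T_3$ and in checking the single admissibility condition that $\beta_i$ is absent from the remaining potential; once these bookkeeping points are in place, the statement is an immediate specialization of the previously proved gluing results.
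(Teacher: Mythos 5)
Your argument is correct and is essentially identical to the paper's own proof: the paper likewise sets $i=\lfloor (k+1)/2\rfloor$, realizes $Q_n$ as the gluing $\wt{Q}$ of the full subquiver on the remaining vertices with a copy of $T_3$, and applies Proposition~\ref{p:glue:mut1} or Proposition~\ref{p:glue:mut2} to the potential $\sum_{j\neq i}B_j+S$, in which $\beta_i$ does not appear. No gaps.
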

\begin{proof}
Let $i = \lfloor (k+1)/2 \rfloor$ and let $Q$ be the full subquiver of
$Q_n$ on the set of vertices $\{0,1,2,\dots,2n+1\} \setminus \{2i-1, 2i\}$.
Then $Q_n \simeq \wt{Q}$ in the notations of the previous section, 
so we can apply Proposition~\ref{p:glue:mut1} or Proposition~\ref{p:glue:mut2}
for the potential $\sum_{j \neq i} B_j + S$ in which the arrow $\beta_i$
does not appear.
\end{proof}

The next statement concerns deformations of a potential
$\sum_{i=1}^n B_i + S$ by terms containing
at least two consecutive cycles inside some
of the $n$ blocks glued together to form the quiver~$Q_n$.

\begin{prop} \label{p:Qn:deform}
Let $S$ be a potential on $Q_n$ and let $I \subseteq \{1,2,\dots,n\}$.
Assume that no arrow
$\beta_i$ for $i \in I$ appears in $S$.
For each $i \in I$, let $\lambda_i, \lambda'_i$ be scalars such that
$\lambda_i \neq 0$
and let $S'_i, S''_i, S'''_i, S''''_i$ be potentials
in $e_0 \wh{KQ_n} e_0$.

Then the potential $B_1 + \dots + B_n + S$ is right equivalent to
\[
\sum_{i \not \in I} B_i + 
\sum_{i \in I} \lambda_i B_i + S + \sum_{i \in I} \lambda'_i \Delta_i
+ \sum_{i \in I} B_i^2 S'_i 
+ \sum_{i \in I} B_i \Delta_i S''_i
+ \sum_{i \in I} \Delta_i B_i S'''_i
+ \sum_{i \in I} \Delta_i^2 S''''_i
\]
\end{prop}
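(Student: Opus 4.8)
The plan is to generalize the proof of Proposition~\ref{p:glue:deform} by carrying out all the block substitutions at once, through a single endomorphism of $\wh{KQ_n}$. For each $i \in I$, set
\[
T_i = \lambda_i \beta_i + \lambda'_i \delta_i + \beta_i \gamma_i S'_i \alpha_i \beta_i + \delta_i \gamma_i S''_i \alpha_i \beta_i + \beta_i \gamma_i S'''_i \alpha_i \delta_i + \delta_i \gamma_i S''''_i \alpha_i \delta_i,
\]
which is parallel to $\beta_i$ (this uses that $S'_i,\dots,S''''_i \in e_0\wh{KQ_n}e_0$) and whose image in $KQ_1$ is $\lambda_i \beta_i + \lambda'_i \delta_i$, the remaining summands lying in $\fm^2$. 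I would then consider the endomorphism $\varphi = \sub_{\{\beta_i \la T_i\}_{i \in I}}$.

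The first step is to check that $\varphi$ is a right equivalence. Here I would \emph{not} invoke Corollary~\ref{c:sub:prod}, since the potentials $S'_i,\dots,S''''_i$ are allowed to contain the arrows $\beta_j$ with $j \in I$, so the hypothesis that $\beta_i$ does not appear in $T_j$ for $j \neq i$ may fail. Instead I would argue directly with the criterion of~\cite[Proposition~2.4]{DWZ08}, just as in Lemma~\ref{l:sub:gTauto:m2}: the induced map $\varphi^{(1)}$ on $KQ_1$ fixes every arrow except that it sends $\beta_i \mapsto \lambda_i \beta_i + \lambda'_i \delta_i$ for $i \in I$. In the basis of arrows this is block-diagonal, with a block $\left(\begin{smallmatrix} \lambda_i & 0 \\ \lambda'_i & 1 \end{smallmatrix}\right)$ on the pair $(\beta_i,\delta_i)$ for each $i \in I$ and the identity elsewhere; its determinant is $\prod_{i \in I}\lambda_i \neq 0$, so $\varphi^{(1)}$ is an isomorphism and $\varphi$ is an automorphism.

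Next I would compute $\varphi(W)$ for $W = B_1 + \dots + B_n + S$. The key structural observation is that, within $W$, each arrow $\beta_i$ with $i \in I$ occurs only inside $B_i$: it does not occur in $B_j$ for $j \neq i$, nor in $S$ by hypothesis. Hence $\varphi$ fixes $S$ and fixes $B_j$ for $j \notin I$, while for $i \in I$ it replaces the single occurrence of $\beta_i$ in $B_i = \alpha_i \beta_i \gamma_i$ by $T_i$, giving
\[
\varphi(B_i) = \alpha_i T_i \gamma_i = \lambda_i B_i + \lambda'_i \Delta_i + B_i S'_i B_i + \Delta_i S''_i B_i + B_i S'''_i \Delta_i + \Delta_i S''''_i \Delta_i.
\]
It is essential here that the substitution is applied in a single pass: the arrows $\beta_j$ that occur inside $T_i$ (through $S'_i,\dots$) are left untouched. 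This is precisely why the simultaneous substitution succeeds, whereas substituting the blocks one after another would re-expand these arrows and spoil the formula; flagging and justifying this is the one genuine subtlety of the argument.

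Finally, I would rotate the cycles of $\varphi(W)$ into the stated shape, using the cyclic equivalences $B_i S'_i B_i \sim B_i^2 S'_i$, $\Delta_i S''_i B_i \sim B_i \Delta_i S''_i$, $B_i S'''_i \Delta_i \sim \Delta_i B_i S'''_i$ and $\Delta_i S''''_i \Delta_i \sim \Delta_i^2 S''''_i$. This shows that $\varphi(W)$ is cyclically equivalent to the target potential, and since $\varphi$ is a right equivalence it follows that $B_1 + \dots + B_n + S$ is right equivalent to it. The remainder beyond the simultaneous-substitution point is just the bookkeeping of rotations already seen in Proposition~\ref{p:glue:deform}.
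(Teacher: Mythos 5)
Your proposal is correct and follows essentially the same route as the paper: the same simultaneous substitution $\sub_{\{\beta_i \la T_i\}_{i\in I}}$ with the same elements $T_i$, justified as a right equivalence via \cite[Proposition~2.4]{DWZ08} in the spirit of Lemma~\ref{l:sub:gTauto:m2}, followed by evaluation on $W$ and cyclic rotation. The subtlety you flag---that the substitutions must be performed in a single pass rather than block by block---is exactly the point the paper makes in explaining why Proposition~\ref{p:glue:deform} cannot be applied directly.
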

\begin{proof}
We argue as in the proof of Proposition~\ref{p:glue:deform}
(which cannot be directly applied since after deforming at the first
index $i$, the potential may now have terms other than $B_j$ containing
the arrows $\beta_j$ for some $j \in I$).

For each $i \in I$, let
\[
T_i = \lambda_i \beta_i + \lambda'_i \delta_i
+ \beta_i \gamma_i S'_i \alpha_i \beta_i
+ \delta \gamma S'' \alpha \beta + \beta \gamma S'''_i \alpha_i \delta_i
+ \delta \gamma S'''' \alpha \delta
\]
Then $T_i$ is parallel to $\beta_i$ and the endomorphism 
$\sub_{\{\beta_i \la T_i\}_{i \in I}}$
defined by sending each $\beta_i$ to $T_i$ while
fixing all other arrows and vertices
is a right equivalence by an argument similar to that
of Lemma~\ref{l:sub:gTauto:m2} relying on~\cite[Proposition~2.4]{DWZ08}.
We get the result by evaluating $\sub_{\{\beta_i \la T_i\}_{i \in I}}$
on $B_1 + \dots + B_n + S$.
\end{proof}

The case where the subset $I$ consists of just one block is a special case
of Proposition~\ref{p:glue:deform} and will be used in the proof of
Lemma~\ref{l:FhG:jk}.
Another case, where $I$ consists of all the $n$ blocks, will be used in the
proof of Proposition~\ref{p:Qn:Wf}.

\subsection{Mutations at the central vertex and the quivers $Q'_n$}
\label{ssec:Qn:mut:center}

We turn our attention to mutation at the central vertex $0$.
In the notations of Section~\ref{ssec:match}, the sets $C$ and $\Gamma$ are
given by $C = \{(\gamma_i,\alpha_j)\}_{1 \leq i, j \leq n}$ and
$\Gamma = \{ \beta_i, \delta_i \}_{1 \leq i \leq n}$.

\begin{notat}
Throughout this section,
fix a matching $\rho$ with $C' = \{(\gamma_i,\alpha_i)\}_{1 \leq i \leq n}$
and $\Gamma' = \{\beta_i\}_{1 \leq i \leq n}$, sending each
pair $(\gamma_i, \alpha_i)$ to $\beta_i$.
Observe that the matching $\rho$ is maximal.
\end{notat}

\begin{notat}
Let $n \geq 1$. Denote by $Q'_n$ the quiver on the $2n+1$ vertices
$0,1,2,\dots,2n$ with arrows
$a_i, c_i, d_i$ ($1 \leq i \leq n$) and $b_{ij}$ ($1 \leq i \ne j \leq n$)
given by
\begin{align*}
0 \xrightarrow{a_i} 2i &,&
2i \xrightarrow{b_{ij}} 2j-1 &,&
2i-1 \xrightarrow{c_i} 0 &,&
2i-1 \xrightarrow{d_i} 2i .
\end{align*}
Observe that the quiver $Q'_n$ has $n(n+2)$ arrows.
Call the arrows $a_i$ and $c_i$ \emph{radial}, as they are
incident to the central vertex $0$.
\end{notat}

The quivers $Q'_n$ for $n=1,2$ are shown in Figure~\ref{fig:Q'n}.
Note that $Q'_1$ is the extended Dynkin quiver $\wt{A_2}$ and
the quiver $Q'_3$ equals the other member $X'_7$ in the mutation class
of $X_7$, shown in Figure~\ref{fig:X7}.

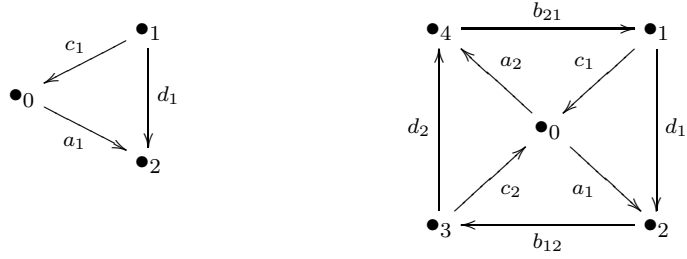
\begin{figure}
\begin{align*}
\begin{array}{c}
\xymatrix@=1pc{
&& {\bullet_1} \ar[dll]_{c_1} \ar[dd]^{d_1} \\
{\bullet_0} \ar[drr]_{a_1} \\
&& {\bullet_2}
}
\end{array}
&&
\begin{array}{c}
\xymatrix{
{\bullet_4} \ar[rr]^{b_{21}} &
& {\bullet_1} \ar[dl]_{c_1} \ar[dd]^{d_1} \\
& {\bullet_0} \ar[dr]_{a_1} \ar[ul]_{a_2} \\
{\bullet_3} \ar[ur]_{c_2} \ar[uu]^{d_2} &
& {\bullet_2} \ar[ll]^{b_{12}}
}
\end{array}
\end{align*}
\caption{The quivers $Q'_n$ for $n=1,2$.}
\label{fig:Q'n}
\end{figure}

\begin{lemma} \label{l:Qn:mut0}
$\mu_0(Q_n) \simeq Q'_n$.
\end{lemma}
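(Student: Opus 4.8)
The plan is to compute the mutation $\mu_0(Q_n)$ directly by applying the machinery of Section~\ref{ssec:match}, specializing to the vertex $k=0$ and the maximal matching $\rho$ fixed at the start of this section. Since the quivers $Q_n$ have no loops and no $2$-cycles, Lemma~\ref{l:Q*mut} guarantees that for the maximal matching $\rho$ the dual quiver $(Q_n)^*$ equals the mutation $\mu_0(Q_n)$. Thus the whole statement reduces to identifying the dual quiver $(Q_n)^*$ with $Q'_n$ by exhibiting an explicit isomorphism of quivers, which is a purely combinatorial bookkeeping task.

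\medskip

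First I would record the sets attached to the vertex $0$: the incoming arrows are $\gamma_1,\dots,\gamma_n$ and the outgoing arrows are $\alpha_1,\dots,\alpha_n$, so $C=\{(\gamma_i,\alpha_j)\}_{1\le i,j\le n}$ and $\Gamma=\{\beta_i,\delta_i\}_{1\le i\le n}$, exactly as stated. With $C'=\{(\gamma_i,\alpha_i)\}_i$ and $\Gamma'=\{\beta_i\}_i$, I would then run through the three steps of Definition~\ref{def:Qdual}. Step~1 adds an arrow $[\gamma_i\alpha_j]$ for each pair $(\gamma_i,\alpha_j)\in C\setminus C'$, that is for all $i\ne j$; these become the arrows $b_{ij}\colon 2i\to 2j-1$, since $\gamma_i$ ends at $0$ coming from $2i$ and $\alpha_j$ starts at $0$ going to $2j-1$. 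Step~2 reverses each radial arrow, sending $\alpha_i\colon 0\to 2i-1$ to $\alpha_i^*\colon 2i-1\to 0$ and $\gamma_i\colon 2i\to 0$ to $\gamma_i^*\colon 0\to 2i$; these become $c_i$ and $a_i$ respectively. Step~3 removes the arrows $\beta_i\in\Gamma'$, while the arrows $\delta_i\colon 2i-1\to 2i$ (which lie outside $\Gamma'$) survive unchanged and become the $d_i$.

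\medskip

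Collecting the surviving and newly created arrows, the dual quiver $(Q_n)^*$ has vertices $0,1,\dots,2n$ and arrows $a_i=\gamma_i^*\colon 0\to 2i$, $c_i=\alpha_i^*\colon 2i-1\to 0$, $d_i=\delta_i\colon 2i-1\to 2i$, and $b_{ij}=[\gamma_i\alpha_j]\colon 2i\to 2j-1$ for $i\ne j$. Comparing directly with the definition of $Q'_n$, I would check that the endpoints match on the nose, giving an isomorphism $(Q_n)^*\xrightarrow{\sim}Q'_n$ fixing all vertices and sending $\gamma_i^*\mapsto a_i$, $\alpha_i^*\mapsto c_i$, $\delta_i\mapsto d_i$, $[\gamma_i\alpha_j]\mapsto b_{ij}$. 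Combined with Lemma~\ref{l:Q*mut}(\ref{it:Q*mut}), which identifies $(Q_n)^*$ with $\mu_0(Q_n)$ because $\rho$ is maximal, this yields $\mu_0(Q_n)\simeq Q'_n$.

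\medskip

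I do not expect any genuine obstacle here; the content is entirely in correctly translating the source and target of each arrow through the reversal and path-contraction operations. The one point demanding care is making sure the arrows of $\Gamma'=\{\beta_i\}$ are precisely the ones removed while the $\delta_i$ persist, so that the two $3$-cycles $B_i$ and $\Delta_i$ through each block are handled asymmetrically exactly as the chosen matching $\rho$ prescribes; a careless choice of matching would produce a quiver with residual $2$-cycles and fail to equal $Q'_n$. Confirming that the maximal matching $\rho$ leaves no $2$-cycles (as it must, by Lemma~\ref{l:Q*mut}) is therefore the main consistency check to perform.
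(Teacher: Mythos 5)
Your proposal is correct and follows essentially the same route as the paper: invoke Lemma~\ref{l:Q*mut} to identify $\mu_0(Q_n)$ with the dual quiver $Q_n^*$ for the maximal matching $\rho$, then match arrows via $\gamma_i^*\mapsto a_i$, $\alpha_i^*\mapsto c_i$, $\delta_i\mapsto d_i$, $[\gamma_i\alpha_j]\mapsto b_{ij}$. All sources and targets check out, so there is nothing to add.
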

\begin{proof}
By Lemma~\ref{l:Q*mut}, the mutation $\mu_0(Q_n)$ equals the dual quiver 
$Q^*_n$ with respect to $\rho$, whose arrows are
$\alpha^*_i, \gamma^*_i, \delta_i$ for $1 \leq i \leq n$
and $[\gamma_i \alpha_j]$ for $1 \leq i \neq j \leq n$ , given by
\begin{align*}
2i-1 \xrightarrow{\alpha_i^*} 0 &,&
0 \xrightarrow{\gamma_i^*} 2i &,&
2i-1 \xrightarrow{\delta_i} 2i &,&
2i \xrightarrow{[\gamma_i \alpha_j]} 2j-1.
\end{align*}
The quivers $Q^*_n$ and $Q'_n$ are clearly seen to be isomorphic, by
sending
\begin{align}
\label{e:Qn:mut0}
\alpha^*_i \mapsto c_i &,&
\gamma^*_i \mapsto a_i &,&
\delta_i \mapsto d_i &,&
[\gamma_i \alpha_j] \mapsto b_{ij} .
\end{align}
\end{proof}

\begin{lemma} \label{l:Qn:Wrho}
For the matching $\rho$,
\begin{enumerate}
\renewcommand{\theenumi}{\alph{enumi}}
\item
$W_\rho$ is cyclically equivalent to $B_1 + B_2 + \dots + B_n$.

\item
Under the isomorphism of~\eqref{e:Qn:mut0},
$W_{\rho^*}$ is cyclically equivalent to
$\sum_{1 \leq i \neq j \leq n} a_i b_{ij} c_j$.
\end{enumerate}
\end{lemma}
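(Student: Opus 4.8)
The plan is to unwind the definitions of $W_\rho$ and $W_{\rho^*}$ for the specific matching $\rho$ fixed in this section and then simplify each by a single cyclic rotation of every $3$-cycle involved (together, for part (b), with the isomorphism of~\eqref{e:Qn:mut0}). The one point requiring care is notational: in the general setup of Section~\ref{ssec:match} a pair in $C$ is written $(\alpha,\beta)$ with $\alpha$ ending at the mutation vertex and $\beta$ starting at it, whereas here the mutation vertex is $0$ and the arrows of $Q_n$ carrying the names $\alpha_i$ and $\gamma_i$ play the opposite roles: $\gamma_i$ ends at $0$, so it is in the role of the generic ``$\alpha$'', while $\alpha_i$ starts at $0$, so it is in the role of the generic ``$\beta$''. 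Keeping this bookkeeping straight is the main thing to get right; there is no genuine obstacle beyond it.

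For part (a), by the definition of the encoding potential, $W_\rho = \sum_{(\alpha,\beta)\in C'} \alpha\beta\,\gamma_{\alpha,\beta}$. Since $C' = \{(\gamma_i,\alpha_i)\}_{1\le i\le n}$ and $\gamma_{\gamma_i,\alpha_i} = \beta_i$, this reads $W_\rho = \sum_{i=1}^n \gamma_i\alpha_i\beta_i$. Each term $\gamma_i\alpha_i\beta_i$ is the cycle $2i \to 0 \to 2i-1 \to 2i$, which is rotationally equivalent to $\alpha_i\beta_i\gamma_i = B_i$. Summing over $i$ shows that $W_\rho$ is cyclically equivalent to $B_1 + \dots + B_n$.

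For part (b), I would first record that the dual encoding potential is $W_{\rho^*} = \sum_{(\alpha,\beta)\in C\setminus C'} [\alpha\beta]\,\beta^*\alpha^*$, the purely $\rho^*$-part of~\eqref{e:W*}. Here $C\setminus C' = \{(\gamma_i,\alpha_j) : i\ne j\}$; reading $\alpha = \gamma_i$ and $\beta = \alpha_j$, one has $\alpha^* = \gamma_i^*\colon 0 \to 2i$, $\beta^* = \alpha_j^*\colon 2j-1 \to 0$, and $[\alpha\beta] = [\gamma_i\alpha_j]\colon 2i \to 2j-1$. Hence $W_{\rho^*} = \sum_{i\ne j} [\gamma_i\alpha_j]\,\alpha_j^*\,\gamma_i^*$, each summand being the $3$-cycle $2i \to 2j-1 \to 0 \to 2i$.

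Finally I transport $W_{\rho^*}$ along the isomorphism~\eqref{e:Qn:mut0}, under which $[\gamma_i\alpha_j]\mapsto b_{ij}$, $\alpha_j^*\mapsto c_j$ and $\gamma_i^*\mapsto a_i$, so that each summand maps to $b_{ij}c_j a_i$. Rotating this $3$-cycle once yields $a_i b_{ij} c_j$, and summing over all $i\ne j$ gives precisely that the image of $W_{\rho^*}$ is cyclically equivalent to $\sum_{1\le i\ne j\le n} a_i b_{ij} c_j$, as claimed. Both statements thus reduce to the explicit form of the (dual) matching potential followed by one rotation per cycle.
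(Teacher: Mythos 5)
Your proposal is correct and matches the paper's proof, which simply records $W_\rho = \sum_{i=1}^{n} \gamma_i \alpha_i \beta_i$ and $W_{\rho^*} = \sum_{1 \leq i \neq j \leq n} [\gamma_i \alpha_j] \alpha^*_j \gamma^*_i$ and leaves the rotations and the transport along~\eqref{e:Qn:mut0} implicit. Your careful bookkeeping of the role-reversal ($\gamma_i$ playing the generic ``$\alpha$'' and $\alpha_j$ the generic ``$\beta$'') is exactly the point the paper glosses over, and you handle it correctly.
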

\begin{proof}
We have $W_\rho = \sum_{i=1}^{n} \gamma_i \alpha_i \beta_i$ in $Q_n$
and
$W_{\rho^*} = \sum_{1 \leq i \neq j \leq n}
[\gamma_i \alpha_j] \alpha^*_j \gamma^*_i$ in $Q_n^*$.
\end{proof}

Before dealing with potentials, we record a few observations concerning
symmetries of the quivers $Q_n$ and $Q'_n$. Any permutation
$\sigma \in S_n$ defines an automorphism of the quiver $Q_n$ by
permuting its $n$ blocks; namely, 
fixing the vertex $0$, acting on the other vertices by
\begin{align*}
2i-1 \mapsto 2\sigma(i)-1 &,&
2i \mapsto 2\sigma(i) &,& (1 \leq i \leq n)
\end{align*}
and sending each of the arrows $\alpha_i$, $\beta_i$, $\gamma_i$, $\delta_i$
to the corresponding arrow $\alpha_{\sigma(i)}$, $\beta_{\sigma(i)}$,
$\gamma_{\sigma(i)}$, $\delta_{\sigma(i)}$, respectively.
It induces an automorphism of the mutated quiver $Q'_n$ by the same action
on the vertices, mapping each of the arrows $a_i$, $c_i$, $d_i$ to
$a_{\sigma(i)}$, $c_{\sigma(i)}$, $d_{\sigma(i)}$ respectively, and
each arrow $b_{ij}$ to $b_{\sigma(i)\sigma(j)}$.

Recall that the \emph{opposite} quiver $Q^{op}$ of a quiver $Q$ is
the quiver obtained from $Q$ by reversing all its arrows. Formally, if
$Q=(Q_0, Q_1, s, t)$, then $Q^{op}=(Q_0^*, Q_1^*, s^*, t^*)$
where $Q_0^*=Q_0$, $Q_1^* = \{ \alpha^* \,:\, \alpha \in Q_1 \}$ and
$s^*(\alpha^*) = t(\alpha)$, $t^*(\alpha^*)=s(\alpha)$ for any
$\alpha \in Q_1$.

\begin{lemma} \label{l:Qn:opp}
There are isomorphisms $Q_n \simeq Q_n^{op}$ and $Q'_n \simeq (Q'_n)^{op}$.
\end{lemma}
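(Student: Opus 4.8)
The plan is to exhibit both isomorphisms explicitly by means of a single combinatorial device: fix the central vertex $0$ and swap the two vertices $2i-1$ and $2i$ within each of the $n$ blocks, fixing $0$. This reflects the whole quiver ``through'' the blocks, and on the orientations it realizes the passage to the opposite quiver.

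First I would treat $Q_n$. Define the vertex bijection $\phi$ by $\phi(0)=0$ and $\phi(2i-1)=2i$, $\phi(2i)=2i-1$ for each $1 \leq i \leq n$. I would then check that $\phi$, together with the arrow assignment $\alpha_i \mapsto \gamma_i^*$, $\gamma_i \mapsto \alpha_i^*$, $\beta_i \mapsto \beta_i^*$, $\delta_i \mapsto \delta_i^*$, defines an isomorphism $Q_n \xrightarrow{\sim} Q_n^{op}$. The verification is a direct comparison of source and target: for instance $\alpha_i$ runs $0 \to 2i-1$, so its image must run $\phi(0)=0 \to \phi(2i-1)=2i$, and the unique arrow of $Q_n^{op}$ with these endpoints is $\gamma_i^*$ (the reverse of $\gamma_i \colon 2i \to 0$); the remaining three arrow types are handled in the same way.

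Next I would treat $Q'_n$ with the analogous vertex swap $\psi(0)=0$, $\psi(2i-1)=2i$, $\psi(2i)=2i-1$. On the radial and $d$-arrows the assignment is again immediate by reading off endpoints: $a_i \mapsto c_i^*$, $c_i \mapsto a_i^*$, $d_i \mapsto d_i^*$. The one step requiring slight care, and the only place where anything beyond a mechanical check happens, is the behaviour of the connecting arrows $b_{ij}$. Tracking endpoints, $b_{ij} \colon 2i \to 2j-1$ is sent by $\psi$ to an arrow $2i-1 \to 2j$, and the unique arrow of $(Q'_n)^{op}$ with these endpoints is $b_{ji}^*$ (the reverse of $b_{ji} \colon 2j \to 2i-1$). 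Hence the correct assignment is $b_{ij} \mapsto b_{ji}^*$, which transposes the index pair. I would confirm this is a bijection of the $b$-arrows, using that $b_{ij}$ is defined precisely when $i \neq j$ so that $b_{ji}$ is available as well.

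The main (mild) obstacle is exactly this index transposition on the $b_{ij}$: one must resist sending $b_{ij}$ to $b_{ij}^*$ and instead observe that the vertex swap forces the two indices to be exchanged. Everything else reduces to reading off sources and targets, so no limiting arguments or auxiliary lemmas are needed; the two explicit maps $\phi$ and $\psi$ constitute the entire proof.
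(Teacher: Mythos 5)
Your proof is correct and coincides with the paper's own argument: the same vertex involution fixing $0$ and swapping $2i-1 \leftrightarrow 2i$, with the same arrow assignments $\alpha_i \mapsto \gamma_i^*$, $\gamma_i \mapsto \alpha_i^*$, $\beta_i \mapsto \beta_i^*$, $\delta_i \mapsto \delta_i^*$ on $Q_n$ and $a_i \mapsto c_i^*$, $c_i \mapsto a_i^*$, $d_i \mapsto d_i^*$, $b_{ij} \mapsto b_{ji}^*$ on $Q'_n$, including the index transposition on the $b_{ij}$.
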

\begin{proof}
The quiver $Q_n$ is isomorphic to its opposite by
the isomorphism fixing the vertex $0$, exchanging the odd-numbered vertices
$2i-1$ and the even-numbered ones $2i$ for all $1 \leq i \leq n$
and mapping each of the arrows
$\alpha_i$, $\beta_i$, $\gamma_i$, $\delta_i$
to
$\gamma_i^*$, $\beta^*_i$, $\alpha^*_i$, $\delta^*_i$ respectively.
Similarly,
the quiver $Q'_n$ is isomorphic to its opposite by the isomorphism having
the same action on the vertices, mapping each of the arrows
$a_i$, $c_i$, $d_i$ to $c^*_i$, $a^*_i$, $d^*_i$ respectively
and each arrow $b_{ij}$ to $b^*_{ji}$.
\end{proof}

Let $R = K \llangle x_1, \dots, x_n, y_1, \dots, y_n \rrangle$ be the ring of
power series in $2n$ non-commuting variables with coefficients in $K$.
Define a continuous ring homomorphism $\Phi \colon R \to e_0 \wh{KQ_n} e_0$
by setting its values on the generators as
\begin{align*}
\Phi(1) = e_0 &,& \Phi(x_i) = \Delta_i &,& \Phi(y_i) = B_i &&
& (1 \leq i \leq n).
\end{align*}

In other words, if
$F \in K \llangle x_1, \dots, x_n, y_1, \dots, y_n \rrangle$ then
\[
\Phi(F) = F(\Delta_1, \dots, \Delta_n, B_1, \dots, B_n) .
\]
In particular, the image of any monomial in $R$ is a corresponding word in the
letters $B_i$ and $\Delta_i$.

\begin{lemma}
The map $\Phi \colon R \to e_0 \wh{KQ_n} e_0$ is an isomorphism.
\end{lemma}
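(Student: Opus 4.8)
The plan is to exhibit an explicit continuous inverse to $\Phi$, using the combinatorial description of $e_0 \wh{KQ_n} e_0$ provided by Lemma~\ref{l:Qn:comb}. The key observation is that $e_0 \wh{KQ_n} e_0$ consists of (possibly infinite) linear combinations of cycles based at the vertex $0$, and by Lemma~\ref{l:Qn:comb}\eqref{it:words} every such cycle is a concatenation of the $3$-cycles $B_j$ and $\Delta_j$. Thus the monomials in $B_1,\dots,B_n,\Delta_1,\dots,\Delta_n$ topologically span $e_0 \wh{KQ_n} e_0$, which shows that $\Phi$ is surjective (and that its image is dense, but since $\Phi$ is continuous and $R$ is complete, the image is in fact all of $e_0 \wh{KQ_n} e_0$).

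For injectivity, I would argue that these words in the $B_j$ and $\Delta_j$ are not merely spanning but form a topological basis, i.e.\ they are linearly independent. The point is that a word in the letters $B_j,\Delta_j$ corresponds to a well-defined path in $Q_n$: reading the word left to right, each letter $B_j = \alpha_j \beta_j \gamma_j$ or $\Delta_j = \alpha_j \delta_j \gamma_j$ contributes a specific length-three segment, and distinct words yield distinct paths in $Q_n$ (the index $j$ is recorded by the radial arrows $\alpha_j,\gamma_j$ and the choice of $B$ versus $\Delta$ is recorded by the middle arrow $\beta_j$ versus $\delta_j$). Since distinct paths in a quiver are linearly independent elements of the complete path algebra, distinct words in the letters give linearly independent elements, so the monomials of $R$ map to a linearly independent (indeed topologically independent) family. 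Hence $\Phi$ is injective.

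Concretely, I would define the inverse on basis elements: given a cycle $c$ based at $0$, use Lemma~\ref{l:Qn:comb}\eqref{it:words} to write it uniquely as a word $w$ in $B_j,\Delta_j$ and send $c \mapsto w(x_1,\dots,x_n,y_1,\dots,y_n)$ where each $B_j$ becomes $y_j$ and each $\Delta_j$ becomes $x_j$; extend by linearity and continuity. The unique factorization of a based cycle into the letters $B_j,\Delta_j$ is exactly what makes this well-defined, and it is this factorization that constitutes the substance of the argument. One must check that the assignment respects the filtration by path length (a word of syllable-length $m$ maps to a path of length $3m$ and to a monomial of degree $m$), so the map is continuous and is a genuine two-sided inverse to $\Phi$.

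The main obstacle is establishing the unique readability of based cycles as words in the six (or rather $2n$) syllables, i.e.\ that the factorization in Lemma~\ref{l:Qn:comb}\eqref{it:words} is not just a spanning statement but a bijection between words and based cycles. This reduces to the acyclicity recorded in Lemma~\ref{l:Qn:comb}\eqref{it:acyclic}: once a cycle leaves $0$ along $\alpha_j$ it is forced to traverse exactly one of $\beta_j,\delta_j$ and then return via $\gamma_j$ before it can leave $0$ again, so there is no ambiguity in segmenting the cycle at its visits to $0$ and no ambiguity in identifying each segment as a unique syllable. Granting this, both surjectivity and injectivity are immediate, and the filtration-compatibility giving continuity of the inverse is routine.
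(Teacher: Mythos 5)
Your proof is correct and follows essentially the same route as the paper, which simply cites Lemma~\ref{l:Qn:comb}\eqref{it:words}: the whole content is that based cycles at $0$ factor uniquely into the syllables $B_j,\Delta_j$, so monomials map bijectively onto a topological basis of $e_0\wh{KQ_n}e_0$ with the compatible degree-tripling you note. (Your parenthetical that continuity of $\Phi$ plus completeness of $R$ forces the image to be closed is not a valid general principle, but it is harmless here since your explicit filtration-compatible inverse already settles surjectivity.)
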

\begin{proof}
This is a consequence of Lemma~\ref{l:Qn:comb}\eqref{it:words}.
\end{proof}

Similarly to $\Phi$, we would like to define a map $\Psi$ to the space of
potentials on $Q'_n$. However, its domain of definition will not be entire $R$
but rather a subspace of it consisting of the admissible power series, to be
defined next.
To do this, consider the ring $A = \llangle x_1, \dots, x_n \rrangle$ of 
power series in $n$ non-commuting variables. We have a natural
homomorphism $\pi \colon R \twoheadrightarrow A$ defined by
$\pi(x_i) = \pi(y_i) = x_i$ for all $1 \leq i \leq n$.

\begin{defn} \label{def:admiss}
A monomial $x_{i_1} x_{i_2} \dots x_{i_r} \in A$ is \emph{admissible}
if it has degree at least $2$ and any two consecutive variables (in
cyclic order) are different. In other words,
$r>1$, $i_s \neq i_{s+1}$ for all $1 \leq s < r$, and
$i_r \neq i_1$.

A monomial $w \in R$ is \emph{admissible} if $\pi(w)$ is admissible
as a monomial in $A$. In other words, it has degree at least $2$ and
no word of the form $x_i x_i, x_i y_i, y_i x_i, y_i y_i$ appears
as subword in $w$ or its rotations.

A power series in $R$ is \emph{admissible} if 
any monomial occurring with non-zero coefficient is
admissible.
\end{defn}

Strictly speaking,
in order to be able to mutate a potential at the vertex $0$, its
terms should not start at $0$. However, in our description as $\Phi(F)$
for $F \in R$ all the terms start at $0$.
As usual, one addresses this by appropriately rotating the terms.

\begin{notat}
Let $\omega$ be a cycle in $Q_n$ starting at the vertex $0$.
We denote by $\rot(\omega)$ the cycle obtained from $\omega$ by rotating
it one step so that its first arrow is the second arrow of $\omega$ and its
last arrow is the first arrow of $\omega$.

Extend this by linearity and continuity, so that
if $S$ is a potential in $e_0 \wh{KQ_n} e_0$, then $\rot(S)$ is the
potential on $Q_n$ obtained by rotating each cycle occurring in $S$.
\end{notat}

\begin{lemma} \label{l:Qn:Fadmiss}
Let $F \in R$. Then $\rot(\Phi(F))$ is $\rho$-admissible
if and only if $F$ is admissible.
\end{lemma}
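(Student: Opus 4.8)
The plan is to connect the combinatorial notion of admissibility for the power series $F$ (Definition~\ref{def:admiss}) with the $\rho$-admissibility of the rotated potential $\rot(\Phi(F))$ (Definition~\ref{def:pot:admiss}), using the dictionary already set up in Lemma~\ref{l:Qn:comb}. Recall that for the matching $\rho$ fixed in this section we have $C' = \{(\gamma_i, \alpha_i)\}_{1 \le i \le n}$, so an element of $\wh{KQ_n}$ is $\rho$-admissible precisely when none of its terms starts at the vertex $0$ and none of the paths $\gamma_i \alpha_i$ appears in it. The point of applying $\rot$ is exactly to arrange the first of these two conditions: since every cycle occurring in $\Phi(F)$ is a concatenation of the $3$-cycles $B_j$ and $\Delta_j$ (all passing through $0$), each such cycle starts at $0$, and rotating it one step produces a cycle whose terms no longer start at $0$. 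So the first condition of $\rho$-admissibility is automatic after rotation, and the whole content of the lemma lies in the second condition.

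First I would spell out how the monomials of $F$ correspond to the words appearing in $\Phi(F)$. By the definition of $\Phi$, the image of a monomial $z_{i_1} \dots z_{i_r}$ (with each $z$ being some $x$ or $y$) is the word in the letters $\Delta_i, B_i$ obtained by sending $x_i \mapsto \Delta_i$ and $y_i \mapsto B_i$. Under the projection $\pi \colon R \to A$ both $x_i$ and $y_i$ map to $x_i$, which matches exactly the fact that the condition controlling $\gamma_i \alpha_i$ is insensitive to the distinction between $B_i$ and $\Delta_i$ (both $3$-cycles traverse the arrows $\gamma_i$ and $\alpha_i$ at their seam). This is the reason the admissibility of a monomial $w \in R$ is defined through $\pi(w) \in A$.

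The key step is then to invoke Lemma~\ref{l:Qn:comb}\eqref{it:admiss}, which characterizes exactly when the path $\gamma_i \alpha_i$ fails to appear in (the rotated form of) a potential: the words must not be $B_i$ or $\Delta_i$ themselves, and must not contain, up to rotation, any of the subwords $B_i B_i$, $B_i \Delta_i$, $\Delta_i B_i$, $\Delta_i \Delta_i$. Translating back through $\Phi$, these forbidden configurations correspond under $\pi$ precisely to the conditions that the monomial have degree at least $2$ and that no $x_i x_i$ occur in it or its rotations. I would therefore argue that $\rot(\Phi(F))$ fails to have any $\gamma_i \alpha_i$ as a subpath, for every $i$, if and only if every monomial of $F$ satisfies the conditions of Definition~\ref{def:admiss}, i.e.\ $F$ is admissible. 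Combining this equivalence with the automatic vanishing of terms starting at $0$ after rotation gives the two-sided implication of the lemma.

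The main obstacle I anticipate is purely bookkeeping: making precise the phrase ``up to rotation'' on both sides simultaneously and checking that the cyclic adjacency conditions truly match. One must verify carefully that a term of $F$ of the form $y_i$ (degree $1$), or any monomial with a repeated consecutive variable in cyclic order, produces in $\Phi(F)$ a cycle whose rotation does contain $\gamma_i \alpha_i$, and conversely that every occurrence of $\gamma_i \alpha_i$ in $\rot(\Phi(F))$ forces such a configuration upstream in $F$. Since the correspondence between the letters $B_i, \Delta_i$ and the variables $x_i, y_i$ is exact and Lemma~\ref{l:Qn:comb} already encapsulates the combinatorics, I expect this verification to reduce to quoting that lemma and the definition of $\pi$, rather than any genuinely new computation.
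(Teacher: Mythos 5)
Your proposal is correct and takes essentially the same route as the paper, which simply observes that the statement is a reformulation of Lemma~\ref{l:Qn:comb}\eqref{it:admiss} via Definition~\ref{def:admiss}; you have merely spelled out the translation (rotation handling the ``starts at $0$'' condition, and $\pi$ collapsing $B_i,\Delta_i$ to $x_i$) in more detail.
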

\begin{proof}
This is a reformulation of Lemma~\ref{l:Qn:comb}\eqref{it:admiss}
using Definition~\ref{def:admiss}.
\end{proof}

We will now define a continuous linear map $\Psi$ from the subspace
of admissible power series in $R$ to $\wh{KQ'_n}$. 
It suffices to define $\Psi$ for admissible monomials.

\begin{defn} \label{def:Psi}
Let $w \in R$ be an admissible monomial. Write
$\pi(w) = x_{i_1} x_{i_2} \dots x_{i_r}$ so that $r>1$, $i_s \neq i_{s+1}$
for $1 \leq s < r$ and $i_r \neq i_1$.
For $1 \leq s \leq r$,
define an element $p_s$ in $\wh{KQ'_n}$ which is a scalar multiple
of a path from the vertex $2i_s-1$ to $2i_s$, by
\[
p_s = \begin{cases}
d_{i_s} & \text{if the $s$-th letter in $w$ equals $x_{i_s}$,} \\
-c_{i_s} a_{i_s} & \text{if the $s$-th letter in $w$ equals $y_{i_s}$.}
\end{cases}
\]
Now define
\begin{equation} \label{e:Psi}
\Psi(w) = 
p_1 b_{i_1 i_2} p_2 \dots
b_{i_{r-1} i_r} p_r b_{i_r i_1} .
\end{equation}
\end{defn}

\begin{lemma} \label{l:Psi}
Let $F \in R$ be admissible. Then the isomorphism~\eqref{e:Qn:mut0}
identifies $\Psi(F)$ with
$\sub_{\{\beta_i \la -\alpha^*_i \gamma^*_i\}_{i=1}^n}([\rot(\Phi(F))])$.
\end{lemma}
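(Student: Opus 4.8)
The plan is to verify the identity by checking it on admissible monomials, since both sides are continuous and linear in $F$. So let $w \in R$ be an admissible monomial with $\pi(w) = x_{i_1} x_{i_2} \dots x_{i_r}$, and trace through what happens to $\Phi(w)$ under rotation, bracketing, and substitution. First I would compute $\Phi(w)$ explicitly: it is the word in the letters $B_i, \Delta_i$ corresponding to $w$, namely a cycle at $0$ of the form $L_1 L_2 \dots L_r$ where each block $L_s$ equals $B_{i_s} = \alpha_{i_s} \beta_{i_s} \gamma_{i_s}$ or $\Delta_{i_s} = \alpha_{i_s} \delta_{i_s} \gamma_{i_s}$ according to whether the $s$-th letter of $w$ is $y_{i_s}$ or $x_{i_s}$.

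Next I would apply $\rot$ to move the terms off the vertex $0$. Since each block starts with $\alpha_{i_s}$ and ends with $\gamma_{i_s}$, rotating so that the cycle begins just after the first $\alpha_{i_1}$ regroups the product as $(\delta\text{- or }\beta\text{-letter})\,\gamma_{i_1}\alpha_{i_2}\,(\cdots)\,\gamma_{i_r}\alpha_{i_1}$; the admissibility hypothesis $i_s \neq i_{s+1}$ and $i_r \neq i_1$ guarantees each $\gamma_{i_s}\alpha_{i_{s+1}}$ (and $\gamma_{i_r}\alpha_{i_1}$) is a genuine length-two path in $C \setminus C'$, so that bracketing is legitimate. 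Applying $[\,\cdot\,]$ replaces each $\gamma_{i_s}\alpha_{i_{s+1}}$ by the arrow $[\gamma_{i_s}\alpha_{i_{s+1}}]$, which under~\eqref{e:Qn:mut0} is $b_{i_s i_{s+1}}$. The remaining middle letters are $\delta_{i_s}$ (from an $x$-letter) or $\beta_{i_s}$ (from a $y$-letter). Finally I would apply the substitution $\sub_{\{\beta_i \la -\alpha^*_i \gamma^*_i\}}$: it fixes each $\delta_{i_s}$ but sends $\beta_{i_s} \mapsto -\alpha^*_{i_s}\gamma^*_{i_s}$, which under~\eqref{e:Qn:mut0} becomes $-c_{i_s} a_{i_s}$. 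Comparing block-by-block with Definition~\ref{def:Psi}, the surviving piece $\delta_{i_s}$ matches $p_s = d_{i_s}$ and the substituted piece $-\alpha^*_{i_s}\gamma^*_{i_s}$ matches $p_s = -c_{i_s}a_{i_s}$, while the bracket arrows match the $b_{i_s i_{s+1}}$ interleaved between consecutive $p_s$. Thus the right-hand side equals exactly $p_1 b_{i_1 i_2} p_2 \cdots b_{i_{r-1} i_r} p_r b_{i_r i_1} = \Psi(w)$.

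The main obstacle I anticipate is bookkeeping of the rotation and the resulting index alignment: one must be careful that after rotating, the arrow $[\gamma_{i_r}\alpha_{i_1}]$ closing the cycle lands in the correct cyclic position (it becomes $b_{i_r i_1}$, the last factor of~\eqref{e:Psi}), and that the middle letter $p_s$ is associated with block $i_s$ rather than being shifted by one index. A clean way to avoid sign or positional errors is to treat a single block first ($r=1$ is excluded by admissibility, so the shortest case is $r=2$) and then observe the pattern is periodic. The substitution step is justified by Lemma~\ref{l:sub:prod} (the arrows $\beta_i$ are distinct and do not appear in the targets $-\alpha^*_i\gamma^*_i$), so the composite substitution over all $i$ may be applied termwise with no interaction, and since it is a continuous algebra homomorphism it commutes with the linear extension to arbitrary admissible $F$.
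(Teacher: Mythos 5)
Your proposal is correct and follows essentially the same route as the paper's proof: reduce to an admissible monomial, write $\rot(\Phi(w))$ as a concatenation of the $r$ blocks $\delta_{i_s}\gamma_{i_s}\alpha_{i_{s+1}}$ or $\beta_{i_s}\gamma_{i_s}\alpha_{i_{s+1}}$, use admissibility to justify bracketing each $\gamma_{i_s}\alpha_{i_{s+1}}$, then apply the substitution and the isomorphism~\eqref{e:Qn:mut0} block by block. The extra appeal to Lemma~\ref{l:sub:prod} for the composite substitution is a harmless (and valid) addition.
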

\begin{proof}
It suffices to consider the case that $F$ is an admissible monomial.
In the notations of Definition~\ref{def:Psi}, $\rot(\Phi(F))$ is then a
concatenation of $r$ paths of the form
$\delta_{i_s} \gamma_{i_s} \alpha_{i_{s+1}}$
or $\beta_{i_s} \gamma_{i_s} \alpha_{i_{s+1}}$,
where we set $i_{r+1}=i_1$.

Since $F$ is admissible, in $[\rot(\Phi(F)]$ each path
$\gamma_{i_s} \alpha_{i_{s+1}}$ is replaced by the arrow
$[\gamma_{i_s} \alpha_{i_{s+1}}]$, so after applying the substitutions,
the path $\delta_{i_s} \gamma_{i_s} \alpha_{i_{s+1}}$
becomes $\delta_{i_s} [\gamma_{i_s} \alpha_{i_{s+1}}]$
and $\beta_{i_s} \gamma_{i_s} \alpha_{i_{s+1}}$
becomes $-\alpha^*_{i_s} \gamma^*_{i_s} [\gamma_{i_s} \alpha_{i_{s+1}}]$.
Applying the isomorphism~\eqref{e:Qn:mut0}, these become
identified with $d_{i_s} b_{i_s i_{s+1}}$ and
$-c_{i_s} a_{i_s} b_{i_s i_{s+1}}$, respectively.
Concatenating the $r$ paths,
we get the expression in~\eqref{e:Psi}, as required.
\end{proof}

\begin{prop} \label{p:Qn:mut0}
Let $F \in K \llangle x_1, \dots, x_n, y_1, \dots, y_n \rrangle$ be an 
admissible power series.
Then
\[
\mu_0(Q_n, B_1 + \dots + B_n + \Phi(F)) \simeq
\Bigl( Q'_n, \sum_{1 \leq i \neq j \leq n} a_i b_{ij} c_j + \Psi(F) \Bigl).
\]
\end{prop}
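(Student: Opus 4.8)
The plan is to recognize the potential $B_1 + \dots + B_n + \Phi(F)$ as being of the form $W_\rho + S$ for the fixed maximal matching $\rho$ and a $\rho$-admissible potential $S$, and then to read off the mutation directly from Theorem~\ref{t:QPmut}. First I would rotate all terms so that none starts at the central vertex $0$. By Lemma~\ref{l:Qn:Wrho}\eqref{it:acyclic} the sum $B_1 + \dots + B_n$ is cyclically equivalent to $W_\rho = \sum_{i=1}^n \gamma_i \alpha_i \beta_i$, so setting $S = \rot(\Phi(F))$ shows that $B_1 + \dots + B_n + \Phi(F)$ is cyclically equivalent to $W_\rho + S$. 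Since $F$ is admissible, Lemma~\ref{l:Qn:Fadmiss} guarantees that $S$ is $\rho$-admissible, and the potential now has exactly the shape required to invoke the theorem.

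Next I would verify the hypotheses of Theorem~\ref{t:QPmut}\eqref{it:QWmut}. The quiver $Q_n$ has no loops and no $2$-cycles: the full subquiver on the vertices $1, \dots, 2n$ is acyclic by Lemma~\ref{l:Qn:comb}\eqref{it:acyclic}, and no $2$-cycle can pass through $0$ because every arrow leaving $0$ ends at an odd-numbered vertex $2i-1$ while every arrow entering $0$ starts at an even-numbered vertex $2j$. The matching $\rho$ is maximal as already recorded. The theorem then gives that $\mu_0(Q_n, W_\rho + S)$ is right equivalent to $(Q^*_n, W_{\rho^*} + S^*)$, where $S^* = \sub_{\{\beta_i \la -\alpha^*_i \gamma^*_i\}_{i=1}^n}([S])$. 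Here I would unwind the index conventions of the dual matching: for $\gamma = \beta_i \in \Gamma'$ the corresponding pair is $(\alpha_{\beta_i}, \beta_{\beta_i}) = (\gamma_i, \alpha_i)$, so the prescribed substitution $\gamma \la -\beta^*_\gamma \alpha^*_\gamma$ specializes to $\beta_i \la -\alpha^*_i \gamma^*_i$, matching exactly the substitution appearing in the definition of $\Psi$.

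Finally I would transport everything across the quiver isomorphism $Q^*_n \simeq Q'_n$ of Lemma~\ref{l:Qn:mut0}, recorded in~\eqref{e:Qn:mut0}, which induces a continuous isomorphism of the respective complete path algebras. Under this isomorphism $W_{\rho^*}$ becomes cyclically equivalent to $\sum_{1 \leq i \neq j \leq n} a_i b_{ij} c_j$ by Lemma~\ref{l:Qn:Wrho}\eqref{it:admiss}, while $S^* = \sub_{\{\beta_i \la -\alpha^*_i \gamma^*_i\}_{i=1}^n}([\rot(\Phi(F))])$ is identified with $\Psi(F)$ by Lemma~\ref{l:Psi}. Composing the right equivalence supplied by Theorem~\ref{t:QPmut} with this algebra isomorphism yields the claimed isomorphism of quivers with potentials between $\mu_0(Q_n, B_1 + \dots + B_n + \Phi(F))$ and $\bigl(Q'_n, \sum_{1 \leq i \neq j \leq n} a_i b_{ij} c_j + \Psi(F)\bigr)$.

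Since every ingredient has been assembled in the preceding lemmas, there is no genuine analytic obstacle here; the proposition is an assembly of Theorem~\ref{t:QPmut} with the explicit identifications of Lemmas~\ref{l:Qn:mut0},~\ref{l:Qn:Wrho} and~\ref{l:Psi}. The only point that demands real care is the bookkeeping of the rotations (so that the terms avoid the vertex $0$ and $B_1 + \dots + B_n$ may be replaced by $W_\rho$) together with the sign conventions encoded in $(\alpha_{\beta_i}, \beta_{\beta_i}) = (\gamma_i, \alpha_i)$, which must be tracked precisely to ensure that the substitution signs in $S^*$ agree with those built into the definition of $\Psi$ in~\eqref{e:Psi}.
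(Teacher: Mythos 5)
Your proposal is correct and follows essentially the same route as the paper's own proof: replace $\Phi(F)$ by $\rot(\Phi(F))$, check $\rho$-admissibility via Lemma~\ref{l:Qn:Fadmiss}, apply Theorem~\ref{t:QPmut}, and then identify the result using Lemmas~\ref{l:Qn:mut0}, \ref{l:Qn:Wrho} and~\ref{l:Psi}. The only blemishes are cosmetic: the sub-item references \eqref{it:acyclic} and \eqref{it:admiss} you attach to Lemma~\ref{l:Qn:Wrho} actually point into Lemma~\ref{l:Qn:comb}; the facts you cite are parts (a) and (b) of Lemma~\ref{l:Qn:Wrho} itself.
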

\begin{proof}
Replace $\Phi(F)$ by the cyclically equivalent potential $S=\rot(\Phi(F))$,
which is $\rho$-admissible by Lemma~\ref{l:Qn:Fadmiss}.
By Theorem~\ref{t:QPmut}, $\mu_0(Q_n, W_\rho + S)$ is right
equivalent to $(Q^*_n, W_{\rho*} +
\sub_{\{\beta_i \la -\alpha_i^* \gamma^*_i\}}([S]))$.
The result is now a consequence of Lemma~\ref{l:Qn:mut0},
Lemma~\ref{l:Qn:Wrho} and Lemma~\ref{l:Psi}.
\end{proof}

\begin{cor}
Let $F \in K \llangle x_1, \dots, x_n, y_1, \dots, y_n \rrangle$ be an 
admissible power series. If the potential $B_1 + \dots + B_n + \Phi(F)$
on $Q_n$ is non-degenerate, then for any $i<j$
the sum of the coefficients of the monomials $x_i x_j$ 
and $x_j x_i$ in $F$ is not zero.
\end{cor}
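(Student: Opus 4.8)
The plan is to transport the problem to the mutated quiver $Q'_n$ and to read off the quantity in question as the coefficient of a single $4$-cycle. First I would apply Proposition~\ref{p:Qn:mut0} to get the right equivalence $\mu_0(Q_n, B_1 + \dots + B_n + \Phi(F)) \simeq (Q'_n, W')$ with $W' = \sum_{1 \le k \ne l \le n} a_k b_{kl} c_l + \Psi(F)$. Evaluating Definition~\ref{def:Psi} on the two admissible monomials $x_i x_j$ and $x_j x_i$ gives $\Psi(x_i x_j) = d_i b_{ij} d_j b_{ji}$ and $\Psi(x_j x_i) = d_j b_{ji} d_i b_{ij}$, which are rotations of one another and hence both cyclically equivalent to the $4$-cycle $\omega := d_i b_{ij} d_j b_{ji}$. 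A length count shows these are the only contributions: an admissible monomial of length $r$ is sent by $\Psi$ to a cycle of length at least $2r$, so only length-$2$ monomials in the $x$-variables can produce a cycle of length $4$. Consequently $\omega$ appears in $W'$ with coefficient exactly $\lambda_{ij}$, the sum of the coefficients of $x_i x_j$ and $x_j x_i$ in $F$.

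Next I would exhibit a two-step mutation turning $\omega$ into a $2$-cycle. The cycle $\omega$ runs through the vertices $2i-1 \to 2i \to 2j-1 \to 2j \to 2i-1$, so its two even vertices $2i$ and $2j$ are non-adjacent along it. Mutating first at $2i$ contracts the subpath $d_i b_{ij}$ to a single arrow $[d_i b_{ij}] \colon 2i-1 \to 2j-1$, turning $\omega$ into the $3$-cycle $[d_i b_{ij}] d_j b_{ji}$; one checks that after this step $[d_i b_{ij}]$ lies in no $2$-cycle, so it survives in $\mu_{2i}(Q'_n, W')$, and that $d_j, b_{ji}$ are untouched. Mutating then at $2j$ contracts $d_j b_{ji}$ to $[d_j b_{ji}] \colon 2j-1 \to 2i-1$, creating the $2$-cycle $[d_i b_{ij}][d_j b_{ji}]$ on the vertices $2i-1, 2j-1$. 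Tracking the substitutions, the scalar (degree-two) part of the resulting potential contains this $2$-cycle with coefficient equal, up to sign, to $\lambda_{ij}$.

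I would then invoke non-degeneracy. Since mutation preserves non-degeneracy, the quiver underlying $\mu_{2j}\mu_{2i}\mu_0(Q_n, B_1 + \dots + B_n + \Phi(F))$ must be free of $2$-cycles. If $\lambda_{ij} = 0$, then by the previous paragraph the $2$-cycle $[d_i b_{ij}][d_j b_{ji}]$ carries no scalar potential term, so by the Splitting Theorem it cannot be absorbed into a trivial direct summand and therefore persists in the reduced part, giving a $2$-cycle in the mutated quiver and contradicting non-degeneracy. Hence $\lambda_{ij} \ne 0$, which is the assertion; the pair $i<j$ being arbitrary (or by the block-permutation symmetry of $Q_n$) finishes the argument.

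The step I expect to demand the most care is the bookkeeping of the coefficient of $[d_i b_{ij}][d_j b_{ji}]$ across the two mutations. Each pre-mutation at $2i$ and at $2j$ creates trivial $2$-cycles arising from the radial $3$-cycles $a_k b_{kl} c_l$ (for instance $[a_i b_{ij}] c_j$), whose removal during reduction entails substitutions in the arrows $c_l$. I would show that these substitutions can neither create nor cancel a scalar multiple of $[d_i b_{ij}][d_j b_{ji}]$: the cleanest route is to equip the arrows of $Q'_n$ with a grading in which $\omega$ is homogeneous and is the unique minimal-degree cycle mapping onto this $2$-cycle, so that the length-increasing reduction substitutions leave the relevant degree-two coefficient equal to $\pm\lambda_{ij}$. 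Alternatively, one may compute the mutations at $2i$ and $2j$ explicitly through Theorem~\ref{t:QPmut} and read the coefficient off directly.
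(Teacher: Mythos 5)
Your overall strategy is sound and your first step coincides with the paper's: mutate at $0$, observe that $\Psi(x_ix_j)=d_ib_{ij}d_jb_{ji}$ and $\Psi(x_jx_i)=d_jb_{ji}d_ib_{ij}$ are rotations of one another, and that by a length/degree count no other monomial of $F$ can contribute to this $4$-cycle, so it appears in $W'$ with coefficient $\lambda_{ij}$. Where you diverge is in how you extract the contradiction from $\lambda_{ij}=0$. The paper at this point \emph{restricts} to the full subquiver of $Q'_n$ on the vertices $2i-1,2i,2j-1,2j$, which is a chordless $4$-cycle, using the fact (cited from Labardini-Fragoso, Corollary~22) that restriction of a non-degenerate potential to a full subquiver is again non-degenerate; it then invokes the known statement that any non-degenerate potential on a chordless $4$-cycle must contain the $4$-cycle term. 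You instead perform the two mutations at $2i$ and $2j$ directly inside the ambient quiver $Q'_n$. This is essentially a proof from scratch of the fact the paper cites, but carried out in a much larger quiver, and it shifts all the difficulty into the coefficient bookkeeping that you yourself flag as the delicate step: you must control the degree-$6$ part of the potential through two mutations in the presence of all the radial arrows, the trivial $2$-cycles $[a_ib_{ik}]c_k$ and their reduction substitutions, and (when $F$ contains letters $y_k$) the failure of the obvious matching to be admissible at $2i$, which prevents a direct application of Theorem~\ref{t:QPmut} and forces either a preliminary right equivalence or an appeal to the general pre-mutation plus Splitting Theorem. Your proposed grading ($|a|=|b|=|c|=2$, $|d|=1$, which the paper indeed uses elsewhere) does make this work, since every term of $W'$ has degree at least $6$ and the candidate $2$-cycle sits in degree exactly $6$, but none of this is carried out in detail. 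The restriction step buys exactly this: on the four-vertex chordless cycle every cycle is a power of the $4$-cycle, so the two-mutation (or direct) analysis becomes trivial. In short, your argument can be completed, but as written the hardest part is only sketched, and the paper's route via restriction to a full subquiver is both shorter and cleaner.
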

\begin{proof}
The full subquiver of $Q'_n$ on the set of vertices $2i-1, 2i, 2j-1, 2j$
is a chordless cycle of length $4$.
Any non-degenerate potential on it must contain the cycle
$d_i b_{ij} d_j b_{ji}$ or its rotations.

If a potential is non-degenerate, then so are its mutations and any
restriction to a full subquiver~\cite[Corollary~22]{Labardini09}.
The claim now follows by
performing a mutation at $0$ and then restricting to the above subquiver,
since $\Psi(x_i x_j) = d_i b_{ij} d_j b_{ji}$
and $\Psi(x_j x_i) = d_j b_{ji} d_i b_{ij}$.
\end{proof}

To illustrate the results of this section, we construct a family of
potentials on $Q_n$ for which all the mutations at vertices of $Q_n$ are
completely understood.

Call a power series $f(x_1, \dots, x_n) \in A$ \emph{reduced}
if all its terms are admissible.
The \emph{reduction} $\bar{f}$ of a power series $f$ is defined
by deleting all the non-admissible monomials occurring in $f$.

Viewing $A$ as a subspace of $R$ in the natural way, the maps
$\Phi$ and $\Psi$ restrict to $A$ and its subspace of reduced
power series, respectively. We have
$\Phi(f) = f(\Delta_1, \dots, \Delta_n)$ for any power series $f \in A$,
while 
\begin{equation} \label{e:PsiA}
\Psi(x_{i_1} x_{i_2} \dots x_{i_r})
= d_{i_1} b_{i_1 i_2} d_{i_2} b_{i_2 i_3} \dots d_{i_r} b_{i_r i_1}
\end{equation}
for any admissible monomial $x_{i_1} x_{i_2} \dots x_{i_r}$ in $A$.

\begin{remark}
Note that the expression in~\eqref{e:PsiA} is a cycle in $Q'_n$ without
any radial arrows, hence for any reduced power series $f$,
no radial arrows appear in $\Psi(f)$.

Conversely, any cycle in $Q'_n$ starting at an odd-numbered vertex
without radial arrows is of the form in~\eqref{e:PsiA}.
Thus the restriction of $\Psi$ induces an isomorphism between the subspace
of reduced power series in $A$ and the space
$\bigoplus_{i=1}^n
e_{2i-1} \! \left(\wh{KQ'_n}/\wh{KQ'_n}e_0\wh{KQ'_n}\right) \! e_{2i-1}$.
\end{remark}

\begin{prop} \label{p:Qn:Wf}
Let $f(x_1,\dots,x_n)$ be a power series without constant term and
consider the potential
$W = B_1 + \dots + B_n + f(\Delta_1, \dots, \Delta_n)$ on $Q_n$.
Then:
\begin{enumerate}
\renewcommand{\theenumi}{\alph{enumi}}
\item \label{it:Qn:Wf:mutk}
$\mu_k(Q_n, W) \simeq (Q_n, W)$ for any vertex $1 \leq k \leq 2n$ of $Q_n$.

\item \label{it:Qn:Wf:red}
$(Q_n, W)$ is right equivalent to $(Q_n, \overline{W})$, where
\[
\overline{W} = B_1 + \dots + B_n + \bar{f}(\Delta_1, \dots, \Delta_n)
\]
is the potential corresponding to the reduction $\bar{f}$ of $f$.

\item \label{it:Qn:Wf:mut0}
$
\mu_0(Q_n, W) \simeq
\left( Q'_n, \sum_{1 \leq i \neq j \leq n} a_i b_{ij} c_j + \Psi(\bar{f}) \right)
$
\end{enumerate}
\end{prop}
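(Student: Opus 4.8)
The plan is to prove the three parts essentially in order, treating part~\eqref{it:Qn:Wf:mutk} and part~\eqref{it:Qn:Wf:red} as direct applications of results already established in this subsection, and then deducing part~\eqref{it:Qn:Wf:mut0} by combining part~\eqref{it:Qn:Wf:red} with Proposition~\ref{p:Qn:mut0}. The key observation throughout is that the potential $f(\Delta_1,\dots,\Delta_n)$ equals $\Phi(f)$ when we regard $f \in A$ as a power series in $R$ via the natural inclusion, so all the machinery relating $\Phi$, admissibility and $\Psi$ applies verbatim. Note also that since $f$ has no constant term and each $\Delta_i$ is a $3$-cycle, $W$ is a genuine potential (all terms have length at least $3$).

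For part~\eqref{it:Qn:Wf:mutk}, I would first observe that every term of $S = f(\Delta_1,\dots,\Delta_n)$ is a word in the letters $\Delta_i$ alone, so by Lemma~\ref{l:Qn:comb}\eqref{it:words}--(c) none of the arrows $\beta_1,\dots,\beta_n$ appears in $S$. This is exactly the hypothesis of Proposition~\ref{p:Qn:mutk}, which then gives $\mu_k(Q_n,W)\simeq(Q_n,W)$ for every side vertex $k\neq 0$. For part~\eqref{it:Qn:Wf:red}, the point is that $f$ and its reduction $\bar f$ differ only by non-admissible monomials, i.e.\ by terms that (up to rotation) contain some $x_ix_i$ as a subword or are equal to a single $x_i$; under $\Phi$ these correspond to terms containing a factor $\Delta_i^2 = \Delta_i\Delta_i$, or to the single cycle $\Delta_i$. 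The plan is to absorb all such terms using Proposition~\ref{p:Qn:deform} with $I=\{1,\dots,n\}$, taking $\lambda_i=1$ and $\lambda_i'=0$ and choosing the potentials $S_i',S_i'',S_i''',S_i''''$ so that the added terms $\sum_i \Delta_i^2 S_i''''$ (and, if needed, the $\Delta_i$ and mixed terms) reproduce exactly the non-admissible part of $f(\Delta_1,\dots,\Delta_n)$; the hypothesis of that proposition, that no $\beta_i$ appears, is again guaranteed since $\bar f$ is a power series in the $\Delta_i$.

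The main obstacle I anticipate is the bookkeeping in part~\eqref{it:Qn:Wf:red}: I must verify that every non-admissible monomial of $f$, once written as a word in the $\Delta_i$, indeed factors through one of the four allowed shapes $\Delta_i^2 S$, $\Delta_i B_i S$, etc., so that it can be realized as one of the deformation terms in Proposition~\ref{p:Qn:deform}. Since $f\in A$ involves only the $\Delta_i$, the relevant shape is $\Delta_i^2 S''''_i$ for the subwords $x_ix_i$, together with the degree-one terms $\lambda_i'\Delta_i$ for the monomials $x_i$; a monomial is non-admissible precisely when (up to rotation) it contains some $\Delta_i^2$ or has length one, so each such term lands in the span of the allowed deformations, and collecting them determines the required $S''''_i$ and $\lambda_i'$. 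Finally, for part~\eqref{it:Qn:Wf:mut0} I would apply part~\eqref{it:Qn:Wf:red} to replace $W$ by $\overline W=B_1+\dots+B_n+\Phi(\bar f)$ up to right equivalence, note that $\bar f$ is admissible by construction, and invoke Proposition~\ref{p:Qn:mut0} to obtain
\[
\mu_0(Q_n,\overline W)\simeq\Bigl(Q'_n,\ \sum_{1\le i\ne j\le n} a_i b_{ij} c_j + \Psi(\bar f)\Bigr),
\]
where $\Psi(\bar f)$ takes the explicit radial-free form in~\eqref{e:PsiA}. Since mutation is well defined on right-equivalence classes of quivers with potentials, $\mu_0(Q_n,W)$ is isomorphic to the same quiver with potential, which is the desired conclusion.
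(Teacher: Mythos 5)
Your proposal is correct and follows essentially the same route as the paper: part~(a) via Proposition~\ref{p:Qn:mutk}, part~(b) by rotating each non-admissible term of $f(\Delta_1,\dots,\Delta_n)$ to start with some $\Delta_i^2$ (plus the linear terms $\lambda'_i\Delta_i$) and invoking Proposition~\ref{p:Qn:deform} with $I=\{1,\dots,n\}$, and part~(c) by combining (b) with Proposition~\ref{p:Qn:mut0}. The only cosmetic point is the initial claim ``$\lambda'_i=0$'', which you correctly retract later: $\lambda'_i$ must be the coefficient of $x_i$ in $f$.
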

\begin{proof}
Claim~\eqref{it:Qn:Wf:mutk} is a consequence of Proposition~\ref{p:Qn:mutk}.
To show claim~\eqref{it:Qn:Wf:red}, let $1 \leq i \leq n$ and consider
all the terms in $f(\Delta_1, \dots, \Delta_n)$ containing 
(up to rotation) $\Delta_i^2$
but no $\Delta_j^2$ for any $j<i$. Rotate each of these terms such that it
starts in $\Delta_i^2$, so the sum of the rotated terms can be written
as $\Delta_i^2 S_i$ for some element $S_i$ in $e_0 \wh{KQ_n} e_0$.
Now the claim follows from Proposition~\ref{p:Qn:deform}
for $I = \{1,2,\dots,n\}$,
noting that $W$ is cyclically equivalent to
\[
\overline{W} + \sum_{i=1}^n \lambda_i \Delta_i + \sum_{i=1}^n \Delta_i^2 S_i
\]
where $\lambda_i$ is the coefficient of the monomial $x_i$ in $f$.

Finally, to prove claim~\eqref{it:Qn:Wf:mut0}, replace $W$ by the right
equivalent potential $\overline{W}$ and then use Proposition~\ref{p:Qn:mut0}.
\end{proof}

\subsection{Mutations at a side vertex of the quiver $X'_7$}
\label{ssec:X7:side}

The quiver $X_7$ was discovered by Derksen-Owen~\cite{DerksenOwen08} as a new
quiver whose mutation class is finite, consisting of two members
$X_7$ and $X'_7$ which are shown in Figure~\ref{fig:X7}.
These are the quivers $Q_n$ and $Q'_n$, respectively,
of the previous sections with $n=3$.
In particular, Proposition~\ref{p:Qn:mutk} applies to the mutations
$\mu_k$ of certain potentials on the quiver
$X_7$ when $1 \leq k \leq 6$ (whose resulting quiver is isomorphic to $X_7$),
while Proposition~\ref{p:Qn:mut0} applies to their mutation $\mu_0$ taking
$X_7$ to $X'_7$.

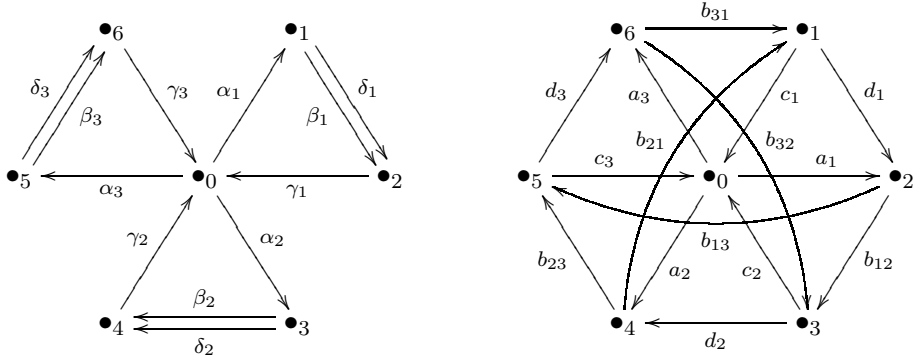
\begin{figure}
\begin{align*}
\xymatrix@=1.5pc{
& {\bullet_6} \ar[ddr]^{\gamma_3} & &
{\bullet_1} \ar@<0.5ex>[ddr]^{\delta_1} \ar@<-0.5ex>[ddr]_{\beta_1} \\
\\
{\bullet_5} \ar@<0.5ex>[uur]^{\delta_3} \ar@<-0.5ex>[uur]_{\beta_3} & &
{\bullet_0} \ar[ll]^{\alpha_3} \ar[uur]^{\alpha_1} \ar[ddr]^{\alpha_2} & &
{\bullet_2} \ar[ll]^{\gamma_1} \\
\\
& {\bullet_4} \ar[uur]^{\gamma_2} & & {\bullet_3} \ar@<0.5ex>[ll]^{\delta_2}
\ar@<-0.5ex>[ll]_{\beta_2}
}
&&
\xymatrix@=1.5pc{
& {\bullet_6} \ar[rr]^{b_{31}} \ar@/^1.5pc/[ddddrr]^{b_{32}} & &
{\bullet_1} \ar[ddr]^{d_1} \ar[ddl]^(0.4){c_1} \\
\\
{\bullet_5} \ar[uur]^{d_3} \ar[rr]^(0.4){c_3} & &
{\bullet_0} \ar[rr]^(0.6){a_1} \ar[ddl]^(0.6){a_2} \ar[uul]^(0.6){a_3} & &
{\bullet_2} \ar[ddl]^{b_{12}} \ar@/^1.5pc/[llll]^{b_{13}} \\
\\
& {\bullet_4} \ar[uul]^{b_{23}} \ar@/^1.5pc/[uuuurr]^{b_{21}} & &
{\bullet_3} \ar[ll]^{d_2} \ar[uul]^(0.4){c_2}
}
\end{align*}
\caption{The two quivers $X_7$ (left) and $X'_7$ (right)
in the mutation class of $X_7$.}
\label{fig:X7}
\end{figure}

It remains to consider the mutations $\mu_k$ of $X'_7$ for $1 \leq k \leq 6$.
While in this case $\mu_k(X'_7) \simeq X'_7$, the isomorphism sends
radial arrows to non-radial ones and vice versa, so its effect on a potential
is more complicated, as we shall see below.

Taking into account the symmetries of Lemma~\ref{l:Qn:opp}
and the discussion preceding it, it suffices to consider mutation at
one of the side vertices.
Without loss of generality, we consider mutation at the vertex $2$.
In this case,
the sets $C$ and $\Gamma$ are given by
$C = \{(a_1, b_{12}), (a_1, b_{13}), (d_1, b_{12}), (d_1, b_{13})\}$
and $\Gamma = \{c_2, c_3\}$,
and there is a unique maximal matching $\rho = (C', \Gamma',
C' \xrightarrow{\sim} \Gamma')$ with
$C' = \{(a_1, b_{12}), (a_1, b_{13})\}$ and $\Gamma'=\Gamma$,
encoded by the potential
\[
W_\rho = a_1 b_{12} c_2 + a_1 b_{13} c_3.
\]

The dual quiver $(X'_7)^*$ with respect to the matching $\rho$ is shown
in Figure~\ref{fig:X7mut2}, arranged in such a way to illustrate its
isomorphism with $X'_7$, to be discussed next.
The potential encoding the dual matching is
\[
W_{\rho^*} = [d_1 b_{12}] b_{12}^* d_1^* + [d_1 b_{13}] b_{13}^* d_1^*.
\]

\begin{figure}
\begin{align*}
\xymatrix@=1.5pc{
& {\bullet_5} \ar[rr]^{b^*_{13}} \ar@/^1.5pc/[ddddrr]^{d_3} & &
{\bullet_2} \ar[ddr]^{a_1^*} \ar[ddl]^(0.4){d^*_1} \\
\\
{\bullet_4} \ar[uur]^{b_{23}} \ar[rr]^(0.4){b_{21}} & &
{\bullet_1} \ar[rr]^(0.6){c_1} \ar[ddl]^(0.7){[d_1 b_{12}]}
\ar[uul]^(0.55){[d_1 b_{13}]} & &
{\bullet_0} \ar[ddl]^{a_3} \ar@/^1.5pc/[llll]^{a_2} \\
\\
& {\bullet_3} \ar[uul]^{d_2} \ar@/^1.5pc/[uuuurr]^{b^*_{12}} & &
{\bullet_6} \ar[ll]^{b_{32}} \ar[uul]^(0.3){b_{31}}
}
\end{align*}
\caption{The dual quiver $(X'_7)^*$ with respect to the maximal
matching $\{c_2 \leftrightarrow (a_1, b_{12}), 
c_3 \leftrightarrow (a_1, b_{13})\}$
corresponding to mutation at the vertex~$2$.}
\label{fig:X7mut2}
\end{figure}
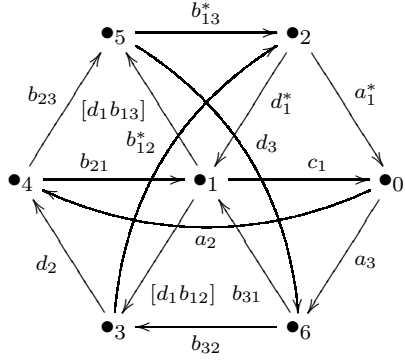

Comparing Figure~\ref{fig:X7} and Figure~\ref{fig:X7mut2}, we see:
\begin{lemma} \label{l:X7mut2:iso}
There is an isomorphism $\bar{\theta} \colon (X'_7)^* \to X'_7$
which is defined on the arrows as in Table~\ref{tab:X7:iso}.
\end{lemma}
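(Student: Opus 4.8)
The plan is to present $\bar\theta$ as a pair of bijections, one on the vertex sets and one on the arrow sets of $(X'_7)^*$ and $X'_7$, and then to verify that this pair respects the source and target maps. Since a bijective morphism of quivers is automatically an isomorphism, establishing this compatibility suffices.

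First I would fix the bijection on vertices. Both quivers are drawn in Figures~\ref{fig:X7} and~\ref{fig:X7mut2} with their seven vertices occupying the same geometric positions (one central vertex surrounded by a hexagon of six), and the layout of Figure~\ref{fig:X7mut2} was chosen precisely so that $\bar\theta$ sends the vertex of $(X'_7)^*$ in each position to the vertex of $X'_7$ in the matching position. Reading this off gives $1 \mapsto 0$, $2 \mapsto 1$, $0 \mapsto 2$, $6 \mapsto 3$, $3 \mapsto 4$, $4 \mapsto 5$ and $5 \mapsto 6$.

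Next I would read the arrow assignment from Table~\ref{tab:X7:iso} and confirm it is a bijection between the fifteen arrows of $(X'_7)^*$ and the fifteen arrows of $X'_7$ (matching the count $n(n+2)=15$ for $Q'_n$ with $n=3$). The core of the proof is then a finite incidence check: for each arrow of $(X'_7)^*$ I would confirm that the source and target of its image under $\bar\theta$ coincide with the $\bar\theta$-images of its source and target, using the vertex bijection above. Once all fifteen incidences are verified, $\bar\theta$ is an isomorphism of quivers.

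The only real obstacle is bookkeeping rather than mathematics. One must correctly extract each arrow of $(X'_7)^*$ together with its endpoints from the somewhat crowded layout of Figure~\ref{fig:X7mut2} — in particular the curved arrows $d_3$, $a_2$ and $b^*_{12}$, whose endpoints are easy to misread — and match them against the table entries. There is no conceptual difficulty: the verification is purely combinatorial and amounts to checking that all fifteen incidences are preserved.
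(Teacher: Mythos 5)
Your proposal is correct and matches the paper's approach: the paper offers no proof beyond the sentence ``Comparing Figure~\ref{fig:X7} and Figure~\ref{fig:X7mut2}, we see,'' relying on the fact that Figure~\ref{fig:X7mut2} is deliberately laid out so that the vertex bijection is position-matching and the fifteen incidences can be read off directly. Your explicit vertex map ($1\mapsto 0$, $2\mapsto 1$, $0\mapsto 2$, $6\mapsto 3$, $3\mapsto 4$, $4\mapsto 5$, $5\mapsto 6$) and the finite source/target check are exactly the verification the paper leaves implicit.
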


\begin{table}
\begin{align*}
a_2 \mapsto b_{13} &&
b_{21} \mapsto c_3 &&
c_1 \mapsto a_1 &&
d_2 \mapsto b_{23} &&
a_1^* \mapsto d_1 &&
b_{12}^* \mapsto b_{21}
\\
a_3 \mapsto b_{12} &&
b_{23} \mapsto d_3 &&
[d_1 b_{12}] \mapsto a_2 &&
d_3 \mapsto b_{32} &&
d_1^* \mapsto c_1 &&
b_{13}^* \mapsto b_{31}
\\
&& b_{31} \mapsto c_2 &&
[d_1 b_{13}] \mapsto a_3
\\
&& b_{32} \mapsto d_2
\end{align*}
\caption{An isomorphism
$\bar{\theta} \colon (X'_7)^* \xrightarrow{\sim} X'_7$.}
\label{tab:X7:iso}
\end{table}

The map $\bar{\theta}$ defines an isomorphism from the complete
path algebra of $(X'_7)^*$ to that of $X'_7$, and
from now on we will identify these algebras via $\bar{\theta}$.
We will also consider the homomorphism $\theta$ from the complete path
algebra of the quiver $(X'_7)'$ (cf. Lemma~\ref{l:adm:Q'}) to that of
$X'_7$ which agrees with $\bar{\theta}$ on 
the arrows of $(X'_7)^*$ and its values on the remaining arrows are
\begin{align*}
\theta(c_2) = b_{21} d_1 &,& \theta(c_3) = b_{31} d_1 .
\end{align*}

\begin{prop} \label{p:X7mut2}
Let $S$ be a potential on $X'_7$ and assume that the arrow $a_1$ does
not appear in $S$. Let
$W = a_1 b_{12} c_2 + a_1 b_{13} c_3 + S$.
Then the mutation $\mu_2(X'_7, W)$ is isomorphic to $(X'_7, W')$
with
$W' = a_2 b_{21} c_1 + a_3 b_{31} c_1 + \theta([S])$.
\end{prop}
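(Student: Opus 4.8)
The plan is to specialize the general mutation formula of Theorem~\ref{t:QPmut} to the quiver $X'_7$ at the vertex $k=2$, to eliminate the minus signs it produces by means of the sign-cancellation machinery of the same section, and finally to transport the result to $X'_7$ through the isomorphism $\bar\theta$ of Lemma~\ref{l:X7mut2:iso}. First I would record the combinatorial data at the vertex $2$, which is exactly as displayed just before the statement: the sets $C = \{(a_1,b_{12}),(a_1,b_{13}),(d_1,b_{12}),(d_1,b_{13})\}$ and $\Gamma = \{c_2,c_3\}$, the unique maximal matching $\rho$ with $C' = \{(a_1,b_{12}),(a_1,b_{13})\}$, $\Gamma'=\{c_2,c_3\}$, and $W_\rho = a_1 b_{12} c_2 + a_1 b_{13} c_3$. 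Since $a_1$ does not appear in $S$, the two paths $a_1 b_{12}$ and $a_1 b_{13}$ do not appear either, so after rotating the terms of $S$ so that none of them starts at $2$, the potential $S$ is $\rho$-admissible in the sense of Definition~\ref{def:pot:admiss}, and $W = W_\rho + S$.

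Since $X'_7$ has no loops or $2$-cycles and $\rho$ is maximal, Theorem~\ref{t:QPmut}(\ref{it:QWmut}) applies and yields that $\mu_2(X'_7, W)$ is right equivalent to $\bigl((X'_7)^*, W^*\bigr)$, where $W^* = W_{\rho^*} + S^*$ with $W_{\rho^*} = [d_1 b_{12}] b_{12}^* d_1^* + [d_1 b_{13}] b_{13}^* d_1^*$ (the sum over $C \setminus C'$) and $S^* = \sub_{\{c_2 \la -b_{12}^* a_1^*,\, c_3 \la -b_{13}^* a_1^*\}}([S])$, using $(\alpha_{c_i},\beta_{c_i}) = (a_1, b_{1,i})$.

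Next I would remove the minus signs. The key observation is that $C' = \{a_1\} \times B = A' \star B'$ for $A' = \{a_1\}$ and $B' = \varnothing$, so Proposition~\ref{p:CABsign} supplies a right equivalence $\eps_{A',B'}$ of $\bigl((X'_7)^*, W^*\bigr)$ with $\bigl((X'_7)^*, W^*_+\bigr)$, where $W^*_+ = W_{\rho^*} + \sub_{\{c_2 \la b_{12}^* a_1^*,\, c_3 \la b_{13}^* a_1^*\}}([S])$ is the sign-free analogue. Applying now the isomorphism $\bar\theta$, one reads off from Table~\ref{tab:X7:iso} that $\bar\theta(W_{\rho^*}) = a_2 b_{21} c_1 + a_3 b_{31} c_1$, giving the first two terms of $W'$. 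For the substitution part, the point is that $\theta$ was defined precisely so that $\theta = \bar\theta \circ \sub_{\{c_2 \la b_{12}^* a_1^*,\, c_3 \la b_{13}^* a_1^*\}}$ as homomorphisms out of $\wh{K(X'_7)'}$: indeed $\theta$ agrees with $\bar\theta$ on the arrows common to $(X'_7)'$ and $(X'_7)^*$, while $\theta(c_2) = b_{21} d_1 = \bar\theta(b_{12}^* a_1^*)$ and $\theta(c_3) = b_{31} d_1 = \bar\theta(b_{13}^* a_1^*)$. Hence $\bar\theta$ carries $W^*_+$ to $a_2 b_{21} c_1 + a_3 b_{31} c_1 + \theta([S]) = W'$, and composing the two right equivalences with the isomorphism $\bar\theta$ shows that $\mu_2(X'_7,W)$ is right equivalent to $(X'_7, W')$, as claimed.

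I expect the main obstacle to lie in the interplay of Steps three and four rather than in the invocation of Theorem~\ref{t:QPmut}. The delicate points are, first, to notice that $C'$ has the special form $A'\star B'$ with $A'=\{a_1\}$, $B'=\varnothing$, which is exactly what forces the signs to cancel via $\eps_{A',B'}$; and second, to verify the clean compatibility $\bar\theta \circ \sub = \theta$, namely that the \emph{positive} substitution followed by $\bar\theta$ reproduces the map $\theta$ applied to $[S]$. The sign elimination is genuinely essential here: without it, $\bar\theta$ would send each $c_i$ to $-b_{\cdot 1} d_1$ rather than to the positive value built into the definition of $\theta$, and the outcome would not match $W'$ on the nose.
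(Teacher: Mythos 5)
Your proposal is correct and follows essentially the same route as the paper: reduce to Theorem~\ref{t:QPmut} via the unique maximal matching, cancel the signs with Proposition~\ref{p:CABsign} using $C'=A'\star B'$ for $A'=\{a_1\}$, $B'=\varnothing$, and transport through $\bar\theta$. The only difference is that you spell out the verification $\theta = \bar\theta \circ \sub_{\{c_2 \la b_{12}^* a_1^*,\, c_3 \la b_{13}^* a_1^*\}}$, which the paper leaves implicit.
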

\begin{proof}
The condition that $a_1$ does not appear in $S$ is equivalent
to $S$ being $\rho$-admissible. Moreover, $C'=A' \star B'$
for the sets $A'=\{a_1\}$ and $B'=\varnothing$.
Hence Theorem~\ref{t:QPmut} and Proposition~\ref{p:CABsign} apply,
and the mutation $\mu_2(X'_7,W)$ is right equivalent to the potential
\[
[d_1 b_{12}] b^*_{12} d^*_1 + [d_1 b_{13}] b^*_{13} d^*_1 +
\sub_{\{c_2 \la b^*_{12} a^*_1, \, c_3 \la b^*_{13} a^*_1\}}([S])
\]
on $(X'_7)^*$. Now the claim follows by applying $\bar{\theta}$.
\end{proof}

Let $R=K\llangle x_1, x_2, x_3, y_1, y_2, y_3 \rrangle$ and let
$R' \subseteq R$ be its subspace of admissible power series,
cf.\ Definition~\ref{def:admiss}.
Consider an admissible power series $F \in R'$ and let
\begin{equation} \label{e:W:PsiF}
W_F = \sum_{1 \leq i \neq j \leq 3} a_i b_{ij} c_j + \Psi(F)
\end{equation}
be its associated potential on $X'_7$,
where $\Psi$ is the map defined in~\eqref{e:Psi}, cf.\ Definition~\ref{def:Psi}.

Before stating the next lemma,
we list a few values of $\theta$ for certain cycles of $(X'_7)'$:
\begin{align*}
&\theta(a_2 b_{21} c_1) = b_{13} c_3 a_1 ,&&
\theta(a_2 b_{23} c_3) = b_{13} d_3 b_{31} d_1 \\
&\theta(a_3 b_{31} c_1) = b_{12} c_2 a_1 ,&&
\theta(a_3 b_{32} c_2) = b_{12} d_2 b_{21} d_1 \\
&\theta([d_1 b_{12}] d_2 b_{21}) = a_2 b_{23} c_3 ,&&
\theta([d_1 b_{13}] d_3 b_{31}) = a_3 b_{32} c_2 ,&&
\theta(d_2 b_{23} d_3 b_{32}) = b_{23} d_3 b_{32} d_2
\end{align*}

\begin{lemma} \label{l:X7mut2:W0}
Let $F=x_1 x_2 + x_2 x_3 + x_3 x_1$. Then
$\mu_2(X'_7, W_F) \simeq (X'_7, W_F)$.
\end{lemma}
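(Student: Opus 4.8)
The plan is to reduce everything to Proposition~\ref{p:X7mut2}, which computes $\mu_2(X'_7,W)$ for potentials of the form $W=a_1b_{12}c_2+a_1b_{13}c_3+S$ with $a_1$ absent from $S$. First I would write $W_F$ out explicitly. Since $F=x_1x_2+x_2x_3+x_3x_1$ lies in the subspace of reduced power series, formula~\eqref{e:PsiA} gives
\[
\Psi(F)=d_1b_{12}d_2b_{21}+d_2b_{23}d_3b_{32}+d_3b_{31}d_1b_{13},
\]
so that $W_F=\sum_{1\le i\ne j\le 3}a_ib_{ij}c_j+\Psi(F)$. By the remark preceding Proposition~\ref{p:Qn:Wf}, no radial arrow appears in $\Psi(F)$, so the only terms of $W_F$ containing $a_1$ are $a_1b_{12}c_2$ and $a_1b_{13}c_3$. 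Setting $S=a_2b_{21}c_1+a_2b_{23}c_3+a_3b_{31}c_1+a_3b_{32}c_2+\Psi(F)$ therefore puts $W_F$ into the required form with $a_1$ absent from $S$.

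Next I would invoke Proposition~\ref{p:X7mut2}, which yields $\mu_2(X'_7,W_F)\simeq(X'_7,W')$ with $W'=a_2b_{21}c_1+a_3b_{31}c_1+\theta([S])$. The bracket operation replaces the length-two paths $d_1b_{12}$ and $d_1b_{13}$ (the pairs in $C\setminus C'$) by the arrows $[d_1b_{12}]$ and $[d_1b_{13}]$; in $S$ these subpaths occur only inside $\Psi(F)$, turning $d_1b_{12}d_2b_{21}$ into $[d_1b_{12}]d_2b_{21}$ and $d_3b_{31}d_1b_{13}$ into $d_3b_{31}[d_1b_{13}]$, while the remaining five terms of $S$ are unchanged by $[\,\cdot\,]$.

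Finally I would apply $\theta$ to each of the seven terms of $[S]$, reading the images off the list of values of $\theta$ tabulated just before Lemma~\ref{l:X7mut2:W0} and using that potentials are taken up to cyclic equivalence (so, e.g., $\theta(d_3b_{31}[d_1b_{13}])$ may be computed from its rotation $[d_1b_{13}]d_3b_{31}$, and likewise wherever a rotation is convenient). Carrying this out, the four terms of $S$ coming from the radial part map to $a_1b_{13}c_3$, $d_1b_{13}d_3b_{31}$, $a_1b_{12}c_2$, $d_1b_{12}d_2b_{21}$, while the three terms from $\Psi(F)$ map to $a_2b_{23}c_3$, $d_2b_{23}d_3b_{32}$, $a_3b_{32}c_2$. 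Together with the two extra summands $a_2b_{21}c_1$ and $a_3b_{31}c_1$ of $W'$, these are, up to rotation, precisely the six radial $3$-cycles $a_ib_{ij}c_j$ ($i\ne j$) together with the three quartic cycles $d_1b_{12}d_2b_{21}$, $d_2b_{23}d_3b_{32}$, $d_3b_{31}d_1b_{13}$ of $\Psi(F)$. Hence $W'$ is cyclically equivalent to $W_F$, and $(X'_7,W')\simeq(X'_7,W_F)$, which is the assertion.

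The main obstacle is precisely this bookkeeping: one must track each monomial through both $[\,\cdot\,]$ and $\theta$, apply the correct rotations to identify cyclically equivalent cycles, and then verify the slightly surprising combinatorial fact that $\theta$ permutes the radial $3$-cycles and the quartic cycles among themselves so as to regenerate the entire original collection. No genuine algebra is needed beyond Proposition~\ref{p:X7mut2}; the content is the invariance of $W_F$ under this explicit permutation of cycles induced by $\bar{\theta}$ and $\theta$.
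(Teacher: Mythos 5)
Your proof is correct and follows exactly the paper's route: the paper's own proof is the one-line observation that the claim follows from Proposition~\ref{p:X7mut2} together with the tabulated values of $\theta$, and your write-up simply carries out that bookkeeping explicitly (decomposing $W_F$ as $a_1b_{12}c_2+a_1b_{13}c_3+S$, applying $[\,\cdot\,]$ and $\theta$ term by term, and matching cycles up to rotation). All the individual computations check out.
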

\begin{proof}
This is clear from Proposition~\ref{p:X7mut2} and the values of $\theta$
listed above.
\end{proof}

The rest of this section is devoted to understanding the effect of $\mu_2$
on potentials of the form $W_F$ for general admissible power series $F$.
We start by stating the conditions on $F$
for the resulting potential to be of the same form.

\begin{lemma} \label{l:WF:admiss}
$W_F$ is $\rho$-admissible if and only if
the letter $y_1$ does not appear in any term of $F$.
\end{lemma}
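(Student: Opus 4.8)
The plan is to test the two requirements of $\rho$-admissibility (Definition~\ref{def:pot:admiss}) for the maximal matching $\rho$ attached to mutation at the vertex $2$, whose defining subset is $C' = \{(a_1, b_{12}), (a_1, b_{13})\}$. First I would peel off the matching potential, writing $W_F = W_\rho + S$ with $W_\rho = a_1 b_{12} c_2 + a_1 b_{13} c_3$ and
\[
S = \sum_{\substack{1 \le i \ne j \le 3 \\ (i,j) \ne (1,2),(1,3)}} a_i b_{ij} c_j + \Psi(F),
\]
so that the admissibility in question is that of the complementary potential $S$, which is exactly the hypothesis that lets one apply Proposition~\ref{p:X7mut2} to $W_F$. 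The goal is then to prove the chain of equivalences: $S$ is $\rho$-admissible $\iff$ the arrow $a_1$ does not occur in $S$ $\iff$ the letter $y_1$ does not occur in $F$.

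The first equivalence rests on the local geometry at the vertex $2$. Condition~(i) of Definition~\ref{def:pot:admiss} holds automatically, because every term of $S$ as written starts either at $0$ (the four remaining $3$-cycles $a_i b_{ij} c_j$) or, by~\eqref{e:Psi}, at an odd vertex $2i_1-1$ (the terms of $\Psi(F)$); in particular none starts at $2$. For condition~(ii) I would use that in $X'_7$ the arrow $a_1$ terminates at $2$ while the only arrows leaving $2$ are $b_{12}$ and $b_{13}$; hence in any cycle an occurrence of $a_1$ is necessarily followed by $b_{12}$ or $b_{13}$, so the forbidden subpaths $a_1 b_{12}$, $a_1 b_{13}$ occur if and only if $a_1$ occurs at all. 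Together these two observations identify $\rho$-admissibility of $S$ with the absence of $a_1$ from $S$.

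Next I would track where $a_1$ can occur. The four $3$-cycles among the terms of $S$ are the pairs $(i,j) \in \{(2,1),(2,3),(3,1),(3,2)\}$ and involve only $a_2$ and $a_3$, so $a_1$ can appear only inside $\Psi(F)$. Reading off Definition~\ref{def:Psi}, the map $\Psi$ replaces the $s$-th letter of a monomial by $p_s = d_{i_s}$ when it is $x_{i_s}$ and by $p_s = -c_{i_s} a_{i_s}$ when it is $y_{i_s}$, and interleaves the non-radial arrows $b_{i_s i_{s+1}}$; the only radial arrows $a_\bullet$ it ever introduces are those coming from the factors $-c_i a_i$, that is, from the letters $y_i$. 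Hence $a_1$ appears in a term $\Psi(w)$ exactly when the monomial $w$ contains $y_1$ (and then the $a_1$ is indeed followed by $b_{1,i_{s+1}}$ with $i_{s+1} \in \{2,3\}$, in accordance with the previous paragraph).

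The one point demanding genuine care --- and the step I expect to be the main obstacle --- is to exclude cancellation, so that an occurrence of $y_1$ in $F$ forces a surviving occurrence of $a_1$ in $\Psi(F)$ rather than one confined to a summand annihilated by others. For this I would note that, up to sign, each $\Psi(w)$ is a single path of $\wh{KQ'_n}$ beginning at the vertex $2i_1-1$, and that its alternating pattern of $p$-factors and $b$-arrows recovers the letters of $w$ one at a time; thus distinct admissible monomials map to distinct, hence linearly independent, paths, and no cancellation among the $\Psi(w)$ is possible. It follows that $a_1$ occurs in $\Psi(F)$ if and only if some monomial of $F$ with non-zero coefficient contains $y_1$. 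Combining this with the reductions above yields that $S$ is $\rho$-admissible precisely when $y_1$ appears in no term of $F$, which is the assertion of the lemma.
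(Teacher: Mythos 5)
Your proof is correct and follows essentially the same route as the paper's (much terser) argument: reduce $\rho$-admissibility of the part of $W_F$ complementary to $W_\rho$ to the non-occurrence of the arrow $a_1$, and then observe from Definition~\ref{def:Psi} that $a_1$ enters $\Psi(F)$ only through the factors $-c_1 a_1$ produced by the letter $y_1$. Your extra care about condition~(i), about $a_1$ being necessarily followed by $b_{12}$ or $b_{13}$, and about the impossibility of cancellation among the paths $\Psi(w)$ fills in details the paper leaves implicit, but does not change the approach.
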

\begin{proof}
$W_F$ is $\rho$-admissible if and only if the arrow $a_1$ does not appear
in $\Psi(F)$. By the definition of $\Psi$,
any occurrence of $a_1$ in $\Psi(F)$ arises from that of the letter $y_1$
in $F$.
\end{proof}

\begin{defn}
Let $w$ be an admissible monomial of $R$ in which the letter $y_1$ does
not appear. 
We way that $w$ \emph{avoids $i1i$} if none of the words below
\begin{align*} 
x_2 x_1 x_2 && x_2 x_1 y_2 && y_2 x_1 x_2 && y_2 x_1 y_2 &&
x_3 x_1 x_3 && x_3 x_1 y_3 && y_3 x_1 x_3 && y_3 x_1 y_3
\end{align*}
appears as a subword of $w$ or its rotations.
In other words, if $(i_1, i_2, \dots, i_r)$ is the sequence of indices of
the letters in $w$ then $(2,1,2)$ and $(3,1,3)$ do not appear in any of its rotations.

We say that an admissible power series \emph{avoids $i1i$} if
all its terms avoid $i1i$.
\end{defn}

\begin{lemma} \label{l:212313}
Assume that $W_F$ is $\rho$-admissible. Then
$\mu_2(X'_7, W_F)$ is isomorphic to a potential of the form $W_{F'}$
for some admissible power series $F'$
if and only if the following conditions hold:
\begin{enumerate}
\renewcommand{\theenumi}{\roman{enumi}}
\item
$F$ contains the terms $x_1 x_2$ and $x_3 x_1$ (up to rotation);

\item \label{it:212313}
Any term of $F$ avoids $i1i$.
\end{enumerate}
\end{lemma}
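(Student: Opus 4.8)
The plan is to reduce the computation to Proposition~\ref{p:X7mut2} and then analyze the resulting potential monomial by monomial. By Lemma~\ref{l:WF:admiss} the standing hypothesis that $W_F$ is $\rho$-admissible means exactly that $y_1$ does not occur in $F$, so I would write $W_F = W_\rho + S$ with $W_\rho = a_1 b_{12} c_2 + a_1 b_{13} c_3$ and $S = \sum_{i\in\{2,3\},\, j\ne i} a_i b_{ij} c_j + \Psi(F)$, in which $a_1$ does not appear, so $S$ is $\rho$-admissible. Proposition~\ref{p:X7mut2} then gives $\mu_2(X'_7,W_F)\simeq (X'_7,W')$ with $W' = a_2 b_{21} c_1 + a_3 b_{31} c_1 + \theta([S])$. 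The first step is to split $\theta([S])$ along the decomposition of $S$: applying the values of $\theta$ listed before Lemma~\ref{l:X7mut2:W0} to the four radial $3$-cycles of $S$ produces $a_1 b_{12} c_2 + a_1 b_{13} c_3 + \Psi(x_1 x_2) + \Psi(x_1 x_3)$ up to cyclic equivalence. Hence $W'$ already contains four of the six radial $3$-cycles $\sum_{i\ne j} a_i b_{ij} c_j$; the two missing ones are $a_2 b_{23} c_3$ and $a_3 b_{32} c_2$, and every further contribution comes from $\theta([\Psi(F)])$.

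The heart of the argument is a syllable-by-syllable computation of $\theta([\Psi(w)])$ for a single admissible monomial $w$ not involving $y_1$. Each letter $x_1$ becomes the merged arrow $[d_1 b_{1m}]$ under $[\cdot]$ and maps to the radial arrow $a_m$ under $\theta$, while the connector $b_{\ell 1}$ entering it maps to the radial arrow $c_{5-\ell}$ (since $\theta(b_{21})=c_3$, $\theta(b_{31})=c_2$); the letters $x_2,x_3,y_2,y_3$ map to paths supported on the side vertices, with $\theta(c_2)=b_{21}d_1$ and $\theta(c_3)=b_{31}d_1$ inserting new $d_1$'s. Tracking the visits to the central vertex $0$, one sees that each such visit in $\theta([\Psi(w)])$ is an excursion $c_{5-\ell}\,a_m$, where $\ell$ and $m$ are the indices immediately preceding and following an occurrence of $x_1$; this is a genuine $\Psi$-syllable $-c_i a_i$ precisely when $5-\ell=m$, i.e.\ when $\ell\ne m$, and is mismatched exactly when $(\ell,1,m)=(i,1,i)$, that is, when $w$ fails to avoid $i1i$. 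Because the arrows $b_{ij}$ exist only for $i\ne j$, any cycle all of whose $0$-visits are matched is automatically $\Psi$ of an admissible monomial. I would treat the two length-two cases separately: for $w=x_1 x_2$ and $w=x_3 x_1$ the excursion wraps around the whole cycle and one obtains the radial $3$-cycles $a_2 b_{23} c_3$ and $a_3 b_{32} c_2$; a length count (every $\Psi$-cycle has length at least $4$, every radial cycle length $3$) shows these are the only monomials whose image is a radial $3$-cycle.

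From this both directions follow. For sufficiency, assume (i) and (ii): condition~(i) supplies the two missing radial $3$-cycles with coefficient $1$, so the radial part of $W'$ is exactly $\sum_{i\ne j} a_i b_{ij} c_j$; condition~(ii) guarantees that every remaining term of $F$ has image $\pm\Psi$ of an admissible monomial, so together with $\Psi(x_1 x_2)+\Psi(x_1 x_3)$ the non-radial part of $W'$ equals $\Psi(F')$ for an admissible power series $F'$ collecting these monomials, whence $W'=W_{F'}$. For necessity I would use that $S\mapsto\theta([S])$ is the composite of the isomorphism of Proposition~\ref{p:rho:admissible} with $\bar{\theta}$, hence injective on cyclic-equivalence classes of monomials, so no cancellation occurs between images of distinct terms of $F$. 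Consequently, if $x_1 x_2$ or $x_3 x_1$ is absent from $F$ the corresponding radial $3$-cycle is missing from $W'$, and if some term of $F$ contains $i1i$ its image carries a mismatched $0$-excursion which is neither a radial $3$-cycle nor a $\Psi$-cycle and survives, so $W'$ is not of the form $W_{F'}$.

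The step I expect to be the main obstacle is the middle paragraph: pinning down the index bookkeeping of $\theta$ precisely enough to prove that mismatch of the central excursion is equivalent to the combinatorial $i1i$ pattern. The length-two boundary cases and the no-cancellation (injectivity) claim needed for the necessity direction also require care.
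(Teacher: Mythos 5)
Your proposal is correct and follows essentially the same route as the paper's proof: reduce via Proposition~\ref{p:X7mut2}, observe that the two missing triangles $a_2 b_{23} c_3$ and $a_3 b_{32} c_2$ arise exactly from the terms $x_1 x_2$ and $x_3 x_1$ of $F$, and that the only obstruction to the remaining terms being of the form $\Psi(F')$ is the appearance of the paths $b_{21} d_1 b_{12}$ or $b_{31} d_1 b_{13}$ in $\Psi(F)$, i.e.\ precisely the $i1i$ pattern (the other potential sources $b_{21}c_1$, $b_{31}c_1$ being excluded since $y_1$ does not occur in $F$). Your syllable-by-syllable tracking of the excursions through the central vertex is more elaborate than the paper's argument and essentially anticipates Lemma~\ref{l:Psi:theta}, but the key identifications, and the implicit use of no-cancellation, are the same.
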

\begin{proof}
From Proposition~\ref{p:X7mut2} and the values of $\theta$ listed
above we see that in order for
all the triangles $a_i b_{ij} c_j$ to appear in $\mu_2(X'_7,W_F)$,
the terms
$d_1 b_{12} d_2 b_{21}$ and $d_1 b_{13} d_3 b_{31}$ must occur in $\Psi(F)$,
that is, $x_1 x_2$ and $x_3 x_1$ occur in $F$.

Now $\mu_2(X'_2, W_F)$ will be of the form $W_{F'}$ if any path
$c_j a_i$ of radial arrows with $i \neq j$ does not appear in any term
apart from the triangle $a_i b_{ij} c_j$ it belongs to. Looking back
at the definition of $\theta$, this means that the paths
$b_{31} c_1$ (whose image under $\theta$ is $c_2 a_1$),
$b_{31}[d_1 b_{13}]$ (whose image is $c_2 a_3$),
$b_{21} c_1$ (whose image is $c_3 a_1$)
and $b_{21}[d_1 b_{12}]$ (whose image is $c_3 a_2$)
do not appear in $[\Psi(F)]$.

Since $W_F$ is $\rho$-admissible, $F$ does not contain the letter $y_1$
and hence the arrow $c_1$ does not appear in $\Psi(F)$. We are left
to consider the paths $b_{21} d_1 b_{12}$ and $b_{31} d_1 b_{13}$ in $\Psi(F)$.
They will not appear exactly when the condition~\eqref{it:212313} holds.
\end{proof}

\begin{defn}
Let $w$ be an admissible monomial in $R'$. Assume that $y_1$ does not 
appear in $w$ and that $w$ avoids $i1i$.
Construct a new monomial $\theta_2(w)$ as follows.
Let $i_1, \dots, i_r$ be the sequence of indices of the letters in $w$.
For any place $1 \leq s \leq r$ such that $i_s \ne 1$, let $s'$ be the
previous place (in cyclic order) such that $i_{s'} \ne 1$. Then
either $s-s'=1$ or $s-s'=2$ (modulo $r$), since $w$ is admissible.

For each such $s$, let $i=i_s$. If $s-s'=1$, replace the $s$-th
letter of $w$ by a word of one or two letters according to the following:
\begin{align*}
x_i \mapsto x_i && y_i \mapsto -x_i x_1
\end{align*}
If $s-s'=2$, replace the $s$-th and its previous letter (which must
be $x_1$) as follows:
\begin{align*}
x_1 x_i \mapsto -y_i && x_1 y_i \mapsto y_i x_1
\end{align*}

Define $\theta_2(w)$ as the (signed) monomial obtained from $w$ after
going over all the places $s$ with $i_s \neq 1$ and performing these
replacements.
\end{defn}

\begin{example}
If $w=x_2 x_3$ then we need to go over the places $1,2$ and
$\theta_2(w) = x_2 x_3$.
If $w=x_1 x_2 x_3$ then we need to go over the places $2,3$ and
$\theta_2(w) = -y_2 x_3$.
Similarly, $\theta_2(x_3 x_2 x_1) = -y_3 x_2$
(going over the places $1,2$).
\end{example}

\begin{lemma}
$\theta_2(w)$ is always admissible and avoids $i1i$.
\end{lemma}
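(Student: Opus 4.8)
The plan is to read the entire argument off the sequence of indices of the letters, exploiting that $\theta_2$ acts locally. First I would record the structural consequence of the hypotheses: since $y_1$ never occurs in $w$, every letter of index $1$ is $x_1$, and by admissibility no two such letters are cyclically adjacent, so each $x_1$ is immediately followed by a letter of index in $\{2,3\}$. This lets me group each $x_1$ with the non-$1$ letter after it, decomposing $w$ cyclically into $m$ \emph{chunks}, each of the form $x_i$, $y_i$, $x_1 x_i$, or $x_1 y_i$ with $i\in\{2,3\}$, where $m$ is the number of non-$1$ letters. The four substitution rules defining $\theta_2$ send these chunks to $x_i$, $-x_i x_1$, $-y_i$, $y_i x_1$ respectively, so $\theta_2(w)$ is, up to sign, the cyclic concatenation of the image chunks. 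Reading indices, the image of the $t$-th chunk starts with its non-$1$ index $i_t$ and carries a trailing $1$ exactly when the chunk's non-$1$ letter is a $y$; these trailing $1$'s are the only $1$'s in $\theta_2(w)$. Thus the index sequence of $\theta_2(w)$ is cyclically $i_1\langle 1\rangle\, i_2\langle 1\rangle\cdots i_m\langle 1\rangle$, with the $\langle 1\rangle$ after $i_t$ present iff the $t$-th non-$1$ letter is a $y$.

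The crux, and what I expect to be the main obstacle, is to prove that the sequence $(i_1,\dots,i_m)$ of non-$1$ indices has cyclically distinct consecutive entries, hence alternates between $2$ and $3$. This is exactly where both hypotheses enter, and the case split is dictated by how the chunks $K_{t-1}$ and $K_t$ abut in $w$: if $K_t$ begins with its non-$1$ letter, then that letter is adjacent to the last letter of $K_{t-1}$ and admissibility gives $i_{t-1}\ne i_t$; if $K_t$ begins with $x_1$, then in $w$ this $x_1$ is flanked by the indices $i_{t-1}$ and $i_t$, and the hypothesis that $w$ avoids $i1i$ gives $i_{t-1}\ne i_t$. I would also dispose of the degenerate case here: $m\ge 2$, because $m=1$ would force $w$ to be a single non-$1$ letter (degree $1$, not admissible) or $x_1 x_i$/$x_1 y_i$, whose lone $1$ is cyclically flanked by two copies of $i$, contradicting avoidance of $i1i$.

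With the alternation of $(i_1,\dots,i_m)$ in hand, both conclusions follow by inspecting the displayed index sequence, which should be routine. For admissibility I would check that consecutive indices differ: the only within-chunk adjacency is $(i_t,1)$ with $i_t\ne 1$, and between chunks the pair is either $(1,i_{t+1})$ or $(i_t,i_{t+1})$, distinct in the first case trivially and in the second by alternation; the degree is at least $m\ge 2$. For avoidance of $i1i$ I would note that every $1$ in $\theta_2(w)$ is a trailing $1$ after some $i_t$ and is immediately followed by $i_{t+1}$, so it is flanked by the distinct indices $i_t$ and $i_{t+1}$; hence no cyclic subword $(2,1,2)$ or $(3,1,3)$ can occur. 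I would sanity-check the bookkeeping against the worked examples $\theta_2(x_1x_2x_3)=-y_2x_3$ and $\theta_2(x_3x_2x_1)=-y_3x_2$ before finalizing.
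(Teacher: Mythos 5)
Your proof is correct and takes essentially the same route as the paper's: the entire content of the argument is the observation that the consecutive non-$1$ indices satisfy $i_{s'}\neq i_s$, which follows from admissibility of $w$ when $s-s'=1$ and from $i1i$-avoidance when $s-s'=2$, and the paper's proof consists of exactly this remark. Your version just spells out the resulting chunk decomposition and index bookkeeping explicitly, and additionally disposes of the degenerate case $m=1$, which the paper leaves implicit.
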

\begin{proof}
This follows from the fact that we always have $i_s \neq i_{s'}$.
If $s-s'=1$ this is true since $w$ is admissible, while if $s-s'=2$ this
holds since $w$ avoids $i1i$.
\end{proof}

\begin{lemma} \label{l:Psi:theta}
$\Psi(\theta_2(w)) = \theta([\Psi(w)])$.
\end{lemma}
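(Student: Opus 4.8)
The statement concerns the fixed admissible monomial $w$ from the preceding definition of $\theta_2$, in which $y_1$ does not appear and which avoids $i1i$. Before computing, I would note that both sides are well defined: the previous lemma guarantees that $\theta_2(w)$ is admissible, so $\Psi(\theta_2(w))$ makes sense; and since $y_1$ is absent, the factor $p(y_1)=-c_1a_1$ never occurs, so $a_1$ does not appear in $\Psi(w)$, whence $\Psi(w)$ is $\rho$-admissible (it is a cycle based at an odd vertex and contains none of the paths $a_1b_{1j}$), so that $[\Psi(w)]$ is defined via Lemma~\ref{l:adm:Q'}. The plan is then to decompose $w$ combinatorially and compare the two sides block by block.

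First I would set up the block decomposition. Grouping the cyclic index sequence $i_1,\dots,i_r$ of $w$ so that each block is a non-$1$ letter (index $j\in\{2,3\}$) together with the letter $x_1$ immediately preceding it in cyclic order when present, I would observe: admissibility forbids two consecutive $1$'s, so every $x_1$ is absorbed into exactly the following block; avoidance of $i1i$ forces the two non-$1$ indices flanking an $x_1$ to differ; and together with admissibility this makes the block-head indices strictly alternate between $2$ and $3$. Writing $j^\flat$ for the element of $\{2,3\}$ other than $j$, the next head is always $j^\flat$. Each block then has one of four types: \emph{type $A$} (no preceding $x_1$) or \emph{type $B$} (with preceding $x_1$), and head letter $x_j$ or $y_j$.

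Next I would compute the left-hand side. Applying the four block rules defining $\theta_2$ and then the definition~\eqref{e:Psi} of $\Psi$ (with $p(x_i)=d_i$, $p(y_i)=-c_ia_i$, and the arrow leaving a block being $b_{\,\cdot\,,\,j^\flat}$ since the next head is $j^\flat$), the contribution of a block with head $j$ is
\begin{align*}
(A,x_j)&\colon\ d_j\,b_{jj^\flat}, &
(A,y_j)&\colon\ -\,d_j\,b_{j1}\,d_1\,b_{1j^\flat}, \\
(B,x_j)&\colon\ c_j\,a_j\,b_{jj^\flat}, &
(B,y_j)&\colon\ -\,c_j\,a_j\,b_{j1}\,d_1\,b_{1j^\flat},
\end{align*}
and $\Psi(\theta_2(w))$ is the cyclic concatenation of these. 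In parallel I would compute the right-hand side: forming $[\Psi(w)]$ merely collapses each $d_1b_{1j}$ into $[d_1b_{1j}]$ (no $a_1b_{1j}$ occurs), and applying $\theta$ arrow-by-arrow via Table~\ref{tab:X7:iso} together with $\theta([d_1b_{12}])=a_2$, $\theta([d_1b_{13}])=a_3$, $\theta(c_j)=b_{j1}d_1$, $\theta(d_j)=b_{jj^\flat}$, $\theta(b_{jj^\flat})=d_{j^\flat}$ and $\theta(b_{j1})=c_{j^\flat}$ produces, per block, the $\theta$-image of its head's $p$-factor framed by a prefix $a_j$ (present exactly for type $B$, coming from the collapsed $[d_1b_{1j}]$) and by the connecting arrow $d_{j^\flat}$ or $c_{j^\flat}$ according as the next block is type $A$ or $B$.

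Finally I would compare the two cyclic words. The signs match: each $y$-letter contributes exactly one factor $-1$ on each side—on the left the sign migrates between the $\theta_2$-rule and the $p(y_i)=-c_ia_i$ rule, giving the pattern $(+,-,+,-)$ on the four types, which is exactly the sign $-$ attached to a $y$-headed block by $\theta(p)$ on the right. The arrow contents also agree: the right-hand grouping attaches the suffix $c_{j^\flat}$ (resp. the $a_{j^\flat}$ prefix of the next block) to the transition where the left-hand grouping records $c_j a_j$ at the start of a type-$B$ block, so the two groupings cut one and the same cyclic sequence of arrows at points differing by a single rotation. Hence $\Psi(\theta_2(w))$ and $\theta([\Psi(w)])$ are the same cyclic word, i.e. cyclically equivalent, which is the asserted identity of potential terms. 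I expect the main obstacle to be purely organizational: correctly assigning the connecting $b$-arrows and the collapsed $a_j$-arrows to blocks so that the two groupings visibly yield the same cyclic word, and tracking the signs through all four block types—the whole point being that the definition of $\theta_2$ was engineered precisely to make this matching hold.
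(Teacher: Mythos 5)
Your proof is correct and follows essentially the same route as the paper's: the paper's case distinction $s-s'\in\{1,2\}$ (head letter with or without a preceding $x_1$) is exactly your type-$A$/type-$B$ block decomposition, and your four per-block contributions and the sign pattern $(+,-,+,-)$ coincide with the paper's computation of $\theta$ applied to the pieces $b_{i'i}d_i$, $-b_{i'i}c_ia_i$, $b_{i'1}[d_1b_{1i}]d_i$, $-b_{i'1}[d_1b_{1i}]c_ia_i$. Your explicit observation that the two sides agree only as cyclic words (the two groupings cut the same cycle at points differing by one connecting arrow) is consistent with the paper's implicit convention of identifying cycles up to rotation, which is all that is needed where the lemma is applied.
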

\begin{proof}
We will consider $\Psi(w)$ as a concatenation of certain (signed) paths
not starting or ending at the vertex $2$
and evaluate $\theta$ after applying $[-]$ on each such path.

Let $i_1, \dots, i_r$ be the sequence of indices of the letters of $w$,
let $s, s'$ be as in the construction of $\theta_2(w)$ and
let $i=i_s$, $i'=i_{s'}$. Then by our assumptions $i \neq i'$
and they both differ from $1$.

If $s-s'=1$ then in $\Psi(w)$ we either have the path $b_{i'i} d_i$
(arising if the $s$-th letter of $w$ is $x_i$) or the path
$-b_{i'i} c_i a_i$ (arising if it is $y_i$). Then
\begin{align*}
\theta(b_{i'i} d_i) = d_i b_{ii'} &&
\theta(-b_{i'i} c_i a_i) = - d_i b_{i1} d_1 b_{1i'}
\end{align*}
corresponding to the words $x_i$ and $-x_i x_1$, respectively.

If $s-s'=2$ then in $\Psi(w)$ we either have the path
$b_{i'1} d_1 b_{1 i} d_i$
(arising from the word $x_1 x_i$ in $w$) or the path
$-b_{i'i} d_1 b_{1 i} c_i a_i$ (arising from the word $x_1 y_i$). Then
\begin{align*}
\theta(b_{i'1} [d_1 b_{1 i}] d_i) = c_i a_i b_{ii'} &&
\theta(-b_{i'1} [d_1 b_{1 i}] c_i a_i) = - c_i a_i b_{i1} d_1 b_{1i'}
\end{align*}
corresponding to the words $-y_i$ and $y_i x_1$, respectively.
\end{proof}

We can extend the map $\theta_2$ to the space of admissible power series
avoiding $i1i$ using linearity and continuity. The next proposition
expresses the result of the mutation $\mu_2$ on a potential $W_F$
in terms of the map $\theta_2$.

\begin{prop} \label{p:X7mut2:G}
Let $G$ be an admissible power series which avoids $i1i$
and let $F = x_1 x_2 + x_3 x_1 + G$.
Then $\mu_2(X'_7, W_F) \simeq (X'_7, W_{F'})$
for $F' = x_1 x_2 + x_3 x_1 + \theta_2(G)$.
\end{prop}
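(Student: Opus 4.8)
The plan is to reduce the statement to Proposition~\ref{p:X7mut2}, whose hypothesis is that the potential splits off the two triangles encoding $\rho$, namely $a_1 b_{12} c_2 + a_1 b_{13} c_3$, with the remaining part $S$ free of the arrow $a_1$. First I would record that since $G$ avoids $i1i$ (a notion defined only for monomials in which $y_1$ does not occur), the letter $y_1$ appears neither in $G$ nor in the terms $x_1 x_2$, $x_3 x_1$, hence not in $F$; by Lemma~\ref{l:WF:admiss} this means $W_F$ is $\rho$-admissible, equivalently $a_1$ does not appear in $\Psi(F)$. Writing $W_F = \sum_{i \ne j} a_i b_{ij} c_j + \Psi(F)$ and peeling off the two triangles through the vertex $2$, I would set $S = a_2 b_{21} c_1 + a_2 b_{23} c_3 + a_3 b_{31} c_1 + a_3 b_{32} c_2 + \Psi(F)$; none of these summands involves $a_1$, so the hypothesis of Proposition~\ref{p:X7mut2} is met.

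Applying Proposition~\ref{p:X7mut2} then gives $\mu_2(X'_7, W_F) \simeq (X'_7, W')$ with $W' = a_2 b_{21} c_1 + a_3 b_{31} c_1 + \theta([S])$, so the whole problem becomes the explicit evaluation of $\theta([S])$. I would split $[S]$ into the four remaining triangles (on which $[-]$ acts trivially) and $[\Psi(F)] = [\Psi(x_1 x_2)] + [\Psi(x_3 x_1)] + [\Psi(G)]$, and treat each piece separately. The four triangles and the two length-four terms are handled directly by the table of $\theta$-values listed before Lemma~\ref{l:X7mut2:W0}: one gets $\theta(a_2 b_{21} c_1) = b_{13} c_3 a_1$, $\theta(a_3 b_{31} c_1) = b_{12} c_2 a_1$, $\theta(a_2 b_{23} c_3) = b_{13} d_3 b_{31} d_1$ and $\theta(a_3 b_{32} c_2) = b_{12} d_2 b_{21} d_1$, while $\theta([\Psi(x_1 x_2)]) = a_2 b_{23} c_3$ and $\theta([\Psi(x_3 x_1)]) = a_3 b_{32} c_2$ (the latter after a rotation, $\theta$ respecting cyclic equivalence). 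For the $G$-part I would invoke Lemma~\ref{l:Psi:theta}, extended by linearity and continuity, to get $\theta([\Psi(G)]) = \Psi(\theta_2(G))$.

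Substituting these into $W'$ and working modulo cyclic equivalence, I would then reassemble the terms. The triangles $b_{13} c_3 a_1$ and $b_{12} c_2 a_1$ rotate to $a_1 b_{13} c_3$ and $a_1 b_{12} c_2$; together with $a_2 b_{21} c_1$, $a_3 b_{31} c_1$ (from the front of $W'$) and $a_2 b_{23} c_3$, $a_3 b_{32} c_2$ (produced by $\theta$ from the $\Psi(x_1 x_2)$ and $\Psi(x_3 x_1)$ parts) these are exactly the six cycles $\sum_{i \ne j} a_i b_{ij} c_j$. The two length-four terms rotate to $d_1 b_{13} d_3 b_{31} = \Psi(x_1 x_3)$ and $d_1 b_{12} d_2 b_{21} = \Psi(x_1 x_2)$, and since $\Psi(x_1 x_3)$ and $\Psi(x_3 x_1)$ are rotations of one another they give the same potential. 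Collecting everything yields, up to cyclic equivalence, $W' = \sum_{i \ne j} a_i b_{ij} c_j + \Psi(x_1 x_2) + \Psi(x_3 x_1) + \Psi(\theta_2(G)) = W_{F'}$ with $F' = x_1 x_2 + x_3 x_1 + \theta_2(G)$, as claimed; here $F'$ is admissible because $\theta_2(G)$ is, by the lemma showing that $\theta_2$ preserves admissibility and avoidance of $i1i$.

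The routine computations are all pinned down by the tabulated $\theta$-values and by Lemma~\ref{l:Psi:theta}; the one place demanding care — the main obstacle — is the bookkeeping in the last step. One must verify that the four triangles pulled out of $S$, the two triangles in front of $W'$, and the two triangles produced by $\theta$ from the $\Psi(x_1 x_2)$ and $\Psi(x_3 x_1)$ contributions recombine, after the appropriate rotations, into precisely the full sum of all six radial triangles $\sum_{i \ne j} a_i b_{ij} c_j$, with none missing or doubled, and that the two stray length-four cycles land exactly on the images $\Psi(x_1 x_2)$ and $\Psi(x_3 x_1)$. Lemma~\ref{l:212313} may be cited in passing as an a priori guarantee that the output has the form $W_{F'}$, but the explicit computation above both establishes this and identifies $F'$.
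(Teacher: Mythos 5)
Your proposal is correct and follows essentially the same route as the paper's proof: establish $\rho$-admissibility via Lemma~\ref{l:WF:admiss}, apply Proposition~\ref{p:X7mut2} together with the tabulated $\theta$-values to identify the image of the radial triangles and of $\Psi(x_1x_2)+\Psi(x_3x_1)$, and conclude via Lemma~\ref{l:Psi:theta} that the remaining part is $\Psi(\theta_2(G))$. The only difference is presentational — you carry out the triangle bookkeeping explicitly where the paper compresses it into ``arguing as in the proof of Lemma~\ref{l:212313}'' — and your accounting of the six cycles $a_i b_{ij} c_j$ and the two length-four cycles is accurate.
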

\begin{proof}
By Lemma~\ref{l:WF:admiss}, $W_F$ is $\rho$-admissible, so by
arguing as in the proof of Lemma~\ref{l:212313}
using Proposition~\ref{p:X7mut2} and the list of values of $\theta$
preceding Lemma~\ref{l:X7mut2:W0}, we see that
$\mu_2(X'_7, W_F) = (X'_7, W_{F'})$ for $F'= x_1 x_2 + x_3 x_1 + G'$,
where $G'$ is an admissible power series such that
$\Psi(G') = \theta([\Psi(G)])$.
The result now follows from Lemma~\ref{l:Psi:theta}.
\end{proof}

The preceding proposition can be used to construct a family of potentials
on $X'_7$ whose mutations at all the side vertices are completely understood.
For convenience, the result is stated in terms of potentials on the quiver
$X_7$.

\begin{prop} \label{p:X7mut2:P}
Let $P_{123}, P_{321} \in K[[x]]$ be two power series without constant
term and consider the potential on $X_7$ given by
\[
W = B_1 + B_2 + B_3 +
\Delta_1 \Delta_2 + \Delta_2 \Delta_3 + \Delta_3 \Delta_1 +
P_{123}(\Delta_1 \Delta_2 \Delta_3) + P_{321}(\Delta_3 \Delta_2 \Delta_1).
\]

Then $\mu_2 \mu_0(X_7, W) \simeq \mu_0(X_7, \wt{W})$
for the potential $\wt{W}$ on $X_7$ given by
\[
\wt{W} = B_1 + B_2 + B_3 +
\Delta_1 \Delta_2 + \Delta_2 \Delta_3 + \Delta_3 \Delta_1 +
P_{123}(-B_2 \Delta_3) + P_{321}(-B_3 \Delta_2).
\]
\end{prop}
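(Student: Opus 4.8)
The plan is to compute the quivers with potential on both sides of the claimed isomorphism by means of the power-series calculus of $\Phi$, $\Psi$ and $\theta_2$ developed above, and then to verify that the two admissible power series produced coincide. Set $f(x_1,x_2,x_3)=x_1x_2+x_2x_3+x_3x_1+P_{123}(x_1x_2x_3)+P_{321}(x_3x_2x_1)$, so that $W=B_1+B_2+B_3+f(\Delta_1,\Delta_2,\Delta_3)$ is exactly of the form treated in Proposition~\ref{p:Qn:Wf}. Every monomial of $f$ is admissible: the quadratic ones trivially, and the index sequences $(1,2,3,1,2,3,\dots)$ and $(3,2,1,3,2,1,\dots)$ of $(x_1x_2x_3)^k$ and $(x_3x_2x_1)^k$ have no two cyclically adjacent equal letters. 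Hence $f$ is reduced, $\bar f=f$, and Proposition~\ref{p:Qn:Wf}\eqref{it:Qn:Wf:mut0} gives $\mu_0(X_7,W)\simeq(X'_7,W_F)$ with $F=f$, regarded inside $R$ as a series with no $y$-letters.

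Next I would feed $(X'_7,W_F)$ into Proposition~\ref{p:X7mut2:G}. Peeling off the two quadratic terms required there, write $F=x_1x_2+x_3x_1+G$ with $G=x_2x_3+P_{123}(x_1x_2x_3)+P_{321}(x_3x_2x_1)$. The series $G$ is admissible and avoids $i1i$: the summand $x_2x_3$ involves no $x_1$, while neither $(1,2,3,1,2,3,\dots)$ nor $(3,2,1,3,2,1,\dots)$ ever contains the pattern $(2,1,2)$ or $(3,1,3)$. Thus Proposition~\ref{p:X7mut2:G} applies and yields $\mu_2(X'_7,W_F)\simeq(X'_7,W_{F'})$ with $F'=x_1x_2+x_3x_1+\theta_2(G)$; here one uses that mutation is well defined up to right equivalence, so that $\mu_2$ may legitimately be applied to the right-hand side of the isomorphism of the previous step.

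The computational core, and the step I expect to demand the most care, is the evaluation of $\theta_2$ on the two one-variable series inside $G$; by linearity and continuity it suffices to treat the powers. Running the replacement rules of $\theta_2$ through one period of $(x_1x_2x_3)^k$, each block $x_1x_2x_3$ produces the replacement $x_1x_2\mapsto -y_2$ (the case $s-s'=2$, consuming the $x_1$ and $x_2$ of that block) followed by $x_3\mapsto x_3$, giving $\theta_2\bigl((x_1x_2x_3)^k\bigr)=(-y_2x_3)^k$. For $(x_3x_2x_1)^k$ the letter $x_1$ sits at the end of each block and is consumed together with the $x_3$ opening the \emph{next} block via $x_1x_3\mapsto -y_3$ (the final $x_1$ pairing cyclically with the first $x_3$), while $x_2\mapsto x_2$; reading the result cyclically gives $\theta_2\bigl((x_3x_2x_1)^k\bigr)=(-y_3x_2)^k$ up to rotation. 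This block-straddling, together with the bookkeeping of the minus signs, is the delicate point. Summing over $k$ yields $\theta_2\bigl(P_{123}(x_1x_2x_3)\bigr)=P_{123}(-y_2x_3)$ and $\theta_2\bigl(P_{321}(x_3x_2x_1)\bigr)=P_{321}(-y_3x_2)$, and since $\theta_2(x_2x_3)=x_2x_3$ we obtain $F'=x_1x_2+x_2x_3+x_3x_1+P_{123}(-y_2x_3)+P_{321}(-y_3x_2)$.

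Finally I would compute the right-hand side. Using $\Phi(x_i)=\Delta_i$ and $\Phi(y_i)=B_i$, the potential $\wt W$ equals $B_1+B_2+B_3+\Phi(\wt F)$ for the series $\wt F=x_1x_2+x_2x_3+x_3x_1+P_{123}(-y_2x_3)+P_{321}(-y_3x_2)$, since $\Phi(-y_2x_3)=-B_2\Delta_3$ and $\Phi(-y_3x_2)=-B_3\Delta_2$; here $\wt F$ is admissible because the index sequences of $(y_2x_3)^k$ and $(y_3x_2)^k$ alternate between two distinct indices. Proposition~\ref{p:Qn:mut0} then gives $\mu_0(X_7,\wt W)\simeq(X'_7,W_{\wt F})$. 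Comparing the two expressions shows $\wt F=F'$, whence $\mu_2\mu_0(X_7,W)\simeq(X'_7,W_{F'})=(X'_7,W_{\wt F})\simeq\mu_0(X_7,\wt W)$, which is the assertion.
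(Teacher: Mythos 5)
Your proposal is correct and follows essentially the same route as the paper: write $W=B_1+B_2+B_3+\Phi(f)$, apply Proposition~\ref{p:Qn:Wf} for $\mu_0$, then Proposition~\ref{p:X7mut2:G} with the key computations $\theta_2((x_1x_2x_3)^k)=(-y_2x_3)^k$ and $\theta_2((x_3x_2x_1)^k)=(-y_3x_2)^k$, and finally Proposition~\ref{p:Qn:mut0} to identify the result with $\mu_0(X_7,\wt{W})$. The extra care you take with admissibility, the $i1i$ condition, and the cyclic rotation in the block-straddling case is consistent with the paper's (more terse) argument.
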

\begin{proof}
We can write $W = B_1 + B_2 + B_3 + \Phi(f)$ for the power series
$f(x_1, x_2, x_3) = x_1 x_2 + x_3 x_1 + g(x_1, x_2, x_3)$,
where
\[
g(x_1, x_2, x_3) = x_2 x_3 + P_{123}(x_1 x_2 x_3)
+ P_{321}(x_3 x_2 x_1) .
\]

Write also $\wt{W} = B_1 + B_2 + B_3 + \Phi(F)$ for the power series
$F(x_1, x_2, x_3, y_1, y_2, y_3) = x_1 x_2 + x_3 x_1 +
G(x_1, x_2, x_3, y_1, y_2, y_3)$, where
\[
G(x_1, x_2, x_3, y_1, y_2, y_3) = x_2 x_3 + P_{123}(-y_2 x_3)
+ P_{321}(-y_3 x_2) .
\]

Since each of the monomials $(x_1 x_2 x_3)^n$ and $(x_3 x_2 x_1)^n$ avoids
$i1i$, so does $g$, and moreover
$\theta_2(x_2 x_3) = x_2 x_3$, $\theta_2((x_1 x_2 x_3)^n) = (-y_2 x_3)^n$
and $\theta_2((x_3 x_2 x_1)^n) = (-y_3 x_2)^n$, hence
$\theta_2(g) = G$.

By Proposition~\ref{p:Qn:Wf},
$\mu_0(X_7, W) = (X'_7, \sum_{1 \leq i \ne j \leq 3} a_i b_{ij} c_j
+ \Psi(f))$. Applying Proposition~\ref{p:X7mut2:G} and using
$\theta_2(g)=G$, we get
$\mu_2 \mu_0(X_7, W) = (X'_7, \sum_{1 \leq i \ne j \leq 3} a_i b_{ij} c_j
+ \Psi(F))$,
which equals $\mu_0(X_7, \wt{W})$ by
Proposition~\ref{p:Qn:mut0}.
\end{proof}

\begin{remark} \label{rem:X7mutk:P}
When performing a mutation at a general side vertex, we
need to replace the terms $-B_2 \Delta_3$ and $-B_3 \Delta_2$
in Proposition~\ref{p:X7mut2:P}
by $-B_i \Delta_j$ and $-B_j \Delta_i$, respectively,
for some $1 \leq i \neq j \leq 3$ (there are $6$ possibilities
corresponding to the $6$ vertices).
\end{remark}

\begin{remark}
In the special case where both $P_{123}$ and $P_{321}$ are equal to zero
we have $\wt{W}=W$, which is also a direct consequence of
Lemma~\ref{l:X7mut2:W0}.
\end{remark}

\subsection{Some right equivalences of potentials on $X_7$}
\label{ssec:X7:righteq}

In this section we consider some right equivalences between
potentials on $X_7$ of the form
\[
B_1 + B_2 + B_3 + f(\Delta_1, \Delta_2, \Delta_3)
\]
having ``pure'' terms which are monomials in the $\Delta_i$ only,
and potentials of the form
\[
B_1 + B_2 + B_3 + F(\Delta_1, \Delta_2, \Delta_3, B_1, B_2, B_3)
\]
having ``mixed'' terms involving both $B_i$ and $\Delta_j$.

We record a few useful observations concerning evaluations of power series.
Let $Q$ be a quiver and $k$ a vertex. Denote by $\fm$ the ideal of $\wh{KQ}$
generated by the arrows.
If $F(x_1,\dots,x_n) \in K \llangle x_1,\dots,x_n \rrangle$ is a power series
without constant term, we can substitute any sequence of potentials
$\omega_1, \dots, \omega_n \in e_k \fm^2 e_k$ and
get a potential $F(\omega_1, \dots, \omega_n)$.
If $F$ and $F'$ are two power series whose difference
$F-F'$ lies in the closure of the subspace spanned by the commutators,
then the potentials $F(\omega_1, \dots, \omega_n)$ and
$F'(\omega_1, \dots, \omega_n)$ are cyclically equivalent.
If $S_1, \dots, S_n, \omega \in e_k \fm e_k$ and
$\omega'_i = \omega_i + S_i \omega$ for all $1 \leq i \leq n$,
then there exists $S \in e_k \fm e_k$ such that
$F(\omega'_1, \dots, \omega'_n)$ and $F(\omega_1, \dots, \omega_n) + S \omega$
are cyclically equivalent.

\begin{notat}
In this section, $i, j, k$ will denote three different indices
from the set $\{1,2,3\}$.
\end{notat}

\begin{lemma} \label{l:FhG:jk}
Let $F(x,y) \in K \llangle x, y \rrangle$ be a power series without constant
term. Let $h(x,y) \in K \llangle x, y \rrangle$ and set
$G(x,y) = F(x,y) + (y-x)h(x,y)$.
Then the potentials
\begin{align}
\label{e:potF}
B_1 + B_2 + B_3 +
\Delta_1 \Delta_2 + \Delta_2 \Delta_3 + \Delta_3 \Delta_1 +
F(-B_j \Delta_k, \Delta_i \Delta_j \Delta_k)
\intertext{and}
\label{e:potG}
B_1 + B_2 + B_3 +
\Delta_1 \Delta_2 + \Delta_2 \Delta_3 + \Delta_3 \Delta_1 +
G(-B_j \Delta_k, \Delta_i \Delta_j \Delta_k)
\end{align}
are right equivalent.
\end{lemma}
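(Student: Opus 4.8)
The plan is to exhibit a single explicit right equivalence carrying~\eqref{e:potF} to~\eqref{e:potG} modulo terms that can be discarded by Proposition~\ref{p:Qn:deform}. Writing $x=-B_j\Delta_k$ and $y=\Delta_i\Delta_j\Delta_k$, the two potentials differ by $G(x,y)-F(x,y)=(y-x)h(x,y)$, and the basic observation is the factorization $y-x=\Delta_i\Delta_j\Delta_k+B_j\Delta_k=(\Delta_i\Delta_j+B_j)\Delta_k$. By relabelling cyclically and, where necessary, invoking the opposite-quiver isomorphism $X_7\simeq X_7^{\mathrm{op}}$ of Lemma~\ref{l:Qn:opp} (which preserves right equivalence while reversing the order of the letters of every $\Delta$-word), I may assume that $\Delta_i\Delta_j$ and $\Delta_j\Delta_k$ are two of the quadratic terms $\Delta_1\Delta_2+\Delta_2\Delta_3+\Delta_3\Delta_1$ occurring in~\eqref{e:potF}.

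Set $\rho=\Delta_k\,h(x,y)\in\fm$ and let $\eta=\sub_{\gamma_j\la\gamma_j+\gamma_j\rho}$. Since $\gamma_j\rho$ is parallel to $\gamma_j$ and lies in $\fm^2$, Lemma~\ref{l:sub:gTauto:m2} guarantees that $\eta$ is a (unitriangular) right equivalence. I would then apply $\eta$ to~\eqref{e:potF} and track the occurrences of $\gamma_j$: it appears exactly in the cycles $B_j$, $\Delta_i\Delta_j$, $\Delta_j\Delta_k$ and in the copies of $x$ and $y$ inside $F$. Because $\gamma_j$ sits at the end of both $B_j$ and $\Delta_i\Delta_j$, these two terms acquire the appended contribution $(B_j+\Delta_i\Delta_j)\rho=(B_j\Delta_k+\Delta_i\Delta_j\Delta_k)h=(y-x)h$, which is precisely the difference $G-F$; the term $\Delta_j\Delta_k$ instead acquires $\Delta_j\rho\Delta_k$, while inside $F$ the leading $\Delta_k$ of $\rho$ completes the truncated $\Delta_j$ back into a full letter, so that the arguments become $x\mapsto x(1+\omega)$ and $y\mapsto y(1+\omega)$ with $\omega=h\Delta_k$. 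Thus $\eta$ carries~\eqref{e:potF} to~\eqref{e:potG} plus the error $E=\Delta_j\Delta_k\,h\,\Delta_k+\bigl(F(x(1+\omega),y(1+\omega))-F(x,y)\bigr)$.

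The heart of the argument is to check that every term of $E$ contains the square $\Delta_k^2$ (two consecutive $k$-blocks) as a factor. This rests on the fact that both $x=-B_j\Delta_k$ and $y=\Delta_i\Delta_j\Delta_k$ end in $\Delta_k$: consequently every positive-degree term of the evaluated series $h$ ends in $\Delta_k$, so $\omega=h\Delta_k$ either equals $c_0\Delta_k$ or ends in $\Delta_k\Delta_k$. In the expansion of $F(x(1+\omega),y(1+\omega))-F(x,y)$ each nonzero term is obtained by inserting $\omega$ after some letter $z\in\{x,y\}$ of a monomial of $F$; since $z$ ends in $\Delta_k$, the block $z\omega$ contains $\Delta_k^2$ (at the junction $z\mid\omega$ when $h$ has a constant term, and inside $\omega$ otherwise). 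The core error $\Delta_j\Delta_k\,h\,\Delta_k$ is handled identically. Hence every term of $E$ is cyclically equivalent to one of the form $\Delta_k^2\,S''''$ with $S''''\in e_0\wh{KX_7}e_0$.

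Finally, I would remove $E$ using Proposition~\ref{p:Qn:deform} with $I=\{k\}$: the arrow $\beta_k$ occurs in~\eqref{e:potG} only inside the summand $B_k$ and not in the remaining potential $\Delta_1\Delta_2+\Delta_2\Delta_3+\Delta_3\Delta_1+G(x,y)$ (as $x,y$ involve only $\beta_j$ with $j\neq k$), so the proposition applies and shows that~\eqref{e:potG} is right equivalent to the potential obtained from it by adjoining $E$, which is exactly the image of~\eqref{e:potF} under $\eta$. Composing with the right equivalence $\eta$ then yields that~\eqref{e:potF} and~\eqref{e:potG} are right equivalent. The main obstacle is the bookkeeping of the previous paragraph—confirming that no error term escapes $\Delta_k^2$-divisibility—together with the preliminary reduction to the cyclic-order configuration, since in the reverse ordering $\Delta_i\Delta_j$ is not a quadratic term of the potential and the substitution $\eta$ must be replaced by its opposite, which is what Lemma~\ref{l:Qn:opp} supplies.
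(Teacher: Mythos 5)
Your proof is correct and is essentially the paper's own argument: the same unitriangular substitution $\gamma_j \mapsto \gamma_j + \gamma_j\Delta_k h(-B_j\Delta_k,\Delta_i\Delta_j\Delta_k)$, the same observation that the contributions from $B_j$ and $\Delta_i\Delta_j$ assemble into $(y-x)h = G-F$ while all remaining error terms carry a factor $\Delta_k^2$, and the same final appeal to Proposition~\ref{p:Qn:deform} with $I=\{k\}$. The only difference is your preliminary reduction via relabelling and the opposite-quiver isomorphism, which is harmless but unnecessary, since $\Delta_a\Delta_b$ and $\Delta_b\Delta_a$ are cyclically equivalent and hence $\Delta_1\Delta_2+\Delta_2\Delta_3+\Delta_3\Delta_1$ already contains $\Delta_i\Delta_j$ and $\Delta_j\Delta_k$ up to rotation for any choice of distinct $i,j,k$.
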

\begin{proof}
Apply the unitriangular right equivalence
$\vphi = \sub_{\gamma_j \la \gamma_j +
\gamma_j \Delta_k h(-B_j \Delta_k, \Delta_i \Delta_j \Delta_k)}$
taking $\gamma_j$ to
$ \gamma_j + \gamma_j \Delta_k h(-B_j \Delta_k, \Delta_i \Delta_j \Delta_k)$
and fixing all the other arrows, cf.\ Lemma~\ref{l:sub:gTauto:m2}.
Then
\begin{align*}
\vphi(B_j) &= B_j + B_j \Delta_k h(-B_j \Delta_k, \Delta_i \Delta_j \Delta_k)
\\
\vphi(\Delta_j) &= 
\Delta_j + \Delta_j \Delta_k h(-B_j \Delta_k, \Delta_i \Delta_j \Delta_k) .
\end{align*}

Observe that every term of
$\Delta_k h(-B_j \Delta_k, \Delta_i \Delta_j \Delta_k)$ ends in
the letter $\Delta_k$. To verify this, consider the monomials
appearing in $h$. If a monomial has positive degree, then substituting
$-B_j \Delta_k$ and $\Delta_i \Delta_j \Delta_k$ results in a word ending
at $\Delta_k$, while for the monomial of degree zero, the corresponding
term is a scalar multiple of $\Delta_k$.
Hence there exist potentials $S$ and $S'$ such that
we can write
\begin{align*}
\vphi(B_j \Delta_k) = B_j \Delta_k + S \Delta_k^2 &,&
\vphi(\Delta_j \Delta_k) = \Delta_j \Delta_k + S' \Delta_k^2 ,
\end{align*}
so
$\vphi(F(-B_j \Delta_k, \Delta_i \Delta_j \Delta_k)) =
F(\vphi(-B_j \Delta_k), \vphi(\Delta_i \Delta_j \Delta_k))$
is cyclically equivalent to
\[
F(-B_j \Delta_k, \Delta_i \Delta_j \Delta_k) + S'' \Delta_k^2
\]
for a suitable potential $S''$,
by the preceding remarks on evaluations.

Evaluating $\vphi$ on the potential in~\eqref{e:potF}, we therefore get, up
to cyclic equivalence,
\begin{align*}
\vphi(&B_k + B_i + B_j +
\Delta_k \Delta_i + \Delta_j \Delta_k + \Delta_i \Delta_j +
F(-B_j \Delta_k, \Delta_i \Delta_j \Delta_k)) = \\
& B_k + B_i + B_j + B_j \Delta_k  h(-B_j \Delta_k, \Delta_i \Delta_j \Delta_k)
+ \Delta_k \Delta_i
+ \Delta_j \Delta_k + S' \Delta_k^2 \\
&+ \Delta_i \Delta_j
+ \Delta_i \Delta_j \Delta_k  h(-B_j \Delta_k, \Delta_i \Delta_j \Delta_k) 
+ F(-B_j \Delta_k, \Delta_i \Delta_j \Delta_k) + S'' \Delta_k^2
\end{align*}
which is right equivalent to the potential in~\eqref{e:potG}
by Proposition~\ref{p:Qn:deform} with $I=\{k\}$, since the arrow $\beta_k$
does not appear in any term of~\eqref{e:potG} except $B_k$.
\end{proof}

We also have the following dual statement.
\begin{lemma} \label{l:FhG:ij}
In the notations of Lemma~\ref{l:FhG:jk}, the potentials
\begin{align*}
B_1 + B_2 + B_3 +
\Delta_1 \Delta_2 + \Delta_2 \Delta_3 + \Delta_3 \Delta_1 +
F(-\Delta_i B_j, \Delta_i \Delta_j \Delta_k)
\intertext{and}
B_1 + B_2 + B_3 +
\Delta_1 \Delta_2 + \Delta_2 \Delta_3 + \Delta_3 \Delta_1 +
G(-\Delta_i B_j, \Delta_i \Delta_j \Delta_k)
\end{align*}
are right equivalent.
\end{lemma}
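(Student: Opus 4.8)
The plan is to mirror the proof of Lemma~\ref{l:FhG:jk}, but to apply the substitution to the arrow $\alpha_j$ (the first arrow of the blocks $B_j$ and $\Delta_j$) rather than to $\gamma_j$ (their last arrow), because here the factor $\Delta_i$ multiplies $B_j$ on the \emph{left}. Concretely, writing $h=h(-\Delta_i B_j,\Delta_i\Delta_j\Delta_k)$, I would apply the unitriangular right equivalence $\vphi=\sub_{\alpha_j \la \alpha_j + h\Delta_i\alpha_j}$. The element $h\Delta_i\alpha_j$ is parallel to $\alpha_j$ and lies in $\fm^2$ (the arguments substituted into $h$ have degree at least $6$, so even the constant term of $h$ contributes $\Delta_i\alpha_j\in\fm^4$), hence $\vphi$ is a unitriangular automorphism by Lemma~\ref{l:sub:gTauto:m2}.

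Next I would record the values $\vphi(B_j)=B_j+h\Delta_i B_j$ and $\vphi(\Delta_j)=\Delta_j+h\Delta_i\Delta_j$, together with the observation dual to that of Lemma~\ref{l:FhG:jk}: every term of $h\Delta_i$ \emph{starts} with the letter $\Delta_i$ (a positive-degree monomial of $h$ evaluates to a word beginning with $\Delta_i$, since both substituted arguments begin with $\Delta_i$, while the constant term of $h$ contributes a scalar multiple of $\Delta_i$). Tracking $\vphi$ on the first potential, the only genuinely new contributions, up to cyclic equivalence, are $h\Delta_i B_j$, cyclically equivalent to $\Delta_i B_j\,h$, coming from the block $B_j$, and $h\Delta_i\Delta_j\Delta_k$, cyclically equivalent to $\Delta_i\Delta_j\Delta_k\,h$, coming from the quadratic term $\Delta_j\Delta_k$. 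With $x=-\Delta_i B_j$ and $y=\Delta_i\Delta_j\Delta_k$ their sum is $(y-x)h$, which turns $F$ into $G$. Every remaining discrepancy — the extra term from $\vphi(\Delta_i\Delta_j)$, and the corrections to the two arguments $\vphi(-\Delta_i B_j)$ and $\vphi(\Delta_i\Delta_j\Delta_k)$ inside the $F$-part (controlled by the evaluation remarks preceding Lemma~\ref{l:FhG:jk}) — begins with $\Delta_i^2$, precisely because $h\Delta_i$ starts with $\Delta_i$; the term $\Delta_k\Delta_i$ and the blocks $B_i$, $B_k$ are fixed by $\vphi$.

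Finally I would absorb all the $\Delta_i^2$-corrections using Proposition~\ref{p:Qn:deform} with $I=\{i\}$. The only hypothesis to check is that $\beta_i$ not appear in the remaining potential; this holds because $\beta_i$ occurs only in $B_i$, and not in the quadratic terms nor in $G(-\Delta_i B_j,\Delta_i\Delta_j\Delta_k)$, whose arguments involve $\delta_i$, $\beta_j$ and the various $\delta$'s but never $\beta_i$, as $i\neq j$. This yields the asserted right equivalence with the second potential.

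I expect the main obstacle to be the bookkeeping in the second step: one must verify that, after rotating the new terms into the shape $(y-x)h$, the side on which $h$ sits is immaterial (it is, since potentials are taken up to cyclic equivalence), and that genuinely \emph{all} leftover terms are of the $\Delta_i^2$ type so that Proposition~\ref{p:Qn:deform} absorbs them. An alternative, possibly cleaner, route would be to deduce the statement directly from Lemma~\ref{l:FhG:jk} via the anti-automorphism of $\wh{KQ_n}$ underlying the isomorphism $Q_n\simeq Q_n^{op}$ of Lemma~\ref{l:Qn:opp}, which fixes each $B_i$ and $\Delta_i$ while reversing products; one then need only check that transposing the power series and relabelling the indices $i\leftrightarrow k$ carries the conclusion of Lemma~\ref{l:FhG:jk} to the desired one, again up to the harmless left/right placement of $h$.
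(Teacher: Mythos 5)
Your proposal is correct and coincides with the paper's proof: the paper proves this lemma exactly by the substitution $\alpha_j \mapsto \alpha_j + h(-\Delta_i B_j, \Delta_i\Delta_j\Delta_k)\,\Delta_i\,\alpha_j$ (mirroring Lemma~\ref{l:FhG:jk}, with the leftover terms now of $\Delta_i^2$-type and absorbed via Proposition~\ref{p:Qn:deform}), and it also mentions your alternative route through the opposite quiver of Lemma~\ref{l:Qn:opp}. Both of your suggested arguments are precisely the ones the paper records.
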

\begin{proof}
The proof is completely analogous to that of the previous lemma,
this time using the right equivalence sending $\alpha_j$
to $\alpha_j + h(-\Delta_i B_j, \Delta_i \Delta_j \Delta_k) \Delta_i \alpha_j$
and fixing all other arrows.
Alternatively, this can also be deduced from the previous lemma
by exchanging the roles of $i$ and $k$ and working over the opposite quiver.
\end{proof}

Let $P(x) \in K[[x]]$ be a power series.
We can consider two power series of two variables built from $P$,
one is $F(x,y) = P(x)$ (so $y$ is a dummy variable) and the other is
$G(x,y) = P(y)$ (so $x$ is a dummy variable).

\begin{lemma} \label{l:P:xy}
There exists $h \in K \llangle x,y \rrangle$ such that
$P(y) - P(x) - (y-x)h(x,y)$ lies in the closure of the subspace 
of $K \llangle x,y \rrangle$ spanned by the commutators.
\end{lemma}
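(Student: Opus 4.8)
The plan is to reduce to a single power of $x$ and then write $h$ explicitly via a non-commutative telescoping identity, exploiting that congruence modulo the closure of the commutator subspace permits cyclic rotation of the letters of a monomial. Since $P(y)-P(x)$ annihilates any constant term, I may assume $P(x)=\sum_{n\geq 1}c_n x^n$. Write $\equiv$ for equality in the quotient of $K\llangle x,y\rrangle$ by the closure of the span of commutators; as recalled in the discussion of $\HH_0$, a topological basis of this quotient is given by cyclic classes of monomials, so $\equiv$ is precisely the equivalence generated by cyclic rotation of the letters of a monomial. By linearity and continuity it therefore suffices to produce, for each $n\geq 1$, a homogeneous element $h_n$ of degree $n-1$ with $y^n-x^n\equiv (y-x)\,h_n(x,y)$.

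The computational heart is the telescoping identity
\[
y^n - x^n = \sum_{k=0}^{n-1} y^{k}(y-x)x^{n-1-k},
\]
valid in $K\llangle x,y\rrangle$: the summand indexed by $k$ equals $y^{k+1}x^{n-1-k}-y^{k}x^{n-k}$, so the sum collapses to $y^n-x^n$. I then rotate each summand cyclically: the monomial $y^{k}(y-x)x^{n-1-k}$ is cyclically equivalent to $(y-x)x^{n-1-k}y^{k}$, whence
\[
y^n - x^n \equiv (y-x)\sum_{k=0}^{n-1} x^{n-1-k}y^{k},
\]
and I set $h_n(x,y)=\sum_{k=0}^{n-1}x^{n-1-k}y^{k}$.

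To assemble the answer, put $h=\sum_{n\geq 1}c_n h_n$. Since $h_n$ is homogeneous of degree $n-1$, the degree-$d$ component of $h$ receives a contribution only from the single term $c_{d+1}h_{d+1}$; thus $h$ is a well-defined element of $K\llangle x,y\rrangle$ and $\sum_{n}c_n(y^n-x^n)$ converges $\fm$-adically to $P(y)-P(x)$. Passing to the limit in the continuous quotient map yields $P(y)-P(x)\equiv (y-x)\,h(x,y)$, which is the assertion.

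There is no substantial obstacle in this argument; the only points demanding genuine care are the justification that congruence modulo the closure of the commutator subspace coincides with the cyclic-word relation used to reposition $(y-x)$ in each summand, and the convergence bookkeeping that allows one to pass from the per-degree identities to the infinite sum inside the completed ring.
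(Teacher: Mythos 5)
Your proof is correct and follows essentially the same route as the paper: both construct $h=\sum_n c_n h_n$ with $h_n$ an explicit homogeneous non-commutative analogue of $(y^n-x^n)/(y-x)$, and verify $y^n-x^n\equiv(y-x)h_n$ modulo commutators by a direct computation (the paper expands $(y-x)\sum_i y^{n-i}x^{i-1}$ and cancels rotated pairs, while you telescope and then cyclically rotate each summand — the two calculations are mirror images, and your $h_n=\sum_k x^{n-1-k}y^k$ differs from the paper's only in the ordering of the letters, which is immaterial). The convergence bookkeeping via homogeneity of the $h_n$ matches the paper's arrangement of $h$ by total degree.
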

\begin{proof}
Write $P(x) = \sum_{n \geq 1} c_n x^n$, let
$h(x,y) = \sum_{i,j \geq 0} c_{i+j+1} y^j x^i \in K \llangle x,y \rrangle$
and arrange the terms of $h$ according to their total degree
$h(x,y) = \sum_{n \geq 1} c_n \sum_{i=1}^{n} y^{n-i} x^{i-1}$.

Since for any $n \geq 1$ we have the equality
\[
(y-x)(y^{n-1} + y^{n-2} x + \dots + y x^{n-2} + x^{n-1})
= y^n + \sum_{i=1}^{n-1} y^{n-i} x^i
- \sum_{i=1}^{n-1} x y^{n-i} x^{i-1} - x^n
\]
and the right hand side equals $y^n-x^n$ 
modulo the subspace spanned by the commutators,
we deduce that $P(y) - P(x)$ is equal to $(y-x)h(x,y)$
modulo the closure of that subspace.
\end{proof}

\begin{lemma} \label{l:P:BD}
Let $P(x)$ be a power series without constant term.
Then the potentials
\begin{align*}
&B_1 + B_2 + B_3 +
\Delta_1 \Delta_2 + \Delta_2 \Delta_3 + \Delta_3 \Delta_1 +
P(\Delta_i \Delta_j \Delta_k)
\\
&B_1 + B_2 + B_3 +
\Delta_1 \Delta_2 + \Delta_2 \Delta_3 + \Delta_3 \Delta_1 +
P(-B_j \Delta_k)
\\
&B_1 + B_2 + B_3 +
\Delta_1 \Delta_2 + \Delta_2 \Delta_3 + \Delta_3 \Delta_1 +
P(-\Delta_i B_j)
\end{align*}
are right equivalent.
\end{lemma}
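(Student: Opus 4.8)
The plan is to use the first potential, the one containing $P(\Delta_i\Delta_j\Delta_k)$, as a common reference point and to connect the other two to it separately: the second (containing $P(-B_j\Delta_k)$) via Lemma~\ref{l:FhG:jk}, and the third (containing $P(-\Delta_iB_j)$) via Lemma~\ref{l:FhG:ij}. Since right equivalence is an equivalence relation, and in particular transitive, establishing these two links is enough. The key device is to feed the specific $h$ produced by Lemma~\ref{l:P:xy} into the two deformation lemmas.

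First I would set $F(x,y) = P(x)$ and $G(x,y) = P(y)$, so that after the substitution $x \mapsto -B_j\Delta_k$, $y \mapsto \Delta_i\Delta_j\Delta_k$ we have $F(-B_j\Delta_k, \Delta_i\Delta_j\Delta_k) = P(-B_j\Delta_k)$ and $G(-B_j\Delta_k, \Delta_i\Delta_j\Delta_k) = P(\Delta_i\Delta_j\Delta_k)$. By Lemma~\ref{l:P:xy} there is $h \in K\llangle x,y\rrangle$ such that $P(y) - P(x) - (y-x)h(x,y)$ lies in the closure of the commutator subspace. Putting $\widetilde{G}(x,y) = F(x,y) + (y-x)h(x,y)$, Lemma~\ref{l:FhG:jk} applies verbatim and shows that the potential with $F(-B_j\Delta_k,\Delta_i\Delta_j\Delta_k) = P(-B_j\Delta_k)$ is right equivalent to the one with $\widetilde{G}(-B_j\Delta_k,\Delta_i\Delta_j\Delta_k)$. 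To finish the link I would invoke the evaluation remarks preceding Lemma~\ref{l:FhG:jk}: since $\widetilde{G}$ and $G = P(y)$ differ by an element of the closure of the commutator subspace, and the substituted cycles $-B_j\Delta_k,\ \Delta_i\Delta_j\Delta_k$ lie in $e_0\fm^2 e_0$, the potentials $\widetilde{G}(-B_j\Delta_k,\Delta_i\Delta_j\Delta_k)$ and $P(\Delta_i\Delta_j\Delta_k)$ are cyclically equivalent, hence right equivalent via $\varphi = \id$. Chaining the two steps yields the right equivalence of the first and second potentials, and the third is treated identically, replacing $-B_j\Delta_k$ by $-\Delta_iB_j$ throughout and using Lemma~\ref{l:FhG:ij} in place of Lemma~\ref{l:FhG:jk}; the same $h$ works since it depends only on $P$.

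The argument involves no genuinely hard computation: everything reduces to correctly bookkeeping which of the two variables $x,y$ is treated as a dummy in each power series, and to verifying that the relevant cycles sit in $e_0\fm^2 e_0$. The one delicate point I would state carefully is the interface between the two notions of equivalence. Lemma~\ref{l:FhG:jk} produces a right equivalence only up to the \emph{exact} $(y-x)h$ correction, whereas the residual discrepancy between $\widetilde{G}$ and $P(y)$ is merely a commutator; that residue must therefore be absorbed through cyclic equivalence of the \emph{evaluated} potentials (using the evaluation remarks) rather than through the substitution lemma itself. Keeping these two mechanisms separate is what makes the proof clean and is the only place where Lemma~\ref{l:P:xy} is genuinely needed.
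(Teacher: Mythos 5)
Your proposal is correct and follows exactly the paper's route: the paper's proof is the one-line "Combine Lemma~\ref{l:FhG:jk}, Lemma~\ref{l:FhG:ij} and Lemma~\ref{l:P:xy}," and you have simply unpacked that combination, including the (correct) observation that the discrepancy between $F(x,y)+(y-x)h(x,y)$ and $P(y)$ is absorbed by cyclic equivalence of the evaluated potentials rather than by the substitution lemma itself.
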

\begin{proof}
Combine Lemma~\ref{l:FhG:jk}, Lemma~\ref{l:FhG:ij} and Lemma~\ref{l:P:xy}.
\end{proof}

As a consequence we get the following right equivalence.
\begin{cor}
The potentials
\begin{equation} \label{e:P123}
B_1 + B_2 + B_3 +
\Delta_1 \Delta_2 + \Delta_2 \Delta_3 + \Delta_3 \Delta_1 +
P(\Delta_1 \Delta_2 \Delta_3)
\end{equation}
and
\begin{equation} \label{e:P321}
B_1 + B_2 + B_3 +
\Delta_1 \Delta_2 + \Delta_2 \Delta_3 + \Delta_3 \Delta_1 +
P(\Delta_3 \Delta_2 \Delta_1)
\end{equation}
are right equivalent.
\end{cor}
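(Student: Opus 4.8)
The plan is to bridge the two orientations through a ``mixed'' potential. The point is that $\Delta_1\Delta_2\Delta_3$ and $\Delta_3\Delta_2\Delta_1$ are \emph{not} rotations of one another, so no amount of cyclic equivalence relates them directly; one genuinely needs to pass through a term involving the $B_i$. The idea is to use Lemma~\ref{l:P:BD} once to trade the pure cube $\Delta_1\Delta_2\Delta_3$ for a mixed term $-B_2\Delta_3$, then to rotate $B_2$ past $\Delta_3$ inside the cycle, and finally to apply Lemma~\ref{l:P:BD} a second time, with the indices reversed, in order to recover the pure cube in the opposite order.

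Concretely, I would proceed in three steps, chaining right equivalences by transitivity. First, applying Lemma~\ref{l:P:BD} with $(i,j,k)=(1,2,3)$ shows that the potential in~\eqref{e:P123} is right equivalent to
\[
B_1 + B_2 + B_3 + \Delta_1\Delta_2 + \Delta_2\Delta_3 + \Delta_3\Delta_1 + P(-B_2\Delta_3).
\]
Next, I would observe that for every $n\geq 1$ the cycle $(B_2\Delta_3)^n$ is a rotation of $(\Delta_3 B_2)^n$, obtained by moving the three arrows of the leading factor $B_2$ to the end. Consequently $P(-B_2\Delta_3)$ and $P(-\Delta_3 B_2)$ are cyclically equivalent, the sign $(-1)^n$ being identical on both sides. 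Since the remaining summands are unchanged, this gives a right equivalence (via the identity map) to
\[
B_1 + B_2 + B_3 + \Delta_1\Delta_2 + \Delta_2\Delta_3 + \Delta_3\Delta_1 + P(-\Delta_3 B_2).
\]
Finally, applying Lemma~\ref{l:P:BD} once more, now with $(i,j,k)=(3,2,1)$, identifies $P(-\Delta_3 B_2)$ with $P(\Delta_3\Delta_2\Delta_1)$ and produces the potential in~\eqref{e:P321}.

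The only genuinely delicate point is the middle step: one must check that the rotation relating $B_2\Delta_3$ and $\Delta_3 B_2$ survives substitution into the power series $P$. This is exactly where recognizing that each homogeneous piece $(B_2\Delta_3)^n$ rotates termwise to $(\Delta_3 B_2)^n$ is essential, and it is precisely this phenomenon that forces the detour through a mixed term instead of an (impossible) direct rotation between the two pure cubes. The three steps combine by transitivity of right equivalence to yield the claim, and the same argument works verbatim for any other admissible choice of the bridging indices.
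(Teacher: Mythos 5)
Your proof is correct and is essentially identical to the paper's: both apply Lemma~\ref{l:P:BD} with $(i,j,k)=(1,2,3)$ to reach $P(-B_2\Delta_3)$, with $(i,j,k)=(3,2,1)$ to reach $P(-\Delta_3 B_2)$, and bridge the two via the rotational equivalence of $(-B_2\Delta_3)^n$ and $(-\Delta_3 B_2)^n$. No issues.
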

\begin{proof}
By Lemma~\ref{l:P:BD} with $(i,j,k)=(1,2,3)$,
the potential~\eqref{e:P123} is right equivalent to
\[
B_1 + B_2 + B_3 +
\Delta_1 \Delta_2 + \Delta_2 \Delta_3 + \Delta_3 \Delta_1 +
P(-B_2 \Delta_3)
\]
and by the same lemma with $(i,j,k)=(3,2,1)$,
the potential~\eqref{e:P321} is right equivalent to
\[
B_1 + B_2 + B_3 +
\Delta_1 \Delta_2 + \Delta_2 \Delta_3 + \Delta_3 \Delta_1 +
P(-\Delta_3 B_2).
\]

These potentials are cyclically equivalent since every
monomial $(-B_2 \Delta_3)^n$ is rotationally equivalent to
the corresponding monomial $(-\Delta_3 B_2)^n$.
\end{proof}

\subsection{Explicit non-degenerate potentials on $X_7$}
\label{ssec:X7:potential}

We are now ready to state and prove the main result of this section.
We keep the notations from the previous section.

\begin{theorem} \label{t:X7:potential}
Let $P(x) \in K[[x]]$ be a power series without constant term.
Then the potential on the quiver $X_7$ given by
\[
W = B_1 + B_2 + B_3
+ \Delta_1 \Delta_2 + \Delta_2 \Delta_3 + \Delta_3 \Delta_1
+ P(\Delta_1 \Delta_2 \Delta_3)
\]
is non-degenerate.
\end{theorem}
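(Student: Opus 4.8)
The plan is to prove non-degeneracy by showing that the entire mutation class of $(X_7,W)$ is swept out by two explicit families of quivers with potential, both built on $2$-acyclic quivers. Write
\[
W_P = B_1 + B_2 + B_3 + \Delta_1\Delta_2 + \Delta_2\Delta_3 + \Delta_3\Delta_1 + P(\Delta_1\Delta_2\Delta_3),
\]
and let $\mathcal{A}$ be the class of quivers with potential right equivalent to $(X_7, W_P)$ for some power series $P$ without constant term, and let $\mathcal{B}$ be the class of those right equivalent to $\mu_0(X_7, W_P)$. Since right equivalence leaves the underlying quiver unchanged, every member of $\mathcal{A}$ has quiver $X_7$ and, by Lemma~\ref{l:Qn:mut0}, every member of $\mathcal{B}$ has quiver $X'_7$; both of these quivers have no $2$-cycles. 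The potential $W$ of the statement is exactly $W_P$, so $(X_7,W)\in\mathcal{A}$.

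The key point is that $\mathcal{A}\cup\mathcal{B}$ is closed under mutation at every vertex; since these quivers are $2$-acyclic, mutation is defined everywhere, and there are four transitions to check (using throughout that mutation is compatible with right equivalence \cite{DWZ08}). First, mutating a member of $\mathcal{A}$ at a side vertex $k\neq 0$ returns a member of $\mathcal{A}$, because $\mu_k(X_7,W_P)\simeq(X_7,W_P)$ by Proposition~\ref{p:Qn:Wf}\eqref{it:Qn:Wf:mutk}. Second, mutating a member of $\mathcal{A}$ at the central vertex $0$ gives a member of $\mathcal{B}$ by Proposition~\ref{p:Qn:Wf}\eqref{it:Qn:Wf:mut0} and the definition of $\mathcal{B}$. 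Third, mutating a member of $\mathcal{B}$ at the central vertex returns to $\mathcal{A}$, since $\mu_0\mu_0$ is the identity up to right equivalence \cite{DWZ08}. Finally, mutating $\mu_0(X_7,W_P)\in\mathcal{B}$ at a side vertex is governed by Proposition~\ref{p:X7mut2:P} (with the second power series taken to be zero) together with Remark~\ref{rem:X7mutk:P}: it yields $\mu_0(X_7,\wt W)$, where $\wt W$ differs from a member of the defining family only in that its cubic correction is a single mixed term of the form $P(-B_i\Delta_j)$. By Lemma~\ref{l:P:BD} this term is right equivalent to a pure term $P(\Delta_a\Delta_b\Delta_c)$, so $\wt W\simeq W_{P'}$ for a suitable $P'$ and the outcome again lies in $\mathcal{B}$.

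Granting this closure, an induction on the length of the mutation sequence shows that every quiver with potential obtained from $(X_7,W)$ by iterated mutation lies in $\mathcal{A}\cup\mathcal{B}$, hence is built on $X_7$ or $X'_7$ and has no $2$-cycles; this is precisely the assertion that $W$ is non-degenerate. I expect the only delicate point to be the fourth transition: in general $\mu_k$ applied to a member of $\mathcal{B}$ creates cubic terms mixing the $B_i$ and the $\Delta_j$, and what keeps the induction inside the finite-parameter family $\mathcal{A}\cup\mathcal{B}$ is exactly Lemma~\ref{l:P:BD}, which trades each such mixed term for a pure monomial in the $\Delta_i$. Restricting to a single power series $P$ (rather than two) is what guarantees that only one mixed correction term appears at a time, so that Lemma~\ref{l:P:BD} applies cleanly without the required substitutions interfering with one another.
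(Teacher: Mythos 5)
Your proposal is correct and follows essentially the same route as the paper: the paper's proof likewise establishes that the two-element set $\{(X_7,W),(X'_7,W')\}$ is closed under mutation up to right equivalence, using Proposition~\ref{p:Qn:Wf} for mutations of $X_7$, Proposition~\ref{p:X7mut2:P} with Remark~\ref{rem:X7mutk:P} for side mutations of $X'_7$, and Lemma~\ref{l:P:BD} to absorb the mixed term $P(-B_i\Delta_j)$ back into the original family. Your packaging via the mutation-closed classes $\mathcal{A}\cup\mathcal{B}$ and an explicit induction on the length of the mutation sequence is just a slightly more formal presentation of the same argument.
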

\begin{proof}
We will prove that there is a potential $W'$ on the quiver $X'_7$ such
that
\begin{align*}
\mu_0(X_7, W) \simeq (X'_7, W') &,&
\mu_k(X_7, W) \simeq (X_7, W) &,&
\mu_k(X'_7, W') \simeq (X'_7, W')
\end{align*}
for any vertex $1 \leq k \leq 6$.
Since each of the quivers $X_7$ and $X'_7$ does not have any $2$-cycle,
this will imply that $W$ is non-degenerate.

The potential $W$ can be written as
$B_1 + B_2 + B_3 + f(\Delta_1, \Delta_2, \Delta_3)$
for the power series
$f(x_1,x_2,x_3) = x_1 x_2 + x_2 x_3 + x_3 x_1 + P(x_1 x_2 x_3)$.
Since $f$ is reduced, we can apply
Proposition~\ref{p:Qn:Wf} and deduce that
$\mu_k(X_7, W) \simeq (X_7, W)$ for any vertex $1 \leq k \leq 6$,
while $\mu_0(X_7, W) = (X'_7, W')$ for the potential $W'$ given by
\[
W' = \sum_{1 \leq i \neq j \leq 3} a_i b_{ij} c_j +
\sum_{1 \leq i < j \leq 3} d_i b_{ij} d_j b_{ji}
+ P(d_1 b_{12} d_2 b_{23} d_3 b_{31}).
\]

It remains to consider the mutations $\mu_k(X'_7,W')$ for $1 \leq k \leq 6$.
By Proposition~\ref{p:X7mut2:P} and the remark following it,
$\mu_k(X'_7, W') \simeq \mu_0(X_7, \wt{W})$ for a potential $\wt{W}$
of the form
\[
\wt{W} = B_1 + B_2 + B_3
+ \Delta_1 \Delta_2 + \Delta_2 \Delta_3 + \Delta_3 \Delta_1
+ P(-B_i \Delta_j)
\]
for some $1 \leq i \ne j \leq 3$ (depending on $k$). By Lemma~\ref{l:P:BD},
the potentials $W$ and $\wt{W}$ are right equivalent, hence their
mutations at the vertex $0$ are also right equivalent. It follows that
$\mu_k(X'_7, W')$ is right equivalent to $(X'_7, W')$.
\end{proof}

\section{Potentials on $X_7$ whose Jacobian algebras are finite-dimensional}
\label{sec:Jac:fdim}

\subsection{Preliminaries}

Let $Q$ be a quiver. For any arrow $\alpha$ of $Q$ there is a 
\emph{cyclic derivative} map
$\partial_\alpha \colon \HH_0(\wh{KQ}) \to \wh{KQ}$
which is the unique continuous linear map whose value on each
cycle $\alpha_1 \alpha_2 \dots \alpha_n$ is given by
\[
\partial_{\alpha}(\alpha_1 \alpha_2 \dots \alpha_n) =
\sum_{i \,:\, \alpha_i = \alpha}
\alpha_{i+1} \dots \alpha_n \alpha_1 \dots \alpha_{i-1}
\]
where the sum goes over all the indices $1 \leq i \leq n$
such that $\alpha_i=\alpha$.

\begin{defn}[\protect{\cite{DWZ08}}]
The \emph{Jacobian algebra} $\cP(Q,W)$ of a quiver with potential $(Q,W)$
is the quotient of the complete path algebra of $Q$ by the 
\emph{Jacobian ideal} $\cJ(W)$, which is the closure of the
(two-sided) ideal generated by the cyclic derivatives of $W$ with respect
to the arrows of $Q$,
\[
\cP(Q,W) = \wh{KQ} / \cJ(W) = 
\wh{KQ}/ \overline{(\partial_\alpha(W) \,:\, \alpha \in Q_1)} .
\]
\end{defn}

We recall two useful facts concerning Jacobian algebras:
\begin{itemize}
\item
Right equivalent potentials have isomorphic Jacobian
algebras~\cite[Proposition~3.7]{DWZ08}

\item
The finite-dimensionality of Jacobian algebras is preserved under mutations,
i.e.\ if $\cP(Q,W)$ is finite-dimensional, then so is
$\cP(\mu_k(Q,W))$~\cite[Corollary~6.6]{DWZ08}.
\end{itemize}
Thus, in order to show that two potentials on a given quiver are not
right equivalent, it suffices to show that their Jacobian algebras are not
isomorphic.

\subsection{Reduction to the central vertex}

Let $Q$ be a quiver. If $I \subseteq Q_0$ is a set of vertices, 
recall that the \emph{full subquiver of $Q$ on $I$} is the quiver whose
set of vertices is $I$ and its set of arrows consists of the
arrows in $Q$ whose both endpoints lie in $I$.
Denote by $e_I = \sum_{i \in I} e_i$ the corresponding idempotent
in $\wh{KQ}$. By abuse of notation, $e_I$ will also denote the
image of this idempotent in any quotient of $\wh{KQ}$.

\begin{lemma} \label{l:eIfd}
Let $Q$ be a quiver and $I \subseteq Q_0$ be a subset of its vertices
such that the full subquiver of $Q$ on $Q_0 \setminus I$
is acyclic.
Then for any quotient $\gL$ of $\wh{KQ}$, we have 
$\dim_K \gL < \infty$ if and only if
$\dim_K e_I \gL e_I < \infty$.
\end{lemma}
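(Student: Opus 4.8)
The plan is to prove the two implications separately, the forward one being immediate: if $\dim_K \gL < \infty$ then $e_I \gL e_I$ is a $K$-subspace of $\gL$ and hence finite-dimensional, so I would only need to treat the converse. For the converse, suppose $\dim_K e_I \gL e_I < \infty$ and set $J = Q_0 \setminus I$. Let $\overline{\fm}$ denote the image in $\gL$ of the arrow ideal $\fm$ of $\wh{KQ}$. My strategy is to show that $\overline{\fm}$ is nilpotent, say $\overline{\fm}^{N+1} = 0$; once this is established, $\gL = \gL/\overline{\fm}^{N+1}$ is a quotient of $\wh{KQ}/\fm^{N+1}$, which has as a $K$-basis the finitely many paths of length at most $N$, whence $\dim_K \gL < \infty$.

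The first key step is to establish that the corner ideal $\overline{\fm}_I := e_I \overline{\fm} e_I$ is nilpotent. Here I would argue as follows. For any $m \in \fm$ the element $1+m$ is invertible in $\wh{KQ}$, with inverse $\sum_{k \geq 0} (-m)^k$ convergent by completeness, so its image is a unit of $\gL$; thus $\overline{\fm}$ is a two-sided ideal with $1 + \overline{\fm} \subseteq \gL^\times$, and therefore $\overline{\fm}$ lies in the Jacobson radical of $\gL$. Passing to the corner and using the standard identity $\operatorname{rad}(e_I \gL e_I) = e_I \operatorname{rad}(\gL) e_I$, I get $\overline{\fm}_I \subseteq \operatorname{rad}(e_I \gL e_I)$. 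Since $e_I \gL e_I$ is finite-dimensional its radical is nilpotent, so $\overline{\fm}_I^{\,M} = 0$ for some $M$. Concretely, this means that any product of $M$ images of positive-length paths running from $I$ to $I$ vanishes in $\gL$.

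The second step is combinatorial and is where the acyclicity hypothesis enters. Because the full subquiver of $Q$ on $J$ is finite and acyclic, it contains only finitely many paths, of length at most some $L$. I would then analyze an arbitrary path $p$ of $Q$ through its ordered sequence of visits $w_1, \dots, w_r$ to vertices of $I$. Between two consecutive $I$-visits the path runs through $J$ only, so each such excursion has length at most $L+2$ (an arrow out of $I$, a path inside $J$, an arrow back into $I$), and the initial and final segments have length at most $L+1$. The core subpath from $w_1$ to $w_r$ factors as a product of $r-1$ images of positive-length $I$-to-$I$ paths, hence lies in $\overline{\fm}_I^{\,r-1}$; if $r-1 \geq M$ this product is zero and so $\overline{p}=0$. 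Thus any path with $\overline{p} \neq 0$ has $r \leq M$, which yields the uniform bound $N = 2(L+1) + (M-1)(L+2)$ on its length, i.e.\ $\overline{\fm}^{N+1} = 0$, completing the argument.

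The step I expect to be most delicate is the combinatorial bookkeeping of the third paragraph: one must carefully define the excursions, treat the endpoints of $p$ (which may or may not already lie in $I$, making the initial or final segment trivial), and verify that the factorization of the core genuinely lands in $\overline{\fm}_I^{\,r-1}$ so that the nilpotency index can be invoked. The conceptual crux, however, is the reduction performed in the second paragraph, which is precisely what couples the finite-dimensionality of the \emph{single} corner $e_I \gL e_I$ to a global length bound on all paths of $Q$.
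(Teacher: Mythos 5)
Your proof is correct, but it takes a genuinely different route from the paper's. The paper argues by exhibiting a finite spanning set: it writes an arbitrary path either as a path avoiding $I$ or as $p_- u p_+$ with $u$ a path from $I$ to $I$ and $p_\pm$ meeting $I$ only at one endpoint, notes that acyclicity of the subquiver on $Q_0\setminus I$ makes the sets of possible $p_\pm$ finite, and uses $\dim_K e_I\gL e_I<\infty$ only to replace $u$ by one of finitely many representatives; no radical theory appears. You instead prove the stronger statement that the image $\overline{\fm}$ of the arrow ideal is nilpotent, routing the hypothesis through the Jacobson radical (the corner identity $\operatorname{rad}(e_I\gL e_I)=e_I\operatorname{rad}(\gL)e_I$ together with nilpotency of the radical of a finite-dimensional algebra) and then combining the resulting bound on the number of visits to $I$ with the same combinatorial input, namely that acyclicity bounds the length of each excursion through $Q_0\setminus I$. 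The bookkeeping you flag as delicate does go through, including the cases $r=0$ and trivial end segments. Your approach costs some ring-theoretic machinery but buys two things: the explicit conclusion $\overline{\fm}^{N+1}=0$, exhibiting $\gL$ as a quotient of the truncated algebra $\wh{KQ}/\fm^{N+1}$; and a clean passage from ``every path of length greater than $N$ dies'' to ``$\gL$ is finite-dimensional'', since $\fm^{N+1}$ is generated as a one-sided ideal by the finitely many paths of length exactly $N+1$ (a point worth one explicit sentence in your write-up, and one that the paper's spanning argument leaves slightly implicit, as it tacitly uses that images of paths span the quotient).
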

\begin{proof}
It is clear that if $\dim_K \gL  < \infty$ then
$\dim_K e_I \gL e_I < \infty$. For the converse implication,
it suffices to find a finite set of paths in $Q$ whose images span $\gL$.

For a path $p=\alpha_1 \alpha_2 \dots \alpha_n$ in $Q$, let
$(i_0, i_1, \dots, i_n)$ denote the sequence of vertices it traverses,
so that $\alpha_j$ is an arrow $i_{j-1} \to i_j$ for every
$1 \leq j \leq n$. Define four subsets of paths 
$\cP_I, \cP^-, \cP^+, \cP'$ as follows:
\begin{align*}
& p \in \cP_I \quad \text{ if $i_0, i_n \in I$,} \\
& p \in \cP^+ \quad \text{if $i_0 \in I$ but $i_j \not \in I$
for any $0 < j \leq n$,} \\
& p \in \cP^- \quad \text{if $i_n \in I$ but $i_j \not \in I$ 
for any $0 \leq j < n$,} \\
& p \in \cP' \quad \text{ if $i_j \not \in I$ for every $0 \leq j \leq n$.}
\end{align*}

Since the full subquiver of $Q$ on $Q_0 \setminus I$ is acyclic, the sets
$\cP'$, $\cP^+$ and $\cP^-$ are finite. Moreover, since $e_I \gL e_I$ is
finite-dimensional, there exists a finite subset $\cP'_I \subseteq \cP_I$
such that the image in $\gL$ of any path in $\cP_I$ is a linear combination
of images of paths in $\cP'_I$.
Any path in $Q$ is either in $\cP'$ or can be written as 
$p_- u p_+$ for some $p_- \in \cP^-$, $u \in \cP_I$ and $p_+ \in \cP^+$.
Hence $\gL$ is spanned by the images of the paths in the finite set
\[
\cP' \cup
\left\{ \text{paths of the form $p_- u p_+$ with
$p_- \in \cP^-$, $p_+ \in \cP^+$, $u \in \cP'_I$} \right\} .
\]
\end{proof}

Observe that if $I = Q_0 \setminus \{k\}$ is the complement of a singleton,
we recover Lemma~6.5 of~\cite{DWZ08}, as in this case the condition
on $Q_0 \setminus I$ in Lemma~\ref{l:eIfd} means that there are no loops at $k$.
We will be interested in the other extreme, namely when $I$ consists of a
single vertex. This is recorded in the next lemma. 

\begin{lemma} \label{l:ekfd}
Let $Q$ be a quiver and let $k \in Q_0$ such that the full subquiver on
$Q_0 \setminus \{k\}$ is acyclic. Then for any quotient $\gL$
of $\wh{KQ}$, $\dim_K \gL < \infty$ if and only if
$\dim_K e_k \gL e_k < \infty$.
\end{lemma}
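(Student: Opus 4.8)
The plan is to recognize that this lemma is nothing but the special case $I = \{k\}$ of Lemma~\ref{l:eIfd}, so that the entire statement follows from a single application of that result. Concretely, I would set $I = \{k\}$ and verify that the two hypotheses match up: the idempotent $e_I = \sum_{i \in I} e_i$ collapses to $e_k$, so that $e_I \gL e_I = e_k \gL e_k$, and the requirement that the full subquiver of $Q$ on $Q_0 \setminus I$ be acyclic is, verbatim, the standing assumption that the full subquiver on $Q_0 \setminus \{k\}$ is acyclic. Feeding these into Lemma~\ref{l:eIfd} yields the equivalence ``$\dim_K \gL < \infty$ if and only if $\dim_K e_k \gL e_k < \infty$'' at once.

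Since there is genuinely no additional content, I do not anticipate any obstacle; the only thing to confirm is the bookkeeping of the specialization, which is immediate. As a sanity check one can trace the spanning argument of Lemma~\ref{l:eIfd} in this concrete setting: the path sets $\cP'$, $\cP^+$ and $\cP^-$ (paths avoiding $k$ altogether, respectively leaving or entering $k$ only at a single endpoint) are finite precisely because $Q_0 \setminus \{k\}$ carries no cycles, while a finite system of representatives $\cP'_I \subseteq \cP_I$ exists exactly because $e_k \gL e_k$ is finite-dimensional. Every path of $Q$ then either lies in $\cP'$ or factors as $p_- u p_+$ with $p_- \in \cP^-$, $u \in \cP_I$ and $p_+ \in \cP^+$, so that finitely many paths span $\gL$. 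This confirms that the general lemma specializes cleanly, and no separate argument is needed.
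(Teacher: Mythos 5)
Your proof is correct and is exactly the paper's argument: the lemma is the specialization of Lemma~\ref{l:eIfd} to $I=\{k\}$, and your sanity check of the spanning argument is just the general proof traced through in this case. (In fact the paper's printed one-line proof reads ``Take $I = Q_0 \setminus \{k\}$,'' which is a typo -- the surrounding discussion makes clear the intended choice is $I=\{k\}$, as you have it.)
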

\begin{proof}
Take $I = Q_0 \setminus \{k\}$ in Lemma~\ref{l:eIfd}.
\end{proof}

Applying this to the quiver $Q_n$ and its central vertex $0$, we get:
\begin{lemma} \label{l:KQn0fd}
For any quotient $\gL$ of $\wh{KQ_n}$,
$\dim_K \gL < \infty$ if and only if $\dim_K e_0 \gL e_0 < \infty$.
\end{lemma}
\begin{proof}
Combine Lemma~\ref{l:Qn:comb}\eqref{it:acyclic} and Lemma~\ref{l:ekfd}.
\end{proof}

\subsection{Calculation of $e_0 \gL e_0$}
Let $f(x_1,x_2,\dots,x_n)$ be a power series without constant term
in $n$ non-commuting variables $x_1, x_2, \dots, x_n$.
In this section we compute $e_0 \gL e_0$ for the Jacobian algebra
$
\gL = \cP(Q_n, B_1 + \dots + B_n + f(\Delta_1, \dots, \Delta_n)).
$
Note that $f$ can be viewed as a potential on a quiver with one vertex
and $n$ loops $x_1, \dots, x_n$.

\begin{lemma} \label{l:Wder}
Let $W = \sum_{i=1}^n B_i + f(\Delta_1, \dots, \Delta_n)$. Then
\begin{align*}
\partial_{\alpha_i}(W) &= 
\beta_i \gamma_i +
\delta_i \gamma_i (\partial_{x_i} f)(\Delta_1, \dots, \Delta_n),
&
\partial_{\beta_i}(W) &= \gamma_i \alpha_i ,
\\
\partial_{\gamma_i}(W) &= 
\alpha_i \beta_i +
(\partial_{x_i} f)(\Delta_1, \dots, \Delta_n) \alpha_i \delta_i,
&
\partial_{\delta_i}(W) &=
\gamma_i (\partial_{x_i} f)(\Delta_1, \dots, \Delta_n) \alpha_i
\end{align*}
for any $1 \leq i \leq n$.
\end{lemma}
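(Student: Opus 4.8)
The plan is to exploit the linearity and continuity of the cyclic derivative $\partial_\alpha$ to split $W$ into the elementary part $\sum_{i=1}^n B_i$ and the substituted part $f(\Delta_1,\dots,\Delta_n)$, and to reduce the computation on the latter to the case of a single monomial. First I would treat $\sum_i B_i$: since $B_i=\alpha_i\beta_i\gamma_i$ is a $3$-cycle built from the arrows $\alpha_i,\beta_i,\gamma_i$, and these blocks are disjoint across $i$, the cyclic derivatives are read off directly by rotation, giving $\partial_{\alpha_i}(\sum_j B_j)=\beta_i\gamma_i$, $\partial_{\beta_i}(\sum_j B_j)=\gamma_i\alpha_i$, $\partial_{\gamma_i}(\sum_j B_j)=\alpha_i\beta_i$ and $\partial_{\delta_i}(\sum_j B_j)=0$. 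Since the arrow $\beta_i$ appears in no $\Delta_j$, it occurs only in $B_i$, which already yields the stated formula $\partial_{\beta_i}(W)=\gamma_i\alpha_i$.

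For the substituted part, I would record the key combinatorial observation: writing a monomial of $f$ as $x_{j_1}\cdots x_{j_m}$, its image under $x_\ell\mapsto\Delta_\ell$ is the concatenation $\Delta_{j_1}\cdots\Delta_{j_m}$, and since $\Delta_\ell=\alpha_\ell\delta_\ell\gamma_\ell$ with pairwise disjoint arrow sets across $\ell$, the arrow $\alpha_i$ (resp.\ $\delta_i$, $\gamma_i$) occurs exactly once inside each block $\Delta_{j_s}$ with $j_s=i$, in first (resp.\ middle, last) position, and nowhere else. Applying the definition of the cyclic derivative, the contribution of the occurrence of $\alpha_i$ in block $s$ is the cycle read starting immediately after this $\alpha_i$, namely $\delta_i\gamma_i\,\Delta_{j_{s+1}}\cdots\Delta_{j_m}\Delta_{j_1}\cdots\Delta_{j_{s-1}}$. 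Summing over the positions $s$ with $j_s=i$ and factoring the fixed prefix $\delta_i\gamma_i$ to the left, the remaining block-sequences assemble exactly into $(\partial_{x_i}f)(\Delta_1,\dots,\Delta_n)$, so that $\partial_{\alpha_i}(f(\Delta))=\delta_i\gamma_i(\partial_{x_i}f)(\Delta)$. The same rotation bookkeeping, now splitting each block $\Delta_i$ at $\delta_i$ and at $\gamma_i$ respectively, yields $\partial_{\delta_i}(f(\Delta))=\gamma_i(\partial_{x_i}f)(\Delta)\alpha_i$ and $\partial_{\gamma_i}(f(\Delta))=(\partial_{x_i}f)(\Delta)\alpha_i\delta_i$. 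Adding the contributions of $\sum_j B_j$ computed above then produces the four displayed formulas.

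The computation is essentially a chain rule for cyclic derivatives under the substitution $x_i\mapsto\Delta_i$, and the only delicate point is the cyclic wrap-around: one must check that the prefix of the block $\Delta_{j_s}$ lying before the differentiated arrow reappears, after the full rotation, as a suffix attached on the right (it is $\alpha_i$ for the $\delta_i$-derivative and $\alpha_i\delta_i$ for the $\gamma_i$-derivative), while the suffix of that block appears on the left. This is the step I expect to require the most care, but it is purely a matter of tracking positions in the cyclic word $\Delta_{j_1}\cdots\Delta_{j_m}$; once the single-monomial identities are established, linearity and continuity of $\partial_\alpha$ extend them to the whole power series $f$.
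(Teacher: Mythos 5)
Your proposal is correct and follows essentially the same route as the paper: reduce by linearity and continuity to the cycles $B_j$ and the monomials $\Delta_{j_1}\cdots\Delta_{j_m}$, then read off each cyclic derivative by rotating at the relevant occurrence of the arrow and recognizing the result as $(\partial_{x_i}f)(\Delta_1,\dots,\Delta_n)$ framed by the appropriate prefix and suffix. The paper only writes out the $\alpha_i$ case and declares the others similar, whereas you carry out all four; the bookkeeping you describe for the wrap-around is exactly right.
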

\begin{proof}
We illustrate for the cyclic derivatives with respect to the arrows
$\alpha_i$, the other cases being similar.
By linearity and continuity, it is enough to compute these derivatives
for the cycles $B_j$ and the monomials
$\Delta_{i_1} \Delta_{i_2} \dots \Delta_{i_r}$.

Indeed, $\partial_{\alpha_i}(B_j)$ equals $\beta_i \gamma_i$ if $j=i$
and vanishes otherwise, and
\begin{align*}
\partial_{\alpha_i}(\Delta_{i_1} \Delta_{i_2} \dots \Delta_{i_r})
&= \sum_{s \,:\, i_s = i} \delta_i \gamma_i \Delta_{i_{s+1}} \dots
\Delta_{i_r} \Delta_{i_1} \dots \Delta_{i_{s-1}}
\\
&= \delta_i \gamma_i \cdot
\partial_{x_i} (x_{i_1} x_{i_2} \dots x_{i_r})(\Delta_1, \dots, \Delta_n) .
\end{align*}
\end{proof}

\begin{lemma} \label{l:e0Le0}
Let $W = \sum_{i=1}^n B_i + f(\Delta_1, \dots, \Delta_n)$
and consider $\gL = \cP(Q_n, W)$. Then
\begin{equation} \label{e:e0Le0}
e_0 \gL e_0 \simeq
K \langle \! \langle x_1, \dots, x_n \rangle \! \rangle / 
\overline{\bigl( x_i^2, [x_i, \partial_{x_i} f] \bigr)}_{1 \leq i \leq n} .
\end{equation}
\end{lemma}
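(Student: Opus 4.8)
The plan is to compute the truncated ideal $e_0 \cJ(W) e_0$ inside $e_0 \wh{KQ_n} e_0$ and then transport the resulting quotient across the isomorphism $\Phi \colon R \xrightarrow{\sim} e_0 \wh{KQ_n} e_0$. Since $\cJ(W)$ is a two-sided ideal, multiplication by the idempotent $e_0$ is exact and gives $e_0 \gL e_0 = e_0 \wh{KQ_n} e_0 / e_0 \cJ(W) e_0$, so it suffices to identify $J := \Phi^{-1}(e_0 \cJ(W) e_0)$ as a closed two-sided ideal of $R = K \llangle x_1, \dots, x_n, y_1, \dots, y_n \rrangle$ and to show that $R/J$ is isomorphic to the right-hand side of~\eqref{e:e0Le0}.

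To pin down $J$, I would sandwich each cyclic derivative from Lemma~\ref{l:Wder} between paths $0 \to s$ and $t \to 0$, where $s,t$ are its endpoints. The acyclicity of the full subquiver on $\{1,\dots,2n\}$ (Lemma~\ref{l:Qn:comb}\eqref{it:acyclic}) forces every path $0 \to 2i-1$ to factor as a cycle at $0$ followed by $\alpha_i$, and every path $0 \to 2i$ to factor through $\alpha_i \beta_i$ or $\alpha_i \delta_i$; hence any longer prepend or append factors through an element of $e_0 \wh{KQ_n} e_0$, and $e_0 \cJ(W) e_0$ is the closed two-sided ideal generated by the minimal sandwiches. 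Applying $\Phi^{-1}$, these are, for each $i$: the elements $y_i + x_i \partial_{x_i} f$ and $y_i + (\partial_{x_i} f) x_i$ from $\partial_{\alpha_i}$ and $\partial_{\gamma_i}$; the monomials $x_i^2, x_i y_i, y_i x_i, y_i^2$ from $\partial_{\beta_i}(W) = \gamma_i \alpha_i$; and the four elements $x_i (\partial_{x_i} f) x_i$, $x_i (\partial_{x_i} f) y_i$, $y_i (\partial_{x_i} f) x_i$, $y_i (\partial_{x_i} f) y_i$ from $\partial_{\delta_i}$. Subtracting the first two generators already yields $[x_i, \partial_{x_i} f] \in J$.

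Next I would build an isomorphism $R/J \cong \bar A$, where $\bar A = A / \overline{(x_i^2, [x_i, \partial_{x_i} f])}$. Let $\sigma \colon R \to A$ be the continuous homomorphism fixing the $x_i$ and sending $y_i \mapsto -x_i \partial_{x_i} f$ (well-defined since $x_i \partial_{x_i} f \in \fm$), and set $\rho = (\text{quotient}) \circ \sigma \colon R \to \bar A$. Using only the relations $x_i^2 = 0$ and $x_i \partial_{x_i} f = (\partial_{x_i} f) x_i$ in $\bar A$, a direct computation shows that $\rho$ annihilates every listed generator of $J$, so $\rho$ factors as $\bar\rho \colon R/J \to \bar A$. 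Conversely, the inclusion $A \hookrightarrow R$ composed with $R \twoheadrightarrow R/J$ kills $x_i^2$ and $[x_i, \partial_{x_i} f]$, both of which lie in $J$, hence factors through $\bar A$ to give $\psi \colon \bar A \to R/J$. I would then check on topological generators that $\bar\rho$ and $\psi$ are mutually inverse; the only nonformal step is $\psi(\bar\rho(y_i)) = \psi(-x_i \partial_{x_i} f) = -[x_i \partial_{x_i} f]_J = [y_i]_J$, which holds precisely because $y_i + x_i \partial_{x_i} f \in J$.

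I expect the main obstacle to be the careful bookkeeping of $e_0 \cJ(W) e_0$: verifying that the closed two-sided ideal it generates is exhausted by the minimal sandwiches (so that no ``long'' relation is overlooked) and that the operation $e_0(-)e_0$ is compatible with taking closures. Everything else reduces to formal manipulation of power series, in which the crucial relation $[x_i, \partial_{x_i} f]=0$ emerges exactly from comparing the two expressions $-x_i \partial_{x_i} f$ and $-(\partial_{x_i} f) x_i$ for $y_i$ supplied by $\partial_{\alpha_i}$ and $\partial_{\gamma_i}$.
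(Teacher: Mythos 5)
Your proposal is correct and follows essentially the same route as the paper: both reduce to $e_0\cJ(W)e_0$, generate it by sandwiching the cyclic derivatives of Lemma~\ref{l:Wder} with the minimal paths to and from the central vertex, and extract the relations $x_i^2$ and $[x_i,\partial_{x_i}f]$ from the $\beta_i$-derivative and from comparing the $\alpha_i$- and $\gamma_i$-derivatives. The only difference is presentational — you package the elimination of the $y_i$ and the redundancy of the remaining generators as a pair of explicit mutually inverse homomorphisms $R/J \leftrightarrow A/\overline{(x_i^2,[x_i,\partial_{x_i}f])}$, whereas the paper verifies the ideal membership directly.
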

\begin{proof}
The algebra $e_0 \gL e_0$ is the quotient of the subalgebra
$e_0 \wh{KQ_n} e_0$ of $\wh{KQ_n}$
by the ideal $J = \cJ(W) \cap e_0 \wh{KQ_n} e_0$.
The space $e_0 \wh{KQ_n} e_0$ is the closure of the subspace of
$\wh{KQ_n}$ spanned by all the cycles starting at $0$.
By Lemma~\ref{l:Qn:comb}\eqref{it:words},
these cycles are precisely the words in the letters $B_i$, $\Delta_i$.

The ideal $J$ is (topologically) generated by the elements
\begin{align*}
& \alpha_i \cdot \partial_{\alpha_i}(W), \\
& \partial_{\gamma_i}(W) \cdot \gamma_i, \\
& \alpha_i \delta_i \cdot \partial_{\beta_i}(W) \cdot \delta_i \gamma_i, &&
\alpha_i \delta_i \cdot \partial_{\beta_i}(W) \cdot \beta_i \gamma_i, &&
\alpha_i \beta_i \cdot \partial_{\beta_i}(W) \cdot \delta_i \gamma_i, &&
\alpha_i \beta_i \cdot \partial_{\beta_i}(W) \cdot \beta_i \gamma_i
\\
& \alpha_i \delta_i \cdot \partial_{\delta_i}(W) \cdot \delta_i \gamma_i, &&
\alpha_i \delta_i \cdot \partial_{\delta_i}(W) \cdot \beta_i \gamma_i, &&
\alpha_i \beta_i \cdot \partial_{\delta_i}(W) \cdot \delta_i \gamma_i, &&
\alpha_i \beta_i \cdot \partial_{\delta_i}(W) \cdot \beta_i \gamma_i
\end{align*}
for $1 \leq i \leq n$.
Since, by Lemma~\ref{l:Wder},
$\alpha_i \beta_i + \omega_i \alpha_i \delta_i$
and $\beta_i \gamma_i + \delta_i \gamma_i \omega_i$ belong to $\cJ(W)$ for
$\omega_i = (\partial_{x_i}f)(\Delta_1, \dots, \Delta_n) \in
e_0 \wh{KQ_n} e_0$,
it suffices to take only the first element in each of the last two rows
of generators.

Computing using Lemma~\ref{l:Wder}
the generators appearing in the first three rows, we get:
\begin{align*}
\alpha_i \cdot \partial_{\alpha_i}(W)
&= B_i + \Delta_i \cdot (\partial_{x_i} f)(\Delta_1, \dots, \Delta_n)
\\
\partial_{\gamma_i}(W) \cdot \gamma_i
&= B_i + (\partial_{x_i} f)(\Delta_1, \dots, \Delta_n) \cdot \Delta_i
\\
\alpha_i \delta_i \cdot \partial_{\beta_i}(W) \cdot \delta_i \gamma_i
&= \Delta_i \Delta_i ,
\end{align*}
hence 
the image of any $B_i$ modulo $J$ is a power series in 
$\Delta_1, \dots, \Delta_n$, and we can view $e_0 \gL e_0$ as
a quotient of
$K \langle \! \langle \Delta_1, \dots, \Delta_n \rangle \! \rangle$.
Moreover, $\Delta_i^2$ and
$[\Delta_i, (\partial_{x_i} f)(\Delta_1, \dots, \Delta_n)]$ belong
to $J$ for $1 \leq i \leq n$, so they belong to the 
ideal defining that quotient.

In order to show that they generate that ideal, we verify that the
remaining generators already lie in the ideal generated by $\Delta_i^2$
and $[\Delta_i, (\partial_{x_i} f)(\Delta_1, \dots, \Delta_n)]$.
Indeed,
\begin{align*}
\alpha_i \delta_i \cdot \partial_{\delta_i}(W) \cdot \delta_i \gamma_i
& = \Delta_i \cdot (\partial_{x_i} f)(\Delta_1, \dots, \Delta_n) \cdot
\Delta_i
\\
&=
[\Delta_i, (\partial_{x_i} f)(\Delta_1, \dots, \Delta_n)] \cdot \Delta_i 
+ (\partial_{x_i} f)(\Delta_1, \dots, \Delta_n) \cdot \Delta_i \Delta_i .
\end{align*}
\end{proof}

\begin{remark} \label{rem:gen1}
The well-known identity $\sum_{i=1}^n [x_i, \partial_{x_i}f] = 0$ implies
that in the description~\eqref{e:e0Le0}, one can take only $n-1$ of the
generators $[x_i, \partial_{x_i}f]$. To verify this identity, it is
enough to consider monomials and note that
\[
\sum_{i=1}^n x_i \cdot \partial_{x_i}(x_{i_1} x_{i_2} \dots x_{i_r})
= \sum_{s=1}^r x_{i_s} \dots x_{i_r} x_{i_1} \dots x_{i_{s-1}}
= \sum_{i=1}^n \partial_{x_i}(x_{i_1} x_{i_2} \dots x_{i_r}) \cdot x_i .
\]
\end{remark}

\subsection{Finite-dimensionality of quotients of
$K\langle \! \langle x_1, \dots, x_n \rangle \! \rangle$}

Let $A = K\langle \! \langle x_1, \dots, x_n \rangle \! \rangle$ be the
complete path algebra of the $n$-loop quiver, and let 
$\fm = (x_1, x_2, \dots, x_n)$ be the ideal of $A$ generated by the arrows.
Observe that $\fm$ is a closed ideal.
We shall view a monomial in $A$ as a word $u$ in the letters $x_1, \dots, x_n$
whose \emph{length} $\ell(u)$ is thus equal to the total degree of the monomial.

For any $r \geq 1$, a $K$-basis of the vector space $A/\fm^r$ is given
by all the words in the letters $x_1,\dots,x_n$ whose length is smaller
than $r$. Hence $A/\fm^r$ is finite-dimensional and
$\dim A/\fm^r = n^0 + n^1 + \dots + n^{r-1}$. It follows that for
any ideal $I$ containing $\fm^r$, the $K$-algebra $A/I$ is finite-dimensional.

The next lemma gives a useful criterion for a closed ideal to contain
some power of $\fm$.
\begin{lemma} \label{l:AIfd}
Let $I \trianglelefteq A$ be an ideal.
If $\fm^r \subseteq I + \fm^{r+1}$ for some $r \geq 1$,
then $\fm^r \subseteq \bar{I}$ and hence
$A/\bar{I}$ is finite-dimensional.
\end{lemma}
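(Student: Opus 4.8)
The plan is to bootstrap the one-step hypothesis $\fm^r \subseteq I + \fm^{r+1}$ into the family of containments $\fm^r \subseteq I + \fm^{r+s}$ for every $s \geq 1$, and then to recognize the relevant intersection of these as the closure $\bar{I}$.

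First I would argue by induction on $s$. The base case $s=1$ is precisely the hypothesis. For the inductive step, I multiply the hypothesis on the left by $\fm^s$ as a containment of subsets: since $I$ is a two-sided ideal we have $\fm^s I \subseteq I$, and $\fm^s \fm^{r+1} = \fm^{r+s+1}$, so
\[
\fm^{r+s} = \fm^s \fm^r \subseteq \fm^s(I + \fm^{r+1}) = \fm^s I + \fm^{r+s+1} \subseteq I + \fm^{r+s+1}.
\]
Feeding this into the inductive hypothesis $\fm^r \subseteq I + \fm^{r+s}$ gives $\fm^r \subseteq I + (I + \fm^{r+s+1}) = I + \fm^{r+s+1}$, which closes the induction.

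Next I would invoke the description of the closure in the $\fm$-adic topology. Since $\{\fm^N\}_{N \geq 1}$ forms a neighbourhood basis of $0$, an element lies in $\bar{I}$ exactly when it lies in $I + \fm^N$ for every $N$, so $\bar{I} = \bigcap_{N \geq 1}(I + \fm^N)$. As $I + \fm^{N+1} \subseteq I + \fm^N$, this intersection is unaffected by discarding finitely many terms; hence the containments $\fm^r \subseteq I + \fm^{r+s}$ for all $s \geq 1$ yield $\fm^r \subseteq \bigcap_{N \geq 1}(I + \fm^N) = \bar{I}$. Finally, $\bar{I}$ is an ideal containing $\fm^r$, so $A/\bar{I}$ is a quotient of $A/\fm^r$, which is finite-dimensional (of dimension $n^0 + \cdots + n^{r-1}$, as recalled above the statement); therefore $A/\bar{I}$ is finite-dimensional.

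The difficulty here is conceptual rather than computational: the key observation is that a single step of the hypothesis propagates through all powers of $\fm$ via the ideal property $\fm^s I \subseteq I$, and this must be paired with the standard identification $\bar{I} = \bigcap_N(I + \fm^N)$ of the closure. Once these two ingredients are in place, the remainder is routine bookkeeping.
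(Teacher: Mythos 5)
Your proof is correct and follows essentially the same route as the paper's: an induction showing $\fm^r \subseteq I + \fm^{r+s}$ for all $s \geq 1$ (using $\fm^s I \subseteq I$), followed by the identification $\bar{I} = \bigcap_N (I + \fm^N)$ coming from the fact that $\{\fm^N\}$ is a neighbourhood basis of $0$. The only difference is cosmetic bookkeeping in the inductive step.
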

\begin{proof}
We show by induction on $i \geq 1$ that $\fm^r \subseteq I + \fm^{r+i}$.
Indeed, the claim holds for $i=1$ by assumption. If it is true for $i$,
then
\[
\fm^r \subseteq I + \fm^{r+i} = I + \fm^i \cdot \fm^r \subseteq
I + \fm^i \cdot (I + \fm^{r+1}) = I + \fm^{r+i+1},
\]
so it holds for $i+1$ as well. We conclude that
$\fm^r \subseteq \bigcap_{i \geq 1} (I + \fm^{r+i}) = \bar{I}$.
\end{proof}

We now consider a quantitative version.
\begin{notat}
Let $I \trianglelefteq A$. For $r \geq 0$, let
\[
d_r = d_r(I) = \dim (I + \fm^r) / (I + \fm^{r+1}).
\]
Observe that since $(I + \fm^r)/(I + \fm^{r+1})$ is a sub-quotient
of $A/\fm^{r+1}$, the numbers $d_r$ are well-defined non-negative
integers.
Moreover, 
$\dim A/(I+\fm^r) = d_0 + d_1 + \dots + d_{r-1}$.
\end{notat}

\begin{remark}
Since $I + \fm^i = \bar{I} + \fm^i$ for all $i \geq 0$, we have
$d_r(\bar{I}) = d_r(I)$ for all $r \geq 0$.
\end{remark}

\begin{prop} \label{p:AIfd}
Let $I \trianglelefteq A$ and
assume that $d_r(I) = 0$ for some $r \geq 0$. Then:
\begin{enumerate}
\renewcommand{\theenumi}{\alph{enumi}}
\item
The algebra $A/\bar{I}$ is finite-dimensional and 
$\dim A/\bar{I} = d_0 + d_1 + \dots + d_{r-1}$.

\item
$d_{r+i}(I) = 0$ for any $i \geq 0$.
\end{enumerate}
\end{prop}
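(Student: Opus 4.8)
The plan is to translate the hypothesis $d_r(I) = 0$ into a single containment of ideals and then feed that into Lemma~\ref{l:AIfd}. The starting observation is that $d_r(I) = \dim (I + \fm^r)/(I + \fm^{r+1})$ vanishes precisely when $I + \fm^r = I + \fm^{r+1}$, which, since $\fm^{r+1} \subseteq \fm^r$, is equivalent to $\fm^r \subseteq I + \fm^{r+1}$. First I would dispose of the degenerate case $r = 0$ separately: there $d_0 = 0$ forces $A/(I+\fm) = 0$, i.e.\ $I + \fm = A$, so $I$ contains a unit and $\bar{I} = A$; both assertions then hold trivially, the dimension formula reading as the empty sum. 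For $r \geq 1$ the hypothesis of Lemma~\ref{l:AIfd} is exactly met, and the lemma yields the key containment $\fm^r \subseteq \bar{I}$ that drives everything else.

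For part~(a), the finite-dimensionality of $A/\bar{I}$ is immediate from Lemma~\ref{l:AIfd}. For the dimension count I would note that $\fm^r \subseteq \bar{I}$ gives $\bar{I} + \fm^r = \bar{I}$, while $I + \fm^r = \bar{I} + \fm^r$ (using $I + \fm^i = \bar{I} + \fm^i$ for every $i$), so that $A/\bar{I} = A/(I + \fm^r)$. The telescoping identity $\dim A/(I + \fm^r) = d_0 + d_1 + \dots + d_{r-1}$ recorded just before the proposition then produces the claimed value.

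For part~(b), the same containment does all the work. Since the powers of $\fm$ are decreasing, $\fm^{r+i} \subseteq \fm^r \subseteq \bar{I}$ for every $i \geq 0$, whence $I + \fm^{r+i} = \bar{I} = I + \fm^{r+i+1}$ and therefore $d_{r+i}(I) = 0$ (equivalently, invoke $d_{r+i}(\bar{I}) = d_{r+i}(I)$ from the preceding remark). I expect no serious obstacle here; the only point requiring care is to keep the distinction between $I$ and its closure $\bar{I}$ straight, applying the identity $I + \fm^i = \bar{I} + \fm^i$ at the right moments, and to treat the edge case $r = 0$ before appealing to Lemma~\ref{l:AIfd}.
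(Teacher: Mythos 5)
Your proof is correct and follows essentially the same route as the paper: translate $d_r(I)=0$ into $I+\fm^r=I+\fm^{r+1}$, invoke Lemma~\ref{l:AIfd} to get $\fm^r\subseteq\bar{I}$, and read off both the dimension formula and the vanishing of the later $d_{r+i}$ from $\bar{I}+\fm^{r+i}=\bar{I}$. Your separate treatment of the edge case $r=0$ (which the paper's appeal to Lemma~\ref{l:AIfd}, stated only for $r\geq 1$, silently skips) is a small but legitimate point of extra care.
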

\begin{proof}
The assumption $d_r=0$ implies that $I + \fm^r = I + \fm^{r+1}$.
By Lemma~\ref{l:AIfd}, $\fm^r \subseteq \bar{I}$ and the algebra $A/\bar{I}$
is finite-dimensional. The claim on its dimension follows from the equality
$\bar{I} + \fm^r = \bar{I}$.
Moreover, $d_{r+i}(I) = d_{r+i}(\bar{I}) = 0$ since
$\bar{I} + \fm^{r+i} = \bar{I}$ for any $i \geq 0$.
\end{proof}

We conclude this section by presenting an algorithm to compute the numbers
$d_0, \dots, d_r$ for a finitely generated ideal $I = (\rho_1, \dots, \rho_t)$
given by the list of its generators.

Note that from the inclusions
\begin{align*}
\fm^{r+1} \subseteq I + \fm^{r+1} \subseteq I + \fm^r &&
\fm^{r+1} \subseteq \fm^r \subseteq I + \fm^r
\end{align*}
we get by comparing dimensions that
\begin{align*}
\dim (I+\fm^r)/(I+\fm^{r+1}) &=
\dim \fm^r/\fm^{r+1} + \dim (I+\fm^r)/\fm^r - \dim (I+\fm^{r+1})/\fm^{r+1} \\
&= n^r - \dim (I+\fm^{r+1})/\fm^{r+1} + \dim (I+\fm^r)/\fm^r ,
\end{align*}
so it suffices to compute the dimension of each subspace
$(I+\fm^r)/\fm^r$ inside $A/\fm^r$.

We shall identify $A/\fm^r$ with the subspace of $A$ consisting of all
the linear combinations of words whose length is smaller than $r$.
Under this identification, the canonical projection
$A \twoheadrightarrow A/\fm^r$ is realised as the \emph{truncation}
$\tau_r \colon A \to A$, taking an element $a \in A$ and removing all the
words whose length is greater or equal to $r$.
Obviously, $\tau_r(a) - a \in \fm^r$ for any $a \in A$, hence
$u \tau_r(a) v - u a v \in \fm^{r+ \ell(u) + \ell(v)}$
for any two words $u, v \in A$.

\begin{lemma} \label{l:Imrmr}
Assume that $I$ is generated by a sequence $\rho_1, \rho_2, \dots, \rho_t$
of elements in $\fm^e$ for some $e \geq 0$.
Then $(I + \fm^r)/\fm^r$ is spanned by all the elements of the form
\begin{equation} \label{e:Imrmr}
u \cdot \tau_{r - \ell(u) - \ell(v)}(\rho_i) \cdot v ,
\end{equation}
where $1 \leq i \leq t$ and $u$ and $v$ are words such that
$\ell(u) + \ell(v) < r - e$.
\end{lemma}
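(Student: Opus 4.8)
The plan is to reduce the statement to understanding the image in $A/\fm^r$ of a single generator sandwiched between two \emph{words}, and then to keep track of lengths. First I would note that, as a topological $K$-vector space, the two-sided ideal $I$ is spanned by the elements $u \rho_i v$ with $1 \leq i \leq t$ and $u, v$ ranging over all words in $x_1, \dots, x_n$: an arbitrary element $a \rho_i b$ expands, upon writing $a = \sum_u \lambda_u u$ and $b = \sum_v \mu_v v$, into the (convergent) sum $\sum_{u,v} \lambda_u \mu_v\, u \rho_i v$. Since the canonical projection $A \to A/\fm^r$ is continuous and $A/\fm^r$ is finite-dimensional, the image $(I + \fm^r)/\fm^r$ is therefore spanned by the images of the elements $u \rho_i v$.

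Next I would discard the terms that are too long. Because $\rho_i \in \fm^e$, the product $u \rho_i v$ lies in $\fm^{\ell(u) + e + \ell(v)}$, so whenever $\ell(u) + \ell(v) \geq r - e$ we have $u \rho_i v \in \fm^r$ and its image is zero. Only the pairs $(u,v)$ with $\ell(u) + \ell(v) < r - e$ can contribute.

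For the remaining pairs I would invoke the truncation estimate recorded immediately before the lemma, namely $u\,\tau_s(a)\,v - u a v \in \fm^{\,s + \ell(u) + \ell(v)}$ for any element $a$ and words $u,v$. Taking $a = \rho_i$ and $s = r - \ell(u) - \ell(v)$ yields
\[
u \rho_i v - u\,\tau_{r-\ell(u)-\ell(v)}(\rho_i)\,v \in \fm^{r},
\]
so these two elements have the same image in $A/\fm^r$. Assembling the three steps shows that $(I+\fm^r)/\fm^r$ is spanned by the elements displayed in \eqref{e:Imrmr}, as required.

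I expect the only real subtlety — rather than a genuine obstacle — to be the first step: justifying the reduction to word-sandwiched generators $u \rho_i v$ in the complete setting, where the coefficients $a, b$ are themselves infinite power series. This is resolved by the finite-dimensionality of $A/\fm^r$, which makes every subspace closed and guarantees that the continuous projection sends each infinite expansion $\sum_{u,v} \lambda_u \mu_v\, u \rho_i v$ to a \emph{finite} combination of images (only finitely many words of bounded length occur); in particular it is harmless whether one works with $I$ or its closure, since $I + \fm^r = \bar I + \fm^r$. The two remaining steps are elementary length bookkeeping.
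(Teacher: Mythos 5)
Your proof is correct and follows essentially the same route as the paper: reduce to the word-sandwiched generators $u\rho_i v$, discard those with $\ell(u)+\ell(v)\geq r-e$ since they land in $\fm^r$, and replace $\rho_i$ by its truncation using the estimate preceding the lemma. Your extra care in justifying the first reduction (the expansion $a\rho_i b=\sum\lambda_u\mu_v\,u\rho_i v$ is an infinite convergent sum, harmless after projecting to the finite-dimensional $A/\fm^r$) is a welcome clarification of a point the paper passes over by simply asserting that $I$ is spanned by the $u\rho_i v$.
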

\begin{proof}
As a vector space, $I$ is spanned by the elements $u \rho_i v$, where $u, v$
are words and $1 \leq i \leq t$. Hence $(I+ \fm^r)/\fm^r$ is spanned by
the corresponding elements $\tau_r(u \rho_i v)$.

Now if $\ell(u) + \ell(v) + e \geq r$, then $u \rho_i v \in \fm^r$
and its image under $\tau_r$ vanishes. Otherwise,
$\tau_r(u \rho_i v) = u \tau_{r - \ell(u) - \ell(v)}(\rho_i) v$ and the
result follows.
\end{proof}

Let $r > e$. 
We can encode the elements in~\eqref{e:Imrmr} in a matrix $M_r$
(with entries in $K$) as follows.
The rows of $M_r$ are indexed by the triples $(u,v,i)$ where $u,v$
are words such that $\ell(u)+\ell(v) < r-e$ and $1 \leq i \leq t$,
whereas its columns are indexed by the words $w$ with
$e \leq \ell(w) < r$. The entries of $M_r$ are given by
\begin{equation} \label{e:Mr}
(M_r)_{(u,v,i),w} = 
\text{the coefficient of $w$ in
$u \cdot \tau_{r-\ell(u)-\ell(v)}(\rho_i) \cdot v$.}
\end{equation}
Viewing the rows of $M_r$ as vectors in $\fm^e/\fm^r$, Lemma~\ref{l:Imrmr}
tells us that the subspace of $\fm^e/\fm^r$ spanned by the rows of $M_r$
equals $(I+\fm^r)/\fm^r$.

Let $p_r$ and $q_r$ denote the numbers of rows and columns in $M_r$,
respectively. Since the number of pairs $(u,v)$ of words with
$\ell(u)+\ell(v)=j$ equals $(j+1)n^j$, we have
\begin{align*}
p_r = t \cdot \sum_{j=0}^{r-e-1} (j+1) n^j &,&
q_r = \sum_{j=e}^{r-1} n^j .
\end{align*}
Moreover, if we order the rows $(u,v,i)$ according to $\ell(u)+\ell(v)$
and the columns $w$ according to $\ell(w)$, then for any $e < s \leq r$,
the upper left submatrix of size $p_s \times q_s$ of $M_r$ equals $M_s$,
since $\tau_s \tau_r = \tau_s$.

The preceding discussion can be summarized by the next algorithm.
\begin{alg} \label{alg:dr}
Given:
\begin{itemize}
\item
an integer $e \geq 0$;
\item
a sequence $\rho_1, \rho_2, \dots, \rho_t$ of elements in $\fm^e$;
\item
an integer $r \geq e$;
\end{itemize}
Output the numbers $d_0(I), d_1(I), \dots, d_r(I)$ for
$I = (\rho_1, \rho_2, \dots, \rho_t)$ by performing the following steps:
\begin{enumerate}
\renewcommand{\labelenumi}{\theenumi.}
\item
Compute the $p_{r+1} \times q_{r+1}$ matrix $M_{r+1}$ according to~\eqref{e:Mr}.

\item
Compute the matrices $M_{s+1}$ as submatrices of $M_{r+1}$
for $e \leq s < r$.

\item
Output
\[
d_s(I) =
\begin{cases}
n^s & \text{if $0 \leq s < e$,} \\
n^e - \rank M_{e+1} & \text{if $s=e$,} \\
n^s - \rank M_{s+1} + \rank M_s & \text{if $e < s \leq r$.}
\end{cases}
\]
\end{enumerate}
\end{alg}

\subsection{A computer-assisted proof of finite-dimensionality}
\label{ssec:computer}

We have implemented Algorithm~\ref{alg:dr} using the \textsc{Magma}
computer algebra system~\cite{MAGMA} (in order to be able to work with
matrices over finite fields) for the ideals
$I_f = \bigl( x_i^2, [x_i, \partial_{x_i} f] \bigr)_{1 \leq i \leq 3}$
of $K \llangle x_1, x_2, x_3 \rrangle$,
where $f \in \fm^2$ is a polynomial.
In this case, $e=2$ and in view of Remark~\ref{rem:gen1}, one needs to
take only two commutators, so $t=5$ generators suffice.

For each of the two polynomials $f=f_0$ and $f=f_1$ given by
\begin{align*}
f_0(x_1, x_2, x_3) &= x_1 x_2 + x_2 x_3 + x_3 x_1 \\
f_1(x_1, x_2, x_3) &= x_1 x_2 + x_2 x_3 + x_3 x_1 + x_1 x_2 x_3 ,
\end{align*}
over each of the base fields $K=\bQ$ and $K=\bF_2$,
the numbers $d_r(I_f)$ for $0 \leq r \leq 6$ are shown in Table~\ref{tab:dr}.

\begin{table}
\[
\begin{array}{cc|ccccccc}
K & f & d_0 & d_1 & d_2 & d_3 & d_4 & d_5 & d_6 \\
\hline
\bQ & f_0 &  1 & 3 & 4 & 3 & 1 & 0 & 0 \\
\bQ & f_1 &  1 & 3 & 4 & 3 & 1 & 0 & 0 \\
\bF_2 & f_0 &1 & 3 & 4 & 4 & 4 & 4 & 4 \\
\bF_2 & f_1 &1 & 3 & 4 & 4 & 3 & 1 & 0
\end{array}
\]
\caption{The numbers $d_r$ for some ideals
$\bigl( x_i^2, [x_i, \partial_{x_i} f] \bigr)_{1 \leq i \leq 3}$
of $K \llangle x_1, x_2, x_3 \rrangle$.
}
\label{tab:dr}
\end{table}

Note that in order to compute $d_6$, a matrix with
$2735 = 5 \cdot (1 + 2 \cdot 3 + 3 \cdot 9 + 4 \cdot 27 + 5 \cdot 81)$
rows and $1089 = 9 + 27 + 81 + 243 + 729$ columns has to be constructed
and analysed.

\begin{remark}
It may be possible to use the generators $x_i^2$ in order to better encode
the words by considering only those with different consecutive letters, so
that their growth would be exponential with base $2$ rather than $3$,
thus significantly lowering the sizes of the matrices involved.
We have not implemented this yet.
\end{remark}

Consider now the potentials
\[
W_\eps = B_1 + B_2 + B_3 + \Delta_1 \Delta_2 + \Delta_2 \Delta_3
+ \Delta_3 \Delta_1 + \eps \Delta_1 \Delta_2 \Delta_3
\]
on $X_7$
for $\eps \in \{0,1\}$.
By examining the entries in Table~\ref{tab:dr}, we deduce the following:
\begin{prop} \label{p:W0W1dim} {\ }
\begin{enumerate}
\renewcommand{\theenumi}{\alph{enumi}}
\item
If $\ch K = 0$, the algebras $\cP(X_7, W_0)$ and $\cP(X_7, W_1)$
are finite-dimensional.

\item
If $\ch K = 2$, the algebra $\cP(X_7, W_1)$ is finite-dimensional
and the two potentials $(X_7, W_0)$ and $(X_7, W_1)$ are not right equivalent.
\end{enumerate}
\end{prop}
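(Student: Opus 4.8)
The plan is to reduce the finite-dimensionality question for $\gL_\eps = \cP(X_7, W_\eps)$ to a computation inside a free power series algebra. Since $X_7 = Q_3$, Lemma~\ref{l:KQn0fd} tells us that $\dim_K \gL_\eps < \infty$ if and only if $\dim_K e_0 \gL_\eps e_0 < \infty$, and Lemma~\ref{l:e0Le0} identifies $e_0 \gL_\eps e_0$ with $A/\overline{I_{f_\eps}}$, where $A = K\llangle x_1, x_2, x_3 \rrangle$, the polynomials $f_0, f_1$ are as in Section~\ref{ssec:computer}, and $I_{f_\eps} = \bigl(x_i^2, [x_i, \partial_{x_i} f_\eps]\bigr)_{1 \leq i \leq 3}$ (here $W_\eps = B_1 + B_2 + B_3 + \Phi(f_\eps)$). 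The generators all lie in $\fm^2$, so we take $e = 2$, and by Remark~\ref{rem:gen1} only two of the three commutators are needed, leaving $t = 5$ generators. The whole argument then rests on the vanishing criterion of Proposition~\ref{p:AIfd}: if $d_r(I_{f_\eps}) = 0$ for some $r$, then $A/\overline{I_{f_\eps}}$ is finite-dimensional of dimension $d_0 + \dots + d_{r-1}$. These numbers are computed by Algorithm~\ref{alg:dr} and recorded in Table~\ref{tab:dr}.

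For part (a), assume $\ch K = 0$. Reading Table~\ref{tab:dr}, over $\bQ$ we have $d_5(I_{f_0}) = d_5(I_{f_1}) = 0$, so Proposition~\ref{p:AIfd} gives $\dim_K e_0 \gL_\eps e_0 = d_0 + \dots + d_4 = 12 < \infty$, and hence $\dim_K \gL_\eps < \infty$, for $K = \bQ$. To pass to an arbitrary field of characteristic $0$, I would observe that the generators of $I_{f_\eps}$ have integer coefficients, so every matrix $M_r$ produced by Algorithm~\ref{alg:dr} has integer entries. Because $\bQ$ embeds into any characteristic-$0$ field $K$, the (non)vanishing of the integer minors of $M_r$ is unchanged, so $\rank M_r$ and therefore each $d_r(I_{f_\eps})$ are the same over $K$ as over $\bQ$. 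In particular $d_5 = 0$ still holds, giving finite-dimensionality over every such $K$.

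For part (b), assume $\ch K = 2$. The same field-independence argument, now using the embedding of the prime field $\bF_2$ into $K$, shows that the $d_r(I_{f_\eps})$ agree with their values over $\bF_2$. From Table~\ref{tab:dr} we read $d_6(I_{f_1}) = 0$, whence $\gL_1$ is finite-dimensional with $\dim_K e_0 \gL_1 e_0 = d_0 + \dots + d_5 = 16$. For the non-equivalence, I would argue by comparing the two corner algebras. A right equivalence between $W_0$ and $W_1$ is an automorphism of $\wh{KX_7}$ fixing the vertices; it induces an isomorphism $\gL_0 \cong \gL_1$ that fixes $e_0$, and therefore an isomorphism $e_0 \gL_0 e_0 \cong e_0 \gL_1 e_0$. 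But Table~\ref{tab:dr} gives $d_0 + \dots + d_6 = 24$ for $f_0$ over $\bF_2$, and since $A/\overline{I_{f_0}}$ surjects onto $A/(I_{f_0} + \fm^7)$ this forces $\dim_K e_0 \gL_0 e_0 \geq 24 > 16$. The two corners thus have different dimensions, so no such isomorphism exists and $W_0, W_1$ are not right equivalent. I note that this last step needs only the finite table and, in particular, does not require deciding whether $\gL_0$ itself is finite- or infinite-dimensional.

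The main obstacle is the rank computations underlying Table~\ref{tab:dr}, which are genuinely computer-assisted: for $r = 6$ one must analyse a $2735 \times 1089$ integer matrix, as noted after the table. The only conceptual care needed is the base-change invariance of the $d_r$, which I handle by keeping the matrices over $\bZ$ and invoking that ranks of integer matrices are determined over the prime field and transfer to every field of the same characteristic; the remaining identifications (writing $W_\eps = B_1 + B_2 + B_3 + \Phi(f_\eps)$, reading off $e = 2$ and $t = 5$, and extracting the partial sums of the $d_r$) are routine.
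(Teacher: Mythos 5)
Your proof is correct and follows essentially the same route as the paper: reduce to the corner algebra $e_0 \gL_\eps e_0 \simeq K\llangle x_1,x_2,x_3\rrangle/\overline{I_{f_\eps}}$ via Lemmas~\ref{l:KQn0fd} and~\ref{l:e0Le0}, read off $d_5(I_{f_0})=d_5(I_{f_1})=0$ over $\bQ$ and $d_6(I_{f_1})=0$ over $\bF_2$ from Table~\ref{tab:dr}, apply Proposition~\ref{p:AIfd}, and rule out a right equivalence by comparing the dimensions $16$ and $\geq 24$ of the two corner algebras in characteristic $2$. The only addition is that you spell out the (correct) base-change step --- the matrices $M_r$ have integer entries, so their ranks, and hence the $d_r$, depend only on the prime field --- which the paper leaves implicit when passing from $\bQ$ and $\bF_2$ to arbitrary fields of characteristic $0$ and $2$.
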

\begin{proof}
Let $\eps \in \{0,1\}$. By Lemma~\ref{l:KQn0fd}
and Lemma~\ref{l:e0Le0}, the finite-dimensionality of the algebra
$\cP(X_7, W_\eps)$ is equivalent to that of
$K \llangle x_1,x_2,x_3 \rrangle / \overline{I_{f_\eps}}$.
For the latter algebras, use Proposition~\ref{p:AIfd}, observing that
$d_5(I_{f_0})=d_5(I_{f_1})=0$ for $K=\bQ$ and $d_6(I_{f_1})=0$
for $K=\bF_2$.

Any right equivalence of $W_0$ and $W_1$ would imply the isomorphism
of the algebras $e_0 \cP(X_7, W_\eps) e_0$, but when $\ch K = 2$
these algebras cannot have the same dimension by virtue of
Table~\ref{tab:dr}.
\end{proof}

Let us explain how to extend these computational results from the case of
characteristic $0$ to arbitrary characteristic (not equal to $2$)
by working with lattices over $\bZ$.

Since the coefficients of $f_0$ and $f_1$ are integers, the matrices
$M_s$ constructed by the algorithm have integer entries and as such,
one can consider their rank over any finite field $\bF_p$. Moreover,
for any integral matrix $M$ we always have
$\rank_{\bF_p} M \leq \rank_{\bQ} M$.

If $f$ has integer coefficients, denote by $d_r^{\bF_p}$
the number $d_r(I_f)$ computed for the ideal $I_f$
in $\bF_p \llangle x_1, x_2, x_3 \rrangle$, and similarly for
$d_r^{\bQ}$.
The next lemma gives a sufficient condition to deduce the vanishing of
$d_r^{\bF_p}$ given that of $d_r^{\bQ}$.

\begin{lemma} \label{l:dr:p0}
Assume that $d_r^{\bQ} = 0$. If $\rank_{\bF_p} M_{r+1} = \rank_{\bQ} M_{r+1}$
then $d_r^{\bF_p} = 0$.
\end{lemma}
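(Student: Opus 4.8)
The plan is to read everything off the explicit formula for $d_s$ recorded in Algorithm~\ref{alg:dr}, combined with the two elementary inputs noted just before the lemma: that for an integral matrix $M$ one always has $\rank_{\bF_p} M \leq \rank_{\bQ} M$, and that each $d_s$ is the dimension of the $K$-vector space $(I+\fm^s)/(I+\fm^{s+1})$, hence a non-negative integer over any field. The whole argument is then a short manipulation of the rank formula together with these two facts.

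First I would dispose of the degenerate ranges of $r$. If $r<e$ then $d_r^{\bQ}=n^r>0$, contradicting the hypothesis $d_r^{\bQ}=0$, so this case does not arise. If $r=e$, the formula gives $d_e=n^e-\rank M_{e+1}$, and the matrix $M_{e+1}$ has exactly $n^e$ columns; thus $d_e^{\bQ}=0$ says that $M_{e+1}$ has full column rank over $\bQ$, and the hypothesis $\rank_{\bF_p}M_{e+1}=\rank_{\bQ}M_{e+1}=n^e$ forces $d_e^{\bF_p}=0$ directly.

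The main case is $r>e$, where $d_s=n^s-\rank M_{s+1}+\rank M_s$. Writing this identity over both $\bF_p$ and $\bQ$ and subtracting, I obtain
\[
d_r^{\bF_p}-d_r^{\bQ}=
\bigl(\rank_{\bQ}M_{r+1}-\rank_{\bF_p}M_{r+1}\bigr)
-\bigl(\rank_{\bQ}M_{r}-\rank_{\bF_p}M_{r}\bigr).
\]
By hypothesis the first bracket vanishes, while the second is non-negative by the rank inequality applied to the integral matrix $M_r$. Hence $d_r^{\bF_p}-d_r^{\bQ}\leq 0$, and since $d_r^{\bQ}=0$ this gives $d_r^{\bF_p}\leq 0$. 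As $d_r^{\bF_p}$ is a dimension it is non-negative, so $d_r^{\bF_p}=0$, as required.

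There is no serious obstacle here; the only points demanding care are bookkeeping ones. One must notice that exactly one rank equality—that of $M_{r+1}$—is assumed, so the contribution of $M_r$ cannot be pinned down by an equality and must instead be controlled only by the one-sided bound $\rank_{\bF_p}M_r\leq\rank_{\bQ}M_r$. The argument closes precisely because this uncontrolled term enters with the favourable sign and because dimensions cannot be negative, which together pin $d_r^{\bF_p}$ to the single admissible value $0$.
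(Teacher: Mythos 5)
Your proof is correct and follows essentially the same route as the paper: both arguments rest on the identity $d_r = n^r - \rank M_{r+1} + \rank M_r$, the assumed rank equality for $M_{r+1}$, the one-sided bound $\rank_{\bF_p} M_r \leq \rank_{\bQ} M_r$, and the non-negativity of $d_r^{\bF_p}$ as a dimension. The only difference is that you also treat the boundary cases $r \leq e$ explicitly, which the paper leaves implicit.
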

\begin{proof}
We have
\[
d_r^{\bF_p} = n^r - \rank_{\bF_p} M_{r+1} + \rank_{\bF_p} M_r  \leq
n^r - \rank_{\bQ} M_{r+1} + \rank_{\bQ} M_r = d_r^{\bQ} = 0 .
\]
\end{proof}

Our computational tool to verify the equality of ranks is based on the
following lemma.
\begin{lemma} \label{l:prank}
Let $L \subseteq \bZ^N$ be a lattice of rank $N-d$ and let
$v_1, v_2, \dots v_d \in \bZ^N$ be vectors such that
$L' = L + \sum \bZ v_i$ has full rank. 
If $p$ is a prime that does not divide the index $[\bZ^N : L']$, then
$\dim_{\bF_p} (L \otimes_{\bZ} \bF_p) = \dim_{\bQ} (L \otimes_{\bZ} \bQ)$.
\end{lemma}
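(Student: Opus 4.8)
The plan is to read the symbol $L \otimes_{\bZ} \bF_p$ as the image of $L$ under the reduction map $\bZ^N \to \bZ^N \otimes_{\bZ} \bF_p = \bF_p^N$; equivalently, as $(L + p\bZ^N)/p\bZ^N \cong L/(L \cap p\bZ^N)$. With this reading the statement is non-trivial, since the natural map $L \otimes_{\bZ} \bF_p \to \bF_p^N$ need not be injective, and the assertion is precisely that under the stated hypothesis it is. Writing $\bar{x}$ for the reduction of $x \in \bZ^N$ and $\bar{L}$ for the image of $L$ in $\bF_p^N$, I would prove the two inequalities $\dim_{\bF_p} \bar{L} \le N - d$ and $\dim_{\bF_p} \bar{L} \ge N - d$, noting that $\dim_{\bQ}(L \otimes_{\bZ} \bQ) = \rank L = N - d$ by hypothesis.

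First I would dispatch the easy inequality, which uses nothing about $p$. Since $L$ is a lattice of rank $N-d$, it admits a $\bZ$-basis $e_1, \dots, e_{N-d}$, and the reductions $\bar{e}_1, \dots, \bar{e}_{N-d}$ span $\bar{L}$ over $\bF_p$; hence $\dim_{\bF_p} \bar{L} \le N-d$.

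The substance is the reverse inequality, and here the hypothesis on the index enters. Because $L'$ has full rank $N$, the quotient $\bZ^N/L'$ is a finite abelian group of order $[\bZ^N : L']$. If $p$ is coprime to this order, then multiplication by $p$ is a bijection of $\bZ^N/L'$ (it is an endomorphism of a finite abelian group with trivial kernel, as the order of any element annihilated by $p$ would divide both $p$ and $|\bZ^N/L'|$, hence be $1$), so $p\,(\bZ^N/L') = \bZ^N/L'$, i.e.\ $\bZ^N = L' + p\bZ^N$. Reducing modulo $p$ and using $L' = L + \sum_i \bZ v_i$ gives $\bF_p^N = \bar{L} + \langle \bar{v}_1, \dots, \bar{v}_d \rangle$, whence $N = \dim_{\bF_p}(\bar L + \langle \bar v_1, \dots, \bar v_d\rangle) \le \dim_{\bF_p}\bar L + d$, so $\dim_{\bF_p}\bar L \ge N - d$. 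Combining the two inequalities yields $\dim_{\bF_p}(L \otimes_{\bZ} \bF_p) = N - d = \dim_{\bQ}(L \otimes_{\bZ} \bQ)$.

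The only genuine obstacle here is conceptual rather than technical: one must fix the interpretation of $L \otimes_{\bZ} \bF_p$ as a reduction inside $\bF_p^N$, since under the abstract tensor reading the statement would be vacuous (an abstract $L \otimes_{\bZ} \bF_p$ always has dimension $\rank L$ because $L$ is free). Once this is settled, the proof is the short finite-group argument above, whose computational payoff is that one may certify $\rank_{\bF_p} M_{r+1} = \rank_{\bQ} M_{r+1}$ by exhibiting completing vectors $v_1, \dots, v_d$ for the row lattice $L$ of $M_{r+1}$ and checking that a single integer, namely the determinant $\pm[\bZ^N : L']$, is prime to $p$.
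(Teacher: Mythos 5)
Your proof is correct and follows essentially the same route as the paper: both arguments sandwich $\dim_{\bF_p}\bar{L}$ between $N-d$ from above (reduction of a rank-$(N-d)$ lattice) and $N-d$ from below by showing that $L'$ reduces onto all of $\bF_p^N$. The only difference is in how that last fact is justified — the paper reduces a $\bZ$-basis of $L'$ and notes its determinant is $\pm[\bZ^N:L']$, prime to $p$, while you observe that multiplication by $p$ is bijective on the finite group $\bZ^N/L'$; both are standard one-line arguments, and your remark about reading $L\otimes_{\bZ}\bF_p$ as the image of $L$ in $\bF_p^N$ correctly pins down the intended (non-vacuous) interpretation.
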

\begin{proof}
It suffices to prove that $L' \otimes \bF_p = \bF_p^N$, as
\[
\dim_{\bF_p} (L' \otimes \bF_p) - d \leq
\dim_{\bF_p} (L \otimes \bF_p) \leq
\dim_{\bQ} (L \otimes \bQ) = \rank L = N - d .
\]

Indeed,
let $w_1, \dots, w_N$ be a $\bZ$-basis of $L'$ and consider the square
integral matrix whose rows are given by the vectors $w_i$.
The absolute value of its determinant equals the index $[\bZ^N : L']$,
which by assumption is not divisible by $p$.
Hence the images modulo $p$ of the vectors $w_1, \dots, w_n$
form a basis of $\bF_p^N$.
\end{proof}

\begin{prop} \label{p:W0W1:Kne2}
Let $K$ be a field with $\ch K \neq 2$. Then the algebras
$\cP(X_7, W_0)$ and $\cP(X_7, W_1)$ are finite-dimensional over $K$.
\end{prop}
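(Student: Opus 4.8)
The plan is to reduce the statement to a purely integral linear-algebra fact about the matrices $M_s$ produced by Algorithm~\ref{alg:dr}, a fact that can be checked once and for all over $\bZ$ so as to cover every odd characteristic at once.

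For $\eps \in \{0,1\}$ write $\gL_\eps = \cP(X_7, W_\eps)$. By Lemma~\ref{l:KQn0fd} the algebra $\gL_\eps$ is finite-dimensional over $K$ if and only if $e_0 \gL_\eps e_0$ is, and by Lemma~\ref{l:e0Le0} the latter is isomorphic to $K\llangle x_1, x_2, x_3 \rrangle / \overline{I_{f_\eps}}$, where $I_{f_\eps} = (x_i^2, [x_i, \partial_{x_i} f_\eps])_{1 \leq i \leq 3}$ and $f_0, f_1$ are the polynomials of Section~\ref{ssec:computer}. By Proposition~\ref{p:AIfd} it therefore suffices to exhibit some $r$ with $d_r^K(I_{f_\eps}) = 0$. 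Since the rank of an integer matrix is unchanged under field extension, each $d_r^K(I_{f_\eps})$ depends only on $\ch K$: it equals $d_r^{\bQ}(I_{f_\eps})$ when $\ch K = 0$ and $d_r^{\bF_p}(I_{f_\eps})$ when $\ch K = p$. The case $\ch K = 0$ is exactly Proposition~\ref{p:W0W1dim}(a), where $d_5^{\bQ}(I_{f_\eps}) = 0$, so we may assume $\ch K = p$ is an odd prime and aim to prove $d_5^{\bF_p}(I_{f_\eps}) = 0$.

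By Lemma~\ref{l:dr:p0}, since $d_5^{\bQ}(I_{f_\eps}) = 0$, it is enough to check that $\rank_{\bF_p} M_6 = \rank_{\bQ} M_6$ for the integer matrix $M_6 = M_6(I_{f_\eps})$ built by Algorithm~\ref{alg:dr}. To handle all odd primes uniformly I would apply Lemma~\ref{l:prank}. Concretely, set $N = q_6$ and $\rho = \rank_{\bQ} M_6$, choose $\rho$ rows of $M_6$ spanning its row space over $\bQ$, and let $L \subseteq \bZ^N$ be the lattice they generate, so that $\rank L = \rho = N - d$ with $d = N - \rho$. Complete $L$ to a full-rank lattice $L' = L + \sum_{i=1}^d \bZ e_{j_i}$ by adjoining suitable standard basis vectors, and compute the index $[\bZ^N : L']$ as the absolute value of the determinant of the resulting $N \times N$ integer matrix. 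By Lemma~\ref{l:prank}, for every prime $p$ not dividing this index one has $\dim_{\bF_p}(L \otimes \bF_p) = \rho$, whence $\rank_{\bF_p} M_6 \geq \rho$; since always $\rank_{\bF_p} M_6 \leq \rank_{\bQ} M_6 = \rho$, equality follows, and then Lemma~\ref{l:dr:p0} gives $d_5^{\bF_p}(I_{f_\eps}) = 0$.

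The crux, and the point where the computer is indispensable, is the verification that $[\bZ^N : L']$ is a power of $2$ for each of $f_0$ and $f_1$; this is precisely what forces $p = 2$ to be the only excluded prime and makes the statement hold over every field of characteristic different from $2$. Equivalently, one checks by exact integer computation (for instance via the Smith normal form of the chosen $\rho \times N$ submatrix) that no odd prime divides the relevant determinantal divisor, consistently with the fact that for $f_0$ the rank genuinely drops at $p = 2$ (cf.\ the value $d_5^{\bF_2} = 4$ in Table~\ref{tab:dr}). I expect the main obstacle to be computational rather than conceptual: $M_6$ is a large integer matrix of the sizes recorded after Table~\ref{tab:dr}, and the argument must be carried out over $\bZ$ rather than modulo individual primes, so as to rule out all odd $p$ simultaneously instead of testing them one at a time. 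Once the index is confirmed to be a power of $2$, combining the above with Proposition~\ref{p:AIfd} yields the finite-dimensionality of $\gL_0$ and $\gL_1$ over any $K$ with $\ch K \neq 2$.
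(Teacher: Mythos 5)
Your proposal is correct and follows essentially the same route as the paper: reduce via Lemma~\ref{l:KQn0fd}, Lemma~\ref{l:e0Le0} and Proposition~\ref{p:AIfd} to the vanishing of $d_5$, invoke Lemma~\ref{l:dr:p0} to reduce to $\rank_{\bF_p} M_6 = \rank_{\bQ} M_6$, and certify this for all odd $p$ at once via Lemma~\ref{l:prank} by checking that the index of a completed lattice is a power of $2$ (the paper finds $[\bZ^{360}:L']=256$). The only caveat is that you take $L$ to be generated by a chosen set of $\rho$ rows rather than by all rows of $M_6$ as the paper does; since a proper choice of rows could generate a sublattice of odd index in the full row lattice, the computation is safer (and is what the paper actually verifies) with the full row lattice, though this is an implementation detail rather than a gap in the argument.
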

\begin{proof}
The case $\ch K = 0$ has already been considered in
Proposition~\ref{p:W0W1dim}. Thus it suffices to prove that
$d_5^{\bF_p}(I_{f_0}) = d_5^{\bF_p}(I_{f_1}) = 0$ for any prime $p>2$.

From Table~\ref{tab:dr} we already know that
$d_5^{\bQ}(I_{f_0}) = d_5^{\bQ}(I_{f_1}) = 0$, thus by Lemma~\ref{l:dr:p0}
we are done once we prove that in each case the rank of the matrix $M_6$ over
$\bF_p$ is equal to that over $\bQ$.

The matrix $M_6$ has $360=9+27+81+243$ columns. One
verifies that $\rank_{\bQ} M_6 = 352$ (this follows also from the equality
$d_2 + d_3 + d_4 = 8$, see Table~\ref{tab:dr}).

Let $L'$ be the lattice 
in $\bZ^{360}$ spanned by the rows of $M_6$ together with the $8$
standard basis vectors in $\bZ^{360}$ corresponding to the words
\begin{align*}
x_1 x_2, && x_1 x_3, && x_2 x_1, && x_2 x_3, && x_1 x_2 x_1, && x_1 x_3 x_2,
&& x_2 x_1 x_2, && x_1 x_2 x_1 x_2 .
\end{align*}

One verifies using \textsc{Magma}~\cite{MAGMA} that $L'$ has full rank
and that $[\bZ^{360} : L'] = 256$.
By Lemma~\ref{l:prank} (with $N=360$, $d=8$ and
$L$ being the lattice spanned by the rows of $M_6$), 
$\rank_{\bF_p} M_6 = \rank_{\bQ} M_6$ for any prime $p>2$.
\end{proof}

\section{A potential on $X_7$ with infinite-dimensional Jacobian algebra}
\label{sec:Jac:infdim}

\subsection{Graded quivers with homogeneous commutativity-relations}

In this section we present a general result about certain
quotients of complete path algebras being infinite-dimensional.

Let $Q$ be a quiver. Recall that a $\emph{grading}$ on $Q$ is a function
$|\cdot| \colon Q_1 \to \bZ$. We say that $Q$ is \emph{positively graded}
if the degree of any arrow is a positive integer.
The degree of a path is the sum of degrees of its arrows. This induces
gradings on the path algebra $KQ$ and its completion $\wh{KQ}$.

Since the grading is positive,
the degree of any path of length $n$ is at least $n$; in other words,
any path of degree $d$ has length at most $d$. This implies that
the subspace $\wh{KQ}_d$ spanned by the paths of
degree $d$ is finite-dimensional and equals $(KQ)_d$.
We have $\wh{KQ} = \prod_{d \geq 0} \wh{KQ}_d$.
Let $\wh{KQ}_{\geq d} = \prod_{d' \geq d} \wh{KQ}_{d'}$.
If $\fm$ denotes the ideal of $\wh{KQ}$ generated by the arrows, then
$\fm^n \subseteq \wh{KQ}_{\geq n}$ for any $n \geq 1$,
hence $\bar{I} \subseteq \bigcap_{d \geq 0} (I + \wh{KQ}_{\geq d})$
for any ideal $I$ of $\wh{KQ}$
(in fact there is an equality but we will not use this).

\begin{lemma} \label{l:KQI:graded}
Let $Q$ be positively graded. If $I$ is an ideal of $\wh{KQ}$
generated by finitely many homogeneous elements, then
$I$ is closed and $\wh{KQ}/I \simeq \prod_{d \geq 0} {KQ}_d/I_d$.
\end{lemma}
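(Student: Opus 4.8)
The plan is to use positivity of the grading to show that $I$ is a \emph{homogeneous} ideal, completely determined by its finite-dimensional graded pieces $I_d = I \cap \wh{KQ}_d$, and then to read off both assertions from the canonical product decomposition $\wh{KQ} = \prod_{d \geq 0} \wh{KQ}_d$. For $a \in \wh{KQ}$ I write $a = (a_d)_{d}$ with $a_d \in \wh{KQ}_d$ its degree-$d$ component, and I let $g_1, \dots, g_t$ be the homogeneous generators, of degrees $e_1, \dots, e_t$. The target identity is $I = \prod_{d \geq 0} I_d$, from which closedness is immediate (a product of subspaces is closed in $\prod_d \wh{KQ}_d$) and the quotient description follows formally.

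First I would record the graded behaviour of the algebraically generated ideal: a typical element is a finite sum $\sum_k u_k g_{i_k} v_k$, and extracting its degree-$d$ part gives $\sum_k \sum_{p+q = d - e_{i_k}} (u_k)_p\, g_{i_k}\, (v_k)_q$, a \emph{finite} sum of homogeneous elements, each of which again lies in the ideal. Thus every homogeneous component of an ideal element is again in the ideal. To pass to the closure I would invoke the containment $\fm^n \subseteq \wh{KQ}_{\geq n}$ recorded just above the lemma: given $a \in I$, for each $n$ write $a = b_n + f_n$ with $b_n$ in the generated ideal and $f_n \in \fm^n \subseteq \wh{KQ}_{\geq n}$, so that comparing components in degrees $d < n$ yields $a_d = (b_n)_d \in I_d$; letting $n \to \infty$ gives $a_d \in I_d$ for all $d$. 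Conversely, if every $a_d \in I_d$, then the partial sums $\sum_{d \leq N} a_d$ lie in $I$ and converge to $a$, so $a \in I$. Together these prove $I = \{\, a : a_d \in I_d \ \text{for all } d \,\} = \prod_{d \geq 0} I_d$, hence $I$ is closed.

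For the isomorphism, the tautological identification $\wh{KQ} = \prod_d \wh{KQ}_d = \prod_d (KQ)_d$ composed with the quotient maps $(KQ)_d \twoheadrightarrow (KQ)_d/I_d$ is a continuous surjective homomorphism; it is multiplicative because $I$ is two-sided, so $\wh{KQ}_p \cdot I_q + I_p \cdot \wh{KQ}_q \subseteq I_{p+q}$, and its kernel is exactly $\{\, a : a_d \in I_d \ \text{for all } d \,\} = I$ by the previous step. This gives $\wh{KQ}/I \simeq \prod_{d \geq 0} (KQ)_d/I_d$. The one genuinely delicate step, and the place where positivity of the grading is indispensable, is matching the $\fm$-adic closure of the generated ideal with its degreewise data: it is precisely $\fm^n \subseteq \wh{KQ}_{\geq n}$ that makes the degree filtration cofinal with the $\fm$-adic one, so that the closure adds nothing beyond the product of the homogeneous pieces $I_d$. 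Without this one cannot recover the ideal from its finite-dimensional graded components, and the completeness of $\wh{KQ}$ is what supplies the limits of the partial sums in the converse inclusion.
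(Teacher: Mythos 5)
There is a genuine gap, and it sits exactly at the point your last paragraph identifies as ``the one genuinely delicate step.'' Your forward direction is fine: for $a$ in the algebraic ideal (or in its closure, via $a\in I+\fm^n\subseteq I+\wh{KQ}_{\geq n}$), every homogeneous component $a_d$ lies in $I_d=I\cap\wh{KQ}_d$. But in the converse direction you write: ``the partial sums $\sum_{d\leq N}a_d$ lie in $I$ and converge to $a$, so $a\in I$.'' Convergence of a sequence of elements of $I$ only places the limit in $\overline{I}$; concluding $a\in I$ from this presupposes that $I$ is closed, which is precisely the assertion you are trying to prove. So your argument establishes $\overline{I}=\prod_d I_d$ and $\wh{KQ}/\overline{I}\simeq\prod_d KQ_d/I_d$, but not the closedness of the algebraic ideal $I$ itself. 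A telling symptom is that your proof never uses the finiteness of the generating set, whereas the statement genuinely needs it for the closedness claim.

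The missing idea, which is the substance of the paper's proof, is a regrouping argument showing directly that $\prod_d I_d\subseteq I$ as a \emph{finite} combination of the generators. Given $x=(x_d)_d$ with $x_d\in I_d$, one writes each $x_d$ as a finite sum of homogeneous multiples of the generators $\rho_1,\dots,\rho_t$, with coefficients of degree $d-|\rho_i|$; for each fixed $i$ the coefficients extracted from all the $x_d$ have degrees tending to infinity, so their sum $a_i=\sum_d a_{i,d}$ converges in $\wh{KQ}$, and then $x=a_1\rho_1+\dots+a_t\rho_t$ is a finite combination, hence lies in the algebraic ideal $I$. This is where both the positivity of the grading (finite-dimensionality of each $\wh{KQ}_d$ and convergence of the $a_i$) and the finiteness of the generating set are used. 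Your proof needs this step inserted in place of the ``partial sums converge'' sentence; the rest (the product decomposition, the closedness of $\prod_d I_d$, and the identification of the quotient) is in order and matches the paper's route.
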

\begin{proof}
Write $I = (\rho_1, \dots, \rho_t)$ for homogeneous elements
$\rho_1, \dots_, \rho_t$.
For any $d \geq 0$, let $I_d = I \cap \wh{KQ}_d$ be the subset
consisting of the elements in $I$ having degree $d$ and consider
$I' = \prod_{d \geq 0} I_d$. 
Then $I'$ is an ideal of $\wh{KQ}$ containing $\rho_1, \dots, \rho_t$
and hence it contains $I$. Since
$I' + \wh{KQ}_{\geq d} =
\left(\prod_{d' < d} I_{d'}\right) \times
\bigl(\prod_{d' \geq d} \wh{KQ}_{d'}\bigr)$, the intersection
$\bigcap_{d \geq 0} (I' + \wh{KQ}_{\geq d})$ equals $I'$
and therefore $I'$ is closed. Moreover, from the construction of $I'$
we have $\wh{KQ}/I' \simeq \prod_{d \geq 0} KQ_d/I_d$.
Thus the claim of the lemma would follow once we show that $I'=I$.

Indeed, let $x=(x_d)_{d \geq 0} \in I'$ with $x_d \in I_d$.
For any $d \geq 0$, there exist homogeneous elements
$a_{1,d}, \dots, a_{t,d}$ such that
$x_d = a_{1,d} \rho_1 + \dots + a_{t,d} \rho_t$
and $|a_{i,d}| = d - |\rho_i|$ (if $|\rho_i|>d$ then $a_{i,d}=0$).
Let $a_i = \sum_{d \geq |\rho_i|} a_{i,d}$ for $1 \leq i \leq t$.
Then $x = a_1 \rho_1 + \dots + a_t \rho_t \in I$, as required.
\end{proof}

Recall that two paths in $Q$ are \emph{parallel} if they have the same
starting point and the same ending point.
A \emph{commutativity-relation} is an element in $KQ$ of the form
$p-q$ where $p,q$ are parallel paths. 
A commutativity-relation is \emph{homogeneous} if the paths $p$ and
$q$ have the same degree.

\begin{prop} \label{p:KQI:hcomm}
Let $Q$ be a positively graded quiver and let $I$ be an ideal of $\wh{KQ}$
generated by finitely many homogeneous commutativity-relations.
Then $I$ is closed and $\dim \wh{KQ}/I < \infty$ if and only if
$Q$ is acyclic.
\end{prop}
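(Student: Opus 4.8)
The plan is to reduce everything to a degree-by-degree analysis using the grading. First I would invoke Lemma~\ref{l:KQI:graded}: since a homogeneous commutativity-relation is in particular a homogeneous element, the ideal $I$ generated by finitely many of them is closed, and there is a vector-space isomorphism $\wh{KQ}/I \simeq \prod_{d \geq 0} KQ_d/I_d$, where $I_d = I \cap \wh{KQ}_d$. A product of finite-dimensional spaces $\prod_d V_d$ is finite-dimensional exactly when $V_d = 0$ for all but finitely many $d$, so the whole problem becomes: show that $KQ_d/I_d \neq 0$ for infinitely many $d$ precisely when $Q$ has an oriented cycle. At each fixed degree this is a genuinely finite computation, since $I_d$ is spanned by the finitely many elements $a(p_k - q_k)b$ with $a,b$ paths and $\deg a + \deg p_k + \deg b = d$.

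The easy direction is when $Q$ is acyclic: then any path has length bounded by the number of vertices, so $Q$ has only finitely many paths, $\wh{KQ} = KQ$ is already finite-dimensional, and hence every quotient is finite-dimensional.

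For the converse the key idea is that a commutativity-relation may identify parallel paths but can never make a path vanish, so no homogeneous block is ever killed. To make this precise I would decompose $I_d = \bigoplus_{i,j} (I \cap e_i KQ_d e_j)$ along the orthogonal idempotents $e_i$, and on each block $e_i KQ_d e_j$ consider the linear functional $\sigma_{ij}$ sending a linear combination of paths to the sum of its coefficients. Each generator $a(p_k - q_k)b$ lying in this block is a difference of two \emph{distinct} paths (distinct because $p_k \neq q_k$ and concatenation of paths is cancellative), hence is killed by $\sigma_{ij}$; therefore $I \cap e_i KQ_d e_j \subseteq \ker \sigma_{ij} \subsetneq e_i KQ_d e_j$ whenever the block is nonzero. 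Consequently $KQ_d/I_d \neq 0$ as soon as there exists a path of degree $d$. Now if $Q$ is not acyclic it contains an oriented cycle $c$ of length $\geq 1$, whose degree $e := \deg c$ is a positive integer since the grading is positive; the powers $c^n$ are paths of degree $ne$, so $KQ_{ne}/I_{ne} \neq 0$ for every $n \geq 1$, giving infinitely many nonzero graded pieces and hence $\dim \wh{KQ}/I = \infty$.

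I expect the point needing the most care to be the verification that $I_d$ respects the endpoint-block decomposition and that its generators are block-homogeneous, as this is exactly what lets the coefficient-sum functional detect non-vanishing; the crucial underlying fact is that $a p_k b \neq a q_k b$ as paths, which is where the commutativity-relation structure and homogeneity are genuinely used, rather than a cancellation that might fail for more general relations.
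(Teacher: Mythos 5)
Your proof is correct, and its overall skeleton coincides with the paper's: both reduce to the graded decomposition $\wh{KQ}/I \simeq \prod_{d\geq 0} KQ_d/I_d$ via Lemma~\ref{l:KQI:graded}, dispose of the acyclic case by finiteness of the set of paths, and then show that $KQ_d/I_d \neq 0$ whenever $Q$ has a path of degree $d$, which combined with positivity of the grading and the existence of a cycle yields infinitely many nonzero factors. The only point of divergence is how that non-vanishing is established. The paper defines the relation $u p_i v \sim u q_i v$ on paths, takes its transitive closure, and observes that the equivalence classes of paths of degree $d$ give a basis of $KQ_d/I_d$; this gives finer information (an explicit basis of the quotient) at the cost of a standard but unspoken verification that such a basis indeed results. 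You instead exhibit the coefficient-sum functional $\sigma$ on each block $e_i KQ_d e_j$ and check that every spanning element $u(p_k-q_k)v$ of $I_d$ is a difference of two \emph{distinct} parallel paths (by cancellativity of concatenation) and hence lies in $\ker\sigma$, so $I_d$ is a proper subspace of any nonzero block. This is slightly more economical and makes explicit exactly where the commutativity-relation shape of the generators is used --- a relation with a nontrivial coefficient sum could kill a whole graded piece, whereas $p-q$ cannot. Both arguments are complete; yours proves only non-vanishing rather than producing a basis, which is all the proposition requires.
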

\begin{proof}
Let $I=(p_1-q_1, \dots, p_t-q_t)$, where 
$(p_i, q_i)$ are pairs of parallel paths of the same degree
$|p_i|=|q_i|$.
By Lemma~\ref{l:KQI:graded}, $I$ is closed and
$\wh{KQ}/I \simeq \prod_{d \geq 0} KQ_d/I_d$.

Define a binary relation on the set of paths of $Q$ by
$u p_i v \sim u q_i v$ for any $1 \leq i \leq t$ and paths $u, v$
such that the concatenation $u p_i v$ (and hence also $u q_i v$) is defined.
Its transitive closure defines an equivalence relation on the set of
paths of $Q$. Since $|p_i|=|q_i|$ for any $1 \leq i \leq t$,
equivalent paths have the same degree and a basis of the space $KQ_d/I_d$
is given by representatives of the equivalence classes of paths of degree $d$.
Thus, $KQ_d/I_d \neq 0$ if there exists at least one path in $Q$ of degree $d$.

Since the grading is positive, if $Q$ is not acyclic then there exist
paths of arbitrarily large degree, and hence infinitely many of the factors
$KQ_d/I_d$ are not zero, so that $\wh{KQ}/I$ is infinite-dimensional.
If $Q$ is acyclic, then $\wh{KQ}$ equals the path algebra $KQ$
which is finite-dimensional over $K$ and so is any of its quotients.
\end{proof}

Let us derive a version of Proposition~\ref{p:KQI:hcomm} for potentials.
Call a cycle $c$ \emph{simple} if no arrow appears in $c$ more than once.

\begin{prop} \label{p:QP:cycles}
Let $Q$ be a positively graded quiver which is not acyclic,
let $\cC$ be a set of simple cycles of the same positive degree
which are pairwise rotationally inequivalent, and
let $\cA$ be the set
of all arrows appearing in at least one of the cycles in $\cC$.

Let $K$ be a field and
assume that there exists a function $\eps \colon \cC \to K$ such that
any arrow $\alpha \in \cA$ appears in exactly two cycles $c \neq c'$ in $\cC$
and moreover $\eps(c)+\eps(c')=0$.

Then the Jacobian algebra $\cP \left(Q, \sum_{c \in \cC} \eps(c) c \right)$
is infinite-dimensional over $K$.
\end{prop}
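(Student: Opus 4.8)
The plan is to show that the Jacobian ideal $\cJ(W)$ of $W = \sum_{c \in \cC} \eps(c) c$ is contained in an ideal generated by finitely many homogeneous commutativity-relations, and then to invoke Proposition~\ref{p:KQI:hcomm}. First I would record a finiteness observation: since the grading is positive and every cycle in $\cC$ has the same degree $\delta > 0$, each such cycle has length at most $\delta$, and because $Q$ is finite this forces $\cC$, and hence also $\cA$, to be finite. Thus $W$ is a genuine finite sum and there will be only finitely many relations to work with.

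Next I would compute the cyclic derivatives. Fix an arrow $\alpha \in \cA$ and let $c, c'$ be the two cycles of $\cC$ in which $\alpha$ occurs. Because each cycle is simple, $\alpha$ appears exactly once in each, so $\partial_\alpha(c)$ is the single path obtained by deleting $\alpha$ and rotating; call it $c/\alpha$, a path from $t(\alpha)$ to $s(\alpha)$, and similarly for $c'/\alpha$. As $\alpha$ occurs in no further cycle, $\partial_\alpha(W) = \eps(c)\,(c/\alpha) + \eps(c')\,(c'/\alpha)$, and the hypothesis $\eps(c') = -\eps(c)$ collapses this to $\eps(c)\bigl[(c/\alpha) - (c'/\alpha)\bigr]$. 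The paths $c/\alpha$ and $c'/\alpha$ are parallel, both running from $t(\alpha)$ to $s(\alpha)$, and have the same degree $\delta - |\alpha|$, so their difference is a homogeneous commutativity-relation; it is moreover nonzero, since $c/\alpha = c'/\alpha$ would make $c$ and $c'$ rotationally equivalent, contrary to assumption. Every arrow $\alpha \notin \cA$ contributes $\partial_\alpha(W) = 0$.

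I would then let $I$ be the ideal of $\wh{KQ}$ generated by the finitely many homogeneous commutativity-relations $(c/\alpha) - (c'/\alpha)$, $\alpha \in \cA$. By Proposition~\ref{p:KQI:hcomm}, $I$ is closed and, since $Q$ is not acyclic, $\wh{KQ}/I$ is infinite-dimensional. The computation above shows that each generator $\partial_\alpha(W)$ of the Jacobian ideal is a scalar multiple of one of these generators of $I$, so the ideal they generate lies inside the closed ideal $I$; hence $\cJ(W) = \overline{(\partial_\alpha(W) : \alpha \in Q_1)} \subseteq I$. Consequently $\wh{KQ}/I$ is a quotient of $\cP(Q,W) = \wh{KQ}/\cJ(W)$, and the infinite-dimensionality of the former forces that of the latter.

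The step I expect to be most delicate is the bookkeeping around the sign hypothesis: it is precisely $\eps(c) + \eps(c') = 0$ that converts the a priori \emph{sum} of two parallel paths into their \emph{difference}, i.e.\ into a commutativity-relation rather than an arbitrary linear combination. A secondary point that needs care is the handling of closures: rather than identifying $\cJ(W)$ with $I$ exactly, which would require every $\eps(c) \neq 0$, I prefer to establish only the inclusion $\cJ(W) \subseteq I$. This inclusion holds unconditionally and suffices for the dimension count, using crucially that $I$ is already closed by Proposition~\ref{p:KQI:hcomm}.
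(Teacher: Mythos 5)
Your proposal is correct and follows essentially the same route as the paper: compute each cyclic derivative $\partial_\alpha(W)$, use simplicity of the cycles and the hypothesis $\eps(c)+\eps(c')=0$ to recognize it as a scalar multiple of a homogeneous commutativity-relation between parallel paths of degree $|c|-|\alpha|$, and then apply Proposition~\ref{p:KQI:hcomm} together with the non-acyclicity of $Q$. The only (harmless) difference is that you conclude via the inclusion $\cJ(W)\subseteq I$ and a surjection $\cP(Q,W)\twoheadrightarrow \wh{KQ}/I$, whereas the paper identifies the Jacobian ideal directly as an ideal generated by homogeneous commutativity-relations; both variants are valid.
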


\begin{remark} \label{rem:QP:cycles2}
If $\ch K = 2$, one can take the constant function $\eps$ defined by
$\eps(c)=1$ for any $c \in \cC$.
If any arrow in $\cA$ appears in exactly two cycles in $\cC$, then the
Jacobian algebra of the potential $\sum_{c \in \cC} c$ on $Q$
is infinite-dimensional over $K$.
\end{remark}

\begin{remark}
When $\ch K \neq 2$, the condition on the function $\eps$ can be rephrased
in combinatorial terms as follows; there is a partition
$\cC = \cC^+ \cup \cC^-$ into two disjoint sets
such that any arrow in $\cA$ belongs to exactly
one cycle in $\cC^+$ and one cycle in $\cC^-$. Then the Jacobian algebra
$\cP(Q, \sum_{c \in \cC^+} c - \sum_{c \in \cC^-} c)$ is infinite-dimensional over
$K$.
\end{remark}

\begin{proof}[Proof of Proposition~\protect{\ref{p:QP:cycles}}]
Observe that if an arrow $\alpha$ appears in a simple cycle $c$, then
$\partial_{\alpha}(c) = p$ for a path $p$ with $|p| = |c| - |\alpha|$.

Let $W = \sum_{c \in \cC} \eps(c) c$, let $\alpha$ be an arrow and consider
the cyclic derivative $\partial_\alpha(W)$.
If $\alpha \not \in \cA$ then $\partial_{\alpha}(W) = 0$.
Otherwise, let $c, c'$ be the two cycles in $\cC$ that $\alpha$ appears in.
Then there exist parallel paths $p$ and $p'$ such that $\partial_\alpha(c)=p$,
$\partial_\alpha(c')=p'$ and $|p| = |c| - |\alpha| = |c'| - |\alpha| = |p'|$,
hence $\partial_\alpha(W) = \eps(c)p + \eps(c')p'$ is a scalar multiple
by $\eps(c)$ of a homogeneous commutativity-relation
due to our assumption that $\eps(c) = - \eps(c')$.

We have shown that the ideal of $\wh{KQ}$ generated by the cyclic
derivatives of $W$ is generated by homogeneous commutativity-relations.
By Proposition~\ref{p:KQI:hcomm}, this ideal is closed and moreover the
quotient, which thus equals the Jacobian algebra, is infinite-dimensional
over $K$.
\end{proof}

\begin{remark}
The conclusion of Proposition~\ref{p:QP:cycles} is false without the
assumption that $Q$ is positively graded and the cycles in $\cC$ are
of the same positive degree. Prominent counterexamples are the potentials
associated to triangulations of closed surfaces with punctures,
defined in~\cite{Labardini09}. 
As shown in~\cite{Ladkani12}, many of these potentials are of the form
$\sum_{c \in \cC} \eps(c) c$ with the property that any arrow belongs to
exactly two cycles $c,c'$ with $\eps(c)+\eps(c')=0$
(called there \emph{$f$-cycle} and \emph{$g$-cycle}),
however their Jacobian algebras are finite-dimensional over any ground field.
\end{remark}

\subsection{Positive grading on $X'_7$ and the Jacobian algebra $\cP(X_7,W_0)$}

Consider the positive grading on the quiver $X'_7$ given by
\begin{align*}
|a_i|=2, && |b_{ij}|=2, && |c_i|=2, && |d_i|=1
\end{align*}
for $1 \leq i \neq j \leq 3$.

\begin{lemma} \label{l:X7:hcycle}
Let $\cC$ be the set consisting of the following $9$ cycles in $X'_7$:
\[
\cC = \left\{ a_i b_{ij} c_j \right\}_{1 \leq i \neq j \leq 3}
\cup
\left\{ d_i b_{ij} d_j b_{ji} \right\}_{1 \leq i < j \leq 3} .
\]
Then any cycle in $\cC$ has degree $6$ and any arrow of $X'_7$ belongs
to exactly two cycles in~$\cC$.
\end{lemma}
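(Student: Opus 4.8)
The plan is to verify both assertions by direct inspection, organising the bookkeeping according to the four families of arrows of $X'_7 = Q'_3$: the radial arrows $a_i$ and $c_i$, the arrows $d_i$, and the arrows $b_{ij}$ for $1 \leq i \neq j \leq 3$, of which there are $3+3+3+6 = 15$ in total. The degree claim is immediate from the given grading: a first-kind cycle contributes $|a_i| + |b_{ij}| + |c_j| = 2+2+2 = 6$, and a second-kind cycle contributes $|d_i| + |b_{ij}| + |d_j| + |b_{ji}| = 1+2+1+2 = 6$, so every element of $\cC$ has degree $6$.

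For the incidence count I would proceed arrow family by arrow family. Each $a_i$ occurs only among first-kind cycles, precisely in $a_i b_{ij} c_j$ for the two indices $j \neq i$; dually each $c_j$ occurs precisely in $a_i b_{ij} c_j$ for the two indices $i \neq j$. Each $d_i$ occurs only among second-kind cycles, and it lies in the cycle attached to the unordered pair $\{k,l\}$ exactly when $i \in \{k,l\}$, which happens for two of the three pairs. Thus every radial arrow and every $d_i$ lies in exactly two cycles.

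The only place where the two families of cycles interact, and hence the one case deserving care, is an arrow $b_{ij}$: I would note that it occurs in exactly one first-kind cycle, namely $a_i b_{ij} c_j$ (the endpoints force the indices), and in exactly one second-kind cycle, namely the one attached to the unordered pair $\{i,j\}$, in which both $b_{ij}$ and $b_{ji}$ appear. Hence each $b_{ij}$ also lies in exactly two cycles. As a global sanity check I would compare total incidences: the nine cycles carry $6 \cdot 3 + 3 \cdot 4 = 30$ arrow-occurrences, matching $15 \cdot 2$, which confirms that nothing has been over- or under-counted. Since each listed cycle is simple --- its arrows are pairwise distinct because $i \neq j$ --- the phrase ``belongs to two cycles'' is unambiguous. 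There is no genuine obstacle here beyond keeping the index conventions of $Q'_n$ straight; the content is purely a combinatorial verification, assembled so as to feed Remark~\ref{rem:QP:cycles2} in the characteristic-$2$ setting.
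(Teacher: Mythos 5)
Your verification is correct and follows essentially the same route as the paper, which likewise checks the degrees directly and then lists, arrow family by arrow family ($a_i$, $b_{ij}$, $c_i$, $d_i$), the two cycles of $\cC$ each arrow belongs to. The incidence-count sanity check ($9$ cycles carrying $30$ arrow-occurrences against $15 \cdot 2$) is a harmless extra not present in the paper.
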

\begin{proof}
One verifies that $|a_i b_{ij} c_j|=6$ and $|d_i b_{ij} d_j b_{ji}|=6$.
The table below lists, for each arrow (in the left column),
two cycles rotationally equivalent to
the two cycles in $\cC$ it belongs to. Here, $i,j,k$ indicate different
indices from the set $\{1,2,3\}$.
\begin{align*}
&a_i    && a_i b_{ij} c_j && a_i b_{ik} c_k && (1 \leq i \leq 3) \\
&b_{ij} && a_i b_{ij} c_j && d_i b_{ij} d_j b_{ji} && (1 \leq i \neq j \leq 3) \\
&c_i    && a_j b_{ji} c_i && a_k b_{ki} c_i && (1 \leq i \leq 3) \\
&d_i    && d_i b_{ij} d_j b_{ji} && d_i b_{ik} d_k b_{ki} && (1 \leq i \leq 3)
\end{align*}
\end{proof}

Recall that $W_0 = B_1 + B_2 + B_3 + \Delta_1 \Delta_2 + \Delta_2 \Delta_3
+ \Delta_3 \Delta_1$ on $X_7$.

\begin{prop} \label{p:W0:Keq2}
If $\ch K = 2$,
the Jacobian algebra $\cP(X_7, W_0)$ is infinite dimensional.
\end{prop}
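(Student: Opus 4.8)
The plan is to move the problem to the mutated quiver $X'_7$, where $W_0$ turns into a signed sum of homogeneous cycles governed by Proposition~\ref{p:QP:cycles}, and then to transport the conclusion back along the mutation. First I would compute $\mu_0(X_7, W_0)$. Writing $W_0 = B_1 + B_2 + B_3 + f(\Delta_1,\Delta_2,\Delta_3)$ with $f(x_1,x_2,x_3) = x_1 x_2 + x_2 x_3 + x_3 x_1$, every monomial of $f$ is admissible, so $f$ is reduced and $\bar f = f$. Proposition~\ref{p:Qn:Wf}\eqref{it:Qn:Wf:mut0} (for $n=3$) then yields
\[
\mu_0(X_7, W_0) \simeq \Bigl( X'_7, \ \sum_{1 \le i \ne j \le 3} a_i b_{ij} c_j + \Psi(f) \Bigr),
\]
and by~\eqref{e:PsiA} we have $\Psi(x_1 x_2) = d_1 b_{12} d_2 b_{21}$, $\Psi(x_2 x_3) = d_2 b_{23} d_3 b_{32}$ and $\Psi(x_3 x_1) = d_3 b_{31} d_1 b_{13}$. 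After rotating the last term, the resulting potential is cyclically equivalent to $\sum_{c \in \cC} c$, where $\cC$ is precisely the set of nine cycles of Lemma~\ref{l:X7:hcycle}.

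Next I would apply the infinite-dimensionality criterion on $X'_7$, equipped with the positive grading $|a_i|=|b_{ij}|=|c_i|=2$, $|d_i|=1$. By Lemma~\ref{l:X7:hcycle} all nine cycles of $\cC$ have degree $6$ and each arrow of $X'_7$ occurs in exactly two of them; they are moreover simple and pairwise rotationally inequivalent, and $X'_7$ is not acyclic. Since $\ch K = 2$, the constant function $\eps \equiv 1$ satisfies $\eps(c) + \eps(c') = 0$ for the two cycles through each arrow, so Remark~\ref{rem:QP:cycles2} (the $\ch K = 2$ case of Proposition~\ref{p:QP:cycles}) shows that $\cP\bigl(X'_7, \sum_{c \in \cC} c\bigr)$ is infinite-dimensional. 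Because cyclically equivalent potentials are right equivalent and hence have isomorphic Jacobian algebras, $\cP(\mu_0(X_7, W_0))$ is infinite-dimensional.

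Finally I would pull this back to $X_7$: finite-dimensionality of Jacobian algebras is preserved under mutation, so were $\cP(X_7, W_0)$ finite-dimensional, then $\cP(\mu_0(X_7, W_0))$ would be too; the contrapositive gives the claim. The one essential point—and the only place the hypothesis $\ch K = 2$ enters—is the sign-matching: the all-$+1$ potential $\sum_{c \in \cC} c$ produces cyclic derivatives that are commutativity-relations only when $1 = -1$, which is exactly why the argument works in characteristic $2$ and, consistently with Proposition~\ref{p:W0W1:Kne2}, fails otherwise. The remaining steps are bookkeeping: verifying that the mutated potential is literally $\sum_{c \in \cC} c$ and that the grading renders it homogeneous, both already secured by Lemma~\ref{l:X7:hcycle}.
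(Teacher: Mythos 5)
Your proposal is correct and follows essentially the same route as the paper's own proof: mutate at the central vertex via Proposition~\ref{p:Qn:Wf}, identify the resulting potential with $\sum_{c \in \cC} c$ for the nine degree-$6$ cycles of Lemma~\ref{l:X7:hcycle}, invoke Proposition~\ref{p:QP:cycles} with Remark~\ref{rem:QP:cycles2}, and pull the conclusion back using preservation of finite-dimensionality under mutation. The extra details you supply (checking that $f$ is reduced, the explicit values of $\Psi$, the rotation of the last term) are accurate and consistent with the paper.
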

\begin{proof}
Since the finite-dimensionality of Jacobian algebras is preserved under
mutations, it suffices to show that the Jacobian algebra of the mutation
$\mu_0(X_7, W_0)$ is infinite-dimensional.

By Proposition~\ref{p:Qn:Wf}, $\mu_0(X_7, W_0) = (X'_7, W_0')$,
where the potential $W'_0$ is given by 
\[
W'_0 = \sum_{1 \leq i \neq j \leq 3} a_i b_{ij} c_j +
\sum_{1 \leq i < j \leq 3} d_i b_{ij} d_j b_{ji} ,
\]
which is exactly $\sum_{\omega \in \cC} \omega$ for the set of cycles
$\cC$ considered in Lemma~\ref{l:X7:hcycle}.
The result is now a consequence of Proposition~\ref{p:QP:cycles}
(and Remark~\ref{rem:QP:cycles2}).
\end{proof}

\begin{remark}
Since the Jacobian algebra $\cP(X_7, W_0)$ is finite-dimensional
when $\ch K \ne 2$ (cf.\ Proposition~\ref{p:W0W1:Kne2}),
the same holds for $\cP(X'_7, W'_0)$. We see that the conclusion of
Proposition~\ref{p:QP:cycles} is false without the assumption on the
function $\eps \colon \cC \to K$.
\end{remark}

\subsection{No reddening sequences on $X_7$}
\label{ssec:X7:reddening}

We recall the definition of reddening sequences from~\cite{Keller13},
see also the survey~\cite{DemonetKeller20}.

Let $Q$ be a quiver without loops and $2$-cycles.
The \emph{framed quiver} $\wh{Q}$ is obtained from $Q$
by adding, for each vertex $i \in Q_0$, a new vertex $i'$
and a new arrow $i \to i'$. Call the new vertices \emph{frozen},
as we will not mutate at them.

Let $R$ be a quiver obtained from $\wh{Q}$ by applying a finite sequence
of mutations at non-frozen vertices.
A non-frozen vertex $i$ is \emph{green} if there are no arrows $j' \to i$
from any frozen vertex to $i$. It is \emph{red} if there are no arrows
$i \to j'$ from $i$ to any frozen vertex.
By the \emph{sign coherence of $c$-vectors}, established in~\cite{DWZ10}
using the theory of decorated representations of Jacobian algebras of
quivers with potentials,
each non-frozen vertex is either green or red.

Initially, all the non-frozen vertices of $\wh{Q}$ are green.
A sequence of non-frozen vertices $i_1, i_2, \dots, i_m$
is a \emph{reddening sequence} (or \emph{green-to-red} sequence)
for $Q$
if all the non-frozen vertices in the quiver
$\mu_{i_m} \dots \mu_{i_2} \mu_{i_1}(\wh{Q})$ are red.
A reddening sequence is \emph{maximal green} if,
for each $1 < t \leq m$, the vertex $i_t$ is green in
the quiver $\mu_{i_{t-1}} \dots \mu_{i_2} \mu_{i_1}(\wh{Q})$.

In~\cite{Seven14}, Seven gave a combinatorial proof that
there are no maximal green sequences (and presumably no reddening
sequences) on the quiver $X_7$.
We can now present a representation theoretic argument, relying
on~\cite{BDP14}.

\begin{prop} \label{p:X7:reddening}
There are no reddening sequences for the quiver $X_7$.
\end{prop}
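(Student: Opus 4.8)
The plan is to invoke the main theorem of Brüstle--Dupont--Pérotin~\cite{BDP14}, which establishes a precise link between reddening sequences and the finite-dimensionality of Jacobian algebras of non-degenerate potentials. Specifically, one of the key results of~\cite{BDP14} states that if a quiver $Q$ admits a reddening (green-to-red) sequence, then for \emph{every} non-degenerate potential $W$ on $Q$, the Jacobian algebra $\cP(Q,W)$ is finite-dimensional. The strategy is therefore to argue by contradiction: if $X_7$ admitted a reddening sequence, every non-degenerate potential on it would have a finite-dimensional Jacobian algebra, and then I would exhibit a single non-degenerate potential whose Jacobian algebra is infinite-dimensional to derive the contradiction.

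The required counterexample is already at hand. The plan is to work over a field $K$ of characteristic $2$ and take the potential $W_0 = B_1 + B_2 + B_3 + \Delta_1 \Delta_2 + \Delta_2 \Delta_3 + \Delta_3 \Delta_1$. By Theorem~\ref{t:X7:potential} (applied with $P = 0$), this potential is non-degenerate, so it is an admissible test case for the BDP criterion. On the other hand, Proposition~\ref{p:W0:Keq2} shows that when $\ch K = 2$ the Jacobian algebra $\cP(X_7, W_0)$ is infinite-dimensional. These two facts together contradict the conclusion that every non-degenerate potential on a quiver admitting a reddening sequence must have finite-dimensional Jacobian algebra.

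Finally, I would note that reddening sequences are a purely combinatorial feature of the quiver $Q$, not of any chosen potential or ground field, so it suffices to rule them out over one convenient field. Thus working over a field of characteristic $2$ (for instance $\bF_2$) loses no generality: the existence or non-existence of a reddening sequence for $X_7$ does not depend on $K$, and exhibiting the obstruction over $\bF_2$ settles the question over every field. This completes the argument.

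The main obstacle, such as it is, lies entirely in correctly citing and applying the contrapositive of the relevant theorem from~\cite{BDP14}; the representation-theoretic content---that the existence of a reddening sequence forces finite-dimensionality of all non-degenerate Jacobian algebras---is exactly what makes this argument work, and all the hard computational input (the infinite-dimensionality in characteristic $2$) has already been carried out in Proposition~\ref{p:W0:Keq2}. No further calculation is needed beyond assembling these ingredients.
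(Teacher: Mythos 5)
Your proposal is correct and follows essentially the same route as the paper: the paper's proof likewise observes that over a field of characteristic $2$ the potential $W_0$ is non-degenerate with infinite-dimensional Jacobian algebra and then invokes \cite[Proposition~8.1]{BDP14} (or \cite[Corollary~5.2]{DemonetKeller20}). Your added remark that reddening sequences are a purely combinatorial, field-independent notion, so that checking over $\bF_2$ suffices, is a correct and worthwhile clarification that the paper leaves implicit.
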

\begin{proof}
Over a ground field of characteristic $2$, the potential $W_0$ on $X_7$
is non-degenerate and its Jacobian algebra $\cP(X_7, W_0)$ is not 
finite-dimensional.

The result now follows from
\cite[Proposition~8.1]{BDP14} or~\cite[Corollary~5.2]{DemonetKeller20}.
\end{proof}

\subsection{On the exchange graph of cluster-tilting objects}
\label{ssec:X7:exchange}

Recall that
a $K$-linear triangulated category $\cC$ with suspension functor
$\Sigma$ and finite-dimensional morphism spaces is \emph{$2$-Calabi-Yau}
if there are bifunctorial isomorphisms
$\Hom_{\cC}(X, \Sigma Y) \simeq D\Hom_{\cC}(Y, \Sigma X)$
for any $X, Y \in \cC$, where
$D(-)$ denotes the duality of vector spaces over $K$.

Assume that $\cC$ is also Krull-Schmidt.
An object $T \in \cC$ is a \emph{cluster-tilting object} if
$\Hom_{\cC}(T, \Sigma T) = 0$ and moreover any object $X \in \cC$ for
which $\Hom_{\cC}(T, \Sigma X)=0$ is 
a summand of a finite direct sum of copies of $T$.
We will consider \emph{basic} cluster-tilting
objects, i.e.\ the multiplicity in $T$ of any of its
indecomposable summands is equal to one.

Iyama and Yoshino have shown~\cite[Theorem~5.3]{IyamaYoshino08} that
it is possible to mutate (basic) cluster-tilting objects in the following
sense; if 
$T$ is a cluster-tilting object of $\cC$ and $X$ is an indecomposable
summand of $T$, then there 
exists a unique indecomposable object $X'$ of $\cC$ which is not
isomorphic to $X$ such that 
if we write $T = \bar{T} \oplus X$, then
$T' = \bar{T} \oplus X'$ is also a cluster-tilting
object in $\cC$. We say that $T'$ is the \emph{mutation} of $T$ at $X$.

The \emph{exchange graph} of the cluster-tilting objects in $\cC$
has its set of vertices equal to the set of isomorphism classes of the
basic cluster-tilting objects in $\cC$, and any two vertices
$T$ and $T'$ are connected by an edge if $T'$ is a mutation of $T$.

For any quiver with potential $(Q,W)$ whose Jacobian algebra is
finite-dimensional, Amiot~\cite{Amiot09} has constructed
the (generalised) \emph{cluster category} $\cC_{(Q,W)}$,
which is a Krull-Schmidt 2-Calabi-Yau triangulated category with a
cluster-tilting object.

\begin{prop} \label{p:X7:exchange}
Let $W$ be a non-degenerate potential on the quiver $X_7$ such that
the Jacobian algebra $\cP(X_7, W)$ is finite-dimensional.
Then the exchange graph of the cluster-tilting objects in the
cluster category $\cC_{(X_7, W)}$ is not connected.
\end{prop}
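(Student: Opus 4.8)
The plan is to argue by contradiction, reducing the connectivity of the exchange graph to the existence of a reddening sequence for $X_7$, which has already been ruled out in Proposition~\ref{p:X7:reddening}. So suppose, contrary to the assertion, that the exchange graph of the cluster-tilting objects in $\cC_{(X_7,W)}$ is connected, and aim to produce a reddening sequence.

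Since $W$ is non-degenerate and $\cP(X_7,W)$ is finite-dimensional, Amiot's construction~\cite{Amiot09} supplies a Krull--Schmidt $2$-Calabi--Yau triangulated category $\cC_{(X_7,W)}$ with a canonical basic cluster-tilting object $T$, whose endomorphism algebra is $\cP(X_7,W)$ and whose Gabriel quiver is $X_7$. First I would record that the suspension $\Sigma T$ is again a basic cluster-tilting object: it is basic because $\Sigma$ is an autoequivalence, and it is cluster-tilting because the $2$-Calabi--Yau property gives $\Hom_{\cC}(\Sigma T, \Sigma \cdot \Sigma T) \simeq \Hom_{\cC}(T, \Sigma T) = 0$. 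By the Iyama--Yoshino mutation theory~\cite{IyamaYoshino08} the edges of the exchange graph are exactly the mutations of cluster-tilting objects, so the phrase ``$\Sigma T$ lies in the connected component of $T$'' is synonymous with ``$\Sigma T$ is reachable from $T$ by a finite sequence of mutations''.

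The key step is the dictionary between reddening sequences and the cluster category. Mutation of $T$ tracks the seed mutation of $X_7$, and, via the sign-coherence of $c$-vectors~\cite{DWZ10} on which the definition of reddening in Section~\ref{ssec:X7:reddening} rests, a sequence of mutations of $T$ reaches the distinguished shifted object $\Sigma T$ (the cluster-tilting object all of whose $c$-vectors are negative) precisely when the corresponding sequence of non-frozen vertices is a reddening sequence for $X_7$; this is the content of the results of~\cite{BDP14} (see also~\cite{DemonetKeller20}) already invoked in the proof of Proposition~\ref{p:X7:reddening}. Granting this correspondence, connectivity of the exchange graph forces $\Sigma T$ to be reachable from $T$, hence yields a reddening sequence for $X_7$, contradicting Proposition~\ref{p:X7:reddening}. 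Therefore the exchange graph cannot be connected.

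The main obstacle is making this middle step fully precise: one must pin down the correct shift, i.e.\ that the terminal ``all red'' seed corresponds to $\Sigma T$ rather than to some other autoequivalence-twist of $T$, and verify that Iyama--Yoshino mutation of cluster-tilting objects is compatible, at the level of $g$- and $c$-vectors, with the combinatorial mutation of the framed quiver $\wh{X_7}$. Once this categorical-combinatorial correspondence is cited in exactly the form used for Proposition~\ref{p:X7:reddening}, the contradiction is immediate and no further computation is needed.
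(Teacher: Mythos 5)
Your proposal is correct and follows essentially the same route as the paper: take the canonical cluster-tilting object $T$, observe that $\Sigma T$ is again cluster-tilting, and note that any mutation sequence from $T$ to $\Sigma T$ would produce a reddening sequence for $X_7$, contradicting Proposition~\ref{p:X7:reddening}. The paper's own proof is just a terser version of this argument, likewise leaving the categorical-combinatorial dictionary to the cited references.
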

\begin{proof}
Let $T$ be the canonical cluster-tilting object in $\cC_{(X_7, W)}$.
Then $\Sigma T$ is also a cluster-tilting object. Any sequence of
mutations from $T$ to $\Sigma T$ would yield a reddening sequence
for $X_7$.
\end{proof}

\begin{remark}
In view of Propositions~\ref{p:W0W1dim} and~\ref{p:W0W1:Kne2},
the statement of Proposition~\ref{p:X7:exchange}
applies to the cluster categories $\cC_{(X_7, W_0)}$ (when the
characteristic of the ground field differs from $2$) and
$\cC_{(X_7, W_1)}$ (over any ground field).
\end{remark}

\bibliographystyle{amsplain}
\bibliography{potX7}

\end{document}